\newlength{\XHeight}
\newlength{\XWidth}
\setlist[itemize,1]{leftmargin=\dimexpr 26pt-.1in}
\newtheorem{PARA}{}[section]
\newtheorem{theorem}[PARA]{Theorem}
\newtheorem{corollary}[PARA]{Corollary}
\newtheorem{lemma}[PARA]{Lemma}
\newtheorem{proposition}[PARA]{Proposition}
\newtheorem{definition}[PARA]{Definition}
\newtheorem{definition-proposition}[PARA]{Definition-Proposition}
\theoremstyle{definition}
\newtheorem{remark}[PARA]{Remark}
\theoremstyle{theorem}
\newtheorem{example}[PARA]{Example}
\newcommand{\para}{\begin{PARA}\rm}
\newcommand{\arap}{\end{PARA}\rm}
\newcommand{\dfn}{\begin{definition}\rm}
\newcommand{\nfd}{\end{definition}\rm}
\newcommand{\rmk}{\begin{remark}\rm}
\newcommand{\kmr}{\end{remark}\rm}
\newcommand{\xmpl}{\begin{example}\rm}
\newcommand{\lpmx}{\end{example}\rm}
\newcommand{\cA}{\mathcal{A}}
\newcommand{\cB}{\mathcal{B}}
\newcommand{\cC}{\mathcal{C}}
\newcommand{\cD}{\mathcal{D}}
\newcommand{\cF}{\mathcal{F}}
\newcommand{\cH}{\mathcal{H}}
\newcommand{\cK}{\mathcal{K}}
\newcommand{\cM}{\mathcal{M}}
\newcommand{\cP}{\mathcal{P}}
\newcommand{\uzeta}{\underline{\zeta}}
\newcommand{\one}
{{{\mathchoice \mathrm{ 1\mskip-4mu l} \mathrm{ 1\mskip-4mu l}
\mathrm{ 1\mskip-4.5mu l} \mathrm{ 1\mskip-5mu l}}}}
\renewcommand{\H}{{\mathbb{H}}}
\newcommand{\R}{{\mathbb{R}}}
\newcommand{\Z}{{\mathbb{Z}}}
\newcommand{\Id}{\operatorname{Id}}
\newcommand{\ev}{\mathrm{ev}}
\newcommand{\Hom}{\operatorname{Hom}}
\newcommand{\CZ}{\mathrm{CZ}}
\def\NABLA#1{{\mathop{\nabla\kern-.5ex\lower1ex\hbox{$#1$}}}}
\def\Nabla#1{\nabla\kern-.5ex{}_{#1}}
\def\Tabla#1{\Tilde\nabla\kern-.5ex{}_{#1}}
\renewcommand{\Tilde}{\widetilde}
\newcommand{\p}{{\partial}}
\newcommand{\wh}{\widehat}
\newcommand{\ol}{\overline}
\newcommand{\MM}{\mathcal{M}}
\newcommand{\wt}{\widetilde}
\newcommand*\Ham[3]{\ensuremath{
       \settowidth{\XWidth}{$\lambda$}
       \setlength{\XHeight}{\XWidth}
       \tikz[baseline]{
         \draw (-\XWidth,2\XHeight) -- (0,2\XHeight) -- (2\XWidth,-.5\XHeight) -- (3\XWidth,.5\XHeight) -- (4\XWidth,-.75\XHeight);
         \node at (1.5\XWidth,1.3\XHeight)[scale=0.5] {#1};
         \node at (2.9\XWidth,-.3\XHeight)[scale=0.5] {#2};
         \node at (4.3\XWidth,-.3\XHeight)[scale=0.5] {#3};
       }
     }
}
\newcommand*\ham{\ensuremath{
	\setlength{\XWidth}{.2em}
       \setlength{\XHeight}{\XWidth}
       \tikz[baseline]{
         \draw (-\XWidth,2\XHeight) -- (0,2\XHeight) -- (2\XWidth,-.5\XHeight) -- (3\XWidth,.5\XHeight) -- (4\XWidth,-.75\XHeight);
       }
     }
}
\newcommand*\Hamlambdamu[2]{\ensuremath{
       \settowidth{\XWidth}{$\lambda$}
       \setlength{\XHeight}{\XWidth}
       \tikz[baseline]{
         \draw (-\XWidth,2\XHeight) -- (0,2\XHeight) -- (2\XWidth,-.5\XHeight) -- (3\XWidth,.5\XHeight);
         \node at (1.5\XWidth,1.3\XHeight)[scale=0.5] {#1};
         \node at (2.9\XWidth,-.3\XHeight)[scale=0.5] {#2};
       }
     }
}
\newcommand*\hamlambdamu{\ensuremath{
	\setlength{\XWidth}{.2em}
       \setlength{\XHeight}{\XWidth}
       \tikz[baseline]{
         \draw (-\XWidth,2\XHeight) -- (0,2\XHeight) -- (2\XWidth,-.5\XHeight) -- (3\XWidth,.5\XHeight);
       }
     }
}
\newcommand*\HamLlambda[1]{\ensuremath{
       \settowidth{\XWidth}{$\lambda$}
       \setlength{\XHeight}{\XWidth}
       \tikz[baseline]{
         \draw (-\XWidth,2\XHeight) -- (0,2\XHeight) -- (2\XWidth,-.5\XHeight);
         \node at (1.5\XWidth,1.3\XHeight)[scale=0.5] {#1};
       }
     }
}
\newcommand*\hamLlambda{\ensuremath{
	\setlength{\XWidth}{.2em}
       \setlength{\XHeight}{\XWidth}
       \tikz[baseline]{
         \draw (-\XWidth,2\XHeight) -- (0,2\XHeight) -- (2\XWidth,-.5\XHeight);
       }
     }
}
\newcommand*\Hamvee[2]{\ensuremath{
       \settowidth{\XWidth}{$\lambda$}
       \setlength{\XHeight}{\XWidth}
       \tikz[baseline]{
         \draw (1\XWidth,.75\XHeight) -- (2\XWidth,-.5\XHeight) -- (3\XWidth,.5\XHeight);
         \node at (1.8\XWidth,1\XHeight)[scale=0.5] {#1};
         \node at (2.9\XWidth,-.3\XHeight)[scale=0.5] {#2};
       }
     }
}
\newcommand*\hamvee{\ensuremath{
	\setlength{\XWidth}{.2em}
       \setlength{\XHeight}{\XWidth}
       \tikz[baseline]{
         \draw (1\XWidth,.75\XHeight) -- (2\XWidth,-.5\XHeight) -- (3\XWidth,.5\XHeight);
       }
     }
}
\newcommand*\Hamwedge[2]{\ensuremath{
       \settowidth{\XWidth}{$\lambda$}
       \setlength{\XHeight}{\XWidth}
       \tikz[baseline]{
         \draw (2\XWidth,-.5\XHeight) -- (3\XWidth,.5\XHeight) -- (4\XWidth,-.75\XHeight);
         \node at (2.9\XWidth,-.3\XHeight)[scale=0.5] {#1};
         \node at (4.3\XWidth,-.3\XHeight)[scale=0.5] {#2};
       }
     }
}
\newcommand*\hamwedge{\ensuremath{
	\setlength{\XWidth}{.2em}
       \setlength{\XHeight}{\XWidth}
       \tikz[baseline]{
         \draw (2\XWidth,-.5\XHeight) -- (3\XWidth,.5\XHeight) -- (4\XWidth,-.75\XHeight);
       }
     }
}
\newcommand*\tikzprod[7]{\ensuremath{
	\setlength{\XWidth}{2em}
	\setlength{\XHeight}{\XWidth}
	\tikz[baseline]{
	 \ifthenelse{#2=1}
	 	{\draw[-<] (0\XWidth,0\XHeight) -- (-\XWidth,\XHeight);
		\node [left] at (-\XWidth,\XHeight) {$\cA$}
		}
		{\draw[->] (0\XWidth,0\XHeight) -- (-\XWidth,\XHeight)
		\node [left] at (-\XWidth,\XHeight) {$\cA^\vee$}
		}; 
	 \ifthenelse{#3=1}
	 	{\draw[-<] (0\XWidth,0\XHeight) -- (\XWidth,\XHeight);
		\node [right] at (\XWidth,\XHeight) {$\cA$}
		}
		{\draw[->] (0\XWidth,0\XHeight) -- (\XWidth,\XHeight)
		\node [right] at (\XWidth,\XHeight) {$\cA^\vee$}
		};
	 \ifthenelse{#4=1}
	 	{\draw[-<] (0\XWidth,0\XHeight) -- (0\XWidth,-\XHeight);
		\node [right] at (0\XWidth,-\XHeight) {$\cA$}
		}
		{\draw[->] (0\XWidth,0\XHeight) -- (0\XWidth,-\XHeight);
		\node [right] at (0\XWidth,-\XHeight) {$\cA^\vee$}
		};
	 \draw[-<] (0\XWidth,0\XHeight) -- (\XWidth,\XHeight); \draw[->] (0\XWidth,0\XHeight) -- (0\XWidth,-\XHeight);
	 \node [right] at (0\XWidth,0\XHeight)[scale=0.8] {#1};
	 \node [right] at (-.7\XWidth,.75\XHeight)[scale=.6] {#5};
	 \node [left] at (.7\XWidth,.75\XHeight)[scale=.6] {#6};
	 \node [left] at (0\XWidth,-.6\XHeight)[scale=.6] {#7}
	 }
	 }
	 } 
\newcommand{\umu}{{\underline{\mu}}}
\newcommand{\um}{{\underline{m}}}
\newcommand{\utau}{{\underline{\tau}}}
\newcommand{\usigma}{{\underline{\sigma}}}
\newcommand{\ubeta}{{\underline{\beta}}}
\newcommand{\uc}{{\underline{c}}}
\renewcommand{\uzeta}{{\underline{\zeta}}}
\begin{document}

\title[Multiplicative structures on cones and duality]{Multiplicative structures on cones and duality}
\author{Kai Cieliebak}
\address{Universit\"at Augsburg \newline Universit\"atsstrasse 14, D-86159 Augsburg, Germany}
\email{kai.cieliebak@math.uni-augsburg.de}
\author{Alexandru Oancea}
\address{
Universit\'e de Strasbourg \newline 
Institut de recherche math\'ematique avanc\'ee, IRMA \newline
Strasbourg, France}
\email{oancea@unistra.fr}
\date{\today}


\begin{abstract} 
We initiate the study of multiplicative structures on cones and show that cones of Floer continuation maps fit naturally in this framework. We apply this to give a new description of the multiplicative structure on Rabinowitz Floer homology and cohomology, and to give a new proof of the Poincar\'e duality theorem which relates the two. 
The underlying algebraic structure admits two incarnations, both new, which we study and compare: on the one hand the structure of $A_2^+$-algebra on the space $\cA$ of Floer chains, and on the other hand the structure of $A_2$-algebra involving $\cA$, its dual $\cA^\vee$ and a continuation map from $\cA^\vee$ to $\cA$.
\end{abstract}

\maketitle

\setcounter{tocdepth}{1}
\tableofcontents


\section{Introduction}\label{sec:introduction}


Rabinowitz Floer homology was originally defined as the Floer homology of the Rabinowitz action functional~\cite{Cieliebak-Frauenfelder}. An alternative description as ``V-shaped symplectic homology'' was found in~\cite{Cieliebak-Frauenfelder-Oancea}, relating Rabinowitz Floer homology to symplectic homology and cohomology. In~\cite{CO} and~\cite{Venkatesh} yet another description of Rabinowitz Floer homology was introduced, as the homology of the cone of a Floer continuation map. This proved crucial to understand its functoriality properties~\cite{CO} and to extend the definition to non-exact settings~\cite{Venkatesh}.  

Surprisingly, these three points of view are not at all equally well suited for the study of algebraic structures. Currently, the most versatile version seems to be V-shaped symplectic homology: it carries a product~\cite{CO} which is a straightforward adaptation of the classical pair-of-pants product in Floer theory, and also a secondary coproduct which together with the product defines a graded Frobenius algebra structure~\cite{CHO-PD}. A similar statement holds for V-shaped symplectic cohomology, the two being related by Poincar\'e duality~\cite{CHO-PD}. 
A definition of such product and coproduct structures in the original setting of the Rabinowitz action functional is not available, although~\cite{Abbondandolo-Merry} and~\cite{Frauenfelder-Weber} may independently lead there.

In this paper we define and study product structures on Rabinowitz Floer homology from the perspective of cones. Besides another proof of the Poincar\'e duality theorem~\cite{CHO-PD} in this framework, we obtain new insights into such structures that are not available by other approaches. Notions and results from this paper serve as inputs in several other articles: in~\cite{CHO-MorseFloerGH} to relate the graded Frobenius algebra structure on Rabinowitz Floer homology of a unit cotangent bundle to that on Rabinowitz loop homology, and in~\cite{CHO-PD,CHO-reducedSH} to prove the splitting theorem for Rabinowitz Floer homology in terms of symplectic homology and cohomology. As such, this paper plays a key role in the series of articles~\cite{CHO-index,CHO-PD,CHO-MorseFloerGH,CHOS-Cross,CO-algebra,CHO-reducedSH} on Poincar\'e duality for loop spaces and its applications.


{\bf Results of the paper}.
The cone of a chain map $c:\cM\to\cA$ is $Cone(c)=\cA\oplus \cM[-1]$
with differential $\p_{Cone(c)}(a,x)=(\p_\cA a + c(x), -\p_\cM x)$. 
In~\S\ref{sec:cones} we begin by spelling out the data corresponding to an $A_\infty$-structure on the cone of a chain map $c:\cM\to\cA$ such that $\cA\subset Cone(c)$ is an $A_\infty$-subalgebra. We call such data an {\em $A_\infty$-triple}. Ignoring higher homotopies, this leads to the notion of an {\em $A_2$-triple} $(\cM,c,\cA)$. It consists of a degree $0$ chain map $c:\cM\to \cA$ together with bilinear maps  
$\mu:\cA\otimes\cA\to\cA$, $m_L:\cA\otimes \cM\to \cM$, $m_R:\cM\otimes\cA\to \cM$
of degree $0$, $\tau_L:\cM\otimes \cA\to \cA$, $\tau_R:\cA\otimes \cM\to \cA$, $\sigma:\cM\otimes \cM\to \cM$ of degree $1$ and $\beta:\cM\otimes\cM\to \cA$ of degree $2$ satisfying suitable relations (see Definition~\ref{defi:pre-subalgebra}). 
In our first result we denote an element of $Cone(c)=\cA\oplus \cM[-1]$ by $(a,\bar x)$, where $\bar x\in\cM[-1]$ is the shift of an element $x\in\cM$. 

{\bf Theorem A }(= Propositions~\ref{prop:product_on_cone}, \ref{prop:arity2homotopyinvariance} and~\ref{prop:A2mor-to-cone})
{\em For an $A_2$-triple, the formula 
\begin{eqnarray*} 
\lefteqn{m \big((a,\bar x),(a',\bar x')\big)} \\
 & = \big( \mu(a,a')+(-1)^{|a|}\tau_R(a,x') + \tau_L(x,a') - (-1)^{|\bar x|}\beta(x,x'), \nonumber \\
 &  \qquad \qquad  \qquad \qquad (-1)^{|a|}\overline{m_L(a,x')}+\overline{m_R(x,a')}-(-1)^{|\bar x|}\overline{\sigma(x,x')}\big) \nonumber
\end{eqnarray*}
defines a degree $0$ bilinear product on $Cone(c)$ which is a chain map and thus descends to homology. This product is functorial with respect to homotopy retracts and morphisms of $A_2$-triples.} 

An important part of the discussion are sign conventions related to multilinear degree shifts, tensor products, and algebraic duals, which we defer to Appendices~\ref{sec:shifts} and~\ref{sec:dg_linear_algebra}. 

In~\S\ref{sec:cones_continuation} we apply the results of~\S\ref{sec:cones} to Floer theory. This is based on the observation that Floer continuation maps give rise to $A_\infty$-triples which are canonically defined up to homotopy equivalence. We prove in this paper only the arity 2 version of this statement:

{\bf Theorem B }(= Proposition~\ref{prop:Hamiltonian-pre-subalgebra}).
{\em Floer continuation maps give rise to $A_2$-triples which are canonically defined up to homotopy equivalence.} 

The construction involves moduli spaces of solutions of Floer equations parametrized by simplices of dimension $0$, $1$, and $2$. The construction of the $A_\infty$-triple
involves moduli spaces parametrized by higher dimensional simplices, much in the manner of~\cite{Ekholm-Oancea}, which should correspond to the assocoipahedra of Poirier and Tradler~\cite{Poirier-Tradler}.
See the discussion below and Remark~\ref{rmk:TQFT++}. 

Let now $W$ be a Liouville domain of dimension $2n$ with trivial canonical bundle, $SH_*$ symplectic homology graded by Conley-Zehnder indices, and $S\H_* = SH_{*+n}$ its degree shifted version. The power of the cone perspective derives from its applicability to various families of Floer continuation maps. Applying the construction of a product on the cone to the three families of Floer continuation maps $\{K_\lambda\}$, $\cH_\vee$ and $\cH_\wedge$ shown in Figure~\ref{fig:3families} gives rise to three rings $S\H_*(\{K_\lambda\})$, $S\H_*(\cH_\vee)$ and $S\H_*(\cH_\wedge)$.
\begin{figure}
\begin{center}
\includegraphics[width=\textwidth]{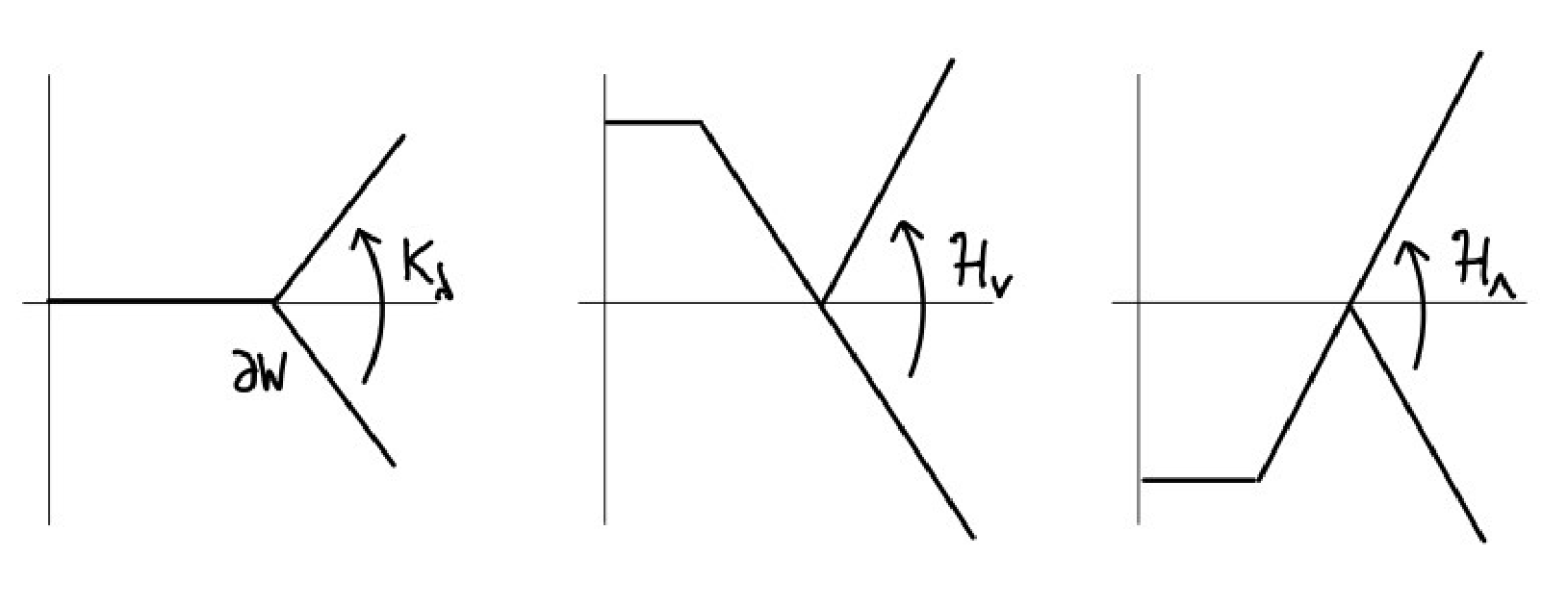}
\vspace{-.5cm}
\caption{The families of Floer continuation maps $\{K_\lambda\}$, $\cH_\vee$ and $\cH_\wedge$.}
\label{fig:3families}
\end{center}
\end{figure}
The first two of these recover Rabinowitz Floer homology $S\H_*(\p W)$ with its product defined in~\cite{CHO-PD}:

{\bf Theorem C. }
{\em We have canonical ring isomorphisms
$$
S\H_*(\{K_\lambda\})\simeq S\H_*(\cH_\vee) \simeq S\H_*(\p W).
$$}

The third one corresponds to Rabinowitz Floer cohomology $SH^{1-n-*}(\p W)$ with its product defined in~\cite{CHO-PD}:  

{\bf Theorem D. }
{\em We have canonical ring isomorphisms
$$
S\H_*(\cH_\wedge)\simeq \wh{SH}_{*+n-1}(\p W) \simeq SH^{1-n-*}(\p W). 
$$}

Here $\wh{SH}_*(\p W) = SH_*(V,\p V)$ for the symplectic cobordism $V=[0,1]\times\p W$ in the terminology of~\cite{CO}. In~\S\ref{sec:alternative_product} and~\S\ref{sec:unit} we give direct definitions of the product and its unit on $\wh{SH}_{*+n-1}(\p W)$, and thus on Rabinowitz Floer cohomology. 
%

{\bf Theorem E (Cone duality). } 
{\em There is a canonical ring isomorphism 
$$
S\H_*(\cH_\vee)\simeq S\H_*(\cH_\wedge). 
$$}

The above ring isomorphisms are summarized in the following diagram:
$$
\xymatrix
@R=20pt
@C=50pt
{
   & & SH^{1-n-*}(\p W) \ar@{=}_{\mbox{\scriptsize Thm.~\ref{thm:coh-product}\quad }}[ld] \ar@{=}^{\mbox{\scriptsize Prop.~\ref{prop:products_coincide}}}[d] \\
   & S\H_*(\p W) \ar@{=}_{\mbox{\scriptsize Thm.~\ref{thm:Rabinowitz-cone}}}[d] \ar@{=}_-{\mbox{\scriptsize Thm.~\ref{thm:PD-cones}}}[r] & \wh{SH}_{*+n-1}(\p W) \ar@{=}^{\mbox{\scriptsize Thm.~\ref{thm:PD-cones2}}}[d] \\
   & S\H_*(\cH_\vee) \ar@{=}_{\mbox{\scriptsize Thm.~\ref{thm:Rabinowitz-cone}}}[d] \ar@{=}_{\mbox{\scriptsize Thm.~\ref{thm:duality-cone}}}[r] & S\H_*(\cH_\wedge) \\
   & S\H_*(\{K_\lambda\}) 
}
$$
Theorems C, D and E together give a new proof of the Poincar\'e duality theorem from~\cite{CHO-PD} (which we restate in this paper as Theorem~\ref{thm:coh-product}). Moreover, the preceding theorems are crucial inputs for the proof of the splitting theorem for Rabinowitz Floer homology (\cite[Proposition~7.6]{CHO-reducedSH} and~\cite[Theorem~1.5]{CHO-PD}). In turn, this splitting theorem serves as a computational tool for the multiplicative structure on Rabinowitz Floer homology, see~\cite[Theorem~7.8]{CHO-reducedSH} and~\cite[\S2]{CHO-PD}. 

The explicit description of the product on $\wh{SH}_{*+n-1}(\p W)$ allows us to relate it to the {\em varying weights secondary product}, originally constructed by Seidel and further explored in~\cite{Ekholm-Oancea,AS-product-structures}: 

{\bf Theorem F }(= Proposition~\ref{prop:cont-weight}).
{\em On negative action symplectic homology $\wh{SH}_{*+n-1}^{<0}(\p W) \simeq SH^{1-n-*}_{>0}(\p W)$ the product in Theorem D agrees with the varying weights secondary product.} 

Rabinowitz Floer homology arises from an $A_2$-triple $(\cM,c,\cA)$ in which $\cM$ is the algebraic dual $\cA^\vee$ of $\cA$. This allows one to encode all the structure on $\cA$ alone. We formalize this in~\S\ref{sec:A2+} in the notion of an {\em $A_2^+$-structure} on $\cA$. It consists of a degree $0$ copairing $c_0\in\cA\otimes \cA$, a degree $1$ secondary copairing $Q_0\in \cA\otimes\cA$, a degree $0$ product $\mu:\cA\otimes \cA\to \cA$, a degree $1$ secondary coproduct $\lambda:\cA\to \cA\otimes \cA$ and a degree $2$ cubic vector $B\in\cA\otimes\cA\otimes \cA$ satisfying suitable relations (see Definition~\ref{defi:A2+structure}). 

{\bf Theorem G }(= Propositions~\ref{prop:TQFT+} and~\ref{prop:A2+toA2-mor}). 
{\em An $A_2^+$-structure on $\cA$ gives rise to an $A_2$-triple $(\cA^\vee,c,\cA)$, and a morphism of $A_2^+$-algebras gives rise to a correspondence of $A_2$-triples.}

As a consequence, a quasi-isomorphism of $A_2^+$-algebras induces a ring isomorphism between the homologies of the cones (Corollary~\ref{cor:A2+toA2-mor}). This result is a crucial input for the proof of the Frobenius algebra isomorphism between Rabinowitz Floer homology of a unit cotangent bundle and Rabinowitz loop homology in~\cite{CHO-MorseFloerGH}. Here the difficulty lies in the fact that Rabinowitz homology is an amalgamation of homology and cohomology, which are covariant resp.~contravariant under the Viterbo isomorphism. It is overcome by encoding the whole structure as an $A_2^+$-structure on homology alone and using its functoriality above. 

Since the notion of an $A_2^+$-structure is self-dual, in addition to a product $\boldsymbol{\mu}$ it also induces a coproduct $\boldsymbol{\lambda}$ on its cone.
This product and coproduct can be read as a coproduct $\boldsymbol{\mu}^\vee$ and product $\boldsymbol{\lambda}^\vee$ on the dual of the cone. The following result can be viewed as an algebraic counterpart of the Poincar\'e Duality Theorem in~\cite{CHO-PD}.

{\bf Theorem H (Duality for $A_2^+$-algebras) }(= Theorem~\ref{thm:PD-A2+}).
{\em Let $\cA$ be an $A_2^+$-algebra which is free and finite dimensional in each degree. Then we have a canonical isomorphism 
$$
  \Bigl(H_*\bigl(Cone(c)\bigr),\boldsymbol{\mu},\boldsymbol{\lambda}\Bigr)\cong
  \Bigl(H_{*-1}\bigl(Cone(c)^\vee\bigr),\boldsymbol{\lambda}^\vee \tau,\tau \boldsymbol{\mu}^\vee\Bigr)
$$
intertwining the products and coproducts, where $\tau$ is the map flipping the tensor factors with signs.}

{\bf Extensions. }
The results in this paper have several straightforward extensions. 
(a) In this paper we treat explicitly only products, but there is an entirely parallel discussion of coproducts. 
(b) Sara Venkatesh defined in~\cite{Venkatesh} Rabinowitz Floer homology in non-exact settings as the cone of such a continuation map at the additive level. The perspective on multiplicative structures that we adopt here is robust and would carry over to such situations.
(c) We formulate all our results for closed strings, in terms of symplectic homology, but they have natural open string counterparts with Lagangian boundary conditions in terms of wrapped Floer homology. See~\cite{CO,CHO-PD} for a discussion of such extensions, in particular regarding the gradings.

{\bf Relation to other work and future directions. }
The notion of an $A_2^+$-algebra encodes only operations with up to $3$ inputs and outputs, which suffices for our applications on the level of homology. Considering Floer moduli spaces with more inputs and outputs leads to a homotopical version of an $A_2^+$-algebra, which we may call an ``$A_\infty^+$-algebra''. Special cases of this structure have appeared in the literature under various names: $V_\infty$-algebras in Tradler--Zeinalian~\cite{Tradler-Zeinalian}, double Poisson algebras in Van den Bergh~\cite{VandenBergh08}, and pre-Calabi-Yau algebras in Iyudu--Kontsevich--Vlassopoulos~\cite{Iyudu-Kontsevich-Vlassopoulos21} and Kontsevich-Takeda-Vlassopoulos~\cite{KTV}. In fact, Leray and Vallette have proved that curved versions of these three structures are equivalent~\cite{Leray-Vallette}. The presence of the copairing $c_0$
in Definition~\ref{defi:A2+structure} means that our structure is also curved. What distinguishes it from the other structures is the presence of the secondary copairing $Q_0$ which measures the failure of $c_0$ to be symmetric. This is dictated to us by the applications in symplectic homology and string topology, where a nontrivial $Q_0$ occurs and is responsible for various subtleties concerning the algebraic structure on reduced homology~\cite{CHO-reducedSH}. 

Poirier and Tradler have shown that the dioperad $V_\infty$ is governed by a family of polytopes called ``assocoipahedra"~\cite{Poirier-Tradler, Poirier-Tradler-Koszulity}. In view of the preceding discussion, we expect that these can be realized as Floer moduli spaces in the absence of $Q_0$. This connection was first proposed by T.~Mazuir after observing the equivalence of Figure 7 in~\cite{CHO-reducedSH} with Figure 9 in~\cite{Poirier-Tradler}. It is an interesting challenge to find polyhedral descriptions of the Floer moduli spaces in the presence of $Q_0$ and study the resulting dioperad. 

It is proved in~\cite{CHO-PD} that Rabinowitz Floer homology of the boundary of a Liouville domain carries the structure of a graded Frobenius algebra. This structure emanates from the cone description of Rabinowitz Floer homology if we enhance the underlying $A_2^+$-algebra to include operations with $4$ inputs and outputs. Based on this, we conjecture that an $A_\infty^+$-algebra gives rise to a homotopy version of a graded Frobenius algebra on its cone. 
The development of this homotopy theory is the subject of ongoing work by Mazuir~\cite{Mazuir-in-progress}.

It has long been observed that the definition of Rabinowitz Floer homology resembles that of Tate homology, see e.g.~\cite{ACF14} for an application of this point of view in equivariant homology. 
We conjecture that our Poincar\'e Duality Theorem~\ref{thm:coh-product} is an instantiation of Tate duality and we intend to explore this in future work. For cotangent bundles of simply connected manifolds, the structure we find on Rabinowitz Floer homology should coincide with the structure found by Rivera and Wang~\cite{Rivera-Wang} on singular Hochschild cohomology of the dga of cochains. 
Our work should also be related to earlier work of Kaufmann such as~\cite{Kaufmann07}, and to Kontsevich graph complexes as studied by Merkulov--Willwacher~\cite{Merkulov-Willwacher}.

In the closed string case, Rabinowitz Floer homology also carries a natural BV operator, see~\cite{CHOS-Cross}. To complete the algebraic picture, one should therefore incorporate BV-structures in the formalisms developed in this paper. The $S^1$-equivariant versions of these structures should give rise to a refinement of the ${\rm IBL}_\infty$ structures from~\cite{Cieliebak-Fukaya-Latschev}. 

The wrapped Floer homology groups have a categorical refinement given by the wrapped Fukaya category~\cite{Abouzaid-Seidel}. Recently Ganatra-Gao-Venkatesh~\cite{GGV} and Bae-Jeong-Kim~\cite{BJK} have defined a categorical refinement of the Lagrangian Rabinowitz Floer homology groups called Rabinowitz Fukaya category. The cone perspective on Rabinowitz Floer homology plays a key role, and it is an interesting question to compare the moduli spaces used in those constructions with the moduli spaces from this paper. In~\cite{BJK} it is proved that, under reasonable assumptions, the Rabinowitz Fukaya category of a Liouville domain of dimension $2n$ is $(n-1)$-Calabi-Yau. This raises the question whether, and how, such a statement implies our Poincar\'e duality theorem. Legout~\cite{Legout} had previously defined a categorical refinement of the Cthulhu homology of Chantraine-Dimitroglou Rizell-Ghiggini-Golovko~\cite{CDRGG20}, in the setting of Lagrangian cobordisms. We conjecture that, when transposed to the context of Liouville domains, the Legout category is equivalent to the Rabinowitz Fukaya category.

{\bf Acknowledgements. } This paper is a split-off from our collaboration with Nancy Hingston on Poincar\'e duality. Without her far reaching vision this could not have come into being. The authors benefited from discussions with M.~Abouzaid, B.~Chantraine, F.~Chapoton, V.~Dotsenko, P.~Ghiggini, R.~Kaufmann, T.~Mazuir, M.~Rivera, and S.~Venkatesh.
The first author thanks Stanford University, Institut Mittag--Leffler, and the Institute for Advanced Study for their hospitality over the duration of this project.
The second author acknowledges the hospitality of Helmut Hofer and IAS in 2017, 2019, and 2022.

This work has benefited from financial support via the grants MICROLOCAL ANR-15-CE40-0007, ENUMGEOM ANR-18-CE40-0009 and COSY ANR-21-CE40-0002. In its late stages, it has also benefited from support provided to the second author by the University of Strasbourg Institute for Advanced Study (USIAS) for a Fellowship, within the French national programme ``Investment for the future" (IdEx-Unistra).

\section{Products on cones}\label{sec:cones}

Throughout this section we use homological conventions and coefficients in a principal ideal domain $R$. Recall that, given a degree $0$ chain map $c:\cM_*\to \cA_*$ between chain complexes whose differentials have degree $-1$,
the cone of a chain map $c:\cM\to\cA$ is the chain complex
$$
Cone(c)=\cA\oplus \cM[-1],\qquad \p_{Cone(c)} = \begin{pmatrix}
  \p_\cA & c \\ 0 & -\p_\cM \end{pmatrix}
$$
with $\cM[-1]_*=\cM_{*-1}$ and $\p_{\cM[-1]}=-\p_\cM$. 
The inclusion $\cA\hookrightarrow Cone(c)$, $a\mapsto (a,0)$ is a chain map. We shall freely refer in this section to the notation of Appendix~\ref{sec:shifts} regarding degree shifts in the multilinear setting.

\begin{definition} \label{defi:Ainfty}
An \emph{$A_\infty$-algebra} $\cA$ is a $\Z$-graded $R$-module endowed with a collection of maps $\mu^d_\cA:\cA[-1]^{\otimes d}\to \cA[-1]$, $d\ge 1$ of degree $-1$ satisfying the relations
\begin{equation} \label{eq:Ainfty}
\sum_{i+j=k+1}\sum_{t=1}^i \mu^i_\cA(\one^{\otimes \, t-1} \otimes \mu^j_\cA\otimes\one^{\otimes \, i-t})=0, \qquad k\ge 1.
\end{equation}
\end{definition}
In the absence of the shift the structure maps $\mu^d_\cA:\cA^{\otimes d}\to \cA$ have degree $d-2$. The functional relation translates by evaluation into the relation 
\begin{align*}
\sum_{i+j=k+1}\sum_{t=1}^i & (-1)^{\|a_1\|+\dots +\|a_{t-1}\|} \\
& \mu^i_\cA(a_1,\dots, a_{t-1}, \mu^j_\cA(a_t,\dots, a_{t+j-1}),a_{t+j},\dots a_i)=0, \quad k\ge 1,
\end{align*}
where $\|a\|=|a|+1$ denotes the shifted degree. The data of an $A_\infty$-algebra on a $\Z$-graded $R$-module $\cA$ is equivalent to the data of a square-zero coderivation on ${\overline T}^c(\cA[-1])=\oplus_{k\ge 1}\cA[-1]^{\otimes k}$, the reduced tensor coalgebra on $\cA[-1]$. This is because each $\mu^d_\cA$ can be uniquely extended as a coderivation $\overline\mu^d_\cA:\overline T^c(\cA[-1])\to \overline T^c(\cA[-1])$ and, setting $\overline \mu_\cA=\sum_d \overline \mu^d_\cA$, the $A_\infty$-relations are equivalent to $\overline \mu_\cA\circ \overline\mu_\cA=0$.
\begin{definition} \label{defi:Ainfty-mor}
Given two $A_\infty$-algebras $\cA$ and $\cA'$, an \emph{$A_\infty$-morphism} $\cF:\cA\to \cA'$ is a collection of maps $\cF^d:\cA[-1]^{\otimes d}\to \cA'[-1]$, $d\ge 1$ of degree $0$ which satisfy the relations 
\begin{equation} \label{eq:Ainfty-mor}
 \sum_{i=1}^d \sum_{j,k,\ell}\cF^i(\one^{\otimes \, j} \otimes \mu_\cA^k\otimes \one^{\otimes \, \ell}) = 
 \sum_r \sum_{i_1,\dots,i_r}\mu_{\cA'}^r(\cF^{i_1}\otimes \dots \otimes \cF^{i_r}), \quad d\ge 1.
\end{equation}  
\end{definition}
The data of the collection $\{\cF^d\}$ is equivalent to the data of a chain map $\overline \cF: ({\overline T}^c(\cA[-1]),\overline \mu_\cA) \to ({\overline T}^c(\cA'[-1]), \overline \mu_{\cA'})$, i.e. $\overline \cF \circ \overline \mu_\cA = \overline \mu_{\cA'}\circ \overline \cF$.

If $\mu^d_\cA=0$ for $d>2$ the $A_\infty$-structure determines a dga structure on $\cA$ by the formulas 
\begin{align}\label{eq:dga-from-Ainfty}
\p_\cA& =-\mu^1_\cA=\mu^1_\cA[1],\cr
a\cdot a' & = (-1)^{|a|}\mu^2_\cA(a,a')=\mu^2_\cA[1,1;1](a,a'). 
\end{align}
See Appendix~\ref{sec:shifts} for an explanation of the notation $\mu^2_\cA[1,1;1]$.
In the case where $\mu^d_\cA=0$ for $d>3$ the same formulas determine an associative-up-to-homotopy differential graded algebra structure on $\cA$. More precisely, with $h(a_1,a_2,a_3)=(-1)^{|a_2|}\mu^3_\cA(a_1,a_2,a_3)$, i.e. $h=\mu^3_\cA[1,1,1;1]$ in the notation of Appendix~\ref{sec:shifts}, we have
\footnote{The conventions for the definition of an $A_\infty$-algebra vary greatly throughout the literature, and the difference stems mainly from the point of view adopted: either square-zero coderivation on $\overline T^c(\cA[-1])$, or homotopy relaxation of dga structure. All conventions are equivalent to one another by suitable sign changes, and the short note by Polishchuk~\cite{Polishchuk} contains a useful comparison. Our convention derives the functional relation~\eqref{eq:Ainfty}, which involves no signs, from the condition that the associated coderivation on the reduced tensor coalgebra on $\cA[-1]$ squares to zero. This is the point of view of Fukaya-Oh-Ohta-Ono~\cite[Definition~3.2.3]{FOOO1} and Seidel~[(2.1)]\cite{Seidel-HHA}. Seidel's convention in~\cite[(1.2)]{Seidel-book} is essentially the same except that the maps $\mu^d_\cA$ are viewed as acting from the right, so one passes from one convention to the other by defining $\tilde \mu^d_\cA(a_d,\dots,a_1)=\mu^d_\cA(a_1,\dots,a_d)$. 

The conventions of Lef\`evre-Hasegawa~\cite[1.2.1.1]{Lefevre-Hasegawa} and Markl~\cite[(2)]{Markl-transfer}, which coincide, are such that the first three structure maps directly define on $\cA$ an associative-up-to-homotopy differential graded algebra structure. They are the same as the original one of Stasheff~\cite{Stasheff-Ainfty2}, cf.~\cite{Polishchuk}. Lef\`evre-Hasegawa writes down 
in~\cite[Lemma~1.2.2.1]{Lefevre-Hasegawa} a transformation through which this point of view is equivalent to ours, and that transformation inspired our treatment of shifts in Appendix~\ref{sec:shifts}. 

The conventions by which one associates to an $A_\infty$-algebra in the sense of Definition~\ref{defi:Ainfty} an associative-up-to-homotopy dga structure vary greatly as well. Ours is different from both the one of Fukaya-Oh-Ohta-Ono~\cite[(3.2.5)]{FOOO1} and the one of Seidel~\cite[(1.3)]{Seidel-book}. We favour it because it fits into a systematic procedure of shifting multilinear maps, cf. Appendix~\ref{sec:shifts}, and also because it realizes $(\cA,\p_\cA)$ as the shift of $(\cA[-1], \mu^1_\cA)$. } 
\begin{equation} \label{eq:associator-from-Ainfty}
(a_1\cdot a_2)\cdot a_3 - a_1\cdot(a_2\cdot a_3) = [\p_\cA,h].
\end{equation}
Similarly, given an $A_\infty$-morphism $\cF:\cA\to \cA'$ the map $f=\cF^1[1;1]:\cA\to \cA'$ acts on elements as $f(a)=\cF^1(a)$ and is a chain map which intertwines the products $m_\cA=\mu^2_{\cA}[1,1;1]$ and $m_{\cA'}=\mu^2_{\cA'}[1,1;1]$ on $\cA$ and $\cA'$ up to a homotopy $k=\cF^2[1,1;1]$, i.e. 
$$
m_{\cA'}(f\otimes f) - f m_\cA = [\p, k]. 
$$
Our discussion evolves around the following refinement of the notion of an $A_\infty$-algebra and $A_\infty$-morphism.

\begin{definition}  \label{defi:Ainfty-subalgebra-extension-triple}
An \emph{$A_\infty$-triple} $(\cM,c,\cA)$ consists of an $A_\infty$-algebra $\cA$, a chain complex $\cM$ and a degree $0$ chain map 
$$
c:\cM\to \cA,
$$ 
together with an $A_\infty$-algebra structure on $Cone(c)$ extending the $A_\infty$-algebra structure on $\cA$ via the inclusion $\cA\hookrightarrow Cone(c)=\cA\oplus \cM[-1]$, and such that $\mu^1_{Cone(c)}=\p_{Cone(c)}[-1]$.
\end{definition}

\begin{definition} \label{defi:Ainfty-subalgebra-extension-triple-morphism}
An \emph{$A_\infty$-morphism} $\cF:(\cM,c,\cA)\to (\cM',c',\cA')$ between $A_\infty$-triples is an $A_\infty$-morphism $Cone(c)\to Cone(c')$ which restricts to an $A_\infty$-morphism $\cA\to \cA'$. 
\end{definition}

\subsection{The point of view of split pairs of $A_\infty$-algebras} \label{sec:Ainfty-pairs}

The data of an $A_\infty$-triple is equivalent to the data of an \emph{$R$-split pair of $A_\infty$-algebras}, or simply \emph{split pair of $A_\infty$-algebras}, meaning an inclusion of $A_\infty$-algebras $\cA\subset \cB$ together with a splitting (as graded $R$-modules) $s:\cB/\cA\to \cB$ of the short exact sequence $0\to \cA\to \cB\to \cB/\cA\to 0$. Given such a pair we write the differential on $\cB$ in upper triangular form with respect to the decomposition $\cB=\cA\oplus (\cB/\cA)$ induced by the splitting, we set $\cM=(\cB/\cA)[1]$ and we define $c:\cM\to \cA$ to be the $(\cB/\cA,\cA)$-component of the differential. Then there is an induced structure of $A_\infty$-triple on $(\cM,c,\cA)$ such that $Cone(c)=\cB$. Conversely, given an $A_\infty$-triple $(\cM,c,\cA)$ the inclusion $\cA\subset Cone(c)$ is obviously split. 

The key property of $A_\infty$-structures is that they obey the Homotopy Transfer Theorem, see~\cite{Markl-transfer} for its most general form and~\cite[\S9.4]{Loday-Vallette} for a contextual discussion. In the context of split pairs of $A_\infty$-algebras $\cA\subset \cB$ the homotopy transfer theorem adapts in an obvious way by considering only maps which are upper triangular with respect to the decomposition $\cB=\cA\oplus \cB/\cA$ provided by the splitting.  

\begin{definition}\label{def:homotopy-retract}
Let $\cA\subset \cB$ and $\cA'\subset \cB'$ be split pairs of chain complexes. We say that the pair $\cA'\subset\cB'$ is a {\em homotopy retract} of the pair $\cA\subset\cB$ if there are maps 
%
$$
\begin{tikzcd}
\arrow[loop left,"H"]\cB \arrow[r, shift left, "P"] & \arrow[l,shift left,"I"] \cB'
\end{tikzcd}
$$
which are upper triangular with respect to the decompositions $\cB=\cA\oplus \cB/\cA$ and $\cB'=\cA'\oplus \cB'/\cA'$ provided by the splittings and such that 
$$
[\p,H]=\one - IP.
$$ 
\end{definition}

The proof of the next theorem is the same as that of~\cite[Theorem~5]{Markl-transfer}. It specifically uses the upper triangular form of the maps $P,I,H$, and also the explicit formulas provided by Markl in~\cite{Markl-transfer}. 

\begin{theorem}[Homotopy transfer for split pairs]Given a homotopy retraction of pairs as above, and given an $A_\infty$-algebra structure on $\cB$ such that $\cA$ is a subalgebra, there is an $A_\infty$-structure on $\cB'$ such that $\cA'$ is a subalgebra, and there are extensions of $P,I$ to $A_\infty$-morphisms of pairs $\tilde P, \tilde I$ and of $H$ to an $A_\infty$-homotopy $\tilde H$ between $\tilde I \tilde P$ and $\one$ which preserves $\cA$.  \qed\end{theorem}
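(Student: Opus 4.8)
The plan is to deduce the theorem from the classical Homotopy Transfer Theorem for $A_\infty$-algebras, for instance in the form of Markl~\cite[Theorem~5]{Markl-transfer}, by observing that the upper-triangular hypotheses make the classical construction close up inside the category of pairs. First I would recall the structure of Markl's proof: given a homotopy retract $(H,P,I)$ of chain complexes with $[\partial,H]=\one-IP$ and an $A_\infty$-structure $\{\mu^d\}$ on $\cB$, the transferred operations on $\cB'$ are given by the standard tree sums $\mu'^d = \sum_T \pm\, P\circ T(\{\mu^k\},H)\circ I^{\otimes d}$, where $T$ ranges over planar rooted trees with $d$ leaves, internal vertices decorated by the $\mu^k$, and internal edges decorated by the homotopy $H$; similarly one builds the $\infty$-quasi-isomorphisms $\tilde P,\tilde I$ (with $\tilde P^1=P$, $\tilde I^1=I$) and the $\infty$-homotopy $\tilde H$ from explicit, if larger, tree sums. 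All the required $A_\infty$- and $\infty$-morphism relations are then formal consequences of $\overline\mu_\cB\circ\overline\mu_\cB=0$ and $[\partial,H]=\one-IP$.

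Next I would make precise what ``upper triangular'' buys us. With respect to the decompositions $\cB=\cA\oplus\cB/\cA$ and $\cB'=\cA'\oplus\cB'/\cA'$, write every operator as a $2\times2$ matrix with rows/columns indexed by the $\cA$- and $\cB/\cA$-summands. By hypothesis $P,I,H$ are upper triangular, i.e. their $(\cB/\cA,\cA)$-matrix entry vanishes — equivalently they send $\cA$ to $\cA'$, $\cA'$ to $\cA$, $\cA$ to $\cA$ respectively; and by hypothesis $\cA\subset\cB$ is a subalgebra, so the structure maps $\mu^d_\cB$ restricted to tensor powers of $\cA$ land in $\cA$, which is again the upper-triangularity statement for the $\mu^d_\cB$. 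The key algebraic observation is that upper-triangular maps of this flavor are closed under composition and tensor product in the precise combinatorial pattern that occurs in the trees $T$: plugging inputs from $\cA'$ into $I^{\otimes d}$ produces inputs in $\cA$, feeding these through a tree whose vertices ($\mu^k_\cB$) and internal edges ($H$) all preserve $\cA$ yields an output in $\cA$, and the final $P$ sends it into $\cA'$. Hence each transferred operation $\mu'^d$ is upper triangular, i.e. $\cA'$ is a subalgebra of $(\cB',\{\mu'^d\})$; the same bookkeeping applied to the trees defining $\tilde P,\tilde I,\tilde H$ shows these are $\infty$-morphisms, resp. an $\infty$-homotopy, ``of pairs'', and that $\tilde H$ preserves $\cA$ because every tree contributing to it begins (at the root) with an $H$, or more carefully because each such tree is built from the $\cA$-preserving pieces $H,\mu^k_\cB$ with an $H$ or $P$ on the outside. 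I would phrase this cleanly by introducing the DG subcategory of upper-triangular maps and noting Markl's formulas never leave it.

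Concretely the steps are: (1) record that upper-triangularity of $\mu^d_\cB$ is exactly the condition ``$\cA$ is a subalgebra'', and that $P,I,H$ upper triangular is the hypothesis; (2) observe upper-triangular maps form a subcategory stable under the operadic composition and $\otimes$ used in the tree formulas; (3) invoke~\cite[Theorem~5]{Markl-transfer} verbatim to get $\{\mu'^d\}$, $\tilde P$, $\tilde I$, $\tilde H$ on the underlying complexes, satisfying all relations; (4) apply (2) to each tree sum to conclude $\cA'$ is a subalgebra and that $\tilde P,\tilde I$ are $\infty$-morphisms of pairs, $\tilde H$ an $\infty$-homotopy of pairs preserving $\cA$. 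Since the $A_\infty$-relations, the $\infty$-morphism relations, and the $\infty$-homotopy relation hold on $\cB'$ as chain complexes by Markl's theorem, they hold a fortiori on the pair, there being nothing further to check.

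The main obstacle — really the only subtlety — is item (2): one must be slightly careful that the trees appearing in $\tilde H$ (the $\infty$-homotopy) still respect the pattern, since $\tilde H$ is not simply ``a sum of $\cA$-preserving operators composed blindly'' but involves a marked edge carrying $H$ together with $IP$-type corrections; I would handle this by checking that each elementary building block ($H$, $\mu^k_\cB$, and the compositions $IP=\one-[\partial,H]$ which is again upper triangular since $\partial$ and $H$ are) is upper triangular, so any tree built from them is, and that the outermost decoration of every tree contributing to $\tilde H$ is $H$ (equivalently the relevant root/corolla piece preserves $\cA$), which gives the ``preserves $\cA$'' clause. Everything else is a transcription of Markl's argument with the adjective ``upper triangular'' inserted, exactly as the statement of the theorem already asserts (``the proof is the same as that of~\cite[Theorem~5]{Markl-transfer}'').
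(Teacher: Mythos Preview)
Your proposal is correct and matches the paper's approach exactly: the paper's entire argument is the single remark that the proof is the same as that of~\cite[Theorem~5]{Markl-transfer}, using specifically the upper triangular form of $P,I,H$ together with Markl's explicit tree formulas. Your elaboration that upper-triangular maps are closed under the operadic compositions and tensor products appearing in those tree sums is precisely the content of that remark.
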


The transferred $A_\infty$-structure and the extensions $\tilde P, \tilde I, \tilde H$ are described very explicitly in terms of summation over trees whose vertices are at least trivalent, see~\cite{Markl-transfer} and also~\cite{KS-HMS-torus}. 

One particularly relevant situation is that in which the maps $P,I$ defining a homotopy retract are actually homotopy inverses, with the homotopy $H':\cB'\to\cB'$ such that $[\p,H']=\one-PI$ being also in upper triangular form. The next result is the analogue of~\cite[Proposition~12]{Markl-transfer}.  

\begin{proposition} \label{prop:transfer-homotopy-equivalence}
In the preceding situation, the homotopy $H'$ can be extended to an $A_\infty$-homotopy $\tilde H'$ between $\tilde P\tilde I$ and $\one$ which preserves $\cA$ if 
$$
[PH-H'P]=0\in H_1(Hom(\cB,\cB')),
$$
and $PH-H'P$ admits a primitive which is upper triangular. \qed
\end{proposition}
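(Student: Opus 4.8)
The proof follows the pattern of~\cite[Proposition~12]{Markl-transfer}, adapted to split pairs in exactly the same way the Homotopy Transfer Theorem above was adapted: one works on the bar side, where an $\infty$-morphism of pairs $\cB\to\cB'$ is a morphism of coaugmented dg coalgebras $\overline T^c(\cB[-1])\to\overline T^c(\cB'[-1])$ respecting the sub- and quotient coalgebras cut out by $\cA\subset\cB$ and $\cA'\subset\cB'$, an $\infty$-homotopy is a coderivation along such a morphism, and ``preserving $\cA$'' means the coderivation restricts to the subcoalgebra on $\cA$. Throughout, the only operations used are composition and tensor products of $R$-linear maps, all of which preserve the property of being upper triangular with respect to the decompositions $\cB=\cA\oplus\cB/\cA$ and $\cB'=\cA'\oplus\cB'/\cA'$.

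\textbf{Key steps.} First I would invoke the Homotopy Transfer Theorem for split pairs on the retraction $(P,I,H)$: this yields the transferred $A_\infty$-structure on $\cB'$ for which $\cA'$ is a subalgebra, extensions $\tilde P,\tilde I$ of $P,I$ to $\infty$-morphisms of pairs, and an $\infty$-homotopy of pairs $\tilde H$ between $\tilde I\tilde P$ and $\one_\cB$ preserving $\cA$; since $P,I$ are homotopy inverses and all data are upper triangular, $\tilde I\tilde P\simeq\one$. Second, since $PI\simeq\one_{\cB'}$ via $H'$, the arity $1$ part of $\tilde P\tilde I$ is a quasi-isomorphism, so $\tilde P\tilde I$ is an $\infty$-isomorphism of pairs and is therefore $\infty$-homotopic to $\one_{\cB'}$; the content of the proposition is to realize such a homotopy \emph{concretely}, of pairs, with prescribed arity $1$ component $H'$. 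Third, I would record the leading discrepancy: because $P$ is a chain map, a direct computation using $[\p,H]=\one-IP$ and $[\p,H']=\one-PI$ gives $[\p,\,PH-H'P]=(P-PIP)-(P-PIP)=0$, so $PH-H'P$ is a degree $1$ cycle in $\Hom(\cB,\cB')$; by hypothesis it has an upper triangular primitive $\Phi$ of degree $2$. Fourth, I would write down $\tilde H'$ as the tree sum of~\cite{Markl-transfer}: arity $1$ component $H'$, higher components assembled from $H'$, $\tilde P$, $\tilde I$, $\tilde H$, the two $A_\infty$-structures, and $\Phi$, the role of $\Phi$ being to absorb the defect by which $\tilde P\tilde H\tilde I$ is only a homotopy between $(\tilde P\tilde I)^2$ and $\tilde P\tilde I$ (one computes $[\p,\tilde P\tilde H\tilde I]=\tilde P\tilde I-(\tilde P\tilde I)^2$, not $\one-\tilde P\tilde I$), the discrepancy being resolved term by term up in arity using that $\tilde P\tilde I$ is an $\infty$-isomorphism with $\Phi$ at the bottom of the induction. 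Since every elementary map involved is upper triangular, so is each component of $\tilde H'$; restricting to the subcoalgebra on $\cA$ recovers an $\infty$-homotopy extending the identity, so $\tilde H'$ preserves $\cA$ (equivalently $\cA'$). This gives the required $\tilde H'$.

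\textbf{Main obstacle.} The one genuinely non-formal point is verifying that Markl's closed-form tree formulas for the $\infty$-homotopy are compatible with the sub/quotient decomposition and that, once the arity $1$ discrepancy $PH-H'P$ is trivialized by an \emph{upper triangular} primitive, the construction closes up with no further obstruction. This is strictly parallel to the compatibility already checked for the Homotopy Transfer Theorem for split pairs, so the new work consists only in tracking that upper triangularity propagates through the tree sum and in the attendant sign bookkeeping; the cohomological hypothesis $[PH-H'P]=0$ together with upper triangularity of the primitive is precisely what is needed for the induction to start and hence to run.
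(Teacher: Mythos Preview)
The paper does not actually give a proof of this proposition: it is stated with a terminal \qed\ and introduced only as ``the analogue of~\cite[Proposition~12]{Markl-transfer}.'' Your proposal correctly identifies this and sketches precisely the intended adaptation of Markl's argument to the split-pair setting---tracking upper triangularity through the tree formulas and using the hypothesis to provide the upper triangular primitive that starts the induction---so your approach is the same as what the paper implicitly has in mind.
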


\subsection{The point of view of $A_\infty$-bimodules} \label{sec:infty-bimodules}

Recall that an $A_\infty$-algebra $\cB$ is a graded $R$-module with operations $\mu^d_\cB:\cB[-1]^{\otimes d}\to \cB[-1]$, $d\ge 1$ of degree $-1$ subject to the relations~\eqref{eq:Ainfty}. 

The data of an $A_\infty$-triple $(\cM,c,\cA)$ can then be explicitly encoded in two collections of operations 
$$
m^{i_1|j_1|\dots|i_k|j_k} : \cA[-1]^{\otimes i_1}\otimes \cM[-2]^{\otimes j_1}\otimes\dots \otimes \cA[-1]^{\otimes i_k}\otimes \cM[-2]^{\otimes j_k}\to \cM[-2], 
$$
and
$$
\tau^{i_1|j_1|\dots|i_k|j_k}: \cA[-1]^{\otimes i_1}\otimes \cM[-2]^{\otimes j_1}\otimes\dots \otimes \cA[-1]^{\otimes i_k}\otimes \cM[-2]^{\otimes j_k}\to \cA[-1]
$$
of degree $-1$, indexed by tuples of non-negative integers $i_1,j_1,\dots,i_k,j_k$ such that the intermediate indices $j_1,i_2,j_2,\dots,i_k$ are nonzero---a convention which we adopt for non-redundancy of the notation--- and such that the following conditions hold: 
\begin{itemize}
\item $m^{d|0}=0$ and $\tau^{d|0}=\mu^d_{\cA}$ for all $d\ge 1$, where $\mu^d_\cA$, $d\ge 1$ are the $A_\infty$-operations for $\cA$; 
\item $\tau^{0|1}=-c[-2;-1]:\cM[-2]\to \cA[-1]$ and $m^{0|1}=\p_{\cM[-2]}$;
\item the operations $\mu^d:Cone(c)[-1]^{\otimes d}\to Cone(c)[-1]$, $d\ge 1$ given by 
\begin{align*}
\mu^d|\cA[-1]^{\otimes i_1}\otimes \cM[-2]^{\otimes j_1}\otimes\dots \otimes & \cA[-1]^{\otimes i_k} \otimes \cM[-2]^{\otimes j_k} \\
& = \tau^{i_1|j_1|\dots|i_k|j_k}\oplus m^{i_1|j_1|\dots|i_k|j_k}
\end{align*}
define an $A_\infty$-algebra structure on $Cone(c)$. 
\end{itemize}

The collections of operations $\{m^{i_1|j_1|\dots|i_k|j_k}\}$ and $\{\tau^{i_1|j_1|\dots|i_k|j_k}\}$ can be further partitioned according to the value of $\mathfrak{j}=j_1+j_2+\dots+j_k$. It is instructive to spell out the meaning of the sub-collections which correspond to the first two values of $\mathfrak{j}$.
\begin{itemize}
\item The case $\mathfrak{j}=0$ is covered by the first of the three conditions above. This expresses the fact that $\cA$ is an $A_\infty$-subalgebra of $Cone(c)$.  
\item The case $\mathfrak{j}=1$ exhibits two sub-collections. The first one is  
$$
m^{i_1|1|i_2}:\cA[-1]^{\otimes i_1}\otimes \cM[-2]\otimes \cA[-1]^{\otimes i_2}\to \cM[-2]
$$
for $i_1,i_2\ge 0$ and exhibits $\cM[-1]$ as an $A_\infty$-bimodule over $\cA$. Here we slightly deviate from the above notational convention by allowing $i_2=0$.
The second sub-collection is 
$$
\tau^{i_1|1|i_2}:\cA[-1]^{\otimes i_1}\otimes \cM[-2]\otimes \cA[-1]^{\otimes i_2}\to \cA[-1]
$$
for $i_1,i_2\ge 0$.
This describes an $\cA$-bimodule $A_\infty$-homomorphism $\cM[-1]\to \cA[-1]$, whose first term is $-c[-1]:\cM[-1]\to \cA[-1]$.  
\end{itemize}
From this perspective, the data of an $A_\infty$-triple can be equivalently rephrased as consisting of an $A_\infty$-algebra $\cA$, of an $A_\infty$-bimodule $\cM[-1]$, and of an $\cA$-bimodule $A_\infty$-homomorphism $\cM[-1]\to \cA[-1]$ whose first term is $-c[-1]:\cM[-1]\to\cA[-1]$, together with collections of maps $\{m^{i_1|j_1|\dots|i_k|j_k}\}$ and $\{\tau^{i_1|j_1|\dots|i_k|j_k}\}$ as above which extend the given data and define an $A_\infty$-structure on $Cone(c)$. 

The discussion of the Homotopy Transfer Theorem from~\S\ref{sec:Ainfty-pairs} has an obvious counterpart in this context, except that the amount of combinatorial data that one needs to track is significantly larger. For further use we spell out the corresponding notion of homotopy retract. 

\begin{definition} \label{defi:homotopy_retract_triple}
A triple $(\cM',c',\cA')$ is a \emph{homotopy retract} of a triple $(\cM,c,\cA)$ if the pair $\cA'\subset Cone(c')$ is a homotopy retract of the pair $\cA\subset Cone(c)$ in the sense of Definition~\ref{def:homotopy-retract}. 
\end{definition}

Let us write in upper triangular form the maps $P,I,H$ involved in the definition as 
$$
P=\left(\begin{array}{cc}p & \cK \\ 0 & \pi\end{array}\right),\qquad I=\left(\begin{array}{cc}i & \cH \\0 & \iota \end{array}\right),\qquad H=\left(\begin{array}{cc}h & a \\0 & -\chi\end{array}\right),
$$
so that we obtain the diagram 
\begin{equation}\label{eq:homotopy-retract-triples}
\begin{tikzcd}[sep=huge]
\arrow[loop left,"\chi"]\cM \arrow[r, shift left, "\pi"] \arrow[d,"c"'] \arrow[dr,"\cK"' near start] & \arrow[l,shift left,"\iota"] \cM' \arrow[d,"c' "] \arrow[dl,"\cH" near start] \\
\arrow[loop left,"h"]\cA \arrow[r, shift left, "p"] & \arrow[l,shift left,"i"] \cA' 
\end{tikzcd}
\end{equation}
It is straightforward to check that the homotopy retract condition is equivalent to the following:
\begin{itemize}
\item ($P$, $I$ are chain maps) 
$$
[\p,\cH]=ic'-c\iota,\qquad [\p,\cK]=pc-c'\pi. 
$$
\item ($H$ is a homotopy between $IP$ and $\one$, i.e. $\one-IP=[\p,H]$) 
$$
\one-\iota\pi = [\p_\cM,\chi], \qquad \one-ip=[\p_\cA,h], 
$$
$$
[\p,a]=c\chi - hc - i\cK - \cH\pi.
$$
\end{itemize}

\subsection{$A_2$-triples} \label{sec:arity2}

Of particular interest for us will be the operations of arity $d=2$. The previous discussion provides degree $-1$ maps
\begin{align}\label{eq:mu-mL-mR}
\tau^{2|0}=  \mu^2_\cA & :\cA[-1]\otimes \cA[-1]\to \cA[-1], \cr 
m^{1|1|0} & :\cA[-1]\otimes  \cM[-2]\to \cM[-2],\cr  
m^{0|1|1} & :\cM[-2]\otimes \cA[-1]\to \cM[-2]
\end{align}
which, after an appropriate shift discussed below, induce an algebra structure on $H(\cA)$ and also an $H(\cA)$-bimodule structure on $H(\cM)$. We also have the degree $-1$ maps  
\begin{align}\label{eq:tauR-tauL}
\tau^{1|1|0} & :\cA[-1]\otimes \cM[-2]\to \cA[-1],\cr 
\tau^{0|1|1} & :\cM[-2]\otimes \cA[-1]\to \cA[-1]
\end{align}
which, also after an appropriate shift, provide homotopies ensuring that $c$ induces in homology a bimodule map $H(\cM)\to H(\cA)$. There are two more degree $-1$ operations of arity $2$, namely 
\begin{equation}\label{eq:sigma}
m^{0|2|0} : \cM[-2]\otimes \cM[-2]\to \cM[-2]
\end{equation}
and 
\begin{equation}\label{eq:beta}
\tau^{0|2|0}:\cM[-2]\otimes \cM[-2]\to \cA[-1]. 
\end{equation}

\begin{definition} \label{defi:pre-subalgebra}
An \emph{$A_2$-triple} $(\cM,c,\cA)$ consists of an associative up to homotopy dg algebra $(\cA,\mu)$, of a chain complex $\cM$ and of a degree $0$ chain map $c:\cM\to \cA$, together with bilinear maps 
$$
m_L:\cA\otimes \cM\to \cM,\qquad m_R:\cM\otimes\cA\to \cM
$$
of degree $0$, 
$$
\tau_L:\cM\otimes \cA\to \cA,\qquad \tau_R:\cA\otimes \cM\to \cA,
$$
$$
\sigma:\cM\otimes \cM\to \cM 
$$
of degree $1$, and 
$$
\beta:\cM\otimes\cM\to \cA
$$
of degree $2$, subject to the following conditions: 
$$
[\p,\mu]=0,\qquad [\p,m_L]=0,\qquad [\p,m_R]=0,
$$
$$
[\p,\tau_L]=\mu(c\otimes\one)-cm_R,\qquad [\p,\tau_R]=\mu(\one\otimes c)-cm_L, 
$$
$$
[\p,\sigma]=m_R(\one\otimes c)-m_L(c\otimes\one),
$$
and 
$$
[\p,\beta] = -c\sigma+\tau_R(c\otimes\one)-\tau_L(\one\otimes c). 
$$
\end{definition}

The brackets are understood with respect to the indicated degrees, e.g. $[\p,m_L]= \p_\cM m_L - m_L(\p_\cA\otimes \one_\cM) - m_L(\one_\cA\otimes \p_\cM)$ and $[\p,\tau_L]=\p_A\tau_L+\tau_L(\p_\cM\otimes \one_\cA)+\tau_L(\one_\cM\otimes \p_\cA)$. 

The definition is motivated by Lemma~\ref{lem:extension-triples-from-Ainfty} and Proposition~\ref{prop:product_on_cone} below. 

\begin{lemma} \label{lem:extension-triples-from-Ainfty}
Given an $A_\infty$-triple $(\cM,c,\cA)$, the arity $2$ operations from (\ref{eq:mu-mL-mR}--\ref{eq:beta}) induce canonically the structure of an $A_2$-triple on $(\cM,c,\cA)$. 
\end{lemma}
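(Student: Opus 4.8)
The plan is to extract, from the full $A_\infty$-triple structure on $Cone(c)$, exactly the arity $\le 2$ part and to verify that the resulting shifted operations satisfy the relations in Definition~\ref{defi:pre-subalgebra}. First I would fix the dictionary between the unshifted operations of an $A_2$-triple and the shifted $A_\infty$-operations listed in \eqref{eq:mu-mL-mR}--\eqref{eq:beta}: the associative-up-to-homotopy product $\mu$ on $\cA$ is $\mu^2_\cA[1,1;1]$, with $\p_\cA=-\mu^1_\cA=\mu^1_\cA[1]$, exactly as in \eqref{eq:dga-from-Ainfty}--\eqref{eq:associator-from-Ainfty}; the bimodule maps $m_L,m_R$ are the appropriate shifts of $m^{1|1|0},m^{0|1|1}$; the maps $\tau_L,\tau_R$ are shifts of $\tau^{1|1|0},\tau^{0|1|1}$; and $\sigma,\beta$ are shifts of $m^{0|2|0},\tau^{0|2|0}$. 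The conventions for these shifts are precisely those of Appendix~\ref{sec:shifts}, so I would phrase everything in that language and let the bookkeeping of signs be handled by the multilinear shift formalism.

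Next I would write out the $A_\infty$-relation \eqref{eq:Ainfty} for $Cone(c)$ in arities $k=1,2,3$ and read off its components along the direct sum decomposition $Cone(c)[-1]^{\otimes d} = \bigoplus (\cA[-1]^{\otimes i_1}\otimes\cM[-2]^{\otimes j_1}\otimes\cdots)$. The arity $1$ relation says $\mu^1_{Cone(c)}\circ\mu^1_{Cone(c)}=0$, which, using $\mu^1_{Cone(c)}=\p_{Cone(c)}[-1]$ from Definition~\ref{defi:Ainfty-subalgebra-extension-triple} and the block form of $\p_{Cone(c)}$ (upper triangular with off-diagonal term $c$ and diagonal terms $\p_\cA,-\p_\cM$), gives $\p_\cA^2=0$, $\p_\cM^2=0$ and the chain-map property $\p_\cA c = c\,\p_\cM$, i.e. the statement that $c$ is a degree $0$ chain map. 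The arity $2$ relations, projected to the six relevant summands ($\mathfrak{j}=0$: $\cA\otimes\cA\to\cA$; $\mathfrak{j}=1$: $\cA\otimes\cM\to\cM$, $\cM\otimes\cA\to\cM$, $\cA\otimes\cM\to\cA$, $\cM\otimes\cA\to\cA$; $\mathfrak{j}=2$: $\cM\otimes\cM\to\cM$ and $\cM\otimes\cM\to\cA$), produce respectively: $[\p,\mu]=0$ together with the $\mathfrak j=0$, $k=3$ relation giving associativity up to homotopy; $[\p,m_L]=0$ and $[\p,m_R]=0$ (these come from the $\mathfrak j=1$ components with no $\tau^{0|1}$ insertion, since $\tau^{0|1}=-c[-2;-1]$ contributes the $c$-terms); the identities $[\p,\tau_L]=\mu(c\otimes\one)-cm_R$ and $[\p,\tau_R]=\mu(\one\otimes c)-cm_L$, where the right-hand sides are exactly the ways of inserting the arity-$1$ operation $\tau^{0|1}=-c$ into $\mu^2$ or composing $c$ after $m^{1|1|0}, m^{0|1|1}$; and finally the two $\mathfrak j=2$ relations $[\p,\sigma]=m_R(\one\otimes c)-m_L(c\otimes\one)$ and $[\p,\beta]=-c\sigma+\tau_R(c\otimes\one)-\tau_L(\one\otimes c)$. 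Matching the terms of these raw $A_\infty$-relations against the stated relations of Definition~\ref{defi:pre-subalgebra} is then a matter of applying the shift conventions of Appendix~\ref{sec:shifts} uniformly and checking that every sign produced by those conventions is the one appearing in the definition.

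The main obstacle I expect is purely the sign bookkeeping: the shift from $\mu^d_\cA$ to the unshifted multilinear maps, from $m^{i|j|k}$ to $m_L,m_R,\sigma$, and from $\tau^{i|j|k}$ to $\tau_L,\tau_R,\beta$ involves Koszul signs that depend on the (shifted) degrees of the arguments and on the positions of the $\cM$-slots, and one must be scrupulous that, e.g., $\tau^{0|1}=-c[-2;-1]$ rather than $+c$, that $m^{0|1}=\p_{\cM[-2]}$ carries the sign convention $\p_{\cM[-1]}=-\p_\cM$, and that the bracket $[\p,-]$ in Definition~\ref{defi:pre-subalgebra} is taken with respect to the \emph{declared} degree of each operation (so the signs in $[\p,\tau_L]=\p_\cA\tau_L+\tau_L(\p_\cM\otimes\one)+\tau_L(\one\otimes\p_\cA)$ differ from those in $[\p,m_L]$). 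I would organize the computation by first treating the case $\mu^d_\cA=0$ for $d>3$ (so $\cA$ is literally a dga up to homotopy, matching the hypothesis of Definition~\ref{defi:pre-subalgebra}), which isolates the arity $\le 3$ relations and makes the tree-summation in \eqref{eq:Ainfty} finite and explicit; the general case follows since the arity $\le 2$ operations and their defining relations only ever see insertions of $\mu^1,\mu^2$, so higher $\mu^d_\cA$ are irrelevant to the $A_2$-triple structure. With the shift conventions pinned down, no further idea is needed — the lemma is the assertion that the arity $\le 2$ part of an $A_\infty$-triple, unshifted according to Appendix~\ref{sec:shifts}, is by definition an $A_2$-triple, and the proof is the corresponding finite sign verification, which I would present compactly by citing Appendix~\ref{sec:shifts} for the shift identities and writing out only the $\mathfrak j=2$ relations in full, the others being entirely analogous to the classical passage \eqref{eq:dga-from-Ainfty}--\eqref{eq:associator-from-Ainfty}.
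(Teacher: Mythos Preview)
Your proposal is correct and takes essentially the same approach as the paper: both extract the arity~$2$ part of the $A_\infty$-relation $\mu^1_\cB\mu^2_\cB+\mu^2_\cB(\mu^1_\cB\otimes\one)+\mu^2_\cB(\one\otimes\mu^1_\cB)=0$ on $Cone(c)$, decompose it along the summands of $(\cA[-1]\oplus\cM[-2])^{\otimes 2}$, and then unshift via Appendix~\ref{sec:shifts}. The one organizational difference is that the paper performs the unshifting in two stages---first a uniform $[1,1;1]$-shift to intermediate ``underlined'' maps $\umu,\um_L,\dots,\ubeta$ living on $\cA\otimes\cM[-1]$ etc., then a second shift on the $\cM[-1]$-slots---which localizes the sign bookkeeping cleanly; you propose to do it in one pass, which is equivalent but makes the Koszul signs slightly harder to track.
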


\begin{proof} We define $\mu$, $m_L$ etc. by suitable shifts of the arity $2$ operations of the $A_\infty$-triple. As explained in Appendix~\ref{sec:shifts}, the order of successive shifts matters. Since our goal is to define an algebra structure on the cone, we first shift uniformly all the arity $2$ operations by $[1,1;1]$ so that all inputs and outputs are tensor products of $\cA$ and $\cM[-1]$; we then further shift by $+1$ on the $\cM[-1]$-factor in order to obtain arity $2$ operations whose inputs and outputs are tensor products of $\cA$ and $\cM$. This means that we define (with $\mu^2_\cA=\tau^{2|0}$) 
$$
\mu=\mu^2_\cA[1,1;1],
$$
$$
m_L=m^{1|1|0}[1,1;1][0,1;1],\qquad m_R=m^{0|1|1}[1,1;1][1,0;1],
$$
$$
\tau_R=\tau^{1|1|0}[1,1;1][0,1;0],\qquad \tau_L=\tau^{0|1|1}[1,1;1][1,0;0],
$$ 
$$
\sigma=m^{0|2|0}[1,1;1][1,1;1],\qquad \beta=\tau^{0|2|0}[1,1;1][1,1;0].
$$

For further use, it is also convenient to define the maps (of degree $0$)
$$
\umu=\mu=\mu^2_\cA[1,1;1],
$$
$$
\um_L=m^{1|1|0}[1,1;1],\qquad \um_R=m^{0|1|1}[1,1;1],
$$
$$
\utau_R=\tau^{1|1|0}[1,1;1],\qquad \utau_L=\tau^{0|1|1}[1,1;1],
$$ 
$$
\usigma=m^{0|2|0}[1,1;1],\qquad \ubeta=\tau^{0|2|0}[1,1;1].
$$

We claim that the maps $\mu,m_L,m_R,\tau_L,\tau_R,\sigma,\beta$ define the structure of an $A_2$-triple on $(\cM,c,\cA)$. Denoting $\cB=\cA\oplus \cM[-1]$, the proof consists of a direct verification by decomposing the $A_\infty$-relation 
\begin{equation} \label{eq:mu12}
\mu^1_\cB\mu^2_\cB+\mu^2_\cB(\mu^1_\cB\otimes \one) + \mu^2_\cB(\one\otimes \mu^1_\cB)=0
\end{equation}
into components. We use that $\mu^2_\cB=(\tau^{2|0}+\tau^{1|1|0}+\tau^{0|1|1}+\tau^{0|2|0},m^{1|1|0}+m^{0|1|1}+m^{0|2|0})$ and 
$$
\mu^1_\cB=\p_{Cone(c)}[-1]=\left(\begin{array}{cc}\p_{\cA[-1]}& -c[-2;-1]\\0 & \p_{\cM[-2]}\end{array}\right).
$$  
While the verification is straightforward, the signs are subtle and for this reason we give the proof in detail. 

\smallskip 

\noindent {\it Step~1. We prove that the maps $\mu=\umu,\um_L,\um_R,\utau_L,\utau_R,\usigma,\ubeta$ satisfy the relations 
$$
[\p,\mu]=0,\qquad [\p,\um_L]=0,\qquad [\p,\um_R]=0,
$$
$$
[\p,\utau_L]=\mu(c[-1;0]\otimes \one)-c[-1;0]\um_R,
$$
$$
[\p,\utau_R]=\mu(\one\otimes c[-1;0])-c[-1;0]\um_L,
$$
$$
[\p,\usigma]=\um_R(\one\otimes c[-1;0])+\um_L(c[-1;0]\otimes\one),
$$
$$
[\p,\ubeta]=-c[-1;0]\usigma +\utau_R(c[-1;0]\otimes\one)+\utau_L(\one\otimes c[-1;0]). 
$$
}

\smallskip 

The relation $[\p,\mu]=0$ follows directly from the discussion in Appendix~\ref{sec:shifts}. Since $\mu^2_\cA$ is a chain map, so is its shift. 

The relations for $\um_L$ and $\utau_R$ are obtained by restricting equation~\eqref{eq:mu12} to $\cA[-1]\otimes \cM[-2]$, where it becomes
\begin{align*}
& \mu^1_\cB(\tau^{1|1|0},m^{1|1|0}) + (\tau^{1|1|0},m^{1|1|0})(\mu^1_\cA\otimes \one) \cr 
& \qquad \qquad \qquad \qquad + (\tau^{1|1|0}+\tau^{2|0},m^{1|1|0})(\one\otimes \mu^1_\cB)=0 \cr
\Leftrightarrow \quad & ([\p,\tau^{1|1|0}]-c[-2;-1]m^{1|1|0}-\tau^{2|0}(\one\otimes c[-2;-1]),[\p,m^{1|1|0}])=0 \cr 
\Leftrightarrow \quad & [\p,\tau^{1|1|0}]-c[-2;-1]m^{1|1|0}-\tau^{2|0}(\one\otimes c[-2;-1])=0, \cr 
& [\p,m^{1|1|0}]=0 \cr
\Leftrightarrow \quad & ([\p,\tau^{1|1|0}]-c[-2;-1]m^{1|1|0}-\tau^{2|0}(\one\otimes c[-2;-1]))[1,1;1]=0, \cr
& [\p,m^{1|1|0}][1,1;1]=0 \cr
\Leftrightarrow \quad & -[\p,\utau_R]-c[-1;0]\um_L+\mu(\one\otimes c[-1;0])=0, \cr 
& -[\p,\um_L]=0.
\end{align*}

The relations for $\um_R$ and $\utau_L$ are obtained by restricting equation~\eqref{eq:mu12} to $\cM[-2]\otimes \cA[-1]$, where it becomes 
\begin{align*}
& \mu^1_\cB(\tau^{0|1|1},m^{0|1|1}) + (\tau^{0|1|1}+\tau^{2|0},m^{0|1|1})(\mu^1_\cB\otimes\one) \cr 
& \qquad \qquad \qquad \qquad + (\tau^{0|1|1},m^{0|1|1})(\one\otimes\mu^1_\cB)=0 \cr 
\Leftrightarrow \quad & ([\p,\tau^{0|1|1}]-c[-2;-1]m^{0|1|1}-\tau^{2|0}(c[-2;-1]\otimes \one),[\p,m^{0|1|1}])=0 \cr
\Leftrightarrow \quad &  [\p,\tau^{0|1|1}]-c[-2;-1]m^{0|1|1}-\tau^{2|0}(c[-2;-1]\otimes \one)=0, \cr 
& [\p,m^{0|1|1}] = 0 \cr
\Leftrightarrow \quad & ([\p,\tau^{0|1|1}]-c[-2;-1]m^{0|1|1}-\tau^{2|0}(c[-2;-1]\otimes \one))[1,1;1]=0, \cr 
& [\p,m^{0|1|1}][1,1;1]=0 \cr
\Leftrightarrow \quad & -[\p,\utau_L]-c[-1;0]\um_R+\mu(c[-1;0]\otimes\one)=0, \cr
& -[\p,\um_R]=0.
\end{align*}

The relations for $\usigma$ and $\ubeta$ are obtained by restricting equation~\eqref{eq:mu12} to $\cM[-2]\otimes\cM[-2]$, where it becomes 
\begin{align*}
& \mu^1_\cB(\tau^{0|2|0},m^{0|2|0})  +(\tau^{0|2|0}+\tau^{1|1|0},m^{0|2|0}+m^{1|1|0})(\mu^1_\cB\otimes \one) \cr
& \qquad \qquad \qquad \quad + (\tau^{0|2|0}+\tau^{0|1|1},m^{0|2|0}+m^{0|1|1})(\one\otimes\mu^1_\cB)=0 \cr
\Leftrightarrow \quad & [\p,\tau^{0|2|0}]-c[-2;-1]m^{0|2|0}- \tau^{1|1|0}(c[-2;-1]\otimes\one) \cr 
& \qquad \qquad \qquad \quad \qquad \qquad \qquad \quad - \tau^{0|1|1}(\one\otimes c[-2;-1]) = 0, \cr
& [\p,m^{0|2|0}]-m^{1|1|0}(c[-2;-1]\otimes \one) - m^{0|1|1}(\one\otimes c[-2;-1]) = 0 \cr
\Leftrightarrow \quad & ([\p,\tau^{0|2|0}]-c[-2;-1]m^{0|2|0}- \tau^{1|1|0}(c[-2;-1]\otimes\one) \cr 
& \qquad \qquad \qquad \quad \qquad \quad - \tau^{0|1|1}(\one\otimes c[-2;-1]))[1,1;1]=0 \cr
& ([\p,m^{0|2|0}]-m^{1|1|0}(c[-2;-1]\otimes \one) \cr 
& \qquad \qquad \qquad \quad \qquad \quad - m^{0|1|1}(\one\otimes c[-2;-1]))[1,1;1]=0\cr
\Leftrightarrow \quad & -[\p,\ubeta]-c[-1;0]\usigma +\utau_R(c[-1;0]\otimes \one) + \utau_L(\one\otimes c[-1;0])=0,\cr
& -[\p,\usigma]+\um_L(c[-1;0]\otimes\one)+\um_R(\one\otimes c[-1;0])=0.
\end{align*}

\smallskip 

\noindent {\it Step~2. We prove the relations for $\mu,m_L,m_R,\tau_L,\tau_R,\sigma,\beta$.} 

\smallskip 

Recall that we have 
$$
m_L=\um_L[0,1;1],\qquad m_R=\um_R[1,0;1],
$$
$$
\tau_R=\utau_R[0,1;0],\qquad \tau_L=\utau_L[1,0;0],
$$
$$
\sigma=\usigma[1,1;1],\qquad \beta=\ubeta[1,1;0].
$$

We already proved $[\p,\mu]=0$, i.e. $\mu$ is a chain map. That $m_L,m_R$ are also chain maps follows from the fact that they are shifts of the chain maps $\um_L,\um_R$. 

To derive the equation for $[\p,\tau_L]$ we use the equation for $[\p,\utau_L]$:  
\begin{align*}
& [\p,\utau_L]-\mu(c[-1;0]\otimes\one)+c[-1;0]\um_R=0 \cr
\Leftrightarrow \quad & ([\p,\utau_L]-\mu(c[-1;0]\otimes\one)+c[-1;0]\um_R)[1,0;0]=0 \cr
\Leftrightarrow \quad & [\p,\tau_L] - \mu(c\otimes \one) + cm_R=0. 
\end{align*}
In the last equivalence we use $c[-1;0]\omega_1=c$, where $\omega_1:\cM\to\cM[-1]$ is the shift. 

To derive the equation for $[\p,\tau_R]$ we use the equation for $[\p,\utau_R]$:
\begin{align*}
& [\p,\utau_R] - \mu(\one\otimes c[-1;0]) + c[-1;0]\um_L=0 \cr
\Leftrightarrow \quad & ([\p,\utau_R] - \mu(\one\otimes c[-1;0]) + c[-1;0]\um_L)[0,1;0]=0 \cr
\Leftrightarrow \quad & [\p,\tau_R]- \mu(\one\otimes c) + c m_L=0.
\end{align*}

To derive the equation for $[\p,\sigma]$ we use the equation for $[\p,\usigma]$: 
\begin{align*}
& [\p,\usigma] - \um_R(\one\otimes c[-1;0]) - \um_L(c[-1;0]\otimes\one) = 0 \cr
\Leftrightarrow \quad & ([\p,\usigma] - \um_R(\one\otimes c[-1;0]) - \um_L(c[-1;0]\otimes\one))[1,1;1]=0 \cr
\Leftrightarrow \quad & -[\p,\sigma] + m_R(\one\otimes c) -m_L(c\otimes\one)=0.
\end{align*}

Finally, to derive the equation for $[\p,\beta]$ we use the equation for $[\p,\ubeta]$: 
\begin{align*} 
& [\p,\ubeta]+c[-1;0]\usigma -\utau_R(c[-1;0]\otimes\one)-\utau_L(\one\otimes c[-1;0]) = 0 \cr
\Leftrightarrow \quad & ([\p,\ubeta]+c[-1;0]\usigma -\utau_R(c[-1;0]\otimes\one) \cr 
& \qquad \qquad \qquad \qquad \qquad \qquad -\utau_L(\one\otimes c[-1;0]))[1,1;0]=0 \cr
\Leftrightarrow \quad & [\p,\beta] + c\sigma -\tau_R(c\otimes \one) + \tau_L(\one\otimes c)=0.
\end{align*}
\end{proof}

In the next statement we denote an element of $Cone(c)=\cA\oplus \cM[-1]$ by $(a,\bar x)$, meaning that $\bar x\in\cM[-1]$ is the shift of an element $x\in\cM$. In particular $|\bar x|=|x|+1$.

\begin{proposition} \label{prop:product_on_cone}
Let $(\cM,c,\cA)$ be an $A_2$-triple with operations denoted $\mu,m_L,m_R,\tau_L,\tau_R,\sigma,\beta$ as above. The formula 
\begin{eqnarray} \label{eq:product_on_cone}
\lefteqn{m \big((a,\bar x),(a',\bar x')\big)} \\
 & = \big( \mu(a,a')+(-1)^{|a|}\tau_R(a,x') + \tau_L(x,a') - (-1)^{|\bar x|}\beta(x,x'), \nonumber \\
 &  \qquad \qquad  \qquad \qquad (-1)^{|a|}\overline{m_L(a,x')}+\overline{m_R(x,a')}-(-1)^{|\bar x|}\overline{\sigma(x,x')}\big) \nonumber
\end{eqnarray}
defines a degree $0$ bilinear product 
$$
m:Cone(c)\otimes Cone(c)\to Cone(c)
$$ 
which is a chain map. This product coincides with the one from~\eqref{eq:dga-from-Ainfty} if the $A_2$-triple is induced from an $A_\infty$-triple as in Lemma~\ref{lem:extension-triples-from-Ainfty}.
\end{proposition}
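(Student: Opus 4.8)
The plan is to verify directly that the formula~\eqref{eq:product_on_cone} for $m$ gives a chain map, by expanding $\partial_{Cone(c)} \circ m - m \circ (\partial_{Cone(c)} \otimes \one + \one \otimes \partial_{Cone(c)})$ applied to a general element $\big((a,\bar x),(a',\bar x')\big)$, splitting it into its $\cA$-component and its $\cM[-1]$-component, and then grouping the resulting terms so that each group is exactly one of the seven structural identities in Definition~\ref{defi:pre-subalgebra} (the ones for $[\partial,\mu]$, $[\partial,m_L]$, $[\partial,m_R]$, $[\partial,\tau_L]$, $[\partial,\tau_R]$, $[\partial,\sigma]$, $[\partial,\beta]$), evaluated on the appropriate sub-tensors. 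Concretely, I would first recall that $\partial_{Cone(c)}(a,\bar x) = (\partial_\cA a + c(x)\,\text{[with sign from the shift]}, -\overline{\partial_\cM x})$, being careful that $\bar x$ lives in $\cM[-1]$ so $\partial_{\cM[-1]} = -\partial_\cM$ and the map $c$ in the cone differential is really $c[-1;0]$ up to sign; then I would compute $m\big(\partial(a,\bar x),(a',\bar x')\big)$ and $m\big((a,\bar x),\partial(a',\bar x')\big)$ and $\partial\big(m((a,\bar x),(a',\bar x'))\big)$ term by term.

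The $\mathcal{A}$-component of the chain-map obstruction should organize itself into four packets: one packet involving only $\mu$ and $\partial_\cA$, which vanishes because $[\partial,\mu]=0$; one packet involving $\tau_R$, $\mu(\one\otimes c)$ and $c m_L$, which vanishes by the identity $[\partial,\tau_R]=\mu(\one\otimes c)-cm_L$; a symmetric packet for $\tau_L$ using $[\partial,\tau_L]=\mu(c\otimes\one)-cm_R$; and a packet involving $\beta$, $c\sigma$, $\tau_R(c\otimes\one)$, $\tau_L(\one\otimes c)$, which vanishes by $[\partial,\beta]=-c\sigma+\tau_R(c\otimes\one)-\tau_L(\one\otimes c)$. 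The $\mathcal{M}[-1]$-component should likewise split into three packets: one using $[\partial,m_L]=0$, one using $[\partial,m_R]=0$, and one using $[\partial,\sigma]=m_R(\one\otimes c)-m_L(c\otimes\one)$. The bookkeeping to make this work is exactly the Koszul-sign bookkeeping of Appendix~\ref{sec:shifts}: the signs $(-1)^{|a|}$, $(-1)^{|\bar x|}$ in~\eqref{eq:product_on_cone} are precisely the shift signs $[1,1;1][0,1;1]$ etc.\ recorded in the proof of Lemma~\ref{lem:extension-triples-from-Ainfty}, so the cleanest route is to observe that $m$ is by construction the operation $\mu^2_{Cone(c)}[1,1;1]$ built from the $A_\infty$-operations $\tau^{2|0}, \tau^{1|1|0}, \tau^{0|1|1}, \tau^{0|2|0}, m^{1|1|0}, m^{0|1|1}, m^{0|2|0}$, and then the chain-map property for $m$ is literally the $[1,1;1]$-shift of the restriction of the $A_\infty$-relation~\eqref{eq:mu12} to $Cone(c)^{\otimes 2}$. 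Under this reading the verification is already contained in Step~1 of the proof of Lemma~\ref{lem:extension-triples-from-Ainfty}, and the last sentence of the proposition is then immediate, since $m$ is by definition $\mu^2_{Cone(c)}[1,1;1]$, which is exactly formula~\eqref{eq:dga-from-Ainfty} applied to the $A_\infty$-algebra $Cone(c)$.

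For the self-contained version (not assuming the $A_2$-triple comes from an $A_\infty$-triple) I would carry out the term-by-term expansion honestly. The main obstacle is purely the sign bookkeeping: one must track the Koszul sign picked up when $\partial_\cM$ or $\partial_\cA$ passes an $\bar x$ of shifted degree $|\bar x| = |x|+1$, and reconcile the sign $-\overline{\partial_\cM x}$ coming from $\partial_{\cM[-1]}$ with the signs $(-1)^{|a|}$ and $(-1)^{|\bar x|}$ already present in~\eqref{eq:product_on_cone}. I expect that for each of the seven packets the net sign works out so that the packet equals $\pm$ the corresponding defining identity of Definition~\ref{defi:pre-subalgebra} applied to the sub-tensor, hence vanishes; the cross-terms (e.g.\ the $c(x)$ coming from $\partial(a,\bar x)$ feeding into $\mu$ or into $m_L$) are exactly what is needed to complete the $\tau$- and $\beta$-packets. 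I would present the computation grouped by output factor ($\cA$ first, then $\cM[-1]$) and within each factor grouped by which structural map the terms involve, so that the reduction to Definition~\ref{defi:pre-subalgebra} is visible line by line; degree $0$ and bilinearity of $m$ are clear from the formula. Finally, the compatibility statement with~\eqref{eq:dga-from-Ainfty} follows by comparing the shift formulas for $\mu, m_L, \dots, \beta$ in Lemma~\ref{lem:extension-triples-from-Ainfty} with the shift $[1,1;1]$ defining the product on $Cone(c)$ from its $A_\infty$-structure, term by term matching the four summands of the $\cA$-component and the three summands of the $\cM[-1]$-component of~\eqref{eq:product_on_cone}.
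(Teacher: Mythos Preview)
Your proposal is correct and essentially follows the paper's approach. The paper is terser: for the chain-map property it simply asserts ``that the formula defines a chain map can be checked directly'' without writing out the packet-by-packet decomposition you outline (your organization into the seven structural identities is exactly how one would carry out that direct check), and for the coincidence with~\eqref{eq:dga-from-Ainfty} it proceeds, as you do, by computing the shift relations between $\um_L,\um_R,\utau_L,\utau_R,\usigma,\ubeta$ and $m_L,m_R,\tau_L,\tau_R,\sigma,\beta$ to recover the signs in~\eqref{eq:product_on_cone}. One small caution: your ``cleanest route'' via the $A_\infty$-relation~\eqref{eq:mu12} only proves the chain-map property when the $A_2$-triple is induced from an $A_\infty$-triple, so for the general statement you do need the honest term-by-term expansion you describe; you already flag this, but make sure the write-up treats the direct computation as the actual proof and the $A_\infty$ route only as motivation.
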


\begin{proof}
That the formula defines a chain map can be checked directly. 

Assume now that the $A_2$-triple is induced from an $A_\infty$-triple. The product induced on $Cone(c)$ by the $A_\infty$-structure is 
$$
m =(\mu+\utau_R+\utau_L+\ubeta,\um_L+\um_R+\usigma).
$$
We therefore merely need to express the maps $\utau_R,\utau_L,\ubeta, \um_L,\um_R,\usigma$ in terms of $\tau_R,\tau_L,\beta,m_L,m_R,\sigma$. 

We work out in full detail the case of $\usigma$. We claim that 
\begin{equation} \label{eq:shift-sigma}
\usigma=-\sigma[-1,-1;-1],
\end{equation}
$$
\usigma(\bar x,\bar x')=-(-1)^{|\bar x|}\overline{\sigma(x,x')}.
$$
Indeed, we start with $\sigma=\usigma[1,1;1]$ and then compute $\sigma[-1,-1;-1]=\usigma[1,1;1][-1,-1;-1]=-\usigma$. The explicit formula in terms of elements is a consequence of the definition of the shift by $[-1,-1;-1]$. 

Similarly, we compute:
\begin{itemize}
\item 
\begin{equation} \label{eq:shift-mL}
\um_L=m_L[0,-1;-1], 
\end{equation}
$$
\um_L(a,\bar x')=(-1)^{|a|}\overline{m_L(a,x')}
$$
since $m_L[0,-1;-1]=\um_L[0,1;1][0,-1;-1]=\um_L$. 
\item 
\begin{equation}\label{eq:shift-mR}
\um_R=m_R[-1,0;-1],
\end{equation}
$$
\um_R(\bar x,a')=\overline{m_R(x,a')}
$$
since $m_R[-1,0;-1]=\um_R[1,0;1][-1,0;-1]=\um_R$.
\item 
\begin{equation}\label{eq:shift-tauL}
\utau_L=\tau_L[-1,0;0],
\end{equation} 
$$
\utau_L(\bar x,a')=\tau_L(x,a')
$$
since $\tau_L[-1,0;0]=\utau_L[1,0;0][-1,0;0]=\utau_L$. 
\item 
\begin{equation}\label{eq:shift-tauR}
\utau_R=\tau_R[0,-1;0],
\end{equation}
$$
\utau_R(a,\bar x')=(-1)^{|a|}\tau_R(a,x')
$$
since $\tau_R[0,-1;0]=\utau_R[0,1;0][0,-1;0]=\utau_R$.
\item 
$$
\ubeta=-\beta[-1,-1;0],
$$
$$
\ubeta(\bar x,\bar x')=-(-1)^{|\bar x|}\beta(x,x')
$$
since $\beta[-1,-1;0]=\ubeta[1,1;0][-1,-1;0]=-\ubeta$. 
\end{itemize}

\end{proof}

\begin{remark} We chose to infer the equations for the maps $m_L$, $m_R$, $\tau_R$, $\tau_L$, $\sigma$, $\beta$ in Definition~\ref{defi:pre-subalgebra} from the $A_\infty$-equations. As such, they assemble canonically into the product structure on the cone induced from the $A_\infty$-structure. But even so, there is a small amount of choice involved: we could have defined $\sigma$ as $m^{0|2|0}[2,2;2]$, which would have changed its sign. We settled for our convention for the reasons mentioned in the preamble of the proof of Lemma~\ref{lem:extension-triples-from-Ainfty}. 

The existence of this potential change of sign can also be understood from the following perspective. Assume one wishes to determine equations for such a collection of maps so that they assemble into \emph{some} product structure on the cone. A close inspection of the formula defining the product $m$ shows that, once we require that it restricts to the product $\mu$ on $\cA$, the equations for $m_L$, $m_R$, $\tau_R$, $\tau_L$, $\sigma$, $\beta$ are uniquely determined up to obvious multiplications by $\pm 1$ by the requirement that the equation expressing the compatibility with the differential, i.e. $[\p_{Cone(c)},m]=0$, translates into \emph{functional} equations for the various maps involved. Our procedure to define the maps from the $A_\infty$-structure can be seen as one convenient way to fix the signs. 
\end{remark}

We call the product $m$ the \emph{canonical product on the cone defined by the $A_2$-structure}. Associativity up to homotopy for the product $m$ is not a priori guaranteed. For this, one would need to enhance the data of an $A_2$-triple precisely with the operations of arity $3$ involved in the definition of an $A_\infty$-triple, see~\eqref{eq:associator-from-Ainfty}. 
  
The $A_2$-triples used in this paper will always be arity $2$ restrictions of genuine $A_\infty$-triples canonically defined up to homotopy. While we will not construct nor make use of the full $A_\infty$-structure, it is important to acknowledge its existence. In particular, the homotopy transfer and homotopy invariance statements for $A_2$-triples discussed below are avatars of the homotopy transfer and homotopy invariance statements for $A_\infty$-structures. 
    
\begin{proposition} \label{prop:arity2homotopyinvariance}
Let $(\cM',c',\cA')$ be a triple which is a homotopy retract of the triple $(\cM,c,\cA)$ as in Definition~\ref{defi:homotopy_retract_triple}. 
Given the structure of an $A_2$-triple on $(\cM,c,\cA)$, there is an induced structure of an $A_2$-triple on $(\cM',c',\cA')$ such that the maps 
$$
\xymatrix{
Cone(c) \ar@<.5ex>[rr]^P && Cone(c') \ar@<.5ex>[ll]^I
}
$$
involved in the homotopy retract are compatible with the products on $Cone(c)$ and $Cone(c')$. \qed
\end{proposition}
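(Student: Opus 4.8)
The plan is to recognize Proposition~\ref{prop:arity2homotopyinvariance} as the arity-$2$ shadow of the Homotopy transfer theorem for split pairs of $\S\ref{sec:Ainfty-pairs}$, under the dictionary ``$A_\infty$-triple $=$ $R$-split pair of $A_\infty$-algebras'' and ``$A_2$-triple $=$ its arity-$\le 2$ truncation'' (Lemma~\ref{lem:extension-triples-from-Ainfty}). One could argue by lifting the given $A_2$-triple on $(\cM,c,\cA)$ to an $A_\infty$-triple — available by construction for all the $A_2$-triples occurring in this paper — applying the Homotopy transfer theorem for split pairs to the homotopy retract of pairs $\cA'\subset Cone(c')$ of $\cA\subset Cone(c)$ underlying~\eqref{eq:homotopy-retract-triples}, and then restricting the transferred $A_\infty$-triple on $(\cM',c',\cA')$ together with the $\infty$-morphisms $\tilde P,\tilde I$ to arity $\le 2$. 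I prefer to spell out the equivalent self-contained route, which only manipulates arity-$2$ data.

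Let $m\colon Cone(c)^{\otimes 2}\to Cone(c)$ be the canonical product attached to the $A_2$-triple $(\cM,c,\cA)$ by Proposition~\ref{prop:product_on_cone}, and set
\[
m'\ :=\ P\circ m\circ (I\otimes I)\colon\ Cone(c')^{\otimes 2}\longrightarrow Cone(c').
\]
Since $P\colon Cone(c)\to Cone(c')$ and $I\colon Cone(c')\to Cone(c)$ are chain maps for the respective cone differentials, $[\p_{Cone(c')},m']=P\,[\p_{Cone(c)},m]\,(I\otimes I)=0$, so $m'$ is a degree $0$ chain map. Because $P$ and $I$ are upper triangular with respect to $Cone(c)=\cA\oplus\cM[-1]$ and $Cone(c')=\cA'\oplus\cM'[-1]$, and $\cA$ is a subalgebra of $(Cone(c),m)$, the restriction of $m'$ to $\cA'\otimes\cA'$ has vanishing $\cM'[-1]$-component and equals $\mu':=p\circ\mu\circ(i\otimes i)$; thus $\cA'$ is a subalgebra of $(Cone(c'),m')$. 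The $\cA$-corners $p,i,h$ of the retract exhibit $\cA'$ as a homotopy retract of the (associative-up-to-homotopy, or $A_\infty$-) algebra $\cA$, so $(\cA',\mu')$ inherits an associative-up-to-homotopy dg algebra structure, its associating homotopy being the arity-$3$ part of the transferred structure. Reading the components of $m'((a,\bar x),(a',\bar x'))$ along $Cone(c')=\cA'\oplus\cM'[-1]$ through the template~\eqref{eq:product_on_cone}, one extracts operations $\mu',m_L',m_R',\tau_L',\tau_R',\sigma',\beta'$ (up to the shift signs of Appendix~\ref{sec:shifts}); these automatically reassemble to $m'$, and the defining relations of an $A_2$-triple (Definition~\ref{defi:pre-subalgebra}) are exactly the components of the identity ``$m'$ is a chain map'' decomposed as in the proof of Lemma~\ref{lem:extension-triples-from-Ainfty}, the equivalences there being reversible.

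For the compatibility of the retract maps with the products, $I$ and $m$ being chain maps and $[\p,H]=\one-IP$ give
\[
m\circ (I\otimes I)-I\circ m'\ =\ (\one-IP)\circ m\circ(I\otimes I)\ =\ \big[\p,\ H\circ m\circ (I\otimes I)\big],
\]
so $I$ intertwines $m'$ and $m$ up to the homotopy $H\circ m\circ(I\otimes I)$, which is again upper triangular. The analogous homotopy for $P$ — and the fact that $P,I$ extend to $\infty$-morphisms of pairs compatible with the retract — is the arity-$\le 2$ part of $\tilde P,\tilde I$ furnished by the Homotopy transfer theorem for split pairs; alternatively it is produced by the same computation using $[\p,H']=\one-PI$ and the auxiliary homotopy $H'$ when one is in the situation of Proposition~\ref{prop:transfer-homotopy-equivalence}.

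The main obstacle is essentially bookkeeping: the $A_2$-triple relations for $(\cM',c',\cA')$ cost nothing beyond ``$m'$ is a chain map'', and matching the components of $m'$ against the shifted operations $m_L',\tau_L',\dots$ through the conventions of Appendix~\ref{sec:shifts} is mechanical. The one genuinely arity-$3$ ingredient — hence the place where the real content hides — is producing the associator homotopy on $(\cA',\mu')$ demanded by the clause ``associative up to homotopy dg algebra'' in Definition~\ref{defi:pre-subalgebra}, which forces one to transfer the associator homotopy of $\cA$ and thus to invoke the arity-$3$ layer of the homotopy transfer machinery.
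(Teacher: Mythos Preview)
Your proposal is correct and is essentially the same approach as the paper's. The paper presents the transferred operations via labeled-tree formulas \`a la Markl (e.g.\ $\sigma'=\pm\pi\sigma(\iota\otimes\iota)\pm\pi m_L(\cH\otimes\iota)\pm\pi m_R(\iota\otimes\cH)$, and $\beta'$ has $7$ terms); your compact formula $m'=P\,m\,(I\otimes I)$, once expanded componentwise using the upper-triangular forms of $P$ and $I$, reproduces exactly these sums, since in arity~$2$ there is a single binary tree and the labelings correspond precisely to the cross-terms of the matrix product.

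One small remark: your treatment of the $P$-compatibility is slightly roundabout. You do not need $H'$ (which only exists in the homotopy-equivalence situation) nor an appeal to the full $A_\infty$ transfer theorem; the same direct computation you gave for $I$ works for $P$ in the pure retract setting, via
\[
P\,m - m'(P\otimes P) = P\,m\bigl(\one\otimes\one - IP\otimes IP\bigr) = \bigl[\p,\ P\,m\,(H\otimes\one) + P\,m\,(IP\otimes H)\bigr].
\]
Your identification of the associator homotopy on $(\cA',\mu')$ as the one genuinely arity-$3$ ingredient is apt and matches the paper's implicit reliance on the homotopy transfer machinery for that clause of Definition~\ref{defi:pre-subalgebra}.
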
  
  
Very explicitly, and because we are in arity 2, the transfer of structure is obtained by summing over the different ways of labeling the inputs and output of the unique binary rooted tree with two leaves by $\cA$ and $\cM$, and inserting accordingly at the inputs the maps $i,\iota, \cH$, and at the output the maps $p,\pi,\cK$ (compare to~\cite{Markl-transfer}). For example, the map $\sigma':\cM'\otimes \cM'\to \cM'$ is given by 
$$
\sigma'=\pm \pi\sigma \iota\otimes \iota \pm \pi m_L \cH\otimes \iota \pm \pi m_R \iota\otimes \cH.
$$  
See Figure~\ref{fig:transfer-sigma}, in which we see 3 different labelings contributing to the transfer. In contrast, the map $\mu':\cA'\otimes\cA'\to\cA'$ is given by $\mu'=p\mu i\otimes i$ since there is only one labeling contributing to the transfer because of the upper triangular form of the maps $P$ and $I$. The formula for the transferred map $\beta'$ involves 7 terms. 

\begin{figure} [ht]
\centering
\input{transfer-sigma.pstex_t}
\caption{Trees for the transferred map $\sigma'$.}
\label{fig:transfer-sigma}
\end{figure} 

In the situation in which the maps $P,I$ defining a homotopy retract are homotopy inverses, with the homotopy $H':\cB'\to\cB'$ such that $[\p,H']=\one-PI$ being also in upper triangular form, we obtain in particular that the (non-associative) algebras $(Cone(c),m)$ and $(Cone(c'),m')$ are homotopy equivalent. In contrast to Proposition~\ref{prop:transfer-homotopy-equivalence} above, this fact is immediate and needs no additional assumption (because we do not ask for any higher compatibilities of the homotopies). Thus, homotopy invariance in the context of $A_2$-triples is automatic.

\subsection{Morphisms of $A_2$-triples}

Consider two $A_2$-triples $(\cM,c,\cA)$ and $(\cM',c',\cA')$ with
associated operations $(m_L,m_R,\tau_L,\tau_R,\sigma,\beta)$ and
$(m_L',m_R',\tau_L',\tau_R',\sigma',\beta')$, respectively.   

\begin{definition}\label{defi:pre-subalgebra-mor}
A {\em special morphism of $A_2$-triples}
$$
(g,f):(\cM,c,\cA)\to(\cM',c',\cA')
$$ 
consists of the following maps:

(1) degree $0$ chain maps $g:\cM\to\cM'$ and $f:\cA\to\cA'$ satisfying
$$
   fc=c'g;
$$
(2) degree $1$ bilinear maps 
$$
  \wh\mu:\cA\otimes\cA\to\cA',\quad
  \wh m_L:\cA\otimes \cM\to \cM',\quad
  \wh m_R:\cM\otimes\cA\to \cM'
$$
satisfying
\begin{gather*}
  [\p,\wh\mu] = \mu'(f\otimes f)-f\mu,\cr
  [\p,\wh m_L] = m_L'(f\otimes g)-gm_L,\qquad
  [\p,\wh m_R] = m_R'(g\otimes f)-gm_R;
\end{gather*}
(3) degree $2$ bilinear maps $\wh\sigma:\cM\otimes \cM\to \cM'$ satisfying
$$
  [\p,\wh\sigma] = \sigma'(g\otimes g)-g\sigma-\wh m_R(1\otimes
c)+\wh m_L(c\otimes 1)
$$
and
$$
  \wh\tau_L:\cM\otimes \cA\to \cA',\qquad \wh\tau_R:\cA\otimes \cM\to \cA'
$$
satisfying
\begin{gather*}
  [\p,\wh\tau_L] = \tau_L'(g\otimes f)-f\tau_L-\wh\mu(c\otimes
  1)+c'\wh m_R,\cr
  [\p,\wh\tau_R] = \tau_R'(f\otimes g)-f\tau_R-\wh\mu(1\otimes
  c)+c'\wh m_L;
\end{gather*}
(4) a degree $3$ bilinear map $\wh\beta:\cM\otimes\cM\to \cA'$ satisfying
$$
  [\p,\wh\beta] = \beta'(g\otimes g)-f\beta+c'\wh\sigma
  -\wh\tau_R(c\otimes 1)+\wh\tau_L(1\otimes c).
$$
\end{definition}

\begin{remark} 
We call a morphism of $A_2$-triples \emph{special} because we require the equality $fc=c'g$ to be satisfied strictly. More generally, one can relax this equality up to homotopy, in which case the rest of the identities need to be suitably corrected. Special morphisms of $A_2$-triples are enough for our applications to Rabinowitz Floer homology and we therefore limit ourselves to this case. 
\end{remark}

The next result explains the genesis of this definition. The $A_\infty$-morphisms from the statement could also be called \emph{special}. 

\begin{proposition} \label{prop:morphisms-fromAinfty-to-A2}
Let $\cF:(\cM,c,\cA)\to (\cM',c',\cA')$ be an\break $A_\infty$-morphism between $A_\infty$-triples such that $\cF^1$ has no $(\cM[-1];\cA')$-component, i.e. it has diagonal form
$$
\cF^1=\begin{pmatrix} F^1 & 0 \\ 0 & G^1 \end{pmatrix}.
$$
The arity $1$ and $2$ components of $\cF$ determine canonically a special morphism between the $A_2$-triples $(\cM,c,\cA)$ and $(\cM',c',\cA')$.  
\end{proposition}

\begin{proof} The proof goes by direct verification using explicit formulas for the various components, expressed in terms of shifts as in Appendix~\ref{sec:shifts}. 

Firstly, $\cF^1$ acts as $Cone(c)[-1]=\cA[-1]\oplus\cM[-2]\to Cone(c')[-1]=\cA'[-1]\oplus \cM'[-2]$, with components $F^1:\cA[-1]\to\cA'[-1]$ and $G^1:\cM[-2]\to\cM'[-2]$. We define  
$$
f=F^1[1;1],\qquad g=G^1[2;2].
$$

Secondly, $\cF^2$ acts as $Cone(c)[-1]^{\otimes 2}\to Cone(c')[-1]$ and we denote its components $\cF^2_{\cA[-1]\otimes\cA[-1];\cA'[-1]}$, $\cF^2_{\cM[-2]\otimes \cA[-1];\cM'[-2]}$ etc. We define 
$$
\wh \mu= \cF^2_{\cA[-1]\otimes\cA[-1];\cA'[-1]} [1,1;1],  
$$
$$
\wh m_L= - \cF^2_{\cA[-1]\otimes\cM[-2];\cM'[-2]} [1,1;1][0,1;1], 
$$
$$ 
\wh m_R = - \cF^2_{\cM[-2]\otimes\cA[-1];\cM'[-2]} [1,1;1][1,0;1],
$$
$$
\wh \sigma = - \cF^2_{\cM[-2]\otimes\cM[-2];\cM'[-2]} [1,1;1][1,1;1],
$$
$$
\wh \tau_L =  \cF^2_{\cM[-2]\otimes\cA[-1];\cA'[-1]} [1,1;1][1,0;0], 
$$
$$ 
\wh \tau_R = \cF^2_{\cA[-1]\otimes\cM[-2];\cA'[-1]} [1,1;1][0,1;0],
$$
$$
\wh \beta = \cF^2_{\cM[-2]\otimes\cM[-2];\cA'[-1]} [1,1;1][1,1;0].
$$

Recalling the definition of the $A_2$-triple structures $(\mu,m_L,m_R,\tau_L,\tau_R,\break \sigma,\beta)$, resp. $(\mu', m_L',m_R',\tau_L',\tau_R',\sigma',\beta')$ from the proof of Lemma~\ref{lem:extension-triples-from-Ainfty}, one then checks directly that the previous formulas indeed give rise to a special $A_2$-morphism. Since the calculations are very similar to those in the proof of Lemma~\ref{lem:extension-triples-from-Ainfty} we omit the details. 
\end{proof}

\begin{proposition}\label{prop:A2mor-to-cone}
A special morphism of $A_2$-triples\break $(g,f):(\cM,c,\cA)\to(\cM',c',\cA')$
induces a degree $0$ chain map
$$
   Cone(g,f):Cone(c)\to Cone(c')
$$
intertwining the products $m,m'$ defined in Proposition~\ref{prop:product_on_cone} up to chain homotopy.
The induced map on homology fits into the commuting diagram with exact sequences
$$
\xymatrix{
   \cdots H_*(\cM) \ar[r]^{c_*} \ar[d]^{g_*} & H_*(\cA) \ar[r] \ar[d]^{f_*} & H_*(Cone(c)) \ar[r] \ar[d]^{Cone(g,f)_*} & H_{*-1}(\cM) \ar[d]^{g_*} \cdots \\
   \cdots H_*(\cM') \ar[r]^{c'_*} & H_*(\cA') \ar[r] & H_*(Cone(c')) \ar[r] & H_{*-1}(\cM') \cdots
}
$$
If $f$ and $g$ induce isomorphisms on homology, then so does $Cone(g,f)$. 
\end{proposition}

\begin{proof} The proposition is proved by inverting the formulas given in the proof of Proposition~\ref{prop:morphisms-fromAinfty-to-A2}. Indeed, the arity $1$ and $2$ components of an $A_\infty$-morphism can be recovered from the components of an $A_2$-morphism, and the first two equations defining an $A_\infty$-morphism $\cF:Cone(c)\to Cone(c')$ are equivalent to the first assertion of the proposition: $\cF^1:Cone(c)[-1]\to Cone(c')[-1]$ is a chain map, and it intertwines the products $\mu^2_{Cone(c)}$ and $\mu^2_{Cone(c')}$ up to a homotopy given by $\cF^2$, i.e. 
$$
\cF^1 \mu^2_{Cone(c)} - \mu^2_{Cone(c')}(\cF^1\otimes \cF^1) = [\p,\cF^2]. 
$$
The second assertion follows by passing to homology in the commuting diagram of short exact sequences
$$
\xymatrix{
   0 \ar[r] & \cA \ar[r] \ar[d]^f & Cone(c) \ar[r] \ar[d]^{Cone(g,f)} & \cM[-1] \ar[r] \ar[d]^g & 0 \\
   0 \ar[r] & \cA' \ar[r] & Cone(c') \ar[r] & \cM'[-1] \ar[r] & 0
}
$$
The last assertion follows from the second one by the 5-lemma.
\end{proof}

\subsection{Examples}

\begin{example}[{\bf Ideals}] Let $(\cA,\p_\cA,\mu)$ be a dga and $\cM\subset \cA$ be a dg ideal. Let $c=incl:\cM\hookrightarrow \cA$ be the inclusion. The triple $(\cM,incl,\cA)$ has a canonical structure of an $A_2$-triple defined as follows: the operations $m_L:\cA\otimes \cM\to\cM$ and $m_R:\cM\otimes \cA\to\cM$ are given by multiplication in $\cA$ and endow $\cM$ with the structure of a strict $\cA$-bimodule, whereas the operations $\tau_R,\tau_L,\sigma,\beta$ are all zero. The corresponding product $m$ on $Cone(incl)=\cA\oplus \cM[-1]$ is given by
\footnote{In~\cite[Lemma~2.1]{Herzog-Takayama} the authors call it ``Nagata product". However, this terminology is potentially misleading. Nagata defined in~\cite[p.~2]{Nagata} a product on the direct sum between a module and its base ring in the context of his general procedure of ``idealization", i.e. of turning a module into an ideal, which signed the beginning of the theory of square zero extensions. In the dg setting, a square zero extension is a \emph{surjective} map of dga's whose kernel squares to zero. In contrast, our setup is concerned with \emph{injective} maps of dga's, i.e. with pairs consisting of an algebra and a subalgebra. In the current setup, if $\cM^2$ were zero then $\cA$ would be a square zero extension of $\cA/\cM$, inheriting a ``Nagata product". In contrast, the cone of the inclusion $\cM\hookrightarrow \cA$ always carries a dga structure.}
$$
m\big((a,\bar x),(a',\bar x')\big)=\big(\mu(a,a'),(-1)^{|a|}\overline{\mu(a,x')}+\overline{\mu(x,a')}\big). 
$$ 
The projection 
$$
proj:Cone(incl)\to \cA/\cM, \qquad (a,x)\mapsto [a]
$$
is clearly a ring map. Assuming that the short exact sequence of $R$-modules $0\to\cM\to \cA\to \cA/\cM\to 0$ is split, it is a general fact that $proj$ is a homotopy equivalence with an explicit homotopy inverse defined in terms of the splitting (see for example~\cite[Lemma~4.3]{CO}). 
\end{example} 
 
\begin{example}[{\bf Quotients}]  \label{example:quotients}
Let now $(\cM,c,\cA)$ be an $A_2$-triple with operations $\mu,m_L,m_R,\tau_L,\tau_R,\sigma,\beta$. Let $m$ be the corresponding product structure on $Cone(c)$. 

Assume $c:\cM\to \cA$ to be surjective, denote $K=\ker\, c$ and assume that the short exact sequence $0\to K \to \cM\stackrel c \to \cA\to 0$ is split. Writing the differential $\p_\cM$ in upper triangular form with respect to the splitting, denote $f:\cA\to K[-1]$ its $(\cA,K)$-component. We then have a homotopy equivalence  
$$
\begin{tikzcd}[cramped]
\arrow[out=200,in=160,looseness=15, "H"] Cone(c)=\cA\oplus K[-1]\oplus \cA[-1] \arrow[r, shift left, "\Sigma"] & \arrow[l,shift left,"T"] K[-1],
\end{tikzcd}
$$
where $T$ is the inclusion on the $K[-1]$-factor, $\Sigma(a,\bar k,\bar a)=f(a)+\bar k$, and $\Sigma T =\one$. The homotopy $H$ acts by $H(a,\bar k,\bar a)=(0,0,a)$. 
The shifted kernel $K[-1]$ inherits the product structure $\tilde \sigma =\Sigma m T\otimes T$. Explicitly 
$$
\tilde \sigma(\bar x_K,\bar x'_K)=-(-1)^{|\bar x_K|}f\beta(x_K,x'_K)-(-1)^{|\bar x_K|}\overline{pr_K\sigma(x_K,x'_K)},
$$
where $pr_K:\cM\to K$ is the projection determined by the splitting. 

In the presence of arity 3 data on the triple $(\cM,c,\cA)$, this product is associative up to homotopy and the maps $\Sigma$, $T$ interchange $\tilde \sigma$ and $m$ up to homotopy. 

Under the additional assumptions 
$$
\beta|_{K\otimes K}=0 \qquad \mbox{and} \qquad \sigma(K\otimes K)\subset K,
$$
we have 
$$
\tilde \sigma(\bar x_K,\bar x'_K)=-(-1)^{|\bar x_K|}\overline{\sigma(x_K,x'_K)}, 
$$ 
i.e. 
$$
\tilde\sigma = -\sigma[-1,-1;-1].  
$$
In case the $A_2$-triple is induced by an $A_\infty$-structure as before, we obtain $\tilde \sigma = \underline \sigma$, the product on $\cM[-1]$ induced by the $A_\infty$-operations. 

In any case, the map $T$ is an algebra map (and a homotopy equivalence), so that $(Cone(c),m)$ and $(K[-1],\tilde \sigma)$ are homotopy equivalent as algebras. 
\end{example}
 
\begin{example}[{\bf Duality $\cM=\cA^\vee$}] \label{example:duality}
Let $(\cA,\p)$ be a dg $R$-module and $\cA^\vee_*=\mathrm{Hom}_R(\cA_{-*},R)$ its graded dual. In~\S\ref{sec:A2+} we describe a structure on $\cA$, called an \emph{$A_2^+$-structure}, which gives rise to an $A_2$-triple $(\cA^\vee,c,\cA)$. It encodes all the information of the $A_2$-triple in terms of operations on $\cA$, which will be important in~\cite{CHO-MorseFloerGH} to relate Rabinowitz Floer homology of a unit cotangent bundle to Rabinowitz loop homology. We study in~\S\ref{sec:A2+} the functoriality of $A_2^+$-structures and prove an algebraic counterpart of our main Poincar\'e duality theorem for the cones of the associated $A_2$-triples. 
\end{example}

\section{Cones of Floer continuation maps}  \label{sec:cones_continuation}

Floer continuation maps naturally give rise to $A_2$-triples, and indeed to $A_\infty$-triples. 

\subsection{Closed symplectically aspherical manifolds} Although main\-ly interested in the noncompact case, we start with a discussion of the compact symplectically aspherical case. In this situation the continuation maps are homotopy equivalences and their cones are acyclic, but it is interesting to see how the $A_2$-triple arises. We work on a closed symplectically aspherical manifold $W$ of dimension $2n$ with trivial first Chern class and a fixed choice of trivialization of its canonical bundle. All our Floer chain complexes are graded by the Conley-Zehnder index, computed in this trivialization. Whenever we write Floer chain complexes, continuation maps, and more generally equations for pseudo-holomorphic maps defined on Riemann surfaces, we tacitly mean that there are choices of compatible almost complex structures involved. In order not to burden the notation we will not make further reference to these unless absolutely necessary. 

\begin{proposition} \label{prop:Hamiltonian-pre-subalgebra}
Let 
$$
\cM_*=FC_*(H)[n], \qquad \cA_*=FC_*(K)[n]
$$ 
be shifted Floer chain complexes determined by nondegenerate Hamiltonians $H,K$ on a closed symplectically aspherical manifold $W$ of dimension $2n$. Recall that $FC_*(H)[n]=FC_{*+n}(H)$. Let $\{H_s\, : \, s\in\R\}$ be a homotopy such that $H_s=K$ for $s\ll 0$ and $H_s=H$ for $s\gg 0$, and denote 
$$
c:\cM_*\to \cA_*
$$
the corresponding continuation map. 
Then $(\cM,c,\cA)$ carries the structure of an $A_2$-triple, canonically defined up to homotopy.  
\end{proposition}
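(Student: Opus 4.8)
The plan is to construct all the structure maps of the $A_2$-triple by counting solutions of Floer equations over moduli spaces of disks with punctures parametrized by $0$-, $1$-, and $2$-simplices, and then to derive the defining relations of Definition~\ref{defi:pre-subalgebra} from the boundary descriptions of the compactified $1$-dimensional components of these moduli spaces. By Lemma~\ref{lem:extension-triples-from-Ainfty} it suffices to produce the arity $2$ data; equivalently, one builds an $A_\infty$-triple up to arity $2$, i.e.\ the operations $\mu^2_\cB$ and $\mu^1_\cB$ on $Cone(c)=\cA\oplus\cM[-1]$ together with the decomposition into $\tau$- and $m$-pieces. The building blocks are standard: $\mu=\mu^2_\cA$ and $\mu^2_\cM$-type operations come from the usual pair-of-pants counts on $K$ and $H$ (with the degree shift by $n$ absorbing the degree $-2$ of the pair-of-pants into a degree $-1$ operation on the shifted complexes); $m_L,m_R$ come from counting ``mixed'' pairs of pants with two inputs governed by $K$ resp.\ $H$ and the remaining puncture interpolating by a chosen homotopy; and the genuinely new maps $\tau_L,\tau_R,\sigma,\beta$ arise from one- and two-parameter families of surfaces in which the Hamiltonian data on the legs is allowed to vary, realizing the continuation map $c$ as an interior operation and the compositions $c\circ(\cdot)$ as homotopic to composites not involving $c$.

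Concretely, I would fix a ``master'' configuration space: a pair of pants whose three ends are equipped with cylindrical Floer data chosen from the homotopy $\{H_s\}$, and allow the weights/asymptotics to move. The degree $1$ maps $\tau_L,\tau_R$ are defined by counting the $0$-dimensional moduli spaces in a $1$-parameter family interpolating between the configuration computing $\mu(c\otimes\one)$ (resp.\ $\mu(\one\otimes c)$) and the one computing $c\,m_R$ (resp.\ $c\,m_L$); the relation $[\p,\tau_L]=\mu(c\otimes\one)-c\,m_R$ is then exactly the statement that the boundary of the compactified $1$-dimensional part of this family consists of broken configurations at the two ends of the parameter interval plus the usual Floer breakings. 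Likewise $\sigma$ is defined from a $1$-parameter family whose endpoints compute $m_L(c\otimes\one)$ and $m_R(\one\otimes c)$, giving $[\p,\sigma]=m_R(\one\otimes c)-m_L(c\otimes\one)$. Finally $\beta$ is a degree $2$ operation defined by counting the $0$-dimensional moduli over a $2$-dimensional family of surface data (a triangle of homotopies), whose codimension-one boundary strata reproduce precisely $-c\sigma+\tau_R(c\otimes\one)-\tau_L(\one\otimes c)$; this last relation, $[\p,\beta]$ equal to that combination, is the compatibility that makes $(\cM,c,\cA)$ an honest $A_2$-triple. The signs are fixed once and for all by organizing the count as an $A_\infty$-operation on $\cB=\cA\oplus\cM[-1]$ and reading off $\mu,m_L,\dots$ via the shifts $[1,1;1]$ etc.\ exactly as in Lemma~\ref{lem:extension-triples-from-Ainfty}, so that the sign in~\eqref{eq:product_on_cone} is automatic.

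For the ``canonically defined up to homotopy'' clause, I would argue in the usual Floer-theoretic way: two choices of the auxiliary data (almost complex structures, the homotopy $\{H_s\}$, the surface families) are joined by a path of such data, and counting solutions over the parametrized moduli spaces produces an $\infty$-morphism of $A_2$-triples (in the arity $\le 2$ sense) between the two, with the homotopies $\tilde P,\tilde I,\tilde H$ of the homotopy-retract formalism of \S\ref{sec:infty-bimodules} and Proposition~\ref{prop:arity2homotopyinvariance}; since in arity $2$ homotopy invariance is automatic (no higher coherences required), this suffices. One should also note that a change of the Hamiltonians $H,K$ within their class, or a change of the chosen homotopy class of $\{H_s\}$, changes the triple only up to such an $\infty$-isomorphism, which gives the canonicity.

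The main obstacle is analytic rather than formal: one must set up the moduli spaces of the $1$- and $2$-parameter families of punctured-disk Floer equations so that they are transversally cut out, have the expected dimensions, admit Gromov-type compactifications whose codimension-one boundary is exactly the union of strata appearing on the right-hand sides of the relations in Definition~\ref{defi:pre-subalgebra}, and carry coherent orientations making the signs match those forced by the $A_\infty$ bookkeeping. In the closed symplectically aspherical setting there is no bubbling and no issue with the maximum principle, so the compactness and transversality are routine (generic almost complex structures suffice, and the aspherical hypothesis kills sphere bubbles), but the orientation/sign verification — checking that the parametrized gluing maps are orientation-compatible so that the boundary contributions appear with the signs predicted by \eqref{eq:associator-from-Ainfty}-style relations — is the delicate point, and it is precisely this that the authors defer in full (arity $\ge 3$, $A_\infty$) to the sequel paper while carrying it out here only through arity $2$. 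I would therefore present the proof as: (1) recall the moduli spaces of gradient-Floer trajectories, pairs of pants, and their one-leg homotopy variants; (2) introduce the $1$- and $2$-parameter families defining $\tau_L,\tau_R,\sigma,\beta$; (3) state the compactness/transversality results (referring to standard sources for the closed aspherical case); (4) read off the structure relations from the boundary descriptions; (5) invoke the arity $2$ homotopy-invariance of \S\ref{sec:cones} for well-definedness up to homotopy.
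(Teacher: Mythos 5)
Your proposal matches the paper's proof in all essentials: the operations $\mu,m_L,m_R$ are defined by index $0$ pairs-of-pants, $\tau_L,\tau_R,\sigma$ by index $-1$ families over $1$-simplices with exactly the endpoint configurations you describe, and $\beta$ by an index $-2$ family over the $2$-simplex whose codimension-one boundary reproduces $-c\sigma+\tau_R(c\otimes\one)-\tau_L(\one\otimes c)$; the relations then follow from Gromov compactness in the closed aspherical case, and canonicity up to homotopy from parametrized moduli over paths of auxiliary data. The one small slip is your remark that the shift by $n$ ``absorbs the degree $-2$ of the pair-of-pants'': the Floer-theoretic pair-of-pants on a $2n$-dimensional target has index drop $n$, and the shift $FC_{*+n}$ turns it into a degree $0$ operation as required for an $A_2$-triple.
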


\begin{proof} We need to define operations $\mu$, $m_L$, $m_R$, $\tau_L$, $\tau_R$, $\sigma$ and $\beta$. 

The operations $\mu:\cA\otimes \cA\to \cA$, $m_L:\cA\otimes \cM\to \cM$, $m_R:\cM\otimes \cA\to \cM$ are pair-of-pants products. They are defined by counts of index $0$ pairs-of-pants with 2 inputs (positive punctures) and 1 output (negative puncture). See Figure~\ref{fig:mu-mL-mR}, in which we depict pairs-of-pants with two inputs and one output schematically as binary rooted trees with two leaves. The two inputs are ordered, the first one is depicted on the left and the second one is depicted on the right. 

\begin{figure} [ht]
\centering
\input{mu-mL-mR.pstex_t}
\caption{Curves defining the maps $\mu$, $m_L$, $m_R$ in Floer theory.}
\label{fig:mu-mL-mR}
\end{figure} 

The operations $\sigma:\cM\otimes \cM\to \cM$, $\tau_L:\cM\otimes \cA\to \cA$, $\tau_R:\cA\otimes \cM\to \cA$ are defined by counts of index $-1$ pairs-of-pants with 2 positive punctures and 1 negative puncture in suitable $1$-dimensional families parametrized by the interval $[0,1]$. See Figure~\ref{fig:sigma-tauL-tauR}. 

\begin{figure} [ht]
\centering
\input{sigma-tauL-tauR.pstex_t}
\caption{Curves defining the maps $\sigma$, $\tau_L$, $\tau_R$ in Floer theory.}
\label{fig:sigma-tauL-tauR}
\end{figure} 

The operation $\beta:\cM\otimes\cM\to\cA$ is defined by the count of index $-2$ pairs-of-pants with 2 positive punctures and 1 negative puncture in a 2-dimensional family parametrized by the 2-simplex. See Figure~\ref{fig:beta}. 

\begin{figure} [ht]
\centering
\input{beta.pstex_t}
\caption{Curves defining the map $\beta$ in Floer theory.}
\label{fig:beta}
\end{figure} 

The proof of the relations defining an $A_2$-triple is straightforward. That the resulting structure is canonically defined up to homotopy is also a straightforward---though combinatorially involved---argument. 
\end{proof}

\begin{remark}[$A_\infty$-triple]
The above $A_2$-triple is the arity 2 part of an $A_\infty$-triple. However, we will not construct the latter here. 
\end{remark}

\begin{remark}[filtration on the cone]
Assume $H\le K$ and the homotopy is monotone. Then all the maps defining the $A_2$-triple can be constructed such that they decrease the action. The cone then has a canonical $\R$-filtration 
$$
Cone(c)^{\le a} = FC_*^{\le a}(K)[n]\oplus FC_{*-1}^{\le a}(H)[n],\qquad a\in\R
$$
and the product structure $m$ defined by the $A_2$-triple preserves this filtration, meaning that 
$$
m\big(Cone(c)^{\le a}\otimes Cone(c)^{\le b}\big)\subseteq Cone(c)^{\le a+b}.
$$
Denote $Cone(c)^{(a,b)}=Cone(c)^{<b}/Cone(c)^{\le a}$. We obtain induced partial products 
$$
m: Cone(c)^{(a,b)}\otimes Cone(c)^{(a',b')}\to Cone(c)^{(\max\{a+b',a'+b\},b+b')}.
$$
\end{remark}

\subsection{Completions of Liouville domains} Consider now the noncom\-pact case where the underlying manifold is the symplectic completion $\wh W$ of a Liouville domain $W$. One additional complication is added by the fact that solutions of the relevant Cauchy-Riemann equations with $0$-order Hamiltonian perturbation are required to obey a maximum principle. More precisely, given a map $u:\Sigma\to \wh W$ defined on a (punctured) Riemann surface $\Sigma$ and solving the perturbed Cauchy-Riemann equation  
$$
(du-X_H\otimes \beta)^{0,1}=0,
$$
where $\beta\in\Omega^1(\Sigma,\R)$ and $H=\{H_z\}$, $z\in\Sigma$ is a $\Sigma$-dependent family of Hamiltonians, one requires that, outside a compact set, we have $d_z(H\beta)\le 0$ for all $z\in\Sigma$. When restricting to admissible Hamiltonians, i.e. Hamiltonians which are linear in the region $\{r\ge 1\}\subset \wh W$, it is enough to impose this condition in that region. If this condition holds on the entire completion $\wh W$, then in addition the relevant maps decrease the action.

The resulting structure is the following. 
Given admissible Hamiltonians $H_1\le H_2$ and $H'_1\le H'_2$ together with monotone homotopies $\{H_s\}$ and $\{H'_s\}$ connecting $H_1$ and $H_2$, respectively $H'_1$ and $H'_2$, denote $c:FC_*(H_1)[n]\to FC_*(H_2)[n]$ and $c':FC_*(H'_1)[n]\to FC_*(H'_2)[n]$ the corresponding continuation maps. Denote $H_1\#H_2(t,x)=H_1(t,x)+H_2(t,(\varphi_{H_1}^t)^{-1}(x))$, where $\phi_H^t$ denotes the Hamiltonian flow of $H$. Assume further the conditions
\begin{equation}\label{eq:conditions-continuation}
H'_1\ge H_1\#H_2,\qquad H'_2\ge 2H_2=H_2\#H_2. 
\end{equation}
The equality $2H_2=H_2\# H_2$ is a condition which holds for time-independent Hamiltonians, hence for the Hamiltonians that we use in the sequel. By modifying the moduli spaces considered in the proof of Proposition~\ref{prop:Hamiltonian-pre-subalgebra} into moduli spaces with inputs $1$-periodic orbits of $H_1, H_2$ and outputs $1$-periodic orbits of $H'_1,H'_2$, we construct operations $\mu$, $m_L$, $m_R$, $\tau_L$, $\tau_R$, $\sigma$, $\beta$ which assemble into a product
$$
m : Cone(c)\otimes Cone(c)\to Cone(c'). 
$$
Moreover, this product respects the canonical filtrations on the cones and induces 
$$
m : Cone(c)^{(a,b)}\otimes Cone(c)^{(a',b')}\to Cone(c')^{(\max\{a+b',a'+b\},b+b')}.
$$
Thus, in the noncompact case there is strictly speaking no pre-subal\-ge\-bra extension structure for a fixed pair of Hamiltonians. However, there is a ``directed system" of such, constructed as above. 
 
\begin{remark}
That conditions~\eqref{eq:conditions-continuation} are indeed sufficient for the existence of continuation maps is a consequence of the following construction, originally due to Matthias Schwarz~\cite[Proposition~4.1 sqq.]{Schwarz}. Given Hamiltonians $H$ and $K$, it is possible to construct a perturbed Floer equation on a pair-of-pants with two positive punctures and one negative puncture such that: near the positive punctures it specializes to the Floer equation for $H$ and $K$, near the negative puncture it specializes to the Floer equation for the Hamiltonian $H\#K(t,x)=H(t,x)+K(t,(\varphi_H^t)^{-1}x)$, and the solutions of this Floer equation satisfy the maximum principle and do not increase the Hamiltonian action. 
\end{remark} 
 
\begin{example}[Rabinowitz Floer homology] \label{example:Klambda} Given $\lambda\in\R$ denote by $K_\lambda$ the Hamiltonian which is $0$ on $W$ and is linear of slope $\lambda$ on $\{r\ge 1\}$, with a convex smoothing if $\lambda>0$, respectively a concave smoothing if $\lambda<0$. See Figure~\ref{fig:Klambda}. 

\begin{figure} [ht]
\centering
\input{Klambda.pstex_t}
\caption{The family of Hamiltonians $\{K_\lambda\}$, $\lambda\in\R$.}
\label{fig:Klambda}
\end{figure}

Given parameters $\lambda_-\le \lambda_+$ we denote 
$$
c_{\lambda_-,\lambda_+}:FC_*(K_{\lambda_-})[n]\to FC_*(K_{\lambda_+})[n]
$$
the continuation map induced by a monotone homotopy. Given also parameters $\lambda'_-\le \lambda'_+$ such that $\lambda_-+\lambda_+\le \lambda'_-$ and  $2\lambda_+\le \lambda'_+$, and given action bounds $-\infty<a< b<\infty$, we obtain bilinear maps 
\begin{equation}\label{eq:mtilde-conec}
m : Cone(c_{\lambda_-,\lambda_+})^{(a,b)}\otimes Cone(c_{\lambda_-,\lambda_+})^{(a,b)}\to Cone(c_{\lambda'_-,\lambda'_+})^{(a+b,2b)}.
\end{equation}

Let now $(a,b)$ be fixed. Given $\lambda_{- -}\le \lambda_-\le \lambda_+$ we have a canonical continuation map $cont:Cone(c_{\lambda_{- -},\lambda_+})\to Cone(c_{\lambda_-,\lambda_+})$. This map is compatible in homology with the bilinear maps $m$ defined above, meaning that we have commutative diagrams for allowable values of the parameters 

{\scriptsize{
$$
\xymatrix{
H_*(Cone(c_{\lambda_-,\lambda_+})^{(a,b)})\otimes H_*(Cone(c_{\lambda_-,\lambda_+})^{(a,b)}) \ar[r]^-{m} & H_*(Cone(c_{\lambda'_-,\lambda'_+})^{(a+b,2b)}) \\
H_*(Cone(c_{\lambda_{- -},\lambda_+})^{(a,b)})\otimes H_*(Cone(c_{\lambda_{- -},\lambda_+})^{(a,b)}) \ar[r]^-{m} \ar[u]^{cont\otimes cont} & H_*(Cone(c_{\lambda'_{- -},\lambda'_+})^{(a+b,2b)}) \ar[u]_{cont}
}
$$
}}

Similarly, given $\lambda_-\le \lambda_+\le \lambda_{++}$ we have a canonical continuation map $cont:Cone(c_{\lambda_-,\lambda_+})\to Cone(C_{\lambda_-,\lambda_{++}})$ which is also compatible in homology with the bilinear products $m$, with a similar meaning. 

Note moreover that the canonical maps 
$$
\lim\limits_{\stackrel\longleftarrow{\lambda_-\to-\infty}} H_*(Cone(c_{\lambda_-,\lambda_+})^{(a,b)})\stackrel \simeq \longrightarrow H_*(Cone(c_{a,\lambda_+})^{(a,b)})
$$ 
are isomorphisms for all $a\le \lambda_+$, and similarly the maps 
$$
H_*(Cone(c_{\lambda_-,b})^{(a,b)})\stackrel\simeq\longrightarrow \lim\limits_{\stackrel\longrightarrow{\lambda_+\to\infty}}H_*(Cone(c_{\lambda_-,\lambda_+})^{(a,b)})
$$
are isomorphisms for all $\lambda_-\le b$. 

Define\footnote{Recall that throughout this paper $S\H_*=SH_{*+n}$ denotes the degree shifted version of symplectic homology.}
$$
S\H_*^{(a,b)}(\{K_\lambda\}) := \lim\limits_{\stackrel\longrightarrow{\lambda_+\to\infty}} \lim\limits_{\stackrel\longleftarrow{\lambda_-\to-\infty}} H_*(Cone(c_{\lambda_-,\lambda_+})^{(a,b)}). 
$$
We obtain in the first-inverse-then-direct limit bilinear maps $m$ acting as  
$$
S\H_*^{(a,b)}(\{K_\lambda\})\otimes S\H_*^{(a,b)}(\{K_\lambda\})\to S\H_*^{(a+b,2b)}(\{K_\lambda\}).
$$

These maps are compatible with the canonical action truncation maps in Floer theory. We therefore define 
$$
S\H_*(\{K_\lambda\}) := \lim\limits_{\stackrel\longrightarrow{b\to\infty}} \lim\limits_{\stackrel\longleftarrow{a\to-\infty}} S\H_*^{(a,b)}(\{K_\lambda\}),
$$
and obtain a product 
$$
S\H_*(\{K_\lambda\})\otimes S\H_*(\{K_\lambda\})\to S\H_*(\{K_\lambda\}).
$$
The associativity of this product can be proved directly by incorporating arity 3 operations in the above discussion. However, associativity also follows a posteriori from Theorem~\ref{thm:Rabinowitz-cone} below, according to which we have a natural isomorphism $S\H_*(\{K_\lambda\})\simeq S\H_*(\p W)$ compatible with the products. 
In view of this, we will refer to $S\H_*(\{K_\lambda\})$ as being a ring. 
\end{example}

\begin{example}[Symplectic homology of filled Liouville cobordisms]
The previous construction can be generalized in the spirit of~\cite{CO} in order to describe product structures on the symplectic homology groups of filled Liouville cobordisms.

Consider $W$ a Liouville cobordism with a Liouville filling $F$ of its negative boundary. Denote $W'=F\cup W$ and further $\wh{W'}$ its symplectic completion. The symplectic completion $\wh{F}$ naturally embeds into $\wh{W'}$ via the Liouville flow and we denote $\wh{F}_R=F\cup [1,R]\times \p F$ for $R\ge 1$. Given $\lambda_+\ge 0$ we consider the Hamiltonians $K_{\lambda_+}:\wh{W'}\to\R$ as in the previous example. Given $\lambda_-\le 0$ and $R\ge 1$ we denote $K_{\lambda_-,R}:\wh{W'}\to\R$ the (smoothing of the) Hamiltonian which coincides on $\wh{F}_R$ with $K_{\lambda_-}:\wh{F}\to\R$ from the previous example, and which is constant equal to $(R-1)\lambda_-$ on $\wh{W'}\, \setminus\, \wh{F}_R$. See Figure~\ref{fig:Ham-cobordisms}. 

\begin{figure}
\begin{center}
\includegraphics[width=.7\textwidth]{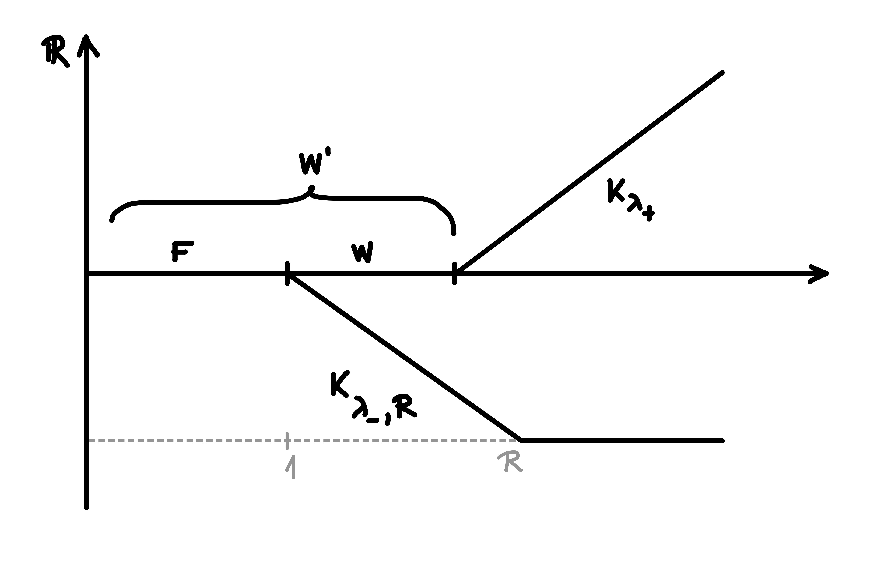}
\caption{Hamiltonians for cobordisms}
\label{fig:Ham-cobordisms}
\end{center}
\end{figure}

Given parameters $\lambda_-\le 0\le \lambda_+$ and $R\ge 1$ we denote 
$$
c_{\lambda_-,\lambda_+,R}:FC_*(K_{\lambda_-,R})[n]\to FC_*(K_{\lambda_+})[n]
$$
the continuation map induced by a monotone homotopy. Arguing as in the previous example one defines for a finite action window $(a,b)$ the groups 
$$
S\H_*^{(a,b)}(\{K_{\lambda_-,R}\}, \{K_{\lambda_+}\})= \lim\limits_{\stackrel\longrightarrow{\lambda_+\to\infty}} \lim\limits_{\stackrel\longleftarrow{\lambda_-\to-\infty}} \lim\limits_{\stackrel\longleftarrow{R\to\infty}} H_*(Cone(c_{\lambda_-,\lambda_+,R})^{(a,b)}),
$$
with product maps 
$$
S\H_*^{(a,b)}(\{K_{\lambda_-,R}\}, \{K_{\lambda_+}\})^{\otimes 2} \to S\H_*^{(a+b,2b)}(\{K_{\lambda_-,R}\}, \{K_{\lambda_+}\}).
$$
These maps are compatible with the canonical action truncation maps in Floer theory, and we define 
$$
S\H_*(\{K_{\lambda_-,R}\}, \{K_{\lambda_+}\}) := \lim\limits_{\stackrel\longrightarrow{b\to\infty}} \lim\limits_{\stackrel\longleftarrow{a\to-\infty}} S\H_*^{(a,b)}(\{K_{\lambda_-,R}\}, \{K_{\lambda_+}\}),
$$
and obtain a product 
$$
S\H_*(\{K_{\lambda_-,R}\}, \{K_{\lambda_+}\})\otimes S\H_*(\{K_{\lambda_-,R}\}, \{K_{\lambda_+}\})\to S\H_*(\{K_{\lambda_-,R}\}, \{K_{\lambda_+}\}).
$$
Theorem~\ref{thm:Rabinowitz-cone} can be generalized to an isomorphism of rings between $S\H_*(\{K_{\lambda_-,R}\}, \{K_{\lambda_+}\})$ and $S\H_*(W)=SH_{*+n}(W)$, the symplectic homology of the cobordism $W$ as defined in~\cite{CO}. As a matter of fact, the entire Eilenberg-Steenrod framework from~\cite{CO} could be rephrased and simplified in the language of cones of the present paper. We will not prove these statements here.
\end{example}

\begin{example}[The family of Hamiltonians $H_{\lambda,\mu}$, $\lambda,\mu\in\R$]  \label{example:Hlambdamu}
Consider again a Liouville domain $W$ with Liouville completion $\wh W$. Given real parameters $\lambda,\mu$ which do not belong to the action spectrum of $\p W$ we define a Hamiltonian $H_{\lambda,\mu}$ on $\wh W$ to be a smoothing of the Hamiltonian which is constant equal to $-\lambda/2$ on $\{r\le 1/2\}$, equal to $0$ at $r=1$, linear of slope $\lambda$ on $\{1/2\le r\le 1\}$, and linear of slope $\mu$ on $\{r\ge 1\}$. See Figure~\ref{fig:HL}. 
We denote $L_\lambda=H_{\lambda,\lambda}$. 

We have $H_{\lambda',\mu'}\ge H_{\lambda,\mu}$ for $\lambda'\le\lambda$ and $\mu'\ge \mu$. Also, we have 
\begin{equation}\label{eq:Hlambdamu-additive}
H_{\lambda+\lambda',\mu+\mu'}= H_{\lambda,\mu} + H_{\lambda',\mu'}
\end{equation} 
for all $\lambda,\lambda',\mu,\mu'$. See Figure~\ref{fig:HL}. 

\begin{figure} [ht]
\centering
\input{HL.pstex_t}
\caption{The family of Hamiltonians $\{H_{\lambda,\mu}\}$, $\lambda,\mu\in\R$.}
\label{fig:HL}
\end{figure}
  
As already seen before, the $1$-periodic orbits of $H_{\lambda,\mu}$ fall into two groups: orbits of type $F$ which are located in a neighborhood of the region $\{r\le 1/2\}$, and orbits of type $I$ which are located in a neighborhood of the region $\{r=1\}$. We will distinguish the following two cases: 

\underline{{\it Case (i): $\lambda<0<\mu$.}} In this situation the orbits of type $F$ form a subcomplex, cf.~\cite[\S2.3]{CO}, and we denote 
$$
i_{H_{\lambda,\mu}}:FC_*^F(H_{\lambda,\mu})[n]\to FC_*(H_{\lambda,\mu})[n]
$$
the inclusion of this subcomplex, with quotient complex $FC_*^I(H_{\lambda,\mu})[n]$. 

\underline{{\it Case (ii): $\lambda>0>\mu$.}} In this situation the orbits of type $I$ form a subcomplex, cf.~\cite[\S2.3]{CO}, and we denote 
\begin{equation}\label{eq:pHlambdamu}
p_{H_{\lambda,\mu}}:FC_*(H_{\lambda,\mu})[n]\to FC_*^F(H_{\lambda,\mu})[n]
\end{equation}
the projection onto the quotient complex consisting of orbits of type $F$, with kernel the subcomplex generated by orbits of type $I$. 

In both cases, we have canonical identifications $FC_*^F(H_{\lambda,\lambda})\cong FC_*(L_\lambda)$. 

\begin{remark}[Subcomplexes from action vs.~geometry] 
That orbits of type $F$ in case (i), or of type $I$ in case (ii), form subcomplexes follows from the geometric Lemmas 2.2 and 2.3 in~\cite{CO}. Alternatively, we could argue by Hamiltonian action: choosing the Hamiltonian $H_{\lambda,\mu}$ to be constant on $[0,\delta_\lambda]$ rather than $[0,1/2]$, with $\delta_\lambda>0$ sufficiently small, we could arrange that in case (i) orbits of type $F$ have smaller action that orbits of type $I$ (and similarly in case (ii)). However, such a shape of Hamiltonian would not satisfy equality~\eqref{eq:Hlambdamu-additive} which we need in the sequel, so we prefer the geometric approach. 
\end{remark}

We now consider separately the families 
$$
\cH_\vee= \{H_{\lambda,\mu}\, : \, \lambda<0<\mu\}
$$ 
and 
$$
\cH_\wedge=\{H_{\lambda,\mu}\, : \, \lambda>0>\mu\},
$$ 
and define from each of them a certain symplectic homology ring. 
  
\smallskip   
  
{\it \underline{Case (i). The family $\cH_\vee= \{H_{\lambda,\mu}\, : \, \lambda<0<\mu\}$.}} 

Given parameters $\lambda<0<\mu$ we denote 
$$
c_{(\lambda,\lambda),(\lambda,\mu)}:FC_*(L_\lambda)[n]\to FC_*(H_{\lambda,\mu})[n]
$$
the continuation map induced by a monotone homotopy. Note that we can choose the homotopy to be constant in the region $\{r\le 1\}$, and with this choice the continuation map $c_{(\lambda,\lambda),(\lambda,\mu)}$ is canonically identified with the inclusion $i_{H_{\lambda,\mu}}$ from above. 
We obtain bilinear maps 
\begin{multline} \label{eq:mtilde_V}
m = m_{\big((\lambda,\lambda),(\lambda,\mu)\big), \big((2\lambda,\lambda+\mu),(2\lambda,2\mu)\big)} :  \\
\mbox{\scriptsize{$Cone(c_{(\lambda,\lambda),(\lambda,\mu)})^{(a,b)}\otimes Cone(c_{(\lambda,\lambda),(\lambda,\mu)})^{(a,b)}
\to  Cone(c_{(2\lambda,\lambda+\mu),(2\lambda,2\mu)})^{(a+b,2b)}.$}} 
\end{multline}

We claim that, given $(a,b)$, the above maps canonically stabilize in homology for $\mu$ fixed and $\lambda$ negative enough. To prove the claim, consider $\lambda_1\le \lambda <0<\mu$. We then have homotopy commutative diagrams of continuation maps 
$$
\xymatrix{
FC_*(L_{\lambda_1})[n]\ar[r] \ar[d]& FC_*(H_{\lambda_1,\mu})[n] \ar@{=}[d] \\
FC_*(H_{\lambda_1,\lambda})[n] \ar[r]& FC_*(H_{\lambda_1,\mu})[n]\\
FC_*(L_\lambda)[n] \ar[u]\ar[r]& FC_*(H_{\lambda,\mu})[n]\ar[u]
}
$$
and 
$$
\xymatrix{
FC_*(H_{2\lambda_1,\lambda_1+\mu})[n]\ar[r] \ar[d]& FC_*(H_{2\lambda_1,2\mu})[n] \ar@{=}[d] \\
FC_*(H_{2\lambda_1,\lambda+\mu})[n] \ar[r]& FC_*(H_{2\lambda_1,2\mu})[n]\\
FC_*(H_{2\lambda,\lambda+\mu})[n] \ar[u]\ar[r]& FC_*(H_{2\lambda,2\mu})[n]\ar[u]
}
$$
The vertical maps induce maps between the cones of the horizontal maps, and we obtain homology commutative diagrams for the product structures involving the horizontal continuation maps. We denote symbolically the resulting diagram 
$$
\xymatrix{
m_{\big( (\lambda_1,\lambda_1),(\lambda_1,\mu)\big), \big((2\lambda_1,\lambda_1+\mu),(2\lambda_1,2\mu)\big)} 
\ar[d]\\
m_{\big( (\lambda_1,\lambda),(\lambda_1,\mu)\big), \big((2\lambda_1,\lambda+\mu),(2\lambda_1,2\mu)\big)} \\
\ar[u]
m_{\big( (\lambda,\lambda),(\lambda,\mu)\big), \big((2\lambda,\lambda+\mu),(2\lambda,2\mu)\big)}.
}
$$
For $(a,b)$ fixed, $\mu$ fixed and $\lambda_1\le \lambda\ll 0$, all the above maps are isomorphisms. This shows that the products stabilize. 

Similarly, we claim that, for $(a,b)$ fixed, diagram~\eqref{eq:mtilde_V} stabilizes in homology for $\lambda\ll 0$ and $b\le \mu\le |\lambda|$. The argument is similar, based on the homotopy commutative diagrams of continuation maps in which $b\le \mu\le\mu_1\le|\lambda|$: 
$$
\xymatrix{
FC_*(L_{\lambda})[n]\ar[r] \ar@{=}[d]& FC_*(H_{\lambda,\mu})[n] \ar[d] \\
FC_*(L_\lambda)[n] \ar[r]& FC_*(H_{\lambda,\mu_1})[n]
}
$$
and 
$$
\xymatrix{
FC_*(H_{2\lambda,\lambda+\mu})[n]\ar[r] \ar[d]& FC_*(H_{2\lambda,2\mu})[n] \ar[d] \\
FC_*(H_{2\lambda,\lambda+\mu_1})[n] \ar[r]& FC_*(H_{2\lambda,2\mu_1})[n]
}
$$
These induce homology commutative diagrams for the product structures involving the horizontal continuation maps, which we denote symbolically 
$$
\xymatrix{
m_{\big( (\lambda,\lambda),(\lambda,\mu)\big), \big((2\lambda,\lambda+\mu),(2\lambda,2\mu)\big)} 
\ar[d]\\
m_{\big( (\lambda,\lambda),(\lambda,\mu_1)\big), \big((2\lambda,\lambda+\mu_1),(2\lambda,2\mu_1)\big)}
}
$$
The outcome of this discussion is that the groups defined by 
$$
S\H_*^{(a,b)}(\cH_\vee) := \lim\limits_{\stackrel\longrightarrow{\mu\to\infty}}\lim\limits_{\stackrel\longleftarrow{\lambda\to-\infty}}H_*(Cone(c_{(\lambda,\lambda),(\lambda,\mu)})^{(a,b)})
$$
inherit a product 
$$
m:S\H_*^{(a,b)}(\cH_\vee)\otimes S\H_*^{(a,b)}(\cH_\vee)\to S\H_*^{(a+b,2b)}(\cH_\vee).  
$$
We define 
$$
S\H_*(\cH_\vee) := \lim\limits_{\stackrel\longrightarrow{b\to\infty}} \lim\limits_{\stackrel\longleftarrow{a\to-\infty}} S\H_*^{(a,b)}(\cH_\vee)
$$
and this inherits a bilinear product 
$$
m:S\H_*(\cH_\vee)\otimes S\H_*(\cH_\vee)\to S\H_*(\cH_\vee).
$$

\smallskip 

{\it \underline{Case (ii). The family $\cH_\wedge= \{H_{\lambda,\mu}\, : \, \lambda>0>\mu\}$.}} 

Given parameters $\lambda>0>\mu$ we denote 
$$
c_{(\lambda,\mu),(\lambda,\lambda)}:FC_*(H_{\lambda,\mu})[n]\to FC_*(L_\lambda)[n]
$$
the continuation map induced by a monotone homotopy. We can choose the homotopy to be constant in the region $\{r\le 1\}$, and with this choice the continuation map $c_{(\lambda,\mu),(\lambda,\lambda)}$ is canonically identified with the projection $p_{H_{\lambda,\mu}}$ from~\eqref{eq:pHlambdamu}. 
We obtain bilinear maps 

\begin{multline} \label{eq:mtilde_A}
m = m_{\big((\lambda,\mu),(\lambda,\lambda)\big), \big((2\lambda,\lambda+\mu),(2\lambda,2\lambda)\big)} :  \\
\mbox{\scriptsize{$Cone(c_{(\lambda,\mu),(\lambda,\lambda)})^{(a,b)}\otimes Cone(c_{(\lambda,\mu),(\lambda,\lambda)})^{(a,b)}
\to  Cone(c_{(2\lambda,\lambda+\mu),(2\lambda,2\lambda)})^{(a+b,2b)}.$}}
\end{multline}

Given $(a,b)$, the above maps canonically stabilize in homology for $\lambda$ fixed and $\mu$ negative enough. To prove this, consider $\lambda>0>\mu\ge \mu_1$. We then have homotopy commutative diagrams of continuation maps 
$$
\xymatrix{
FC_*(H_{\lambda,\mu})[n]\ar[r] & FC_*(H_{\lambda,\lambda})[n]  \\
FC_*(H_{\lambda,\mu_1})[n] \ar[r]\ar[u] & FC_*(H_{\lambda,\lambda})[n]\ar@{=}[u]
}
$$
and 
$$
\xymatrix{
FC_*(H_{2\lambda,\lambda+\mu})[n]\ar[r] & FC_*(H_{2\lambda,2\lambda})[n]  \\
FC_*(H_{2\lambda,\lambda+\mu_1})[n] \ar[r]\ar[u] & FC_*(H_{2\lambda,2\lambda})[n]\ar@{=}[u] 
}
$$
The vertical maps induce maps between the cones of the horizontal maps, and we obtain homology commutative diagrams for the product structures involving the horizontal continuation maps. We denote symbolically the resulting diagram 
$$
\xymatrix{
m_{\big((\lambda,\mu),(\lambda,\lambda)\big), \big((2\lambda,\lambda+\mu),(2\lambda,2\lambda)\big)}\\
m_{\big((\lambda,\mu_1),(\lambda,\lambda)\big), \big((2\lambda,\lambda+\mu_1),(2\lambda,2\lambda)\big)} \ar[u]. 
}
$$
For $(a,b)$ fixed, $\lambda$ fixed and $0\gg \mu\ge \mu_1$, these maps are isomorphisms. This shows that the products stabilize. 

Similarly, for $(a,b)$ fixed, diagram~\eqref{eq:mtilde_A} stabilizes in homology for $0\gg \mu$ and $b\le \lambda\le |\mu|$. The argument is similar, based on the homotopy commutative diagrams of continuation maps for parameters $|\mu|\ge \lambda_1\ge \lambda>0>\mu$:  
$$
\xymatrix{
FC_*(H_{\lambda,\mu})[n]\ar[r] \ar@{=}[d]& FC_*(H_{\lambda,\lambda})[n] \ar[d] \\
FC_*(H_{\lambda,\mu})[n] \ar[r]& FC_*(H_{\lambda,\lambda_1})[n]\\
FC_*(H_{\lambda_1,\mu})[n] \ar[u]\ar[r]& FC_*(H_{\lambda_1,\lambda_1})[n]\ar[u]
}
$$
and 
$$
\xymatrix{
FC_*(H_{2\lambda,\lambda+\mu})[n]\ar[r] \ar[d]& FC_*(H_{2\lambda,2\lambda})[n] \ar[d] \\
FC_*(H_{2\lambda,\lambda_1+\mu})[n] \ar[r]& FC_*(H_{2\lambda,2\lambda_1})[n]\\
FC_*(H_{2\lambda_1,\lambda_1+\mu})[n] \ar[u]\ar[r]& FC_*(H_{2\lambda_1,2\lambda_1})[n]\ar[u]
}
$$
The corresponding homology commutative diagram between products on cones is 
$$
\xymatrix{
m_{\big( (\lambda,\mu),(\lambda,\lambda)\big), \big((2\lambda,\lambda+\mu),(2\lambda,2\lambda)\big)} 
\ar[d]\\
m_{\big( (\lambda,\mu),(\lambda,\lambda_1)\big), \big((2\lambda,\lambda_1+\mu),(2\lambda,2\lambda_1)\big)} \\
\ar[u]
m_{\big( (\lambda_1,\mu),(\lambda_1,\lambda_1)\big), \big((2\lambda_1,\lambda_1+\mu),(2\lambda_1,2\lambda_1)\big)}.
}
$$
The outcome of the discussion is that the groups defined by 
$$
S\H_*^{(a,b)}(\cH_\wedge) := \lim\limits_{\stackrel\longrightarrow{\lambda\to\infty}}
\lim\limits_{\stackrel\longleftarrow{\mu\to-\infty}}
H_*(Cone(c_{(\lambda,\mu),(\lambda,\lambda)})^{(a,b)})
$$
inherit a product 
$$
m:S\H_*^{(a,b)}(\cH_\wedge)\otimes S\H_*^{(a,b)}(\cH_\wedge)\to S\H_*^{(a+b,2b)}(\cH_\wedge).  
$$
We define 
$$
S\H_*(\cH_\wedge) := \lim\limits_{\stackrel\longrightarrow{b\to\infty}} \lim\limits_{\stackrel\longleftarrow{a\to-\infty}} S\H_*^{(a,b)}(\cH_\wedge)
$$
and this inherits a product 
$$
m:S\H_*(\cH_\wedge)\otimes S\H_*(\cH_\wedge)\to S\H_*(\cH_\wedge).
$$

As in Example~\ref{example:Klambda}, the associativity of the products on $S\H_*(\cH_\vee)$ and $S\H_*(\cH_\wedge)$ can be proved directly by incorporating arity 3 operations in the discussion. However, associativity also follows a posteriori from Theorems~\ref{thm:duality-cone} and~\ref{thm:Rabinowitz-cone} below, according to which we have natural isomorphisms $S\H_*(\cH_\vee)\simeq S\H_*(\cH_\wedge)\simeq S\H_*(\p W)$ compatible with the products. 
In view of this, we will refer to $S\H_*(\cH_\vee)$ and $S\H_*(\cH_\wedge)$ as being rings. 
\end{example}

\section{Rabinowitz Floer homology and cohomology rings via cones} \label{sec:RFH-cones}

In this section we discuss Rabinowitz Floer homology and cohomology rings from the cone perspective. In \S\ref{sec:RFH-hom-ring} we give two cone descriptions of the Rabinowitz Floer homology ring, in \S\ref{sec:alternative_product} we use a cone description to define the continuation product on Rabinowitz Floer cohomology, and in \S\ref{sec:unit} we construct the unit for this product.

\subsection{The Rabinowitz Floer homology ring}\label{sec:RFH-hom-ring}

Let $W$ be a Liouville domain with symplectic completion $\wh W$. Let us recall the definition of the Rabinowitz Floer homology ring $S\H_*(\p W)$ from~\cite{CO} in terms of the family of Hamiltonians $H_{\lambda,\mu}$, $\lambda<0<\mu$ from Example~\ref{example:Hlambdamu}. For a fixed finite action interval $(a,b)$ we set  
$$
S\H_*^{(a,b)}(\p W)=\lim\limits_{\stackrel\longrightarrow{\mu\to\infty}}\lim\limits_{\stackrel\longleftarrow{\lambda\to-\infty}} FH_*^{(a,b)}(H_{\lambda,\mu})[n], 
$$
and further 
$$
S\H_*(\p W)=\lim\limits_{\stackrel\longrightarrow{b\to\infty}}\lim\limits_{\stackrel\longleftarrow{a\to-\infty}}S\H_*^{(a,b)}(\p W). 
$$
The product is defined from the pair-of-pants product
$$
FH_*^{(a,b)}(H_{\lambda,\mu})[n]\otimes FH_*^{(a,b)}(H_{\lambda,\mu})[n]\to FH_*^{(a+b,2b)}(H_{2\lambda,2\mu})[n], 
$$
which induces 
$$
S\H_*^{(a,b)}(\p W) \otimes S\H_*^{(a,b)}(\p W)\to S\H_*^{(a+b,2b)}(\p W)
$$
and further 
$$
S\H_*(\p W)\otimes S\H_*(\p W)\to S\H_*(\p W).
$$
The main result of~\cite{CHO-PD} is the following. 

\begin{theorem}[{Poincar\'e duality~\cite[Theorem~4.8]{CHO-PD}}]  \label{thm:coh-product}
Let $W$ be a Liouville domain of dimension $2n$. 

(i) Rabinowitz Floer cohomology $SH^*(\p W)$ carries a canonical unital degree $n-1$ product. 

(ii) There is a canonical Poincar\'e duality isomorphism of unital rings 
$$
PD: S\H_*(\p W)\simeq SH^{1-n-*}(\p W).
$$ 
\end{theorem}

From the perspective of~\cite{CHO-PD} the proofs of (i) and (ii) are inextricably related: the canonical isomorphism $PD$ induces a canonical unital product on Rabinowitz Floer cohomology. One goal of the present paper is to disconnect these two sides of Theorem~\ref{thm:coh-product} and give an independent proof of Poincar\'e duality in terms of cones. 

We begin by realizing Rabinowitz Floer homology in terms of cones.

\begin{theorem} \label{thm:Rabinowitz-cone}
We have canonical isomorphisms which respect the products 
$$
S\H_*(\{K_\lambda\})\simeq S\H_*(\cH_\vee) \simeq S\H_*(\p W).
$$
\end{theorem}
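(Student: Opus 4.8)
The plan is to establish the two isomorphisms in two separate steps, both exploiting the continuation-map machinery already developed. For the isomorphism $SH_*(\cH_\vee)\simeq SH_*(\p W)[n]$, fix a finite action interval $(a,b)$ with $a<0<b$ and parameters $\lambda<0<\mu$. The key observation is that the cone of the continuation map $c_{(\lambda,\lambda),(\lambda,\mu)}:FC_*(L_\lambda)[n]\to FC_*(H_{\lambda,\mu})[n]$ computes, at action level $(a,b)$ with $\mu\gg b$ and $\lambda\ll a$, the same homology as $FH^{(a,b)}_*(H_{\lambda,\mu})$. Indeed, when the homotopy is taken constant on $\{r\le 1\}$, the map $c_{(\lambda,\lambda),(\lambda,\mu)}$ is identified with the inclusion $i_{H_{\lambda,\mu}}$ of the type-$F$ subcomplex, and for $|\lambda|$ large enough relative to the action window the orbits of type $F$ (those of $L_\lambda$, contributing to $SH^{<0}(W)$ in the limit) are acyclic within the window; more precisely, the cone of an inclusion $\cM\hookrightarrow\cB$ of a subcomplex is quasi-isomorphic to the quotient $\cB/\cM$ via the projection, and this quotient here is $FC^I_*(H_{\lambda,\mu})[n]$, whose homology in the appropriate limit over $\lambda,\mu$ and $a,b$ is precisely $SH_*(\p W)[n]$. (In fact $FH^{(a,b)}_*(H_{\lambda,\mu})$ itself stabilizes to $SH_*(\p W)[n]$, so one compares the cone with the full complex $FC_*(H_{\lambda,\mu})[n]$ rather than with the quotient: the cone of $i_{H_{\lambda,\mu}}$ receives a chain map from $FC_*(H_{\lambda,\mu})[n]$ which is a quasi-isomorphism in the limit, because $FC^F_*(L_\lambda)$ becomes acyclic in the action window.) One must then check that the pair-of-pants product on $FH^{(a,b)}_*(H_{\lambda,\mu})$, as recalled just before the theorem, corresponds under this identification to the product $m$ on the cone constructed in Example~\ref{example:Hlambdamu}; this follows because the Floer moduli spaces defining both products are the same curves, the cone product $m$ being assembled precisely so as to reproduce the pair-of-pants count on the quotient/total complex.

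For the isomorphism $SH_*(\{K_\lambda\})\simeq SH_*(\cH_\vee)$, I would exhibit a system of continuation maps relating the two families of Hamiltonians and invoke homotopy invariance of $A_2$-triples (Proposition~\ref{prop:arity2homotopyinvariance} and the subsequent discussion). Concretely, for fixed action window $(a,b)$ and suitable slopes, there are monotone homotopies from $K_{\lambda_-}$ to $L_{\lambda}$ (say $\lambda_-$ very negative, $\lambda$ negative but moderate) and from $K_{\lambda_+}$ to $H_{\lambda,\mu}$ (with $\lambda_+$ comparable to $\mu$), fitting into a homotopy-commutative square whose vertical arrows are $c_{\lambda_-,\lambda_+}$ on one side and $c_{(\lambda,\lambda),(\lambda,\mu)}$ on the other, and whose horizontal arrows are continuation maps that become chain homotopy equivalences in the relevant action window (because the extra orbits created differ only in the region $\{1/2\le r\le 1\}$, which carries no orbits in the window when $|\lambda|$ is large). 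This is a homotopy retract of triples in the sense of Definition~\ref{defi:homotopy_retract_triple}, hence the induced products on the cones agree up to isomorphism; passing to the first-inverse-then-direct limits and checking compatibility with action-truncation maps yields the ring isomorphism. The combinatorial bookkeeping of slopes ensuring that all the squares commute and that the output slopes ($2\lambda$, $\lambda+\mu$, etc.) line up with the target cones is routine but must be done carefully, exactly as in the stabilization arguments of Example~\ref{example:Hlambdamu}.

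The main obstacle I expect is the second step: matching up the two limiting procedures. $SH_*(\{K_\lambda\})$ is defined by a first-inverse-then-direct limit over $(\lambda_-,\lambda_+)$ and then over $(a,b)$, while $SH_*(\cH_\vee)$ involves a first-inverse-then-direct limit over $(\lambda,\mu)$; these are cofinal sub-systems of a common directed system, but verifying cofinality while simultaneously keeping track of which slope inequalities are needed for the product maps $m$ to be defined (the conditions $\lambda_-+\lambda_+\le\lambda'_-$, $2\lambda_+\le\lambda'_+$ and their $\cH_\vee$-analogues) requires threading a single choice of homotopies through the whole tower. A secondary subtlety, flagged already in the proof of Theorem~\ref{thm:duality-cone}, is that the horizontal equivalences do not respect the action filtrations on the nose (only the ``$\pi$''-type maps do, not the ``$\iota$''-type), so one should work with total complexes for well-chosen parameter values and recover the filtered statement a posteriori, rather than trying to filter everything in sight. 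Once these limit-compatibility issues are handled, the identification of products is essentially forced by the fact that all the operations in sight are built from the same Floer moduli spaces.
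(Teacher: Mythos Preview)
Your proposal is correct and follows essentially the same two-step strategy as the paper: homotopy invariance of $A_2$-triples via a continuation square for $SH_*(\{K_\lambda\})\simeq SH_*(\cH_\vee)$, and the cone-of-an-inclusion identification for $SH_*(\cH_\vee)\simeq SH_*(\p W)[n]$.

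Two places where the paper's execution is cleaner than yours. First, for the continuation square: the paper simply observes $K_\lambda=H_{0,\lambda}$ and uses the continuation maps $FC_*(K_\lambda)[n]\leftrightarrows FC_*(L_\lambda)[n]$ and $FC_*(K_\mu)[n]\leftrightarrows FC_*(H_{\lambda,\mu})[n]$ directly, with the \emph{same} slope parameters on both sides; your extra parameter $\lambda_-$ is unnecessary. Second, your justification for product compatibility in the second isomorphism (``the Floer moduli spaces are the same curves'') is too quick---the cone product $m$ has components $\tau_L,\tau_R,\sigma,\beta$ beyond $\mu$. The paper's argument is sharper: for $\lambda\ll 0$ the type-$F$ orbits fall below the action window, so every element of $Cone(i_{H_{\lambda,\mu}})^{(a,b)}$ has the form $(A,0)$; the explicit formula~\eqref{eq:product_on_cone} then gives $m((A,0),(A',0))=(\mu(A,A'),0)$, and the projection $\pi$ is visibly a ring map. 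Your parenthetical wavering between the quotient $FC^I_*$ and the full complex $FC_*$ is symptomatic of not having this clean mechanism in hand. The obstacles you anticipate (filtration behaviour of $\iota$-type maps, matching the limit procedures) are exactly the ones the paper handles, in the same way you suggest.
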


\begin{proof} 

{\it Step~1. We prove the first isomorphism $S\H_*(\{K_\lambda\})\simeq S\H_*(\cH_\vee)$. }

Denote for convenience 
$$
H_{0,\lambda}=K_\lambda.  
$$

Consider $\lambda<0<\mu$. Then we have a homotopy commutative diagram of continuation maps 
$$
\xymatrix{
FC_*(K_\lambda)[n]\ar[d]_{c} \ar@<.75ex>[rr]^-\pi_-\sim && 
FC_*(L_\lambda)[n] \ar[d]^{c_\vee}  \ar@<.75ex>[ll]^-\iota \\
FC_*(K_\mu)[n] \ar@<.75ex>[rr]^-p_-\sim&& FC_*(H_{\lambda,\mu})[n] \ar@<.75ex>[ll]^-i
}
$$
in which the horizontal maps are chain homotopy equivalences, and where the maps $\pi$ and $p$ preserve the filtration, but the maps $\iota$ and $i$ do not. The left vertical map $c=c_{(0,\lambda),(0,\mu)}$ is involved in the definition of $S\H_*(\{K_\lambda\})$. The right vertical map $c_\vee=c_{(\lambda,\lambda),(\lambda,\mu)}$ is involved in the definition of $S\H_*(\cH_\vee)$. See Figure~\ref{fig:cone-iso-KHvee-source}. 

\begin{figure} [ht]
\centering
\input{cone-iso-KHvee-source.pstex_t}
\caption{Isomorphism $S\H_*(\{K_\lambda\})\simeq S\H_*(\cH_\vee)$: continuation diagram at the source.}
\label{fig:cone-iso-KHvee-source}
\end{figure} 

Consider also the homotopy commutative diagram of continuation maps (Figure~\ref{fig:cone-iso-KHvee-target})
$$
\xymatrix{
FC_*(K_{\lambda+\mu})[n]\ar[d]_{c_\wedge} \ar@<.75ex>[rr]^-\pi_-\sim && 
FC_*(H_{2\lambda,\lambda+\mu})[n] \ar[d]^{c_\vee}  \ar@<.75ex>[ll]^-\iota \\
FC_*(K_{2\mu})[n] \ar@<.75ex>[rr]^-p_-\sim&& FC_*(H_{2\lambda,2\mu})[n] \ar@<.75ex>[ll]^-i
}
$$

\begin{figure} [ht]
\centering
\input{cone-iso-KHvee-target.pstex_t}
\caption{Isomorphism $S\H_*(\{K_\lambda\})\simeq S\H_*(\cH_\vee)$: continuation diagram at the target.}
\label{fig:cone-iso-KHvee-target}
\end{figure} 

By Proposition~\ref{prop:arity2homotopyinvariance} and the subsequent discussion on homotopy invariance for $A_2$-triples, we obtain an isomorphism between product structures 
\begin{equation}\label{eq:iso-product-KH}
m_{\big((0,\lambda),(0,\mu)\big), \big((0,\lambda+\mu),(0,2\mu)\big)} \simeq 
m_{\big((\lambda,\lambda),(\lambda,\mu)\big), \big((2\lambda,\lambda+\mu),(2\lambda,2\mu)\big)}.
\end{equation}

In order to conclude the proof, we fix an action interval $(a,b)$ with $a<0<b$ and consider the parameter values $\lambda=a$, $\mu=b$ in the previous setup. A stabilization argument as in Example~\ref{example:Hlambdamu} shows that we have isomorphisms 

{\scriptsize{
$$
\xymatrix
@C=20pt
{
H_*(Cone(c_{(a,a),(a,b)}))\otimes H_*(Cone(c_{(a,a),(a,b)})) \ar[rr]^-{m}\ar[d]^\simeq && H_*(Cone(c_{(2a,a+b),(2a,2b)})) \ar[d]^\simeq \\
S\H_*^{(a,b)}(\cH_\vee)\otimes S\H_*^{(a,b)}(\cH_\vee) \ar[rr]^-{m}&& S\H_*^{(a+b,2b)}(\cH_\vee)
}
$$
}}

and 

{\scriptsize{
$$
\xymatrix{
H_*(Cone(c_{(0,a),(0,b)}))\otimes H_*(Cone(c_{(0,a),(0,b)})) \ar[rr]^-{m}\ar[d]^\simeq && H_*(Cone(c_{(0,a+b),(0,2b)})) \ar[d]^\simeq \\
S\H_*^{(a,b)}(\{K_\lambda\})\otimes S\H_*^{(a,b)}(\{K_\lambda\}) \ar[rr]^-{m}&& S\H_*^{(a+b,2b)}(\{K_\lambda\})
}
$$
}}

The top lines in the above two diagrams are isomorphic by~\eqref{eq:iso-product-KH}, and we infer the isomorphism of the bottom lines. This isomorphism is compatible with action truncation maps and yields an isomorphism of rings 
$$
S\H_*(\cH_\vee)\simeq S\H_*(\{K_\lambda\}).
$$

\bigskip 

{\it Step~2. We prove the second isomorphism $S\H_*(\cH_\vee) \simeq S\H_*(\p W)$. }

Consider parameters $\lambda<0<\mu$. The group and product structure on $S\H_*(\cH_\vee)$ are built from the cone of the continuation map 
$$
c_\vee=c_{(\lambda,\lambda),(\lambda,\mu)}:FC_*(L_\lambda)[n]\to FC_*(H_{\lambda,\mu})[n]. 
$$
Recall that the 1-periodic orbits of $H_{\lambda,\mu}$ fall into two classes, $F$ consisting of orbits located in a neighborhood of $\{r\le 1/2\}$ and $I$ consisting of orbits located in a neighborhood of $\{r=1\}$, giving rise to a subcomplex $FC_*^F(H_{\lambda,\mu})$ and to a quotient complex $FC_*^I(H_{\lambda,\mu})$. We have a canonical identification $FC_*(L_\lambda)\equiv FC_*^F(H_{\lambda,\mu})$ and $c_\vee[-n]\equiv 
i_{H_{\lambda,\mu}}$, the inclusion of $FC_*^F(H_{\lambda,\mu})$ into $FC_*(H_{\lambda,\mu})$. 

The projection $\pi:Cone(i_{H_{\lambda,\mu}})\to FC_*^I(H_{\lambda,\mu})[n]$ is a homotopy equivalence (see for example~\cite[Lemma~4.3]{CO}). Moreover, the map $\pi$ does not increase the action, and also its homotopy inverse 
$$\left(\begin{array}{c} 0 \\\one \\-\p_{I,F}\end{array}\right)[-n]:FC_*^I(H_{\lambda,\mu})\to FC_*^F(H_{\lambda,\mu})\oplus FC_*^I(H_{\lambda,\mu})\oplus FC_{*-1}^F(H_{\lambda,\mu})$$ 
does not increase the action. As a consequence, the induced maps  
$$
\pi^{(a,b)}:Cone(i_{H_{\lambda,\mu}})^{(a,b)}\to FC_*^{I,(a,b)}(H_{\lambda,\mu})[n]
$$
are also homotopy equivalences for any action interval $(a,b)$. 

Let us now fix such a finite action interval $(a,b)$. For $\lambda\ll 0$ the action of the orbits in the group $F$ falls below the action window. Thus the only elements in $Cone(i_{H_{\lambda,\mu}})^{(a,b)}$ are of the form $(A,0)$, where $A\in FC_*(H_{\lambda,\mu})[n]$, and actually $A\in FC_*^I(H_{\lambda,\mu})[n]$. The product of two such elements in $Cone(i_{H_{2\lambda,2\mu}})$ is considered modulo action $\le a+b$, and as such is also represented for $\lambda\ll 0$ by elements in $FC_*(H_{2\lambda,2\mu})[n]$, and actually in $FC_*^I(H_{2\lambda,2\mu})[n]$. We then have 
\begin{align*}
\pi^{(a+b,2b)}m((A,0),(A',0)) & = \pi^{(a+b,2b)}(\mu(A,A'),0)\\
& =\mu(A,A') \, \mathrm{mod} \le a+b. 
\end{align*}
Thus $\pi$ interchanges in the relevant action window the product $m$ on $Cone(i_{H_{\lambda,\mu}})^{(a,b)}$ with the pair-of-pants product $\mu$ on $FH_*^{(a,b)}(H_{\lambda,\mu})[n]$. 

These identifications are compatible with the limits involved in the definitions of $S\H_*(\cH_\vee)$ and $S\H_*(\p W)$. The desired isomorphism of rings follows. 
\end{proof}

\subsection{The Rabinowitz Floer cohomology ring} \label{sec:alternative_product} 

Our original definition of the secondary product on $SH^*(\p W)$ was intimately tied to the proof of Poincar\'e duality. We give here an alternative definition which is independent of that proof. We prove the equivalence of the two definitions in Proposition~\ref{prop:products_coincide} and its Corollary~\ref{cor:products_coincide}.  

We consider the family of Hamiltonians $\cH_\wedge=\{H_{\mu,\lambda}\, : \, \mu>0>\lambda\}$. Define 
\begin{equation} \label{eq:SHhat}
\wh{SH}_*(\p W) := \lim\limits_{\stackrel\longrightarrow{b\to\infty}}\lim\limits_{\stackrel\longleftarrow{a\to-\infty}}\lim\limits_{\stackrel\longleftarrow{\mu\to\infty,\, \lambda\to-\infty}}FH_*^{(a,b)}(H_{\mu,\lambda}).
\end{equation}
(This is the same as $SH_*(\p W\times I,\p W\times \p I)$ from~\cite{CO}.)

We consider the continuation map $c_\wedge=c_{(\mu,\lambda),(\mu,\mu)}:FC_*(H_{\mu,\lambda})[n]\to FC_*(L_\mu)[n]$ and the associated map $\sigma$ from the definition of the multiplication $m:Cone(c_{(\mu,\lambda),(\mu,\mu)})^{\otimes 2}\to Cone(c_{(2\mu,\lambda+\mu),(2\mu,2\mu)})$. Thus 
$$
\sigma:FC_*(H_{\mu,\lambda})[n]\otimes FC_*(H_{\mu,\lambda})[n] \to  FC_{*+1}(H_{2\mu,\lambda+\mu})[n]. 
$$
(We could have taken the target of $\sigma$ to be also $FC_{*+1}(H_{2\mu,2\lambda})[n]$, but the above choice is in line with the previous discussion.) 

Recall that the map $\sigma$ satisfies the relation $[\p,\sigma]=m_R(\one\otimes c_\wedge)-m_L(c_\wedge\otimes\one)$. 
The relevant observation now is that, for a fixed finite action interval $(a,b)$, the filtered map 
$$
\sigma^{(a,b)}:FC_*^{(a,b)}(H_{\mu,\lambda})[n]\otimes FC_*^{(a,b)}(H_{\mu,\lambda})[n] \to FC_{*+1}^{(a+b,2b)}(H_{2\mu,\lambda+\mu})[n]
$$
is actually a chain map as soon as $\mu>2b$. Indeed, for $\mu>2b$ the 1-periodic orbits of the Hamiltonian $L_\mu$ have action larger than $b$, hence the continuation map $c_\wedge$ vanishes on $FC_*^{\le b}(H_{\mu,\lambda})[n]$. 
We obtain a degree $1$ product 
$$
\sigma^{(a,b)}:FH_*^{(a,b)}(H_{\mu,\lambda})[n]^{\otimes 2}\to FH_{*+1}^{(a+b,2b)}(H_{2\mu,\lambda+\mu})[n]
$$
for $\mu>2b$. This product stabilizes for $(a,b)$ fixed as $\lambda\to-\infty$ and $\mu\to\infty$, and it is compatible with the tautological maps given by enlarging the action window. As such, it induces a degree $1$ product 
$$
\sigma:\wh{SH}_*(\p W)[n]\otimes \wh{SH}_*(\p W)[n]\to \wh{SH}_{*+1}(\p W)[n].
$$
In view of the canonical isomorphism $\wh{SH}_*(\p W)\simeq SH^{-*}(\p W)$, we infer a degree $-1$ product 
$$
\sigma^\vee:SH^*(\p W)[-n] \otimes SH^*(\p W)[-n] \to SH^*(\p W)[-n].
$$

It is useful to recast $\sigma$ and $\sigma^\vee$ as degree $0$ products. Our convention is to use the shift $\usigma=-\sigma[-1,-1;-1]$, i.e. $\sigma=\usigma[1,1;1]$, which defines a degree $0$ product 
$$
\usigma:\wh{SH}_*(\p W)[n-1]\otimes \wh{SH}_*(\p W)[n-1]\to \wh{SH}_{*+1}(\p W)[n-1]. 
$$
Dually, we have a degree $0$ product in cohomology 
$$
\usigma^\vee:SH^*(\p W)[1-n] \otimes SH^*(\p W)[1-n] \to SH^*(\p W)[1-n].
$$
This is our alternative definition of the product on $SH^*(\p W)[1-n]$. 

\begin{definition}
We call the products $\usigma$ and $\usigma^\vee$ the \emph{continuation secondary products on $\wh{SH}_*(\p W)[n-1]$ and $SH^*(\p W)[1-n]$}. In order to emphasize the role played by continuation maps, we denote them by $\usigma_c$ and $\usigma^c$ in the Introduction and in~\S\ref{sec:pop-GH-iso}. Their unshifted versions are then denoted $\sigma_c$ and $\sigma^c$.  
\end{definition}

\subsection{The unit in cohomology} \label{sec:unit}

The ring $(\wh{SH}_*(\p W)[n-1],\usigma)$ is unital and we give in this section a description of the unit. As before, we consider the family of Hamiltonians $\cH_\wedge = \{H_{\mu,\lambda}\, : \, \mu>0>\lambda\}$. The starting point of the construction is to consider the family of cycles 
$$
U_{\mu,\lambda}\in FC_0(H_{\mu,\lambda})[n-1] 
$$
defined as follows. 

Consider a Morse perturbation of $H_{\mu,\lambda}$ in the region $\{r\le 1/2\}$ with a single minimum denoted $1_\mu$. (We can assume w.l.o.g. that the perturbation is independent of $\lambda$ and also independent of $\mu$ up to translating the values of the function. Also, for further use, we can assume w.l.o.g. that the actions of all the constant orbits are slightly larger than $\mu/2$.) The Conley-Zehnder degree of $1_\mu$ is equal to $n$, and we define 
$$
U_{\mu,\lambda}=\p(1_\mu), 
$$
where $\p$ is the Floer differential for the complex $FC_*(H_{\mu,\lambda})[n-1]$.
The reason for denoting the minimum $1_\mu$ is that it is a Floer cycle in $FC_*(L_\mu)[n]$ which defines the unit-up-to-continuation for $FH_*(L_\mu)[n]$. Note also that $U_{\mu,\lambda}\in FC_*^I(H_{\mu,\lambda})$, the subcomplex generated by orbits located in a neighborhood of $\{r=1\}$. 
Moreover, since $U_{\mu,\lambda}$ is the differential of an element of action (slightly larger than) $\mu/2$, we have 
$$
U_{\mu,\lambda}\in FC_0^{\le \frac\mu2}(H_{\mu,\lambda})[n-1]. 
$$
Given $a<\mu/2$ we denote  
$$
U_{\mu,\lambda}^{a} \in FC_0^{(a,\frac\mu2)}(H_{\mu,\lambda})[n-1]
$$
the truncation of the cycle $U_{\mu,\lambda}$ in action $> a$. 

The following Lemma is a variant of~\cite[Lemma~7.4]{CO}. 

\begin{lemma} \label{lem:alternative-whSH}
The group $\wh{SH}_*(\p W)$ defined by~\eqref{eq:SHhat} coincides with  
$$
\lim\limits_{\stackrel\longrightarrow{b\to\infty}}\lim\limits_{\stackrel\longleftarrow{a\to-\infty}}\lim\limits_{\stackrel\longleftarrow{\lambda\to-\infty}}FH_*^{(a,b)}(H_{2b,\lambda}).
$$
\qed\end{lemma}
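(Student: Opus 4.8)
\emph{Proof strategy.} The plan is to reduce to a single fixed action window and then run, for that window, the action-filtration argument of~\cite[Lemma~7.4]{CO}. First I would note that the pairs $(a,b)$ with $a<0<b$ are cofinal among all finite action windows, so it suffices to produce, for each such $(a,b)$, a natural isomorphism
$$
\lim\limits_{\stackrel\longleftarrow{\mu\to\infty,\, \lambda\to-\infty}}FH_*^{(a,b)}(H_{\mu,\lambda})\;\simeq\;\lim\limits_{\stackrel\longleftarrow{\lambda\to-\infty}}FH_*^{(a,b)}(H_{2b,\lambda})
$$
which is compatible with enlarging the window, and hence with the outer limit $\lim\limits_{\stackrel\longrightarrow{b\to\infty}}\lim\limits_{\stackrel\longleftarrow{a\to-\infty}}$ applied to both sides. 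Recall, as in the discussion preceding the lemma and in~\cite[\S2.3]{CO}, that the $1$-periodic orbits of $H_{\mu,\lambda}$ with $\mu>0>\lambda$ are of type $F$, located near $\{r\le 1/2\}$, or of type $I$, located near $\{r=1\}$, that the type-$I$ orbits form a subcomplex (Lemmas~2.2 and~2.3 in~\cite{CO}), and that with the Morse perturbation chosen as in~\S\ref{sec:unit} all type-$F$ generators have Hamiltonian action close to $\mu/2$.

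The crux is the stabilization claim: for $\lambda$ fixed and $\mu'\ge\mu\ge 2b$, the continuation map $FH_*^{(a,b)}(H_{\mu',\lambda})\to FH_*^{(a,b)}(H_{\mu,\lambda})$ is an isomorphism. Indeed, for $\mu\ge 2b$ the type-$F$ generators have action $\approx\mu/2\ge b$, hence lie outside the window, so $FC_*^{(a,b)}(H_{\mu,\lambda})$ is generated entirely by type-$I$ orbits; these correspond to closed Reeb orbits of $\p W$ lying in a range depending only on $(a,b)$ and not on $\mu$, and, choosing the Hamiltonians $H_{\mu,\lambda}$ with $\mu\ge 2b$ to all coincide on a neighborhood of $\{r\ge 1\}$ and to differ only over $\{r<1\}$, both the restriction of the Floer differential and the restrictions of the continuation maps to these type-$I$ generators are supported near $\{r=1\}$ and are independent of $\mu$. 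This is exactly the mechanism of~\cite[Lemma~7.4]{CO}. Consequently, for each fixed $\lambda$, the structure maps of the tower identify $\lim\limits_{\stackrel\longleftarrow{\mu\to\infty}}FH_*^{(a,b)}(H_{\mu,\lambda})$ canonically with $FH_*^{(a,b)}(H_{2b,\lambda})$.

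It then remains to assemble the pieces. The sub-poset $\{(\mu,\lambda):\mu\ge 2b\}$ is cofinal in $\{(\mu,\lambda):\mu>0>\lambda\}$ (replace $\mu$ by $\max(\mu,2b)$), so the double inverse limit over $(\mu,\lambda)$ may be computed over this sub-poset, i.e.\ as the iterated limit $\lim\limits_{\stackrel\longleftarrow{\lambda\to-\infty}}\lim\limits_{\stackrel\longleftarrow{\mu\ge 2b}}FH_*^{(a,b)}(H_{\mu,\lambda})$, which by the previous paragraph equals $\lim\limits_{\stackrel\longleftarrow{\lambda\to-\infty}}FH_*^{(a,b)}(H_{2b,\lambda})$. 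All maps used — Floer continuation maps and action-truncation/window-enlargement maps — are natural, so the resulting isomorphisms commute with enlarging $(a,b)$ and with the outer limits; applying $\lim\limits_{\stackrel\longrightarrow{b\to\infty}}\lim\limits_{\stackrel\longleftarrow{a\to-\infty}}$ then identifies $\wh{SH}_*(\p W)$ of~\eqref{eq:SHhat} with the iterated limit in the statement. The main obstacle is the content of the crux step: the precise action estimates separating types $F$ and $I$, and the $\mu$-independence of the type-$I$ subcomplex and of the continuation maps between the relevant windows. Since these repeat almost verbatim the proof of~\cite[Lemma~7.4]{CO}, I would present them only briefly, emphasising only the inequality $\mu\ge 2b$ that forces the type-$F$ part out of the window.
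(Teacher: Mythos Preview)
Your proposal is correct and follows exactly the route the paper intends: the paper gives no proof beyond the remark that the lemma is a variant of~\cite[Lemma~7.4]{CO}, and your argument---push the type-$F$ orbits out of the window via the inequality $\mu\ge 2b$, then use cofinality and $\mu$-independence of the type-$I$ subcomplex---is precisely that variant spelled out.
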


\begin{lemma} \label{lem:U} 
The collection of classes $[U_{2b,\lambda}^a]\in FH_0^{(a,b)}(H_{2b,\lambda})[n-1]$, $a<0<b$, $\lambda<0$ defines a class 
$$
U= \lim\limits_{\stackrel\longrightarrow{b\to\infty}}\lim\limits_{\stackrel\longleftarrow{a\to-\infty}}\lim\limits_{\stackrel\longleftarrow{\lambda\to-\infty}} [U_{2b,\lambda}^a] \in \wh{SH}_0(\p W)[n-1]. 
$$
\end{lemma}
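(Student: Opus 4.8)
The plan is to verify that the family $\{[U_{2b,\lambda}^a]\}$ is compatible with all three limit operations appearing in the reformulation of $\wh{SH}_0(\p W)$ given by Lemma~\ref{lem:alternative-whSH}, and hence assembles into a well-defined element. First I would check compatibility with the inverse limit over $\lambda\to-\infty$: for $\lambda_1\le\lambda<0$ the continuation map $FC_*(H_{2b,\lambda_1})\to FC_*(H_{2b,\lambda})$ is, by our normalization, the identity on the subcomplex of orbits near $\{r\le 1/2\}$ where the distinguished minimum $1_{2b}$ lives, and it intertwines the respective Floer differentials; since $U_{2b,\lambda}=\p(1_{2b})$ is built from $1_{2b}$, the continuation map sends $U_{2b,\lambda_1}$ to $U_{2b,\lambda}$ (after action truncation), so the classes $[U_{2b,\lambda}^a]$ are consistent in the inverse system in $\lambda$.

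Next I would treat the inverse limit over $a\to-\infty$ and the direct limit over $b\to\infty$. Compatibility in $a$ is essentially tautological: enlarging the action window downward, i.e. passing from $(a',b)$ to $(a,b)$ with $a'\le a$, sends the truncated cycle $U_{2b,\lambda}^{a'}$ to $U_{2b,\lambda}^{a}$ by definition of the truncation maps, and these commute with the Floer differential. For the direct limit in $b$, the point is that the relevant map in the direct system $FH_*^{(a,b)}(H_{2b,\lambda})\to FH_*^{(a,b')}(H_{2b',\lambda})$ for $b\le b'$ is induced by a continuation map $FC_*(H_{2b,\lambda})\to FC_*(H_{2b',\lambda})$ which we may again choose to be the identity near $\{r\le 1/2\}$; hence it carries $1_{2b}$ to $1_{2b'}$ and therefore $U_{2b,\lambda}=\p(1_{2b})$ to $U_{2b',\lambda}=\p(1_{2b'})$, compatibly with the action truncations (here one uses that the constant orbits all have action slightly above $2b/2=b$, so that on the window $(a,b)$ the truncated cycles still match up). Throughout I would also note that $U_{2b,\lambda}$ is a genuine cycle: $\p U_{2b,\lambda}=\p^2(1_{2b})=0$, so $[U_{2b,\lambda}^a]$ makes sense once one checks $U_{2b,\lambda}$ lies in action $\le b$, which is Lemma~\ref{lem:U}'s hypothesis $U_{\mu,\lambda}\in FC_0^{\le\mu/2}$ with $\mu=2b$.

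The main obstacle I expect is bookkeeping the interplay between action truncation and the choice of continuation homotopies: one must verify that the various continuation maps can simultaneously be chosen to be the identity in the region $\{r\le 1/2\}$ (so as to preserve the distinguished minimum on the nose) while still being action-nonincreasing and while being compatible with one another up to chain homotopy in the directed systems. This is the content of the normalizations already set up in Example~\ref{example:Hlambdamu} and in the construction of $1_\mu$, so the argument is a matter of assembling those facts carefully rather than proving anything genuinely new. Once compatibility is established in each of the three limits, the inverse and direct limits commute with one another in the required sense (as in the definition of $\wh{SH}_*(\p W)$ and Lemma~\ref{lem:alternative-whSH}), and the collection $\{[U_{2b,\lambda}^a]\}$ defines the desired class $U\in\wh{SH}_0(\p W)[n-1]$.
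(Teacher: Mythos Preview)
Your strategy matches the paper's: verify compatibility with each of the three limit operations, the substantive one being the inverse limit in $\lambda$. However, your execution of that key step has a gap. You assert that the continuation map is ``the identity on the subcomplex of orbits near $\{r\le 1/2\}$'' and hence carries $1_{2b}$ to itself on the nose. Two problems: first, in the $\wedge$-shaped case the $F$-orbits form a \emph{quotient} complex, not a subcomplex (see Case~(ii) in Example~\ref{example:Hlambdamu}); second, even with the homotopy chosen constant near $\{r\le 1/2\}$, nothing prevents $c(1_{2b})$ from having a nonzero component on $I$-orbits of the same degree. The paper accordingly proves only $\uc(1_{\mu'})=1_\mu+\beta$ with $\beta\in FC_*^I(H_{\mu,\lambda})[n-1]$, and then computes
\[
\uc(U_{\mu',\lambda'})=\uc\,\p(1_{\mu'})=\p\,\uc(1_{\mu'})=\p(1_\mu+\beta)=U_{\mu,\lambda}+\p\beta,
\]
so that the \emph{homology classes} agree. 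This is exactly the step you would need to insert.

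Your treatment of the $b$-limit also has a direction error: since $H_{2b',\lambda}\le H_{2b,\lambda}$ for $b'\ge b$, the monotone continuation map goes from $FC_*(H_{2b',\lambda})$ to $FC_*(H_{2b,\lambda})$, not the direction you wrote. The paper avoids this by first establishing the general relation $\uc_*[U_{\mu',\lambda'}]=[U_{\mu,\lambda}]$ for all $\mu'\ge\mu>0>\lambda\ge\lambda'$, then declaring the $a$- and $b$-limits ``formal'' since the classes $U^{(a,b)}$ are compatible with the action-window enlargement morphisms.
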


\begin{proof}
The key observation is the following. Let $\mu' \ge \mu > 0 > \lambda \ge \lambda'$ and consider the shifted continuation map $\uc=\uc_{(\mu',\lambda'),(\mu,\lambda)}:FC_*(H_{\mu',\lambda'})[n-1]\to FC_*(H_{\mu,\lambda})[n-1]$. Then 
\begin{equation}\label{eq:cU}
\uc_*[U_{\mu',\lambda'}]=[U_{\mu,\lambda}].
\end{equation} 
To prove this we choose the homotopy from $H_{\mu',\lambda'}$ to $H_{\mu,\lambda}$ to be constant up to translation in a small neighborhood of the region $\{r\le 1/2\}$ that contains the constant $1$-periodic orbits and the nonconstant $1$-periodic orbits of $H_{\mu',\lambda'}$ that correspond to Reeb orbits with period $\le \mu$. Then 
$$
\uc(1_{\mu'})=1_\mu + \beta
$$ 
with $\beta\in FC_*^I(H_{\mu,\lambda})[n-1]$, and we obtain 
$$
\uc(U_{\mu',\lambda'})= \uc\p (1_{\mu'}) = \p \uc(1_{\mu'}) = \p (1_\mu + \beta) = U_{\mu,\lambda} + \p \beta. 
$$
Equation~\eqref{eq:cU} has filtered variants. For a finite action window $(a,b)$ and a choice of parameters  $0>\lambda\ge \lambda'$, the continuation map $\uc=\uc_{(2b,\lambda'),(2b,\lambda)}$ satisfies $\uc_*[U_{2b,\lambda'}^a]=[U_{2b,\lambda}^a]$, so that we can define the limit 
$$
U^{(a,b)}=\lim\limits_{\stackrel\longleftarrow{\lambda\to-\infty}} [U_{2b,\lambda}^a] \in \lim\limits_{\stackrel\longleftarrow{\lambda\to-\infty}} FH_0^{(a,b)}(H_{2b,\lambda})[n-1]=\wh{SH}_*^{(a,b)}(\p W)[n-1]. 
$$
The rest of the proof is formal. The classes $[U^{(a,b)}]$ are compatible with the morphisms given by enlarging the action windows, hence the class $U$ is well-defined. 
\end{proof}

\begin{proposition} \label{prop:U}
The class $U$ from Lemma~\ref{lem:U} is the unit of the ring $(\wh{SH}_*(\p W)[n-1],\usigma)$. 
\end{proposition}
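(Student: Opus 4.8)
The plan is to reduce the unit property to a computation at the level of a single pair of Hamiltonians in a fixed action window, and then to exploit the defining relation $[\p,\sigma]=m_R(\one\otimes c_\wedge)-m_L(c_\wedge\otimes\one)$ of the $A_2$-triple underlying $\wh{SH}_*(\p W)$, feeding the element $1_\mu$ (a cycle in $FC_*(L_\mu)$, but whose Floer boundary in $FC_*(H_{\mu,\lambda})$ is $U_{\mu,\lambda}$) into one slot.

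Fix a finite action interval $(a,b)$ with $a<0<b$; by Lemma~\ref{lem:alternative-whSH} a class of $\wh{SH}_*(\p W)[n-1]$ is represented by a cycle $x\in FC_*^{(a,b)}(H_{2b,\lambda})[n-1]$, $\lambda\ll0$, while $U$ is represented by the truncation $U^a_{2b,\lambda}=\p(1_{2b})$. Since $\usigma=-\sigma[-1,-1;-1]$ as in~\eqref{eq:shift-sigma}, i.e. $\usigma(\bar u,\bar v)=-(-1)^{|\bar u|}\overline{\sigma(u,v)}$, it suffices to compute $[\sigma(U_{2b,\lambda},x)]$ and $[\sigma(x,U_{2b,\lambda})]$ in $FH_{*+1}^{(a+b,2b)}(H_{4b,\lambda+2b})[n]$. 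Applying the relation to $(1_{2b},x)$ and using $\p x=0$ and $\p 1_{2b}=\pm U_{2b,\lambda}$, the left-hand side $[\p,\sigma](1_{2b},x)$ becomes $\p\sigma(1_{2b},x)\pm\sigma(U_{2b,\lambda},x)$, whence in homology $[\sigma(U_{2b,\lambda},x)]=\pm\big([m_R(1_{2b},c_\wedge(x))]-[m_L(c_\wedge(1_{2b}),x)]\big)$, and symmetrically, from the pair $(x,1_{2b})$, $[\sigma(x,U_{2b,\lambda})]=\pm\big([m_R(x,c_\wedge(1_{2b}))]-[m_L(c_\wedge(x),1_{2b})]\big)$.

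Now I evaluate the right-hand terms. By the discussion of Case (ii) in Example~\ref{example:Hlambdamu}, $c_\wedge=c_{(2b,\lambda),(2b,2b)}$ is the projection $p_{H_{2b,\lambda}}$ onto the quotient complex $FC_*^F(H_{2b,\lambda})\cong FC_*(L_{2b})$; since the $F$-orbits of $H_{2b,\lambda}$ have action slightly above $b$, the map $c_\wedge$ kills $FC_*^{\le b}$, so $c_\wedge(x)=0$ in the relevant truncation and the $m_R$- (resp. second $m_L$-) terms drop out. On the other hand $1_{2b}$ is an $F$-orbit, so $c_\wedge(1_{2b})$ is the generator $1_{2b}$ of $FC_*(L_{2b})$ representing the unit-up-to-continuation of $FH_*(L_{2b})[n]$. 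Hence $[\sigma(U_{2b,\lambda},x)]=\pm[m_L(1_{2b},x)]$ and $[\sigma(x,U_{2b,\lambda})]=\pm[m_R(x,1_{2b})]$. The operations $m_L(1_{2b},-)$ and $m_R(-,1_{2b})$ are the pair-of-pants bimodule actions $FC_*(H_{2b,\lambda})\to FC_*(H_{4b,\lambda+2b})$ with one input capped by the constant minimum $1_{2b}$, hence by the standard capping-off argument they are chain homotopic to a continuation map, and the capping does not increase the action. Passing to the direct limit over $b$ and the inverse limits over $a$, $\lambda$ and the slopes — exactly as in the proofs of Theorems~\ref{thm:duality-cone} and~\ref{thm:Rabinowitz-cone} — these continuation maps become the identity on $\wh{SH}_*(\p W)$; checking that the accumulated signs collapse to $+$, using $|\bar U|=0$ and $\usigma=-\sigma[-1,-1;-1]$, then gives $\usigma(U,x)=x=\usigma(x,U)$, which is the assertion.

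The main obstacle is bookkeeping rather than geometry: one must carefully track Koszul signs through the bracket $[\p,\sigma]$, the regradings $FC_*[n]\leftrightarrow FC_*[n-1]$ and the shift $\usigma=-\sigma[-1,-1;-1]$, so that the ambiguous signs above genuinely become $+$; and one must handle the action-boundary cases ($\mu=2b$, target slope $4b=2\cdot 2b$, orbits ``slightly larger than $\mu/2$'') exactly as in Lemmas~\ref{lem:alternative-whSH} and~\ref{lem:U}. The geometric input — the capping-off homotopy identifying $m_L(1_\mu,-)$ and $m_R(-,1_\mu)$ with continuation maps, compatibly with the action filtration — is standard and can be quoted.
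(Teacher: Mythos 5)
Your proposal is correct and follows essentially the same route as the paper: fix a finite action window, apply the $A_2$-relation $[\p,\sigma]=m_R(\one\otimes c_\wedge)-m_L(c_\wedge\otimes\one)$ with $1_{2b}$ in one slot, observe that $c_\wedge$ annihilates the truncated complex so only $m_L(c_\wedge(1_{2b})\otimes\one)$ (resp.\ $m_R(\one\otimes c_\wedge(1_{2b}))$) survives, identify this with a continuation map, and pass to limits. The only cosmetic differences are that the paper works directly with the shifted operations $\usigma,\um_L,\um_R$ (where the sign in the bracket relation flips to a $+$, which is the main sign you flag as delicate) and records only the one-sided computation, whereas you verify both sides explicitly.
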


\begin{proof}
Recall the fundamental relation $[\p,\sigma]=m_R(\one\otimes c_\wedge) - m_L(c_\wedge \otimes \one)$, which translates into 
\footnote{Note the sign change.} 
$[\p,\usigma]=\um_R(\one\otimes \uc_\wedge) + \um_L(\uc_\wedge\otimes \one)$
in the notation of Lemma~\ref{lem:extension-triples-from-Ainfty}, where $\uc_\wedge=c_\wedge[-1;0]$.    
Let us evaluate both sides at $1_{2b}$ at the first entry and denote 
$$
\uzeta= \usigma(1_{2b}\otimes \one) : FC_*(H_{2b,\lambda})[n-1]\to FC_*(H_{4b,\lambda+2b})[n-1].
$$
This is a linear map of degree $1$, the shifted degree of $1_{2b}$. The relation for $[\p,\usigma]$ becomes 
\begin{equation}\label{eq:sigmaU}
[\p,\uzeta] + \usigma(U_{2b,\lambda}\otimes\one) = \um_R(1_{2b}\otimes \uc_\wedge) + \um_L(\uc_\wedge(1_{2b})\otimes\one). 
\end{equation}
This is a relation between degree $0$ maps defined on $FC_*(H_{2b,\lambda})[n-1]$ and taking values in $FC_*(H_{4b,\lambda+2b})[n-1]$. 

The filtered version of relation~\eqref{eq:sigmaU} is 
$$
[\p,\uzeta] + \usigma(U_{2b,\lambda}^a\otimes\one) = \um_R(1_{2b}\otimes \uc_\wedge) + \um_L(\uc_\wedge(1_{2b})\otimes\one)
$$
and holds at the level of filtered maps acting as 
$$
FC_*^{(a,b)}(H_{2b,\lambda})[n-1]\to FC_*^{(a+b,2b)}(H_{4b,\lambda+2b})[n-1]. 
$$
The term $\um_R(1_{2b}\otimes \uc_\wedge)$ vanishes because the map $\uc_\wedge$ acts as $\uc_\wedge:FC_*^{(a,b)}(H_{2b,\lambda})[n-1]\to FC_*^{(a,b)}(L_{2b})[n-1]$ and the latter complex is zero because all the orbits of $L_{2b}$ have action larger than $b$. On the other hand $\uc_\wedge(1_{2b})=1_{2b}$ and the second term on the right hand side is therefore equal to $\um_L(1_{2b}\otimes\one)$. This is precisely the continuation map $FC_*^{(a,b)}(H_{2b,\lambda})[n-1]\to FC_*^{(a+b,2b)}(H_{4b,\lambda+2b})[n-1]$ induced by a homotopy which is non-increasing on $\{r\ge 1\}$ and non-decreasing with gap equal to $b$ on $\{r\le 1\}$. The outcome of the discussion is that $\usigma(U_{2b,\lambda}^a\otimes\one)$ induces in homology the continuation map 
$$
\uc_*:FH_*^{(a,b)}(H_{2b,\lambda})[n-1]\to FH_*^{(a+b,2b)}(H_{4b,\lambda+2b})[n-1].
$$

As a consequence of Lemma~\ref{lem:alternative-whSH}, the limit 
$$
\lim\limits_{\stackrel\longrightarrow{b\to\infty}}\lim\limits_{\stackrel\longleftarrow{a\to-\infty}}\lim\limits_{\stackrel\longleftarrow{\lambda\to-\infty}} \Big(\uc_*:FH_*^{(a,b)}(H_{2b,\lambda})[n-1]\to FH_*^{(a+b,2b)}(H_{4b,\lambda+2b})[n-1]\Big) 
$$
is equal to $\mathrm{Id}_{\wh{SH}_*(\p W)[n-1]}$. However, the previous discussion shows that the above limit is also equal to $\usigma(U\otimes\one)$. This shows that $U$ is the unit for the ring $(\wh{SH}_*(\p W)[n-1],\usigma)$. 
\end{proof}

\section{Poincar\'e duality} \label{sec:PD}

In this section we prove the Cone Duality Theorem~\ref{thm:duality-cone}, derive from it the cone version of Poincar\'e duality (Theorem~\ref{thm:PD-cones}), and show that it coincides with the Poincar\'e Duality Theorem~\ref{thm:coh-product}.

\subsection{Cone duality theorem}


\begin{theorem}[Cone duality] \label{thm:duality-cone}
There is a canonical isomorphism which respects the products 
$$
S\H_*(\cH_\vee)\simeq S\H_*(\cH_\wedge). 
$$
\end{theorem}

\begin{proof}
The proof is essentially the same as that of Step~1 in Theorem~\ref{thm:Rabinowitz-cone}. 

Consider $\lambda<0<\mu$. Then we have a homotopy commutative diagram of continuation maps 
$$
\xymatrix{
FC_*(H_{\mu,\lambda})[n]\ar[d]_{c_\wedge} \ar@<.75ex>[rr]^-\pi_-\sim && 
FC_*(L_\lambda)[n] \ar[d]^{c_\vee}  \ar@<.75ex>[ll]^-\iota \\
FC_*(L_\mu)[n] \ar@<.75ex>[rr]^-p_-\sim&& FC_*(H_{\lambda,\mu})[n] \ar@<.75ex>[ll]^-i
}
$$
in which the horizontal maps are chain homotopy equivalences. (The maps $\pi$ and $p$ preserve the filtration, but the maps $\iota$ and $i$ do not. However, in the proof below we will use the total complexes for suitable choices of the parameters, which will palliate to this ailment.) The left vertical map $c_\wedge=c_{(\mu,\lambda),(\mu,\mu)}$ is involved in the definition of $S\H_*(\cH_\wedge)$. The right vertical map $c_\vee=c_{(\lambda,\lambda),(\lambda,\mu)}$ is involved in the definition of $S\H_*(\cH_\vee)$. See Figure~\ref{fig:cone-duality-source}. 

\begin{figure} [ht]
\centering
\input{cone-duality-source.pstex_t}
\caption{Duality theorem via cones: continuation diagram at the source.}
\label{fig:cone-duality-source}
\end{figure} 

Consider also the homotopy commutative diagram of continuation maps (Figure~\ref{fig:cone-duality-target})
$$
\xymatrix{
FC_*(H_{2\mu,\lambda+\mu})[n]\ar[d]_{c_\wedge} \ar@<.75ex>[rr]^-\pi_-\sim && 
FC_*(H_{2\lambda,\lambda+\mu})[n] \ar[d]^{c_\vee}  \ar@<.75ex>[ll]^-\iota \\
FC_*(L_{2\mu})[n] \ar@<.75ex>[rr]^-p_-\sim&& FC_*(H_{2\lambda,2\mu})[n] \ar@<.75ex>[ll]^-i
}
$$

\begin{figure} [ht]
\centering
\input{cone-duality-target.pstex_t}
\caption{Duality theorem via cones: continuation diagram at the target.}
\label{fig:cone-duality-target}
\end{figure} 

By Proposition~\ref{prop:arity2homotopyinvariance} and the subsequent discussion on homotopy invariance for $A_2$-triples, we obtain an isomorphism between product structures 
\begin{equation}\label{eq:iso-product-duality}
m_{\big((\mu,\lambda),(\mu,\mu)\big), \big((2\mu,\lambda+\mu),(2\mu,2\mu)\big)} \simeq 
m_{\big((\lambda,\lambda),(\lambda,\mu)\big), \big((2\lambda,\lambda+\mu),(2\lambda,2\mu)\big)}.
\end{equation}

In order to conclude the proof, we fix an action interval $(a,b)$ with $a<0<b$ and consider the parameter values $\lambda=a$, $\mu=b$ in the previous setup. A stabilization argument as in Example~\ref{example:Hlambdamu} shows that we have isomorphisms 

{\scriptsize{
$$
\xymatrix{
H_*(Cone(c_{(a,a),(a,b)})\otimes H_*(Cone(c_{(a,a),(a,b)}) \ar[rr]^-{m}\ar[d]^\simeq && H_*(Cone(c_{(2a,a+b),(2a,2b)}) \ar[d]^\simeq \\
S\H_*^{(a,b)}(\cH_\vee)\otimes S\H_*^{(a,b)}(\cH_\vee) \ar[rr]^-{m}&& S\H_*^{(a+b,2b)}(\cH_\vee)
}
$$
}}

and 

{\scriptsize{
$$
\xymatrix{
H_*(Cone(c_{(b,a),(b,b)})\otimes H_*(Cone(c_{(b,a),(b,b)}) \ar[rr]^-{m}\ar[d]^\simeq && H_*(Cone(c_{(2b,a+b),(2b,2b)}) \ar[d]^\simeq \\
S\H_*^{(a,b)}(\cH_\wedge)\otimes S\H_*^{(a,b)}(\cH_\wedge) \ar[rr]^-{m}&& S\H_*^{(a+b,2b)}(\cH_\wedge)
}
$$
}}

The top lines in the above two diagrams are isomorphic by~\eqref{eq:iso-product-duality}, and we infer the isomorphism of the bottom lines. This isomorphism is compatible with action truncation maps and yields an isomorphism of rings 
$$
S\H_*(\cH_\vee)\simeq S\H_*(\cH_\wedge).
$$
\end{proof}

\subsection{Poincar\'e duality theorem}

Now we can state the cone version of Poincar\'e duality. Recall that $\wh{SH}_*(\p W)[n-1]$ is canonically isomorphic to $SH^{1-n-*}(\p W)$. 

\begin{theorem}[Poincar\'e duality redux] \label{thm:PD-cones}
We have a canonical isomorphism of rings 
$$
(S\H_*(\p W),\mu) \simeq (\wh{SH}_*(\p W)[n-1],\usigma). 
$$
\end{theorem}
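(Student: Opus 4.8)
The plan is to assemble the statement from results already in hand. First, combining Theorem~\ref{thm:Rabinowitz-cone}, which provides a canonical ring isomorphism $SH_*(\cH_\vee)\simeq SH_*(\p W)[n]$, with the Duality Theorem~\ref{thm:duality-cone}, which provides a canonical ring isomorphism $SH_*(\cH_\vee)\simeq SH_*(\cH_\wedge)$, one obtains a canonical ring isomorphism
$$
SH_*(\cH_\wedge)\simeq SH_*(\p W)[n].
$$
It therefore suffices to construct a canonical ring isomorphism $SH_*(\cH_\wedge)\simeq(\wh{SH}_*(\p W)[n-1],\usigma)$, that is, to show that the two constructions --- the one through cones in Example~\ref{example:Hlambdamu}, Case (ii), and the one through the continuation secondary product of~\S\ref{sec:alternative_product} --- produce isomorphic rings.

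To this end, recall from Example~\ref{example:Hlambdamu}, Case (ii), that $SH_*(\cH_\wedge)$ is assembled as a limit of the homologies $H_*(Cone(c_\wedge))$ of the cones of the continuation maps $c_\wedge=c_{(\lambda,\mu),(\lambda,\lambda)}\colon FC_*(H_{\lambda,\mu})[n]\to FC_*(L_\lambda)[n]$, $\lambda>0>\mu$, which are the projections $p_{H_{\lambda,\mu}}$ onto the subquotient $FC_*^F(H_{\lambda,\mu})[n]\cong FC_*(L_\lambda)[n]$, with kernel the subcomplex $K=FC_*^I(H_{\lambda,\mu})[n]$ generated by the orbits of type $I$. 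Thus $c_\wedge$ is surjective, the underlying sequence of $R$-modules is split, and Example~\ref{example:quotients} applies: there is a homotopy equivalence of algebras
$$
Cone(c_\wedge)\;\simeq\;\bigl(K[-1],\tilde\sigma\bigr)=\bigl(FC_*^I(H_{\lambda,\mu})[n-1],\tilde\sigma\bigr),
$$
where $\tilde\sigma$ is obtained by transferring the cone product $m$ of Proposition~\ref{prop:product_on_cone} through the explicit homotopy equivalence; up to the shift convention it is the operation $\underline\sigma=-\sigma[-1,-1;-1]$ of Lemma~\ref{lem:extension-triples-from-Ainfty}. Moreover the homotopy equivalence, and the comparison maps between cones for different parameters furnished by Proposition~\ref{prop:arity2homotopyinvariance}, can be arranged to be action-non-increasing, hence induce equivalences on the filtered subquotients in the relevant ranges.

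Now fix a finite action window $(a,b)$ with $a<0<b$. Exactly as in~\S\ref{sec:alternative_product}, once $\lambda$ is large enough that the type-$F$ orbits of $H_{\lambda,\mu}$ (and of the target Hamiltonian $H_{2\lambda,\lambda+\mu}$) have action above the window, the complexes $FC_*^{F,(a,b)}$ vanish; hence $c_\wedge$ restricts to the zero map at the level of $FC_*^{(a,b)}$, the filtered cone reduces to $Cone(c_\wedge)^{(a,b)}=FC_*^{I,(a,b)}(H_{\lambda,\mu})[n-1]$, and in formula~\eqref{eq:product_on_cone} the terms involving $\mu,m_L,m_R,\tau_L,\tau_R$ drop out because one of their arguments lies in the zero group $FC_*^{F,(a,b)}$, while the $\beta$-term lands in the zero group $FC_*^{F,(a+b,2b)}$; only the $\sigma$-term survives, so the cone product restricts on this window to $\underline\sigma$. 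Passing to the first-inverse-then-direct limits --- over $\lambda$ (with the type-$F$ slope normalized to $2b$ as permitted by Lemma~\ref{lem:alternative-whSH}, the type-$I$ slope running to $-\infty$), then over $a\to-\infty$ and $b\to\infty$ --- the groups become $\wh{SH}_*(\p W)[n-1]$ and the product becomes precisely the continuation secondary product $\usigma$ of~\S\ref{sec:alternative_product}. Combining this with the isomorphism of Step~1 yields the canonical ring isomorphism $(SH_*(\p W)[n],\mu)\simeq(\wh{SH}_*(\p W)[n-1],\usigma)$.

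The conceptual input is entirely contained in Theorems~\ref{thm:duality-cone} and~\ref{thm:Rabinowitz-cone} together with Examples~\ref{example:quotients}--\ref{example:Hlambdamu}; the remaining work is bookkeeping, and I expect the delicate points to be: (1) verifying that the homotopy equivalence of Example~\ref{example:quotients} and the continuation comparison maps between cones respect the action filtrations well enough to induce isomorphisms on the relevant filtered pieces, so that the several nested limits on the two sides match up; (2) matching shift and sign conventions so that $\tilde\sigma$, $\underline\sigma$ and $\usigma$ are literally the same degree-$0$ product on $\wh{SH}_*(\p W)[n-1]$; and (3) the one genuinely structural claim, namely that in the limiting action window the full cone product $m$ collapses onto its $\sigma$-component --- this is exactly what identifies the continuation secondary product of~\S\ref{sec:alternative_product} with a restriction of the cone product of $SH_*(\cH_\wedge)$, and hence makes it a (unital, associative) ring structure.
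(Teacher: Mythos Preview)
Your proposal is correct and follows essentially the same approach as the paper's proof: reduce via Theorems~\ref{thm:duality-cone} and~\ref{thm:Rabinowitz-cone} to identifying $SH_*(\cH_\wedge)$ with $(\wh{SH}_*(\p W)[n-1],\usigma)$, use that $c_\wedge$ is the projection onto the $F$-subcomplex so that Example~\ref{example:quotients} applies, and then observe that in a fixed action window $(a,b)$ with the positive slope large enough the $F$-orbits leave the window, forcing the cone product to collapse onto its $\usigma$-component. The paper phrases the last step via the inclusion $\iota:FC_*^I[n-1]\hookrightarrow Cone(c_\wedge)$ and the identity $\iota\,\usigma(\bar x,\bar x')=m((0,\bar x),(0,\bar x'))$, which is exactly your ``only the $\sigma$-term survives'' computation from formula~\eqref{eq:product_on_cone}.
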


\begin{remark}The unitality of the ring $(\wh{SH}_*(\p W)[n-1],\usigma)$ also follows from the above isomorphism. However, this point of view is roundabout and the direct description of the unit given in~\S\ref{sec:unit} is important for applications.  

Another proof of the existence of the unit comes---in view of the isomorphism $S\H_*(\cH_\wedge)\simeq \wh{SH}_*(\p W)[n-1]$ below---from a discussion of unitality for products on cones. 
More specifically, given an $A_2$-triple $(\cM,c,\cA)$ and assuming that the algebra $\cA$ is unital, one can write down conditions 
under which the ring $(Cone(c),m)$ is unital with unit equal to $(1,0)$, $1\in\cA$. 
More generally, this is related to the notion of homological unitality for $A_\infty$-algebras. 
\end{remark}

Theorem~\ref{thm:PD-cones} is an immediate consequence of Theorems~\ref{thm:duality-cone} and~\ref{thm:Rabinowitz-cone} together with

\begin{theorem}\label{thm:PD-cones2}
We have a canonical isomorphism of rings 
$$
S\H_*(\cH_\wedge)\simeq \wh{SH}_*(\p W)[n-1]. 
$$
\end{theorem}

\begin{proof}
The proof follows exactly the same lines as those of Theorem~\ref{thm:Rabinowitz-cone}. Given a Hamiltonian $H_{\mu,\lambda}$ as in the definition of $\wh{SH}_*(\p W)$, its $1$-periodic orbits are of two types: type $F$ located in a neighborhood of the region $\{r\le 1/2\}$ or type $I$ located in a neighborhood of the region $\{r=1\}$. Accordingly, the free module $FC_*(H_{\mu,\lambda})$ splits as a direct sum $FC_*^I(H_{\mu,\lambda})\oplus FC_*^F(H_{\mu,\lambda})$, and $FC_*^I(H_{\mu,\lambda})$ is a subcomplex, while $FC_*^F(H_{\mu,\lambda})$ is a quotient complex. There is a canonical identification $FC_*^F(H_{\mu,\lambda})\equiv FC_*(L_\mu)$.

Denote $c_\wedge=c_{(\mu,\lambda),(\mu,\mu)}:FC_*(H_{\mu,\lambda})[n]\to FC_*(L_\mu)[n]$ the continuation map. We choose the homotopy from $H_{\mu,\lambda}$ to be constant in the region $\{r\le 3/4\}$, so that $c_\wedge$ coincides with the projection $
p_{H_{\mu,\lambda}}:FC_*(H_{\mu,\lambda})[n]\to FC_*^F(H_{\mu,\lambda})[n]$. 
It is then a general fact that the inclusion $\iota:FC_*^I(H_{\mu,\lambda})[n-1] = \ker p_{H_{\mu,\lambda}}[-1]\hookrightarrow Cone(p_{H_{\mu,\lambda}})$ is a chain homotopy equivalence which preserves the action filtration, and so does its explicit homotopy inverse (see~\cite[Lemma~4.3]{CO} and Example~\ref{example:quotients}). 
We therefore obtain chain homotopy equivalences 
$$
\iota^{(a,b)}:FC_*^{I,(a,b)}(H_{\mu,\lambda})[n-1] \stackrel{\sim}\longrightarrow Cone(p_{H_{\mu,\lambda}})^{(a,b)} = Cone(c_\wedge)^{(a,b)} 
$$
for all action intervals $(a,b)$. 

Let us now fix such a finite action interval $(a,b)$. For $\mu\gg 0$ the action of the orbits in the group $F$ rises above the action window. Thus the only elements in $Cone(p_{H_{\mu,\lambda}})^{(a,b)}$ are of the form $(0,\bar x)$, where $\bar x\in FC_*(H_{\mu,\lambda})[n-1]$, and actually $\bar x\in FC_*^I(H_{\mu,\lambda})[n-1]$. The product of two such elements in $Cone(p_{H_{2\mu,\lambda+\mu}})$ is therefore also represented for $\mu\gg 0$ by elements in $FC_*(H_{2\mu,\lambda+\mu})[n-1]$, and actually in $FC_*^I(H_{2\mu,\lambda+\mu})[n-1]$. We thus have 
$$
\iota^{(a+b,2b)}\usigma(\bar x,\bar x')=(0,\usigma(\bar x,\bar x'))=m((0,\bar x),(0,\bar x')).
$$
Thus $\iota$ interchanges in the relevant action window the product $\usigma$ on $FH_*^{(a,b)}(H_{\lambda,\mu})[n-1]$ with the product $m$ on $Cone(p_{H_{\mu,\lambda}})^{(a,b)}$. 

These identifications and products are compatible with the limits involved in the definitions of $S\H_*(\cH_\wedge)$ and $\wh{SH}_*(\p W)[n-1]$, so the desired isomorphism of rings follows. 
This concludes the proof of Theorem~\ref{thm:PD-cones2}, and therefore of Theorem~\ref{thm:PD-cones}.
\end{proof}

The products $\mu$ and $\usigma$ preserve the action filtration at chain level. As a consequence, the homology groups truncated in negative values of the action $S\H_*^{<0}(\p W)$ and $\wh{SH}_*^{<0}(\p W)[n-1]$ inherit products still denoted $\mu$ and $\usigma$.  
We refer to~\cite{CO} for the formal definitions of $S\H_*^{<0}(\p W)$ and $\wh{SH}_*^{<0}(\p W)$. The following statement is a direct consequence of the fact that the isomorphism from Theorem~\ref{thm:PD-cones} preserves the action filtration. 

\begin{corollary} \label{cor:PD-negative-action}
We have a canonical isomorphism of rings 
$$
(S\H_*^{<0}(\p W),\mu) \simeq (\wh{SH}_*^{<0}(\p W)[n-1],\usigma). 
$$
\qed
\end{corollary}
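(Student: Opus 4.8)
The plan is to observe that the isomorphism produced in Theorem~\ref{thm:PD-cones} is, by construction, compatible with the action filtration, and then to simply restrict everything to negative values of the action. Recall that the isomorphism $SH_*(\p W)[n]\simeq \wh{SH}_*(\p W)[n-1]$ is obtained by composing the isomorphism $SH_*(\p W)[n]\simeq SH_*(\cH_\vee)$ of Theorem~\ref{thm:Rabinowitz-cone}, the isomorphism $SH_*(\cH_\vee)\simeq SH_*(\cH_\wedge)$ of Theorem~\ref{thm:duality-cone}, and the isomorphism $SH_*(\cH_\wedge)\simeq \wh{SH}_*(\p W)[n-1]$ established at the end of the proof of Theorem~\ref{thm:PD-cones}. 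Each of these three isomorphisms is assembled, for each fixed finite action window $(a,b)$, out of homotopy equivalences of $A_2$-triples (Proposition~\ref{prop:arity2homotopyinvariance}) together with the projections $\pi^{(a,b)}$ and inclusions $\iota^{(a,b)}$, all of which are compatible with the tautological maps enlarging the action window. First I would record that, consequently, the composite isomorphism at the level of the filtered groups $SH_*^{(a,b)}$ commutes with the maps $SH_*^{(a,b)}\to SH_*^{(a',b')}$ induced by an inclusion $(a,b)\subseteq(a',b')$.

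Next I would pass to negative action. As recalled in~\cite{CO}, $SH_*^{<0}(\p W)$ and $\wh{SH}_*^{<0}(\p W)$ arise from the same double limits as $SH_*(\p W)$ and $\wh{SH}_*(\p W)$ but with the action windows restricted to negative values (equivalently, as the homology of the subquotient of each chain complex generated by orbits of negative action). Since the composite isomorphism of the previous paragraph is natural in the action window, it restricts to an isomorphism of the corresponding limits, hence to an isomorphism $SH_*^{<0}(\p W)[n]\simeq \wh{SH}_*^{<0}(\p W)[n-1]$ of graded $R$-modules. It remains to check that this is a ring map. The pair-of-pants product $\mu$ sends $FH_*^{(a,b)}\otimes FH_*^{(a,b)}$ to $FH_*^{(a+b,2b)}$, and the continuation product $\usigma$ has the analogous behaviour (as used throughout \S\ref{sec:cones_continuation} and \S\ref{sec:alternative_product}); in particular, once the windows are taken with $b\le 0$ both products preserve negative action, so $\mu$ and $\usigma$ descend to $SH_*^{<0}(\p W)[n]$ and $\wh{SH}_*^{<0}(\p W)[n-1]$. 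Since the isomorphism of Theorem~\ref{thm:PD-cones} intertwines $\mu$ and $\usigma$ and is filtered, the restricted isomorphism intertwines the descended products.

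The only point requiring a little care — and the step I expect to be the main (though minor) obstacle — is that some of the intermediate maps occurring in the proofs of Theorems~\ref{thm:duality-cone}, \ref{thm:Rabinowitz-cone} and~\ref{thm:PD-cones}, notably the inclusions $\iota$ and $i$, do \emph{not} preserve the action filtration for a fixed choice of parameters. This was already handled in those proofs by working with total complexes for suitably chosen parameters $\lambda=a$, $\mu=b$ and invoking the stabilization arguments of Example~\ref{example:Hlambdamu}; the same device shows that the final isomorphism of the two inverse-then-direct limits is honestly filtration-compatible, so that restricting to negative action windows is legitimate. No new analytic input is needed: granting the filtration-compatibility of the isomorphism of Theorem~\ref{thm:PD-cones} (as asserted in the discussion preceding the corollary), the corollary is purely formal.
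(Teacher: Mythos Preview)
Your proposal is correct and follows exactly the paper's approach: the paper states this corollary as ``a direct consequence of the fact that the isomorphism from Theorem~\ref{thm:PD-cones} preserves the action filtration'' and gives no further argument. Your write-up simply unpacks that assertion in more detail (including the care about the non-filtration-preserving maps $\iota$, $i$ being handled by the stabilization argument), but the underlying idea is identical.
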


\subsection{The two Poincar\'e duality theorems are the same}

Recall that $\wh{SH}_*(\p W)=SH_*(\p W\times I,\p W\times \p I)\simeq SH^{-*}(\p W)$. We proved in the Poincar\'e Duality Theorem~\ref{thm:coh-product}(i) that $SH^*(\p W)$ carries a product of degree $n-1$, or alternatively that $\wh{SH}_*(\p W)[n-1]$ carries a product of degree $0$. Part (ii) of the Poincar\'e Duality Theorem~\ref{thm:coh-product} can then be rephrased as an isomorphism of rings $S\H_*(\p W)\simeq \wh{SH}_*(\p W)[n-1]$. On the other hand, we constructed in~\S\ref{sec:alternative_product} another degree $0$ product on $\wh{SH}_*(\p W)[n-1]$ and the Poincar\'e Duality Theorem Redux~\ref{thm:PD-cones} also provides an isomorphism of rings $S\H_*(\p W)\simeq \wh{SH}_*(\p W)[n-1]$. 

\begin{proposition} \label{prop:products_coincide}
The isomorphisms $S\H_*(\p W)\simeq \wh{SH}_*(\p W)[n-1]$ from Theorems~\ref{thm:coh-product} and~\ref{thm:PD-cones} coincide.  
\end{proposition}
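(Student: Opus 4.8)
The plan is to reduce the statement to an equality of two maps of graded $R$-modules and to verify that equality on finite action windows before passing to the limits. An isomorphism of rings is in particular an isomorphism of underlying graded modules, so it suffices to show that the two maps $SH_*(\p W)[n]\to\wh{SH}_*(\p W)[n-1]$ coincide additively; once this is known, Corollary~\ref{cor:products_coincide} follows as well, since the isomorphism of Theorem~\ref{thm:coh-product}(ii) transports the product $\mu$ to the Poincar\'e duality product, while that of Theorem~\ref{thm:PD-cones} transports $\mu$ to the continuation product $\usigma$.

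First I would unwind the isomorphism of Theorem~\ref{thm:PD-cones}. Additively it is the composite of the isomorphisms
$$
SH_*(\p W)[n]\ \simeq\ SH_*(\cH_\vee)\ \simeq\ SH_*(\cH_\wedge)\ \simeq\ \wh{SH}_*(\p W)[n-1]
$$
supplied respectively by the second isomorphism of Theorem~\ref{thm:Rabinowitz-cone}, by the Duality Theorem~\ref{thm:duality-cone}, and by the isomorphism $SH_*(\cH_\wedge)\simeq\wh{SH}_*(\p W)[n-1]$ proved at the end of the proof of Theorem~\ref{thm:PD-cones}. On a fixed finite action window $(a,b)$ each of these is realized by explicit chain homotopy equivalences: the maps $\pi,\iota$ and $p,i$ of the continuation diagrams in Figures~\ref{fig:cone-duality-source}--\ref{fig:cone-iso-KHvee-target}, the projection $Cone(i_{H_{\lambda,\mu}})\to FC^I_*(H_{\lambda,\mu})$, and the inclusion $FC^I_*(H_{\mu,\lambda})\hookrightarrow Cone(p_{H_{\mu,\lambda}})$ from Example~\ref{example:quotients}. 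Composing them, and using that for $\lambda\ll 0\ll\mu$ relative to $(a,b)$ the only surviving generators are the type-$I$ orbits of $H_{\mu,\lambda}$, one identifies the composite with the standard Floer-theoretic comparison between the $\vee$-shaped Floer cohomology underlying $SH_*(\p W)[n]$ and the $\wedge$-shaped Floer homology underlying $\wh{SH}_*(\p W)[n-1]$. I would then observe that this is precisely the chain-level duality from which the isomorphism $PD$ of~\cite[Theorem~4.8]{CHO-PD} is built: in that paper the duality isomorphism is produced together with the secondary product on Rabinowitz Floer cohomology, through the same mechanism of a continuation map $c_\wedge$ that is annihilated on the relevant action window, so that the homotopy witnessing $[\p,\sigma]=m_R(\one\otimes c_\wedge)-m_L(c_\wedge\otimes\one)$ becomes a chain map there (compare~\S\ref{sec:alternative_product}). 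Since all maps in sight commute with action truncation and with continuation, passing to the direct and inverse limits in $(a,b)$ and in the slopes identifies the two isomorphisms.

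The main obstacle is twofold. First one must track the degree shifts $[n]$, $[n-1]$ and the sign convention $\usigma=-\sigma[-1,-1;-1]$ through the formalism of Appendix~\ref{sec:shifts}, and check that the composite zig-zag introduces no spurious automorphism—in particular no overall sign—of $\wh{SH}_*(\p W)[n-1]$; this is routine but delicate bookkeeping of the kind carried out in the proof of Proposition~\ref{prop:product_on_cone}. Second, and this is the genuinely substantive point, one must match the Floer data used in~\cite{CHO-PD} to construct $PD$ with the operations $\mu,m_L,m_R,\tau_L,\tau_R,\sigma,\beta$ of the $A_2$-triple of Proposition~\ref{prop:Hamiltonian-pre-subalgebra}, i.e. check that the two ways of organizing the same moduli spaces of Floer solutions on pairs-of-pants over the $\vee$- and $\wedge$-shaped Hamiltonians give chain-homotopic zig-zags. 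Once this compatibility is established, the equality of the two isomorphisms, and hence of the two secondary products, follows.
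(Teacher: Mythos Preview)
Your overall strategy---compare the two maps on a fixed finite action window $(a,b)$ and then pass to the limit---is correct and is what the paper does. But there is a genuine gap in how you identify the Poincar\'e duality map of Theorem~\ref{thm:coh-product}. That isomorphism is \emph{not} built from the secondary-product mechanism you describe (``a continuation map $c_\wedge$ that is annihilated on the relevant action window, so that the homotopy witnessing $[\p,\sigma]=\dots$ becomes a chain map''). In~\cite{CHO-PD} it is realized as a component of a Floer \emph{differential}: one introduces a Hamiltonian $H_{\lambda^-,(\mu,\lambda)}$ obtained from $L_\lambda$ by inserting a ``dent'' of slopes $(\lambda^-,\mu)$ on $[1/2,1]$, so that $FC_*(\Ham{$\lambda^-$}{$\mu$}{$\lambda$})$ splits as $FC_*(\hamwedge)\oplus FC_*(\hamLlambda)\oplus FC_*(\hamvee)$ with upper-triangular differential, and $PD$ is the component $\p_{\hamvee,\hamwedge}$. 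Your proposal never engages with this description, so the comparison you sketch has no concrete content.

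The substantive work you are missing is an explicit matrix computation together with an action argument. Expressing the composite of Theorem~\ref{thm:PD-cones} on the finite window via the explicit homotopy equivalences for $Cone(incl)$ and $Cone(proj)$ (Example~\ref{example:quotients} and~\cite[Lemma~4.3]{CO}) and the homotopy-retract data~\eqref{eq:homotopy-retract-triples} gives, on $A\in FC_*(\hamvee)$,
$$
\Phi(A)=\p_{\hamvee,\hamwedge}A - \p_{\hamlambdamu,\hamwedge}\cH_{\hamLlambda,\hamlambdamu}\p_{\hamvee,\hamLlambda}A - c_{\hamLlambda,\ham}\p_{\hamvee,\hamLlambda}A,
$$
where $\cH$ is a homotopy between two continuation maps $\hamLlambda\to\hamlambdamu$. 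The last two terms factor through $FC_*(\hamLlambda)$, and for $\lambda/2<a$ every generator there has action below $a$; since $\cH$ decreases action and $c_{\hamLlambda,\ham}$ distorts it arbitrarily little, after truncation one is left with $\Phi^{(a,b)}=\p^{(a,b)}_{\hamvee,\hamwedge}$. This computation is precisely what resolves the sign/homotopy ambiguities you list as ``the main obstacle''; without it the argument is incomplete.
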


\begin{corollary} \label{cor:products_coincide}
The two products on $\wh{SH}_*(\p W)[n-1]$, defined in Theorem~\ref{thm:coh-product} and in~\S\ref{sec:alternative_product},  coincide. \qed\end{corollary}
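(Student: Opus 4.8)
The last statement, Corollary~\ref{cor:products_coincide}, is immediate once Proposition~\ref{prop:products_coincide} is known. Write $\Phi_{PD}$ for the isomorphism $SH_*(\p W)[n]\simeq\wh{SH}_*(\p W)[n-1]$ obtained from $PD$ of Theorem~\ref{thm:coh-product} together with the tautological identification $SH^{-*}(\p W)\simeq\wh{SH}_*(\p W)$, and $\Phi_{\mathrm{cone}}$ for the one of Theorem~\ref{thm:PD-cones}. By construction the product $\sigma_{PD}$ on $\wh{SH}_*(\p W)[n-1]$ coming from Theorem~\ref{thm:coh-product} is the transport of $\mu$ along $\Phi_{PD}$, while Theorem~\ref{thm:PD-cones} says that $\usigma$ is the transport of $\mu$ along $\Phi_{\mathrm{cone}}$; so as soon as $\Phi_{PD}=\Phi_{\mathrm{cone}}$ as maps of $R$-modules we get $\sigma_{PD}=\usigma$. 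Thus everything comes down to Proposition~\ref{prop:products_coincide}.

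The plan is to present $\Phi_{PD}$ and $\Phi_{\mathrm{cone}}$ as explicit chain-level zig-zags, both built out of the two-parameter family $\{H_{\lambda,\mu}\}$, and then to fit them into one homotopy-commutative comparison diagram. By Theorems~\ref{thm:Rabinowitz-cone}, \ref{thm:duality-cone} and~\ref{thm:PD-cones}, the map $\Phi_{\mathrm{cone}}$ is the composite of: (1) the identification $SH_*(\p W)[n]\simeq SH_*(\cH_\vee)$ realized at chain level by the homotopy inverse of the projection $\pi : Cone(i_{H_{\lambda,\mu}})\to FC_*^I(H_{\lambda,\mu})$ from the proof of Theorem~\ref{thm:Rabinowitz-cone}; (2) the duality isomorphism $SH_*(\cH_\vee)\simeq SH_*(\cH_\wedge)$ realized by the homotopy equivalences $\pi,\iota,p,i$ of Figures~\ref{fig:cone-duality-source} and~\ref{fig:cone-duality-target} (on total complexes, for cofinal $\lambda,\mu$, since $\iota$ and $i$ do not preserve the filtration); and (3) the identification $SH_*(\cH_\wedge)\simeq\wh{SH}_*(\p W)[n-1]$ realized by the inclusion $FC_*^I(H_{\mu,\lambda})[n-1]\hookrightarrow Cone(p_{H_{\mu,\lambda}})$ from the proof of Theorem~\ref{thm:PD-cones}. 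Each block is a continuation map inside $\{H_{\lambda,\mu}\}$ or a canonical identification of a subcomplex, resp.\ a quotient complex, with $FC_*(L_\lambda)$. On the other side, $\Phi_{PD}$ is the chain-level construction of $PD$ from~\cite{CHO-PD} followed by the tautological $SH^{-*}(\p W)\simeq\wh{SH}_*(\p W)$, which within the family is dualization composed with the negation $H_{\lambda,\mu}\mapsto H_{-\lambda,-\mu}$ interchanging $\cH_\vee$ and $\cH_\wedge$.

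Concretely I would first re-derive $PD$ in a form that matches step (2) above. The introduction recalls that $PD$, like the secondary cohomology product of~\cite{CHO-PD}, is built as a concatenation of two null-homotopies of a primary operation; the operation $\sigma$ of the $A_2$-triple of Proposition~\ref{prop:Hamiltonian-pre-subalgebra} is exactly such a $1$-parameter count, and on the action window on which the continuation map $c_\wedge$ (resp.\ $c_\vee$) vanishes it becomes an honest chain map, as in~\S\ref{sec:alternative_product}. The crux is to identify this chain map with the one underlying $PD$, by checking that both count the same $1$-parameter families of perturbed pseudo-holomorphic pairs-of-pants for coherently chosen data (homotopies, Morse perturbations near $\{r\le 1/2\}$, almost complex structures). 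Granting this, one assembles the comparison diagram linking $Cone(i_{H_{\lambda,\mu}})$, $Cone(p_{H_{\mu,\lambda}})$, the complexes $FC_*^I$ of the $\vee$- and $\wedge$-shaped Hamiltonians, and their duals, fills each square with an explicit homotopy, and passes to the iterated limits in $(a,b)$ and $\lambda,\mu\to\pm\infty$; the resulting commutative square of limits is the equality $\Phi_{PD}=\Phi_{\mathrm{cone}}$. As a consistency check, on negative action both sides are already pinned down by Corollary~\ref{cor:PD-negative-action} together with the corresponding statement in~\cite{CHO-PD}.

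The main obstacle is precisely this matching step: the construction of $PD$ in~\cite{CHO-PD} is technically heavy (it was carried out simultaneously with the cohomology product), so producing a presentation of $PD$ that lines up square by square with the cone zig-zag — rather than being merely abstractly isomorphic to it — is where the real work lies, and it must be done keeping full track of the degree shifts and signs of Appendix~\ref{sec:shifts}. A secondary but genuine difficulty is filtration bookkeeping: several of the maps involved (the homotopy equivalences $\iota$, $i$ in the duality diagrams and the homotopy inverses of $\pi$, $p$) do not respect the action filtration, so the whole comparison has to be run on total complexes for carefully chosen cofinal families of parameters, with the inverse limit in $\lambda$, the direct limit in $\mu$, and the action-window limits taken in the order already dictated by the proofs of Theorems~\ref{thm:duality-cone} and~\ref{thm:PD-cones}.
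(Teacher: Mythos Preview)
Your first paragraph is exactly the paper's argument for the Corollary: both products are the transport of $\mu$ along the two isomorphisms, so equality of the isomorphisms (Proposition~\ref{prop:products_coincide}) gives equality of the products. That part is fine.

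The rest of your proposal is a sketch of Proposition~\ref{prop:products_coincide}, and here you diverge from the paper in a way that rests on a misconception. You describe $PD$ as ``built as a concatenation of two null-homotopies of a primary operation'' and plan to identify it with a $\sigma$-type count of pairs-of-pants. That description applies to the \emph{secondary product} on $SH^*(\p W)$, not to the isomorphism $PD$ itself. As the paper recalls inside the proof of Proposition~\ref{prop:products_coincide}, the map underlying $PD$ is something much more concrete: it is the $(\hamvee,\hamwedge)$-component of the Floer differential on a single Hamiltonian with slopes $(\lambda^-,\mu,\lambda)$ (the ``dent'' Hamiltonian $H_{\lambda^-,(\mu,\lambda)}$). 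So your ``main obstacle'' --- matching $PD$ to a pairs-of-pants count --- is aiming at the wrong target.

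The paper's route is correspondingly more direct than your zig-zag comparison. It writes the cone isomorphism $\Phi_{\mathrm{cone}}$ as an explicit $3\times 3$/$2\times 2$ matrix product built from projections, inclusions, a homotopy $\cH$, and a small continuation map $c_{\hamLlambda,\ham}$, all for Hamiltonians whose orbit sets split as $\hamLlambda\oplus\hamvee\oplus\hamwedge$. Evaluating on $A\in FC_*(\hamvee)$ gives $\Phi(A)=\p_{\hamvee,\hamwedge}A$ plus two correction terms that factor through the subcomplex of constant orbits $FC_*(\hamLlambda)$. In a fixed action window $(a,b)$ with $\lambda/2<a$, those constant orbits drop below the window, the corrections vanish, and one is left with $\Phi^{(a,b)}(A)=\p_{\hamvee,\hamwedge}^{(a,b)}A$, which is exactly $\Phi_{PD}$. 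No large homotopy-commutative comparison diagram is needed, and the filtration issues you flag are handled by this single action argument rather than by juggling cofinal families.
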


\begin{proof}[Proof of Proposition~\ref{prop:products_coincide}] Consider parameters $\mu>0>\lambda$ and denote by $\lambda^-$ a real number slightly smaller but very close to $\lambda$. Denote $H_{\lambda^-,(\mu,\lambda)}$ 
a Hamiltonian obtained from $L_\lambda$ by replacing the linear part of slope $\lambda$ on the interval $[1/2,1]$ by a ``dent" of slopes $(\lambda^-,\mu)$, i.e. a continuous function which is linear of slope $\lambda^-$ on $[1/2,r_0]$ and linear of slope $\mu$ on $[r_0,1]$ for a suitable value $r_0=r_0(\lambda^-,\mu)$. See Figure~\ref{fig:Hlambdaminusmulambda}. 

\begin{figure} [ht]
\centering
\input{Hlambdaminusmulambda.pstex_t}
\caption{The Hamiltonian $H_{\lambda^-,(\mu,\lambda)}$.}
\label{fig:Hlambdaminusmulambda}
\end{figure} 

We use the graphical notation 
$$
H_{\lambda^-,(\mu,\lambda)}=\Ham{$\lambda^-$}{$\mu$}{$\lambda$}, \qquad H_{\lambda^-,\mu}=\Hamlambdamu{$\lambda^-$}{$\mu$},\qquad L_\lambda=\HamLlambda{$\lambda$} \, . 
$$
We also denote the corresponding Floer complexes $FC_*(\Ham{$\lambda^-$}{$\mu$}{$\lambda$})$ etc. The module $FC_*(\Ham{$\lambda^-$}{$\mu$}{$\lambda$})[n]$ splits as a direct sum $FC_*(\Hamwedge{$\mu$}{$\lambda$})[n]\oplus FC_*(\HamLlambda{$\lambda^-$})[n]\oplus FC_*(\Hamvee{$\lambda^-$}{$\mu$})[n]$ and the differential is upper triangular with respect to this splitting. Here the factors denote respectively the orbits appearing in the concave part, in the neighborhood of $\{r\le 1/2\}$, and in the convex part. We denote the diagonal terms of the differential $\p_{\hamwedge}$, $\p_{\hamLlambda}$, $\p_{\hamvee}$ and the mixed terms $\p_{\hamvee,\hamLlambda}$ etc. 
We use the same subscripts for the components of maps acting between complexes that are split in this way. 

We prove the statement of the Proposition in an arbitrary finite action window $(a,b)$. The statement in the limit $a\to-\infty$, $b\to \infty$ follows by arguments similar to the ones encountered before. Also as before, it is enough to discuss the case of a single set of parameters $\mu\gg 0\gg \lambda$. We first claim that the isomorphism from the Duality Theorem Redux~\ref{thm:PD-cones} is described at finite energy as the composition of the chain homotopy equivalences 
\begin{align*}
FC_*(\Hamwedge{$\mu$}{$\lambda$})[n-1] & \simeq Cone\Big(proj: FC_*(\Ham{$\lambda^-$}{$\mu$}{$\lambda$})[n] \to FC_*(\Hamlambdamu{$\lambda^-$}{$\mu$})[n]\Big) \cr
& \simeq Cone\Big(incl: FC_*(\HamLlambda{$\lambda$})[n]\to FC_*(\Hamlambdamu{$\lambda$}{$\mu$})[n]\Big) \cr 
& \simeq FC_*(\Hamvee{$\lambda$}{$\mu$})[n].
\end{align*}
Indeed, although the shapes of Hamiltonians used in that proof were slightly different, their slopes at infinity were the same as the ones of the Hamiltonians used above, so that the claim follows by homotopy invariance of the cone construction. On the other hand, the isomorphism from the Poincar\'e Duality Theorem~\ref{thm:coh-product} was induced by the $(\Hamvee{}{},\Hamwedge{}{})$-component of the differential of the Floer complex $FC_*(\Ham{}{}{})[n]$. We are thus left to show that the above composition of chain homotopy equivalences induces in the action window $(a,b)$ the same map in homology, denoted 
$$
\Phi^{(a,b)}:FC_*^{(a,b)}(\Hamvee{$\lambda$}{$\mu$})[n]\to FC_{*-1}^{(a,b)}(\Hamwedge{$\mu$}{$\lambda$})[n].
$$ 

We write 
\begin{align*}
Cone\Big(incl: & FC_*(\HamLlambda{$\lambda$}\,)[n] \to FC_*(\Hamlambdamu{$\lambda$}{$\mu$})[n]\Big) \cr
 & = FC_*(\Hamlambdamu{}{})[n]\oplus FC_*(\HamLlambda{}\,)[n-1] \cr
 & = FC_*(\HamLlambda{}\,)[n]\oplus FC_*(\Hamvee{}{})[n] \oplus FC_{*-1}(\HamLlambda{}\,)[n-1],
\end{align*}
\begin{align*}
Cone\Big(proj: & FC_*(\Ham{$\lambda^-$}{$\mu$}{$\lambda$})[n] \to FC_*(\Hamlambdamu{$\lambda^-$}{$\mu$})[n]\Big) \cr
 & = FC_*(\Hamlambdamu{}{})[n]\oplus FC_*(\Ham{}{}{})[n-1] \cr
 & = FC_*(\Hamlambdamu{}{})[n]\oplus FC_*(\Hamwedge{}{})[n-1] \oplus FC_*(\Hamlambdamu{}{})[n-1].
\end{align*}
The above composition is explicitly expressed in matrix form as follows (for the middle map we only write a $2\times 2$ matrix for readability):
$$
\Phi= \left(\begin{array}{ccc} \p_{\hamlambdamu,\hamwedge} & \one & 0 \end{array}\right) 
\left(\begin{array}{cc}\one_{\hamlambdamu}& \cH_{\hamLlambda,\hamlambdamu}\\0 & c_{\hamLlambda,\ham}\end{array}\right)
\left(\begin{array}{c}0 \\\one \\-\p_{\hamvee,\hamLlambda}\end{array}\right). 
$$
Here $\cH_{\hamLlambda,\hamlambdamu}$ denotes a homotopy 
between two possible continuation maps from $\HamLlambda{}$ to $\Hamlambdamu{}{}$ as in the discussion following Definition~\ref{defi:homotopy_retract_triple}, and $c_{\hamLlambda,\ham}$ is a continuation map induced by a small homotopy. The map $\cH_{\hamLlambda,\hamlambdamu}$ decreases the action, and the map $c_{\hamLlambda,\ham}$ distorts the action by an arbitrarily small amount. 

Given an element $A\in FC_*(\Hamvee{$\lambda$}{$\mu$})$ its image under $\Phi$ is 
$$
\Phi(A) = \p_{\hamvee,\hamwedge}A - \p_{\hamlambdamu,\hamwedge}\cH_{\hamLlambda,\hamlambdamu}\p_{\hamvee,\hamLlambda} A - c_{\hamLlambda,\ham}\p_{\hamvee,\hamLlambda} A.  
$$
The point now is that we work in a finite action window $(a,b)$. For a choice of the parameter $\lambda$ such that $\lambda/2< a$, all the generators of the complex $FC_*(\HamLlambda{$\lambda$}\,)$ have action $<a$. Since $c_{\hamLlambda,\ham}$ distorts the action by an arbitrarily small amount and all the other maps involved in the expression of $\Phi(A)$ decrease the action, it follows that, given $A\in FC_*^{(a,b)}(\Hamvee{$\mu$}{$\lambda$})$, the truncation of $\Phi(A)$ in action $(a,b)$ is  
$$
\Phi^{(a,b)}(A) = \p_{\hamvee,\hamwedge}^{(a,b)}A. 
$$
This proves that the isomorphisms from Theorems~\ref{thm:PD-cones} and~\ref{thm:coh-product} coincide in the finite action range $(a,b)$. As already indicated, the statement in the limit $a\to-\infty$, $b\to\infty$ follows by standard arguments which were already seen before.   
\end{proof}

\section{The pair-of-pants product via varying weights}\label{sec:pop-GH-iso} 

We restrict in this section to the homology in negative action range 
$$
\wh{SH}_*^{<0}(\p W)\cong SH_*^{<0}(W,\p W)\cong SH^{-*}_{>0}(W).
$$

We introduce in~\S\ref{sec:varying-weights}
the {\em varying weights secondary 
  product} $\usigma_{w}$ on $SH_*^{<0}(W,\p W)[n-1]$. We show in~\S\ref{sec:PD-varying-weights-iso} that it coincides with the Poincar\'e duality product $\sigma_{PD}$ and with the continuation product $\usigma_c$. The product $\usigma_w$ is not used elsewhere in the paper but, unlike the Poincar\'e duality product $\sigma_{PD}$ and the continuation product $\usigma_c$, it did appear previously in the literature. Its construction goes back to Seidel and was further explored
in~\cite{Ekholm-Oancea}, see also~\cite{AS-product-structures}. The purpose of this section is to clarify its relationship to the constructions of the present paper. 

\subsection{Definition of the varying weights secondary product}
\label{sec:varying-weights}

Let $\Sigma$ be the genus zero Riemann surface with three punctures, two of them labeled as positive $z^0_+$, $z^1_+$ and the third one labeled as negative $z_-$, endowed with cylindrical ends $[0,\infty)\times S^1$ at the positive punctures and $(-\infty, 0]\times S^1$ at the negative puncture. Denote $(s,t)$, $t\in S^1$ the induced cylindrical coordinates at each of the punctures. Consider a smooth family of $1$-forms $\beta_\epsilon\in \Omega^1(\Sigma)$, $\epsilon\in (0,1)$ satisfying the following conditions: 
\begin{itemize}
\item {\sc (nonnegative)} $d\beta_\epsilon\ge 0$; 
\item {\sc (weights)} $\beta_\epsilon=dt$ near each of the punctures;
\item {\sc (interpolation)} we have $\beta_\epsilon=\epsilon dt$ on $[0,R(\epsilon)]\times S^1$ in the cylindrical end near $z^0_+$, and 
$\beta_\epsilon=(1-\epsilon) dt$ on $[0,R(1-\epsilon)]\times S^1$ in the cylindrical end near $z^1_+$, for some smooth function $R:(0,1)\to \R_{>0}$. In other words, the family $\{\beta_\epsilon\}$ interpolates between a $1$-form which varies a lot near $z^0_+$ and very little near $z^1_+$, and a $1$-form which varies a lot near $z^1_+$ and very little near $z^0_+$;
\item {\sc (neck stretching)} we have $R(\epsilon)\to +\infty$ as $\epsilon\to 0$. 
\end{itemize} 
We can assume without loss of generality that for $\epsilon$ close to
$0$ we have $\beta_\epsilon=f_\epsilon(s)dt$ in the cylindrical end at
the positive puncture $z^0_+$, with $f'_\epsilon\ge 0$, $f_\epsilon=1$
near $+\infty$, and $f_\epsilon=\epsilon$ on $[0,R(\epsilon)]$, and
similarly for $\epsilon$ close to $1$ on the end at $z^1_+$.  

\begin{figure} [ht]
\centering
\input{beta_epsilon.pstex_t}
\caption{Interpolating family of $1$-forms with varying weights.}
\label{fig:beta_epsilon}
\end{figure} 

Let $H:\wh W\to\R$ be a concave smoothing localized near $\p W$ of a Hamiltonian which is zero on $W$ and linear of \emph{negative} slope on $[1,\infty)\times \p W$. The Hamiltonian $H$ further includes a small time-dependent perturbation localized near $\p W$, so that all $1$-periodic orbits are nondegenerate. Assume the absolute value of the slope is not equal to the period of a closed Reeb orbit. Denote $\cP(H)$ the set of $1$-periodic orbits of $H$. The elements of $\cP(H)$ are contained in a compact set close to $W$. 
\begin{remark}
The Hamiltonian $H$ above has the standard shape used in the definition of symplectic homology $SH_*(W,\p W)$. However, the construction can accommodate more general Hamiltonians using methods from~\cite[Lemmas~2.2 and~2.3]{CO}.
\end{remark}
Let $J=(J_\epsilon^z)$, $z\in \Sigma$, $\epsilon\in(0,1)$ be a generic family of compatible almost complex structures, independent of $\epsilon$ and $s$ near the punctures, cylindrical and independent of $\epsilon$ and $z$ in the symplectization $[1,\infty)\times \p W$.
For $x^0,x^1,y\in\cP(H)$ denote 
\begin{align*}
   \MM^1(x^0,x^1;y) := \bigl\{&(\epsilon,u)\;\bigl|\; 
   \epsilon\in(0,1),\ u:\Sigma\to \wh W, \cr
   & (du-X_H\otimes \beta_\epsilon)^{0,1}=0, \cr
   & \lim_{\stackrel{s\to+\infty}{z=(s,t)\to z^i_+}} u(z)=x^i(t),\, i=0,1,\cr
   & \lim_{\stackrel{s\to-\infty}{z=(s,t)\to z_-}}u(z)=y(t)\bigr\}.
\end{align*}
In the symplectization $[1,\infty)\times \p W$ we have $H\le 0$ and therefore $d(H\beta)\le 0$, so that elements of the above moduli space are contained in a compact set. The dimension of the moduli space is 
$$
\dim\, \MM^1(x^0,x^1;y) = \CZ(x^0)+\CZ(x^1)-\CZ(y)-n+1. 
$$
When it has dimension zero the moduli space $\MM^1_{\dim=0}(x^0,x^1;y)$ is compact. When it has dimension $1$ the moduli space $\MM^1_{\dim=1}(x^0,x^1;y)$ admits a natural compactification into a manifold with boundary 
\begin{align*}
   \p\MM^1_{\dim=1}(x^0,x^1;y)
   &= \coprod_{\CZ(x')=\CZ(x^0)-1}\MM(x^0;x')\times\MM^1_{\dim=0}(x',x^1;y) \cr
   &\amalg  \coprod_{\CZ(x')=\CZ(x^1)-1}\MM(x^1;x')\times\MM^1_{\dim=0}(x^0,x';y) \cr
   &\amalg\coprod_{\CZ(y')=\CZ(y)+1}\MM^1_{\dim=0}(x^0,x^1;y')\times\MM(y';y) \cr
   &\amalg\MM^1_{\epsilon=1}(x^0,x^1;y) \amalg\MM^1_{\epsilon=0}(x^0,x^1;y).
\end{align*}
Here $\MM^1_{\epsilon=1}(x^0,x^1;y)$ and $\MM^1_{\epsilon=0}(x^0,x^1;y)$ denote the fibers of the first projection $\MM^1_{\dim=1}(x^0,x^1;y)\to (0,1)$, $(\epsilon,u)\mapsto \epsilon$ near $1$, respectively near $0$. (By a standard gluing argument the projection is a trivial fibration with finite fiber near the endpoints of the interval $(0,1)$.) 

Consider the degree $1$ operation 
$$
\sigma_{w}:FC_*(H)[n]\otimes FC_*(H)[n]\to FC_*(H)[n]
$$
defined on generators by 
$$
\sigma_{w}(x^0\otimes x^1)=\sum_{\CZ(y)=\CZ(x^0)+\CZ(x^1)-n+1} \# \MM^1_{\dim=0}(x^0,x^1;y) y,
$$
where $\# \MM^1_{\dim=0}(x^0,x^1;y)$ denotes the count of elements in the $0$-dimen\-sional moduli space $\MM^1_{\dim=0}(x^0,x^1;y)$ with signs determined by a choice of coherent orientations. Consider also the degree $0$ operations 
$$
\sigma_{w}^i:FC_*(H)[n]\otimes FC_*(H)[n]\to FC_*(H)[n], \qquad i=0,1
$$
defined on generators by 
$$
\sigma_{w}^i(x^0\otimes x^1)=\sum_{\CZ(y)=\CZ(x^0)+\CZ(x^1)-n} \# \MM^1_{\epsilon=i}(x^0,x^1;y) y,
$$
where $\# \MM^1_{\epsilon=i}(x^0,x^1;y)$ denotes the count of elements in the $0$-dimen\-sional moduli space $\MM^1_{\epsilon=i}(x^0,x^1;y)$ with signs determined by a choice of coherent orientations. 

The formula for $\p\MM^1_{\dim=1}(x^0,x^1;y)$ translates into the algebraic relation  
\begin{equation} \label{eq:product_with_weights}
\p ^F\sigma_{w} + \sigma_{w}(\p ^F\otimes\mathrm{id}+ \mathrm{id}\otimes \p ^F)=\sigma_{w}^1-\sigma_{w}^0.
\end{equation}
We now claim that  
\begin{equation} \label{eq:product_with_weights_vanishing_at_boundary}
\sigma_{w}^0| FC_*^{<0}(H)[n]\otimes FC_*(H)[n] = 0,\quad \sigma_{w}^1|FC_*(H)[n]\otimes FC_*^{<0}(H)[n] = 0. 
\end{equation}
To prove the claim for $\sigma_{w}^0$, note that this map can be
expressed as a composition $\mu\circ (c\otimes\mathrm{id})$, where
$\mu:FC_*(\epsilon H)[n]\otimes FC_*(H)[n]\to FC_*(H)[n]$ is a
pair-of-pants product, and $c:FC_*(H)[n]\to FC_*(\epsilon H)[n]$ is a
continuation map. The action decreases along continuation maps, hence
$c(FC_*^{<0}(H)[n])\subset FC_*^{<0}(\epsilon H)[n]$. At the same time this
last group vanishes because $\epsilon H$ has no nontrivial
$1$-periodic orbits of negative action for $\epsilon$ small
enough. The argument for $\sigma_{w}^1$ is similar.  

It follows that $\sigma_{w}$ restricts to a degree $1$ chain map
\begin{equation}\label{eq:sigmaF}
   \sigma_{w}:FC_*^{<0}(H)[n]\otimes FC_*^{<0}(H)[n]\to FC_*^{<0}(H)[n].
\end{equation}
(This map lands in $FC_*^{<0}(H)[n]$ for action reasons.) Passing to the limit we obtain a degree $1$ product 
$$
\sigma_w:SH_*^{<0}(W,\p W)[n]\otimes SH_*^{<0}(W,\p W)[n]\to SH_*^{<0}(W,\p W)[n]. 
$$ 
Finally we apply a shift as in~\eqref{eq:shift-sigma}, namely $\usigma_w=-\sigma_w[-1,-1;-1]$, in order to obtain a degree $0$ product 
$$
\usigma_w:SH_*^{<0}(W,\p W)[n-1]^{\otimes 2}\to SH_*^{<0}(W,\p W)[n-1].
$$ 
Explicitly $\usigma_w(\bar x,\bar x')=-(-1)^{|\bar x|}\overline{\sigma(x,x')}$, where $x,x'\in SH_*^{<0}(W,\p W)[n]$ and $\bar x, \bar x'\in SH_*^{<0}(W,\p W)[n-1]$ denote their shifted images. We call $\usigma_w$ the {\bf varying weights degree $-n+1$  secondary product on $SH_*^{<0}(W,\p W)$}.  

Equivalently, in view of the canonical isomorphism $SH_*^{<0}(W,\p W)\simeq SH^{-*}_{>0}(W)$ from~\cite{CO}, the above construction defines a {\bf degree $n-1$ secondary product on $SH^*_{>0}(W)$}, denoted $\usigma^w$. At the level of moduli spaces this is described by exchanging the roles of the positive and negative punctures, and reversing the sign of the Hamiltonian. Thus one considers curves with 2 negative punctures with varying weights treated as cohomological inputs and 1 positive puncture treated as a cohomological output. In this framework, the relevant Floer equation involves Hamiltonians on $\wh W$ with positive slope on $[1,\infty) \times \p W$. 

In yet another equivalent formulation, the above construction defines a {\bf degree $-n+1$ secondary coproduct on $SH_*^{>0}(W)$}, denoted $\underline{\lambda}_w$. The moduli spaces are the same as for the secondary product on $SH^*_{>0}(W)$ (2 negative punctures with varying weights and 1 positive puncture), except that the positive puncture is treated as a homological input and the negative punctures are treated as homological outputs.

\subsection{Secondary products: varying weights and continuation maps}\label{sec:PD-varying-weights-iso}

In view of Corollary~\ref{cor:PD-negative-action} and Proposition~\ref{prop:products_coincide}, the negative action homology group $SH_*^{<0}(W,\p W)[n-1]\cong \wh{SH}_*^{<0}(\p W)[n-1]$ carries two other products of degree $0$ which coincide~: 
\begin{itemize}
\item the {\bf Poincar\'e duality product} $\sigma_{PD}$, induced from the primary product on Rabinowitz Floer homology via the Poincar\'e duality isomorphism from Theorem~\ref{thm:coh-product} restricted in negative action. 
\item the {\bf continuation product} $\usigma_c=\usigma$ constructed in~\S\ref{sec:alternative_product} restricted in negative action. 
\end{itemize}

\begin{proposition}\label{prop:cont-weight}
The continuation product $\usigma_c$ and the varying weights product $\usigma_w$ coincide on $SH_*^{<0}(W,\p W)[n-1]$. 
\end{proposition}

\begin{proof} Going back to the definition of the unshifted varying weights product $\sigma_w$, we recall that the vanishing of the left boundary term in~\eqref{eq:product_with_weights_vanishing_at_boundary}, i.e.
\begin{equation*} 
\sigma_{w}^0| FC_*^{<0}(H)[n]\otimes FC_*(H)[n] = 0,
\end{equation*}
was ensured by the fact that $\sigma_{w}^0$ could be
expressed as a composition $\mu\circ (c\otimes\mathrm{id})$, where
$\mu:FC_*(\epsilon H)[n]\otimes FC_*(H)[n]\to FC_*(H)[n]$ is a
pair-of-pants product, and $c:FC_*(H)[n]\to FC_*(\epsilon H)[n]$ is a
continuation map. Since the action decreases along continuation maps, we have
$c(FC_*^{<0}(H)[n])\subset FC_*^{<0}(\epsilon H)[n]$, and the
last group vanishes because $\epsilon H$ has no nontrivial
$1$-periodic orbits of negative action for $\epsilon$ small
enough. Similarly, the boundary term $\sigma_{w}^1$ can be expressed as $\mu\circ (\mathrm{id}\otimes c)$. 

In the case of the unshifted continuation product $\sigma$, the boundary terms are expressed as $\mu\circ (c'\otimes\mathrm{id})$, respectively $\mu\circ (\mathrm{id}\otimes c')$, where $c':FC_*(H)[n]\to FC_*(K_\nu)[n]$ is the continuation map towards a Hamiltonian $K_\nu$ which vanishes on $W$ and has positive slope $\nu$ on $[1,\infty)\times \p W$. The continuation map $c'$ factors as $FC_*(H)[n]\stackrel{c}\longrightarrow FC_*(\epsilon H)[n]\to FC_*(K_\nu)[n]$ and, when restricted to negative action, vanishes for all positive values of $\nu$. As such, the $1$-parameter family of Floer problems with source a genus $0$ Riemann surface with two positive punctures and one negative puncture which defines $\sigma$ can be chosen as follows: on a first interval we interpolate near the first positive puncture from the continuation map $c'$ to the continuation map $c$. On a second interval we follow the $1$-parameter family which defines $\sigma_w$. And on a third interval we interpolate near the second positive puncture from the continuation map $c$ to the continuation map $c'$. When restricted to negative action, the first and third parametrizing intervals bring no contribution, so that $\sigma_c=\sigma_w$ for this choice of defining data. 

Finally, the continuation product $\usigma_c$ and the varying weights product $\usigma_w$ are defined by the same shift $-[-1,-1;-1]$ from $\sigma_c$ and respectively $\sigma_w$, so that they coincide as well. 
\end{proof}

\section{$A_2^+$-structures} \label{sec:A2+}

The goal of this section is to define the notion of an $A_2^+$-structure on a chain complex $\cA$ and show how it induces an $A_2$-triple $(\cA^\vee,c,\cA)$. 
The $R$-module $\cA$ need not be free or of finite rank. Of particular interest for the applications to string topology in~\cite{CHO-MorseFloerGH} is the case where $\cA$ is free over $R$, but possibly of infinite rank. We also discuss morphisms of $A_2^+$-algebras. 

\subsection{$A_2^+$-algebras} 

Let $(\cA,\p)$ be a dg $R$-module. We denote by $\tau:\cA\otimes \cA \to \cA\otimes \cA$ the twist $a\otimes b\mapsto (-1)^{|a|\cdot |b|}b\otimes a$. 

\begin{definition} \label{defi:A2+structure} 
An \emph{$A_2^+$-structure} on $(\cA,\p)$ consists of the following maps: 
\begin{itemize}
\item \emph{the copairing} $c_0:R\to \cA\otimes \cA$, of degree $0$;
\item \emph{the secondary copairing} $Q_0:R\to \cA\otimes\cA$, of degree $1$;
\item \emph{the product} $\mu:\cA\otimes \cA\to \cA$, of degree $0$;
\item \emph{the secondary coproduct} $\lambda:\cA\to \cA\otimes \cA$, of degree $1$;
\item \emph{the cubic vector} $B:R\to \cA\otimes\cA\otimes \cA$, of degree $2$.    
\end{itemize}
These maps are subject to the following conditions: 
\begin{enumerate}
\item $c_0$ is a cycle.  
\item $c_0$ is symmetric up to a homotopy given by $Q_0$, i.e. 
$$
\tau c_0 - c_0 = [\p,Q_0].
$$
\item $\mu$ is a chain map.
\item $\lambda$ satisfies the relation 
$$
[\p, \lambda]=(\mu\otimes 1)(1\otimes c_0) - (1\otimes \mu)(\tau c_0 \otimes 1).
$$
\item $B$ satisfies the relation 
$$
\p B = (\lambda_{c_0,\tau c_0}\otimes 1 ) \tau c_0 + (312)(\lambda_{\tau c_0,c_0} \otimes 1) \tau c_0 + (231) (\lambda_{\tau c_0,\tau c_0}\otimes 1)c_0.
$$
\end{enumerate}
Here for the last relation we denote $\lambda=\lambda_{c_0,c_0}$ and define 
\begin{align*}
\lambda_{\tau c_0,\tau c_0} & = \lambda_{c_0,c_0} + (\mu\otimes 1)(1\otimes Q_0) - (1\otimes \mu)(\tau Q_0\otimes 1),\\
\lambda_{c_0,\tau c_0} & = \lambda_{c_0,c_0} + (\mu\otimes 1)(1\otimes Q_0),\\
\lambda_{\tau c_0,c_0} & = \lambda_{c_0,c_0} - (1\otimes \mu)(\tau Q_0\otimes 1).
\end{align*}
By $(312)$ we denote the permutation on $\cA^{\otimes 3}$ given by the product of transpositions $\tau_{23}\tau_{12}$,
by $(231)$ the permutation given by the product of transpositions $\tau_{12}\tau_{23}=(\tau_{23}\tau_{12})^2$, 
and by $(123)$ the identity.\footnote{Note that this does not correspond to the cycle notation of permutations.}
The relation satisfied by $B$ can be rewritten 
$$
\p B=\sum_{\sigma\, \mathrm{cyclic}} \sigma \big( (\lambda_{\boldsymbol{a}_\sigma,\boldsymbol{b}_\sigma}\otimes 1)\boldsymbol{c}_\sigma\big),
$$
where the meaning of ``cyclic" is that $\sigma\in\{(123), (231), (312)\}$ and we denote $(\boldsymbol{a}_\sigma,\boldsymbol{b}_\sigma,\boldsymbol{c}_\sigma)=\sigma (c_0,\tau c_0,\tau c_0)$. 
\end{definition}

{\bf Conventions.} We depict the operation $\mu$ as a trivalent tree with two inputs and one output, where the inputs are read in \emph{clockwise} order with respect to the output. We depict the operation $\lambda$ as a trivalent tree with one input and two outputs, where the outputs are read in \emph{counterclockwise} order with respect to the input. We depict the operation $B$ with outputs ordered cyclically \emph{counterclockwise}. This is consistent with the operadic and co-operadic conventions, in which inputs or outputs are \emph{both} read horizontally from left to right. See Figure~\ref{fig:mu-lambda-B}. We depict the inputs and outputs as lying on a circle. 

\begin{figure}
\begin{center}
\includegraphics[width=\textwidth]{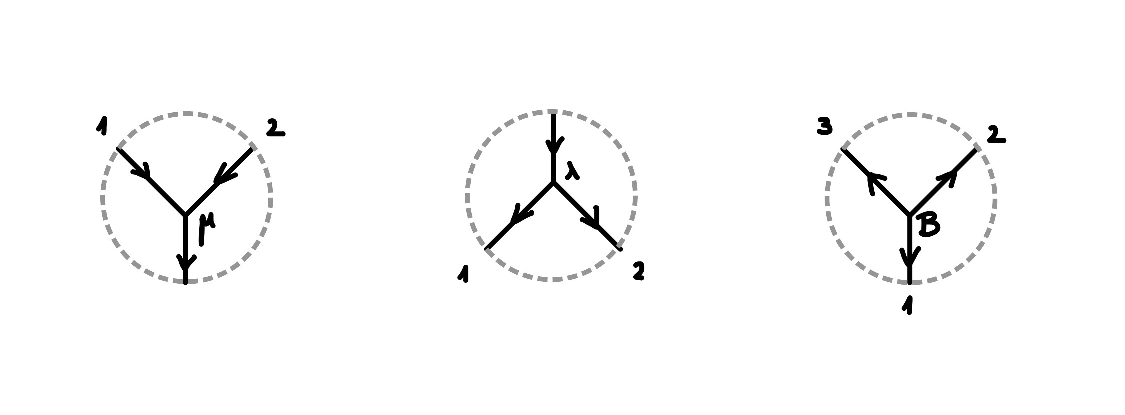}
\caption{Conventions for inputs and outputs of $\mu$, $\lambda$, $B$}
\label{fig:mu-lambda-B}
\end{center}
\end{figure}

The relations involving $c_0$, $Q_0$, $\lambda$, $B$ are depicted in Fig.~\ref{fig:cQ}, \ref{fig:lambda-trees-new} and~\ref{fig:B-new-Q}. 

In these figures and in the sequel we use the following 

{\bf Pictorial convention:} copairings are represented as $\leftrightarrow$, and we always feed their first component as input for some other operations.

The notation $\lambda=\lambda_{c_0,c_0}$ is motivated by Figure~\ref{fig:lambda-trees-new}, in which the copairing $c_0$ appears at both ends of the parametrizing interval. The relations satisfied by the other elements $\lambda_{\boldsymbol{a},\boldsymbol{b}}$, $\boldsymbol{a},\boldsymbol{b}=c_0$ or $\tau c_0$ are depicted in Fig.~\ref{fig:lambda-trees-new-ab}.

\begin{figure}
\begin{center}
\includegraphics[width=.7\textwidth]{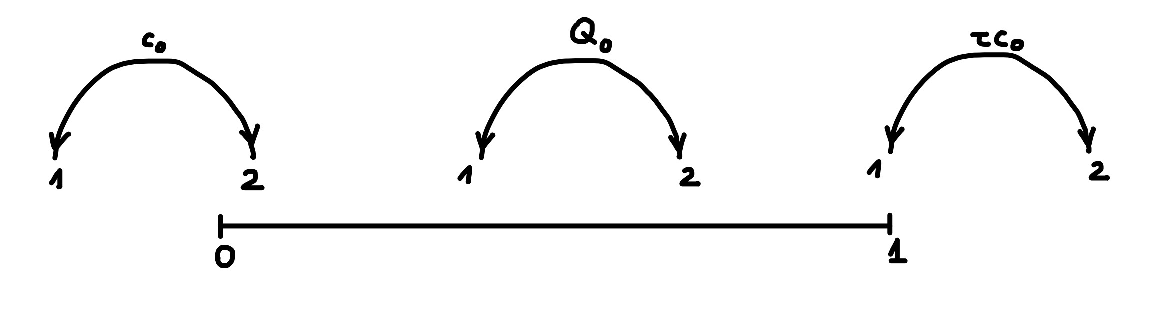}
\caption{The homotopy $Q_0$ between $c_0$ and $\tau c_0$.}
\label{fig:cQ}
\end{center}
\end{figure}


\begin{figure}
\begin{center}
\includegraphics[width=.7\textwidth]{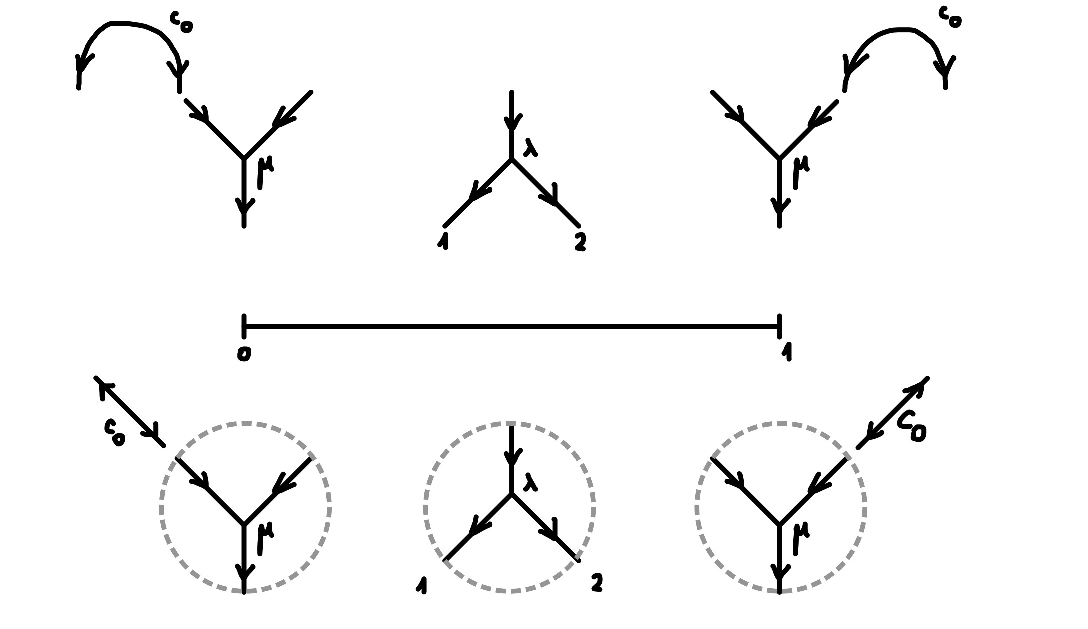}
\caption{Two equivalent descriptions of $\lambda$}
\label{fig:lambda-trees-new}
\end{center}
\end{figure}

\begin{figure}
\begin{center}
\includegraphics[width=\textwidth]{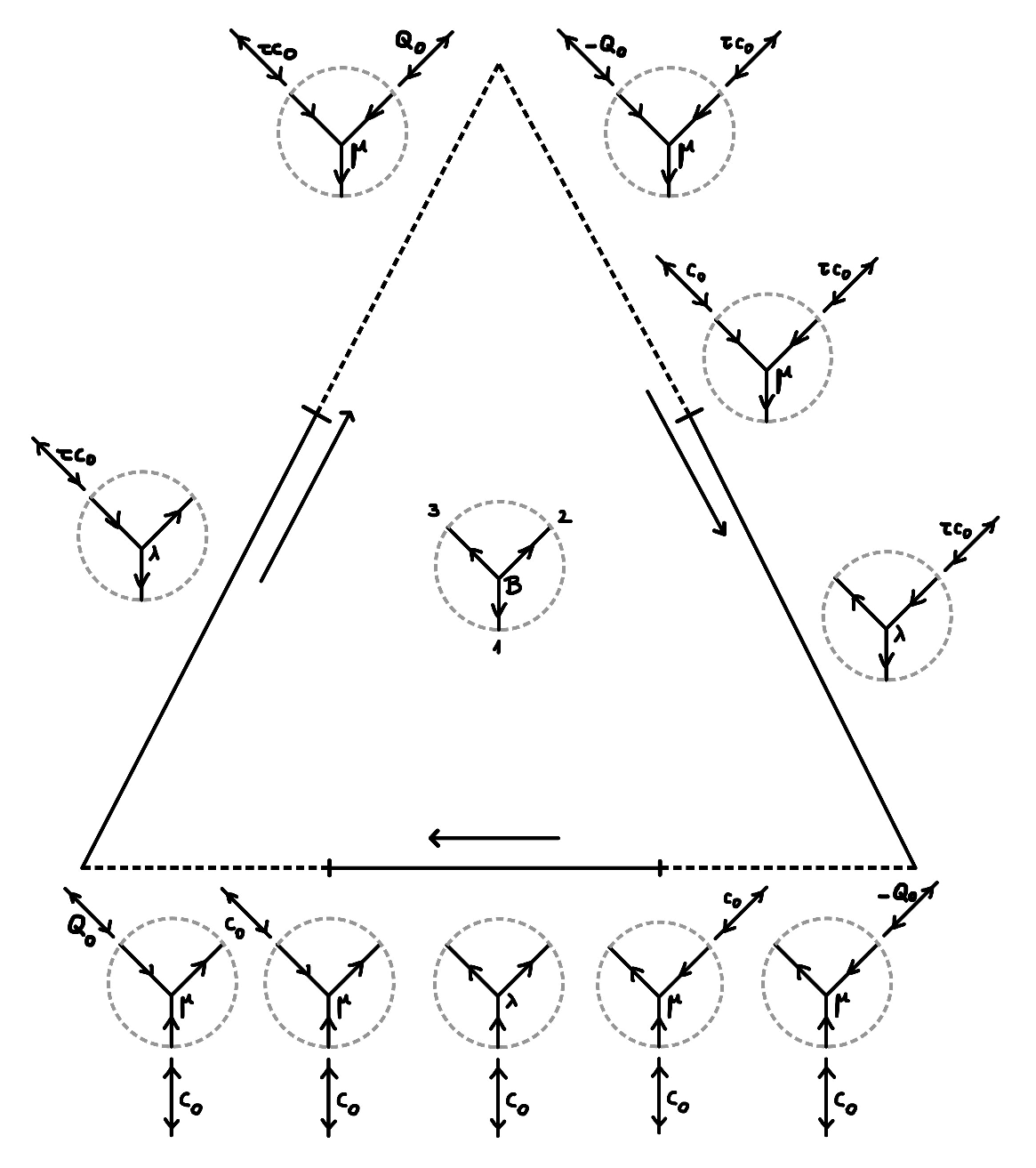}
\caption{The cubic vector $B$}
\label{fig:B-new-Q}
\end{center}
\end{figure}

\begin{figure}
\begin{center}
\includegraphics[width=\textwidth]{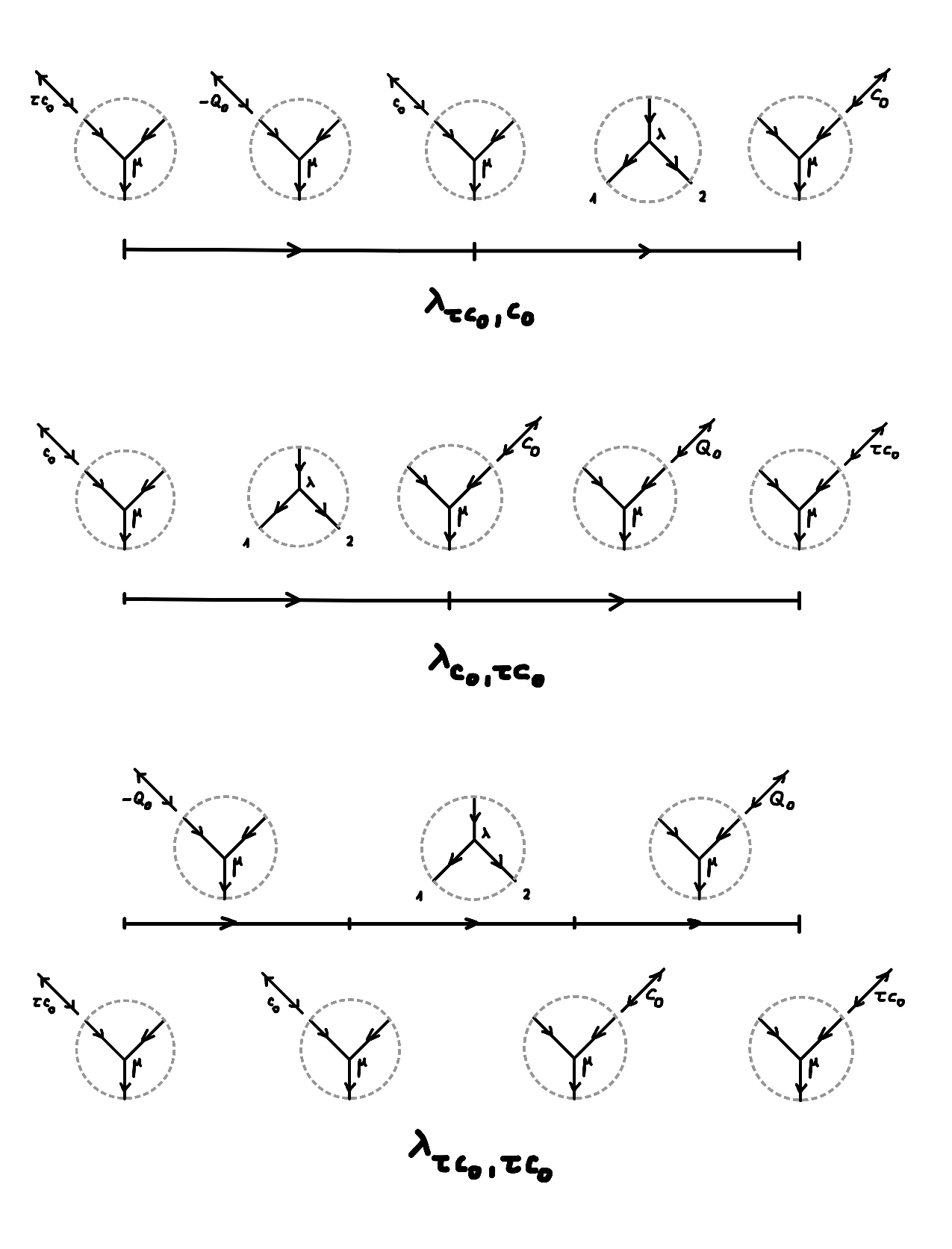}
\caption{The operations $\lambda_{\boldsymbol{a},\boldsymbol{b}}$, $\boldsymbol{a},\boldsymbol{b}=c_0$ or $\tau c_0$}
\label{fig:lambda-trees-new-ab}
\end{center}
\end{figure}

\begin{remark} The copairing $c_0$ gives rise to the map 
$$
c:=(\ev\otimes 1)(1\otimes c_0) :\cA^\vee\to\cA.
$$
That $c_0$ is a cycle implies that $c$ is a chain map. The fact that $c$ originates in $c_0\in \cA\otimes \cA$ implies that $c^\vee$ takes values in
$\cA\cong \iota(\cA)\subset \cA^{\vee\vee}$, where $\iota:\cA\to\cA^{\vee\vee}$ is the canonical map from~\S\ref{sec:consequences}. Moreover, under this identification $c^\vee$ can be expressed in terms of $\tau c_0$ as 
$$
c^\vee=(\ev\otimes 1)(1\otimes\tau c_0). 
$$
The condition $\tau c_0-c_0=[\p,Q_0]$ is therefore equivalent to 
$$
c^\vee-c=[\p,Q],
$$
with 
$$
Q=(\ev\otimes 1)(1\otimes Q_0).
$$
If $\iota$ is an isomorphism, then $c_0$ can be recovered from $c$ by the formula $c_0=(1\otimes c)\ev^\vee$.
\end{remark}

It is useful to spell out the unital variant of the notion of $A_2^+$-structure. 

\begin{definition}
Let $(\cA,\p)$ be a dg $R$-module. A \emph{unital $A_2^+$-structure} on $(\cA,\p)$ consists of the following maps: 
\begin{itemize}
\item \emph{the copairing} $c_0:R\to \cA\otimes \cA$, of degree $0$;
\item \emph{the product} $\mu:\cA\otimes \cA\to \cA$, of degree $0$;
\item \emph{the unit} $\eta:R\to \cA$, of degree $0$;
\item \emph{the secondary coproduct} $\lambda:\cA\to \cA\otimes \cA$, of degree $1$;
\item \emph{the cubic vector} $B:R\to \cA\otimes\cA\otimes \cA$, of degree $2$.
\item \emph{the left- and right unit homotopies} $\mathrm{Left}:\cA\to \cA$ and $\mathrm{Right}:\cA\to\cA$, of degree $1$.
\end{itemize}
These maps are subject to the following conditions: 
\begin{enumerate}
\item $c_0$ and $\eta$ are cycles.  
\item $\mu$ is a chain map.
\item $\eta$ is a two-sided homotopy unit for $\mu$, i.e. 
$$
\mu(\eta\otimes 1)=1+ [\p,\mathrm{Left}],\qquad \mu(1\otimes\eta)=1+ [\p,\mathrm{Right}].
$$
\item $\lambda$ satisfies the relation 
$$
[\p, \lambda]=(\mu\otimes 1)(1\otimes c_0) - (1\otimes \mu)(\tau c_0 \otimes 1).
$$
\item $B$ satisfies the relation (in the notation of Definition~\ref{defi:A2+structure})
$$
\p B=\sum_{\sigma\, \mathrm{cyclic}} \sigma \big( (\lambda_{\boldsymbol{a}_\sigma,\boldsymbol{b}_\sigma}\otimes 1)\boldsymbol{c}_\sigma\big).
$$
\end{enumerate}
\end{definition}

\begin{remark}
A unital $A_2^+$-structure is also an $A_2^+$-structure in the sense of Definition~\ref{defi:A2+structure}. To see this, apply relation~(4) to $\eta$ in order to obtain 
$$
\tau c_0-c_0= [\p,Q_0],
$$ 
where we define the \emph{secondary copairing $Q_0$} as 
$$
Q_0=-\lambda\eta + (\mathrm{Left}\otimes 1)c_0 - (1\otimes\mathrm{Right})\tau c_0:R\to \cA\otimes \cA.
$$

A further simplification arises if there exists a dg submodule $\cA_0\subset \cA$ which is stable under $\mu$, such that $\eta\in \cA_0$, $c_0\in\cA_0\otimes \cA_0$, and $\eta$ is a strict two-sided unit for $\mu$ on $\cA_0$ (more specifically, if the homotopies $\mathrm{Left}$ and $\mathrm{Right}$ vanish on $\cA_0$). In that case it is enough to set 
$$
Q_0=-\lambda\eta. 
$$
\end{remark} 

\begin{remark}
The terminology ``unital $A_2^+$-structure" is motivated as follows. We prove in Proposition~\ref{prop:TQFT+} that an $A_2^+$-structure on $\cA$ determines canonically an $A_2$-triple $(\cA^\vee,c,\cA)$, and further a product structure on $Cone(c)$. For a unital $A_2^+$-structure the product on the cone will be unital.  
\end{remark}

\subsection{From $A_2^+$-algebras to $A_2$-triples} 

\begin{proposition}\label{prop:TQFT+}
An $A_2^+$-structure on $(\cA,\p)$ canonically gives rise to an $A_2$-triple $(\cA^\vee,c,\cA)$. 
\end{proposition}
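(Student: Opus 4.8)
The goal is to produce, from the data $(\cA,\p,c_0,\mu,\lambda,B)$ of an $A_2^+$-structure, all seven operations $\mu, m_L, m_R, \tau_L, \tau_R, \sigma, \beta$ of Definition~\ref{defi:pre-subalgebra} on the triple $(\cA^\vee,c,\cA)$, and to verify the eight compatibility relations. The guiding principle is the one articulated in~\S\ref{sec:trees}: each operation should be \emph{forced} by a TQFT-style gluing rule, reading off a composite of $\mu$, $\lambda$, $B$, $c_0$, $\ev$ and twist maps from a planar tree whose legs are labelled by $\cA$ and $\cA^\vee$. Concretely: $\mu$ on $\cA$ is already given; the bimodule operations $m_L:\cA\otimes\cA^\vee\to\cA^\vee$ and $m_R:\cA^\vee\otimes\cA\to\cA^\vee$ are obtained from $\mu$ by converting one $\cA$-input and the $\cA$-output to $\cA^\vee$ via the evaluation map, exactly as in equations~\eqref{eq:mL-mu} and~\eqref{eq:mR-mu}; the maps $\tau_L:\cA^\vee\otimes\cA\to\cA$ and $\tau_R:\cA\otimes\cA^\vee\to\cA$ are built from the secondary coproduct $\lambda$ (precompose with $\ev$ on the appropriate pair of legs, as in the pictures of Figure~\ref{fig:lambda-trees}); $\sigma:\cA^\vee\otimes\cA^\vee\to\cA^\vee$ is assembled from $\lambda$ with two legs converted to $\cA^\vee$; and $\beta:\cA^\vee\otimes\cA^\vee\to\cA$ is built from the cubic vector $B$ by contracting two of its three $\cA$-outputs against the two $\cA^\vee$-inputs via $\ev$.

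First I would fix notation for the tree calculus and give the explicit functional formulas for $m_L, m_R, \tau_L, \tau_R, \sigma, \beta$ as composites, being careful that every arrow is one of the structure maps or $\ev$, so that (by the discussion in~\S\ref{sec:conventions}) all signs are automatically handled by the Koszul rule and only appear upon evaluation on elements. Next I would verify the relations of Definition~\ref{defi:pre-subalgebra} one at a time, translating each into a functional identity among composites and then invoking the corresponding axiom of the $A_2^+$-structure:
\begin{itemize}
\item $[\p,\mu]=0$ is axiom (2); $[\p,m_L]=[\p,m_R]=0$ follow since $m_L,m_R$ are built from $\mu$ and $\ev$, both chain maps, with $\p$ on $\cA^\vee$ given by convention 2bis.
\item $[\p,\tau_L]=\mu(c\otimes\one)-cm_R$ and $[\p,\tau_R]=\mu(\one\otimes c)-cm_L$ should each follow from axiom (4), $[\p,\lambda]=(\mu\otimes 1)(1\otimes c_0)-(1\otimes\mu)(c_0\otimes 1)$, after contracting one leg against $\ev$ and using $c=(\ev\otimes 1)(1\otimes c_0)$ together with axiom (1) (symmetry of $c_0$) to identify the two terms.
\item $[\p,\sigma]=m_R(\one\otimes c)-m_L(c\otimes\one)$ should again come from axiom (4) with \emph{two} legs contracted, using axiom (3) (centrality of the image of $c$) to match the module actions.
\item $[\p,\beta]=-c\sigma+\tau_R(c\otimes\one)-\tau_L(\one\otimes c)$ is the contracted form of axiom (5), $\p B=(1\otimes\lambda)c_0+(\lambda\otimes 1)c_0-\tau_{23}(\lambda\otimes 1)c_0$: each of the three terms on the right becomes, after contracting two $\cA$-outputs of $B$ against the two $\cA^\vee$-inputs, one of $c\sigma$, $\tau_R(c\otimes\one)$, $\tau_L(\one\otimes c)$, with the twist $\tau_{23}$ accounting for which contraction pattern is used.
\end{itemize}
Finally I would note that $c:\cA^\vee\to\cA$ is a degree $0$ chain map (immediate from axiom (2bis) and $\p c_0=0$, the latter forced by $c_0$ having degree $0$ and being a structure map landing in $\cA\otimes\cA$ — or simply imposed), so $(\cA^\vee,c,\cA)$ is a bona fide $A_2$-triple.

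\textbf{Main obstacle.} The routine part is the list of relations modulo (1)--(5); the genuinely delicate point is bookkeeping the \emph{order} of tensor factors and the placement of twist isomorphisms so that each contracted axiom lands on the nose as the stated relation of Definition~\ref{defi:pre-subalgebra}, rather than a sign-twisted or argument-permuted variant. In particular, matching the asymmetry between $\tau_L$ and $\tau_R$ (and between $m_L$ and $m_R$) with the asymmetry in axiom (4), and getting the single $\tau_{23}$ in axiom (5) to reproduce exactly the $-c\sigma+\tau_R(c\otimes\one)-\tau_L(\one\otimes c)$ pattern with correct signs, is where care is needed. I would handle this by working entirely with commutative diagrams of composites (never evaluating on elements until the very end), so that the Koszul convention does the sign work, and by drawing the planar trees for each of $\tau_L$, $\tau_R$, $\sigma$, $\beta$ explicitly and checking that the gluings producing the two sides of each $[\p,\cdot]$-relation are the same tree. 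One should also remark — as the paper does in Remark~\ref{rmk:finite-dim} — that finite-dimensionality of $\cA$ is \emph{not} needed here, since only $\ev$ and never $\ev^\vee$ enters the construction.
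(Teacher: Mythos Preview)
Your plan is essentially the paper's own proof: define $m_L,m_R$ from $\mu$ via~\eqref{eq:mL-mu}--\eqref{eq:mR-mu}, build $\sigma,\tau_L,\tau_R$ from $\lambda$ and $\beta$ from $B$ by contracting legs against $\ev$, then reduce each relation of Definition~\ref{defi:pre-subalgebra} to axioms (1)--(5) using the functional/tree calculus so that signs are handled by Koszul. One adjustment to your axiom-bookkeeping: the centrality axiom~(3), $\mu(c\otimes 1)=\mu(1\otimes c)\tau$, is needed not only for the $\sigma$-relation but is precisely what the paper invokes for $\tau_L$, $\tau_R$ and the $\beta$-relation as well --- in each case the tree picture produces a term of the form $\mu\tau(c\otimes 1)$ where the $A_2$-triple relation demands $\mu(c\otimes 1)$, and axiom~(3) is what closes this gap (this is exactly the twist-bookkeeping you correctly flag as the main obstacle).
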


\begin{proof}
The $A_2$-triple operations have two inputs and one output. According to our conventions, the inputs are to be read \emph{clockwise} with respect to the output. Some of these operations are obtained by dualizing $\lambda$ and $B$, and it is therefore important to reorder the outputs of $\lambda$ and $B$. We therefore define 
$$
\ol{\lambda}=-\tau\lambda, \qquad \ol{B}=-(1\otimes \tau)B.
$$
Thus $\ol{\lambda}$ and $\ol{B}$ satisfy the relations depicted in Figures~\ref{fig:lambdabar-trees-new} and~\ref{fig:Bbar-new-Q}. 
More generally, we denote 
$$
\ol{\lambda}_{\boldsymbol{a},\boldsymbol{b}}=-\tau\lambda_{\boldsymbol{b},\boldsymbol{a}}, \qquad \boldsymbol{a},\boldsymbol{b}=c_0 \,\, \mbox{or }\tau c_0.
$$
In particular $\ol{\lambda}=\ol{\lambda}_{c_0,c_0}$. The relations satisfied by the other elements $\ol{\lambda}_{\boldsymbol{a},\boldsymbol{b}}$, $\boldsymbol{a},\boldsymbol{b}=c_0$ or $\tau c_0$ are depicted in Fig.~\ref{fig:lambdabar-trees-new-ab}.

\begin{figure}
\begin{center}
\includegraphics[width=.7\textwidth]{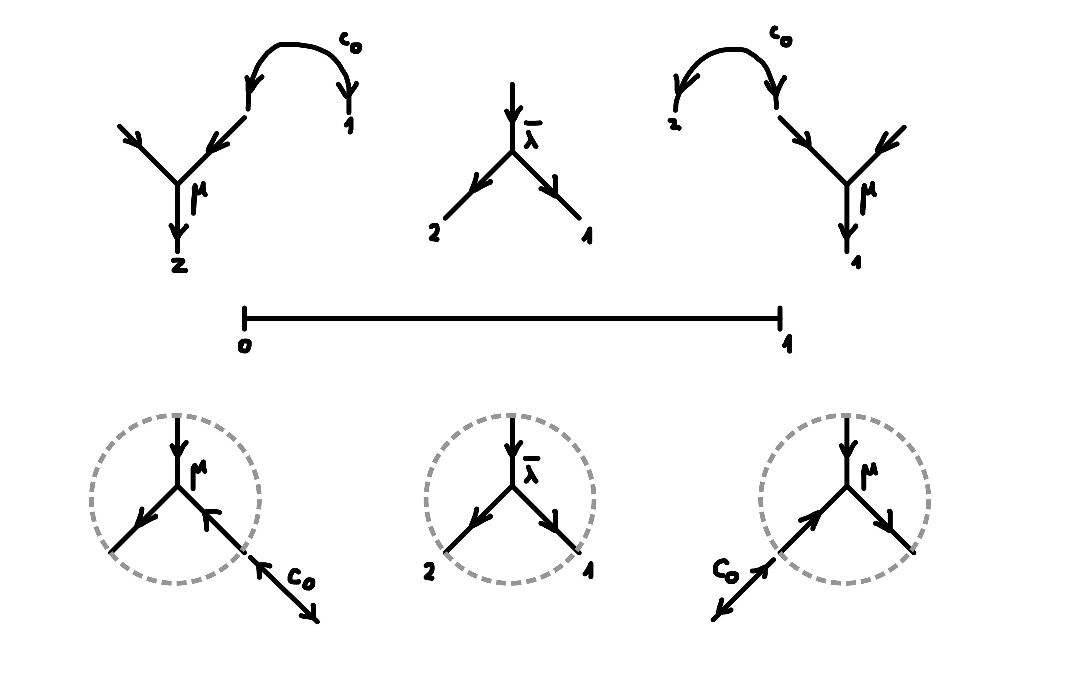}
\caption{The map $\ol{\lambda}$}
\label{fig:lambdabar-trees-new}
\end{center}
\end{figure}

\begin{figure}
\begin{center}
\includegraphics[width=\textwidth]{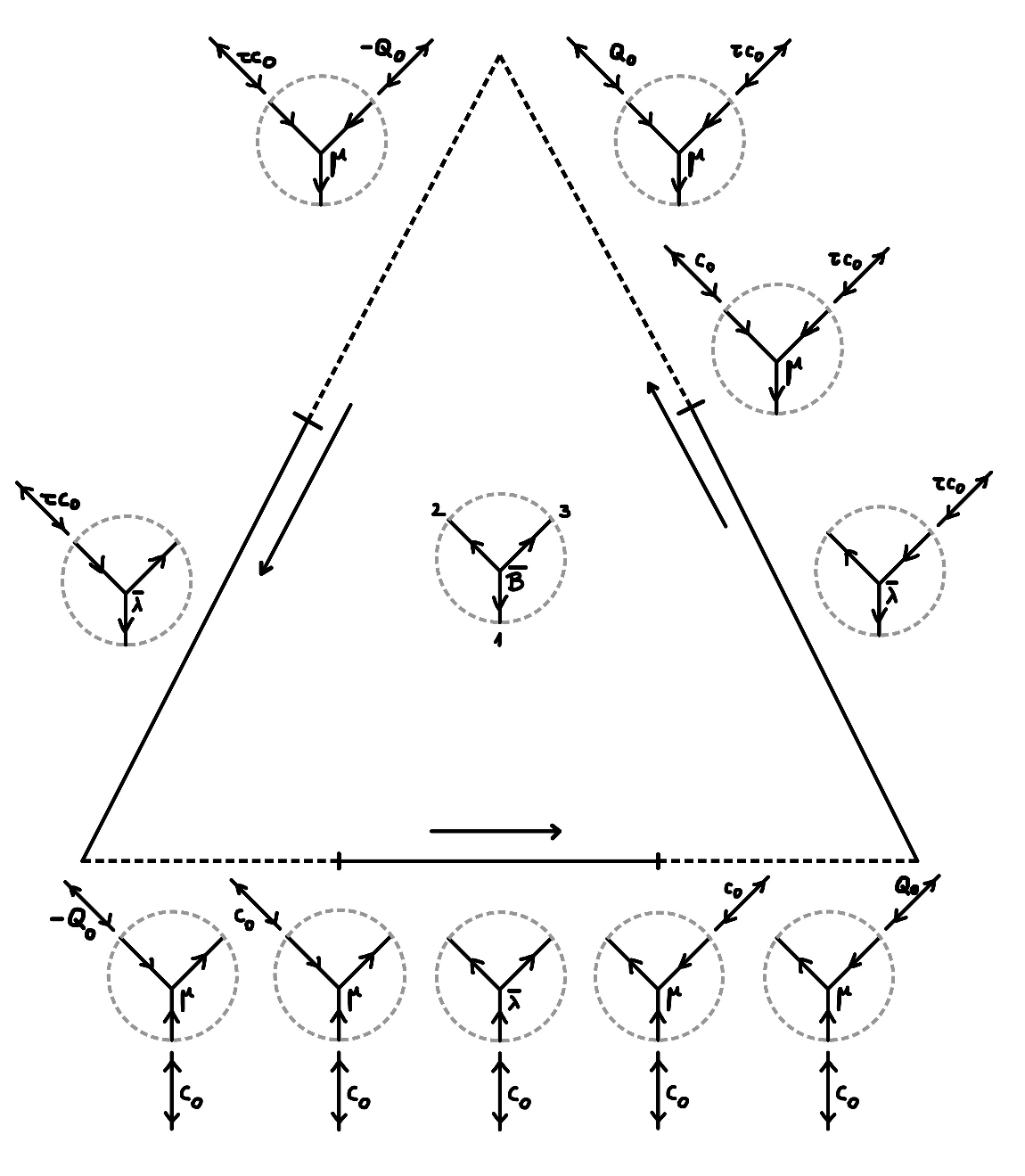}
\caption{The map $\ol{B}$}
\label{fig:Bbar-new-Q}
\end{center}
\end{figure}

\begin{figure}
\begin{center}
\includegraphics[width=\textwidth]{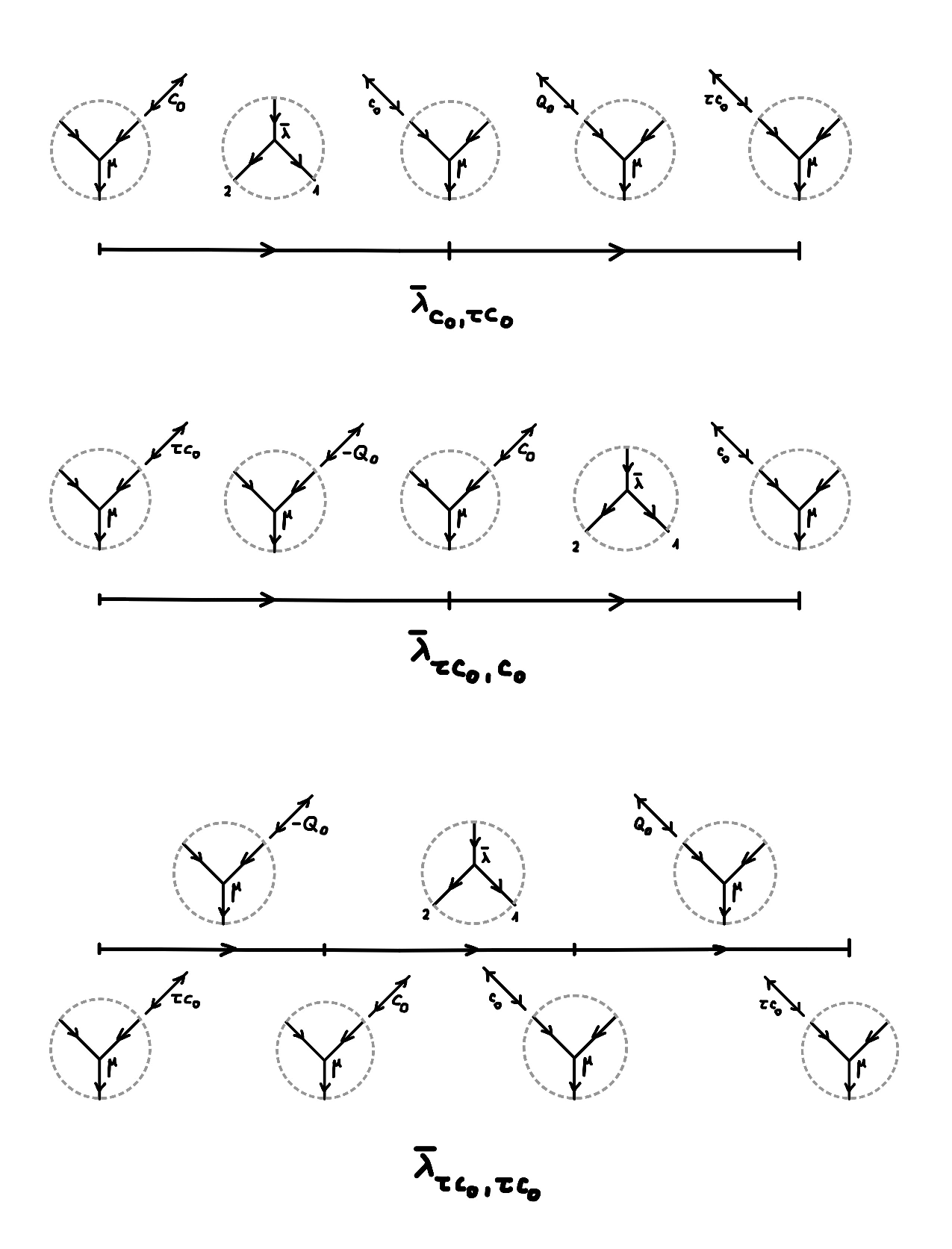}
\caption{The operations $\ol{\lambda}_{\boldsymbol{a},\boldsymbol{b}}$, $\boldsymbol{a},\boldsymbol{b}=c_0$ or $\tau c_0$}
\label{fig:lambdabar-trees-new-ab}
\end{center}
\end{figure}


The maps $m_L,m_R,\tau_L,\tau_R,\sigma,\beta$ defining the $A_2$-triple are given by explicit formulas in terms of $\mu$, $\ol{\lambda}$, $\ol{B}$ as follows. 

{\bf 1. The degree $0$ chain maps $m_L$ and $m_R$} are defined in terms of $\mu$ by Figure~\ref{fig:mLmR-circle}, in which we depict the inputs and outputs on a circle as above.
See also Figure~\ref{fig:mLmR}. 
According to~\S\ref{sec:trees}, the formulas which express the content of Figure~\ref{fig:mLmR-circle} read  
$$
  \ev (m_L\otimes 1)=\ev(1\otimes\mu)\tau_{23}\tau_{12},\qquad
  \ev(m_R\otimes 1)=\ev(1\otimes\mu),
$$
which in terms of elements means 
$$
\langle b,m_L(a,f)\rangle =\langle \mu(b,a),f\rangle,\qquad
\langle m_R(f,a),b\rangle =\langle f,\mu(a,b)\rangle.
$$
  
\begin{figure}
\begin{center}
\includegraphics[width=.7\textwidth]{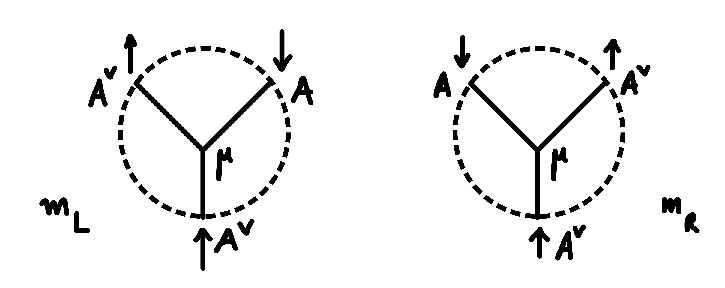}
\caption{The maps $m_L$ and $m_R$}
\label{fig:mLmR-circle}
\end{center}
\end{figure}

{\bf 2. The degree $1$ map $\sigma:\cA^\vee\otimes \cA^\vee\to \cA^\vee$} is determined by $\ol{\lambda}_{\tau c_0,\tau c_0}$ and is defined by Figure~\ref{fig:sigma-trees-new-Q}, to be compared with Figure~\ref{fig:lambdabar-trees-new-ab}. The relation $[\p,\sigma]=m_R(1\otimes c)-m_L(c\otimes 1)$ is readily seen on the figure. 
\begin{figure}
\begin{center}
\includegraphics[width=\textwidth]{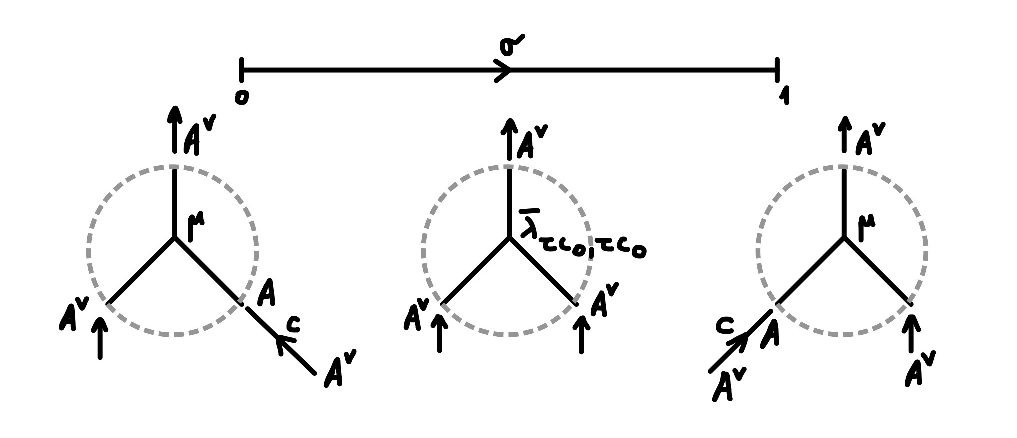}
\caption{The map $\sigma$}
\label{fig:sigma-trees-new-Q}
\end{center}
\end{figure}
The formula which expresses the content of Figure~\ref{fig:sigma-trees-new-Q} reads  
$$
  \ev(\sigma\otimes 1)= (\ev\otimes\ev)(1\otimes \tau\otimes 1)(1\otimes 1\otimes\ol{\lambda}_{\tau c_0,\tau c_0})
$$ 
for maps $\cA^\vee\otimes \cA^\vee\otimes\cA\to R$, or in terms of elements
\begin{align*}
  \langle \sigma(f,g),a\rangle
  &= (-1)^{|f|+|g|}\langle f\otimes g,\ol{\lambda}_{\tau c_0,\tau c_0}(a)\rangle \cr
  & = (-1)^{(|f|+1)(|g|+1)}\langle g\otimes f,\lambda_{\tau c_0,\tau c_0}(a)\rangle. 
\end{align*}
Equivalently, $\sigma$ equals the composition $\cA^\vee\otimes\cA^\vee\to (\cA\otimes \cA)^\vee\stackrel{\ol{\lambda}_{\tau c_0,\tau c_0}^{\vee}}\longrightarrow \cA^\vee$, see~\S\ref{sec:consequences}.3 for the first canonical map. Thus ``$\sigma$ is the dual of $\ol{\lambda}_{\tau c_0,\tau c_0}$". 

{\bf 3. The degree $1$ map $\tau_L:\cA^\vee\otimes\cA\to \cA$} is defined by Figure~\ref{fig:tauL-trees-new-Q}. The relation $[\p, \tau_L]=\mu(c\otimes 1) - cm_R$ is again readily visible on the figure. 
\begin{figure}
\begin{center}
\includegraphics[width=\textwidth]{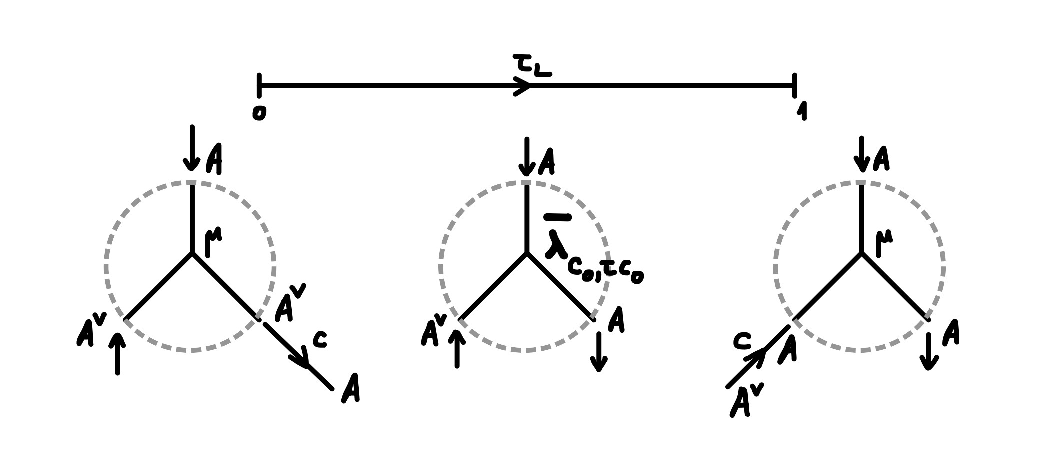}
\caption{The map $\tau_L$}
\label{fig:tauL-trees-new-Q}
\end{center}
\end{figure}
The explicit formula is
$$
  \tau_L=(\ev\otimes 1)(1\otimes\tau)(1\otimes \ol{\lambda}_{c_0,\tau c_0}),
$$
which means in terms of elements 
$$
  \tau_L(f,a)
  = (-1)^{|f|}\langle f\otimes 1,\tau \ol{\lambda}_{c_0,\tau c_0}(a)\rangle 
 = (-1)^{|f|+1}\langle f\otimes 1,\lambda_{\tau c_0,c_0}(a)\rangle. 
$$


{\bf 4. The degree $1$ map $\tau_R:\cA\otimes\cA^\vee\to\cA$} is defined by Figure~\ref{fig:tauR-trees-new-Q} (note the minus sign!) and satisfies the relation $[\p,\tau_R]=\mu(1\otimes c)-cm_L$. 
\begin{figure}
\begin{center}
\includegraphics[width=\textwidth]{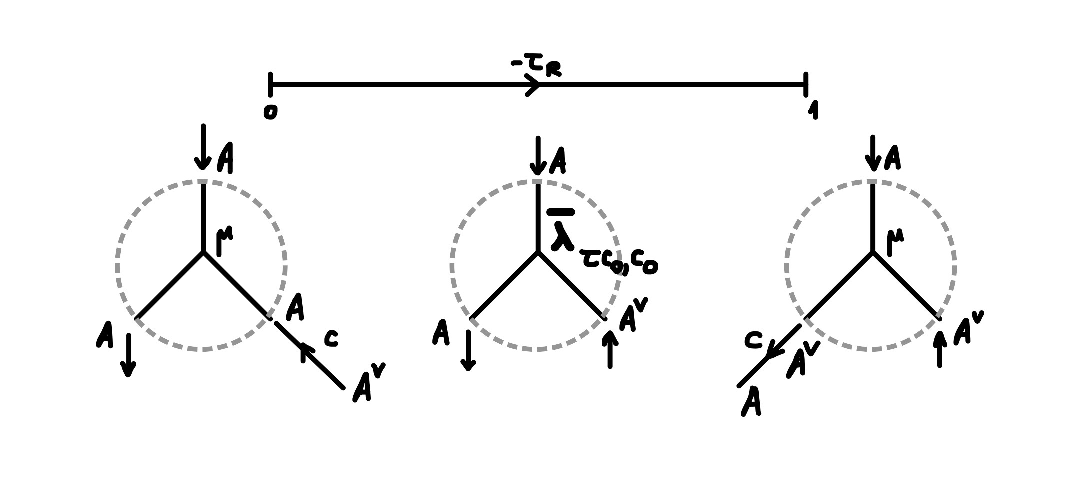}
\caption{The map $-\tau_R$}
\label{fig:tauR-trees-new-Q}
\end{center}
\end{figure}
The explicit formula is
$$
  \tau_R=-(1\otimes\ev)(\tau\otimes 1)(\ol{\lambda}_{\tau c_0,c_0}\otimes 1),
$$
which means in terms of elements 
$$
  \tau_R(a,f)
  = -\langle \tau\ol{\lambda}_{\tau c_0,c_0}(a),1\otimes f\rangle
    = \langle \lambda_{c_0,\tau c_0}(a),1\otimes f\rangle. 
$$




{\bf 5. The degree $2$ map $\beta:\cA^\vee\otimes\cA^\vee\to\cA$} is defined by Figure~\ref{fig:beta-trees-new-Q} (note the minus sign!) and satisfies the relation $[\p,\beta]= \tau_R(c\otimes 1) - c\sigma-\tau_L(1\otimes c)$. 
\begin{figure}
\begin{center}
\includegraphics[width=\textwidth]{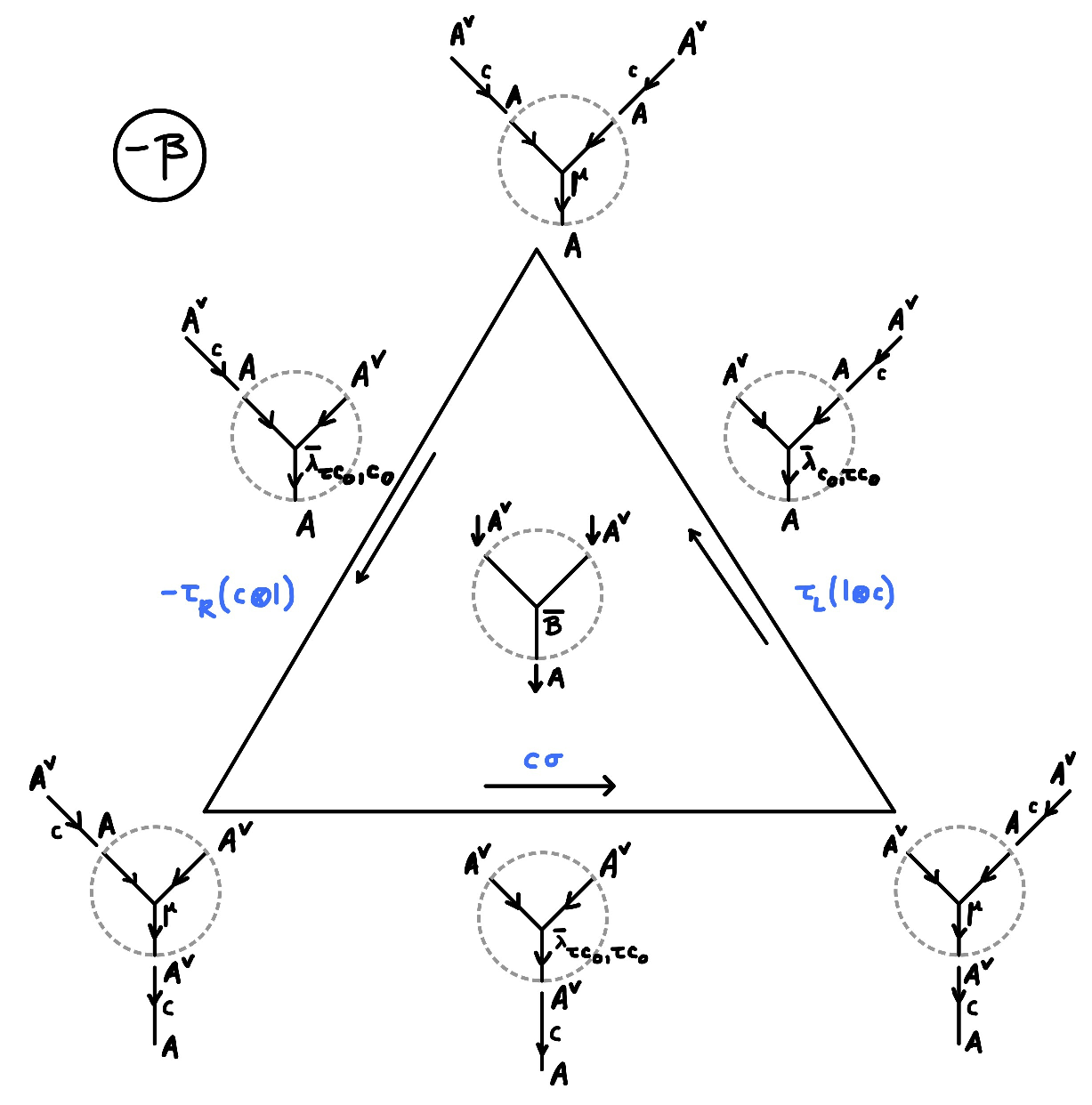}
\caption{The map $-\beta$}
\label{fig:beta-trees-new-Q}
\end{center}
\end{figure}
The explicit formula is
$$
\beta=-(1\otimes \ev\circ \tau \otimes\ev\circ\tau)(1\otimes1\otimes \tau\otimes 1)(\ol{B}\otimes 1\otimes 1),
$$
which in terms of elements means
$$
  \beta(f,g)=-\langle 1\otimes f\otimes g,\ol{B}\rangle. 
$$

%

This concludes the proof of Proposition~\ref{prop:TQFT+}.
\end{proof}

\begin{remark} \label{rmk:TQFT++}
We chose the terminology {\em $A_2^+$-structure} because such a structure comprises a product $\mu$, and additional operations needed to induce a product on the mapping cone. Note that the operations are parametrized by simplices of dimension $0$ (the product $\mu$ with $1$ output), of dimension $1$ (the secondary coproduct $\lambda$ with $2$ outputs), and of dimension $2$ (the cubic vector $B$ with $3$ outputs). The construction of the full $A_\infty$ structure on the cone makes use of the full enrichment by simplices of arbitrary dimension. More generally, one can consider (noncompact) TQFT-type structures with operations parametrized by topological types of $2$-dimensional surfaces with boundary and enriched in simplices: whenever a given surface has $k$ outputs, we attach to it a $(k-1)$-dimensional simplex. This perspective is related to the assocoipahedra of Poirier and Tradler~\cite{Poirier-Tradler},  and is currently studied by Mazuir~\cite{Mazuir-in-progress}. The paper~\cite{Ekholm-Oancea} is also relevant for this line of thought. 
\end{remark}

\subsection{Morphisms of $A_2^+$-algebras and correspondences}\label{sec:correspondencesA2+} 

Consider an $A_2^+$-algebra 
$$
\cA=(\cA,c_0,Q_0,\mu,\lambda,B).
$$ 
Recall that $c_0$ gives rise to the map  
$c=(\ev\otimes 1)(1\otimes c_0):\cA^\vee\to\cA$. 

\begin{definition}
We call $\cA$ {\em special} if
$$
  B=0   
$$
and
$$
(\lambda_{c_0,\tau c_0} \otimes 1) \tau c_0 = (\lambda_{\tau c_0,c_0}\otimes 1)\tau c_0=(\lambda_{\tau c_0,\tau c_0}\otimes 1) c_0 =0.
$$
\end{definition}

For a special $A_2^+$-algebra we will drop $B=0$ from the notation.
Consider now two special $A_2^+$-algebras $(\cA,c_0,Q_0,\mu,\lambda)$ and $(\cA',c_0',Q'_0,\mu',\lambda')$. 

\begin{definition}\label{def:A2+mor}
A \emph{special morphism of special $A_2^+$-algebras} 
$$
(\Psi,\Gamma,\Theta):\cA\to\cA'
$$
consists of the following maps (see Figures~\ref{fig:c0-Q0-Psi},~\ref{fig:Gamma} and~\ref{fig:Theta}):  

(i) a degree $0$ chain map $\Psi:\cA\to\cA'$ satisfying
$$
  c_0'=(\Psi\otimes\Psi)c_0 \qquad \mbox{and} \qquad Q'_0=(\Psi\otimes\Psi)Q_0;
$$
(ii) a degree $1$ bilinear map $\Gamma:\cA\otimes\cA\to\cA'$ such that
$$
  [\p,\Gamma] = \mu'(\Psi\otimes\Psi)-\Psi\mu;
$$
(iii) a degree $2$ bilinear map $\Theta:\cA\to\cA'\otimes\cA'$
  satisfying 
$$
\Theta c=0
$$ 
and 
$$
  [\p,\Theta] = \lambda'\Psi-(\Psi\otimes\Psi)\lambda 
  - (\Gamma\otimes\Psi)(1\otimes c_0) + (\Psi\otimes\Gamma)(\tau c_0\otimes 1).
$$
\end{definition}

\begin{figure}
\begin{center}
\includegraphics[width=.8\textwidth]{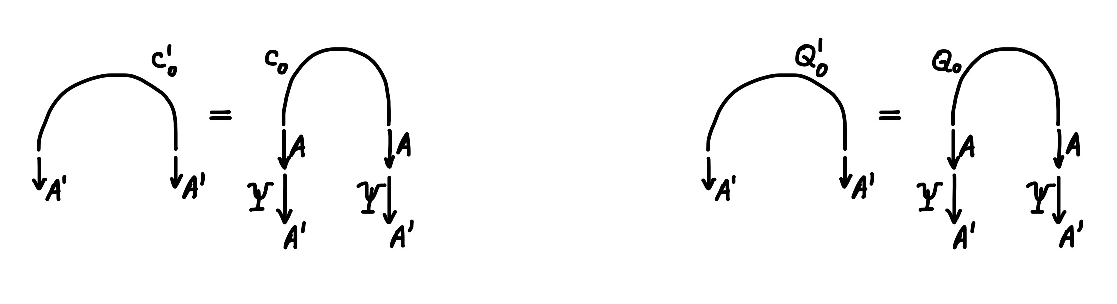}
\caption{$\Psi$ intertwines $c_0$ and $c_0'$, and also $Q_0$ and $Q'_0$}
\label{fig:c0-Q0-Psi}
\end{center}
\end{figure}

\begin{figure}
\begin{center}
\includegraphics[width=.7\textwidth]{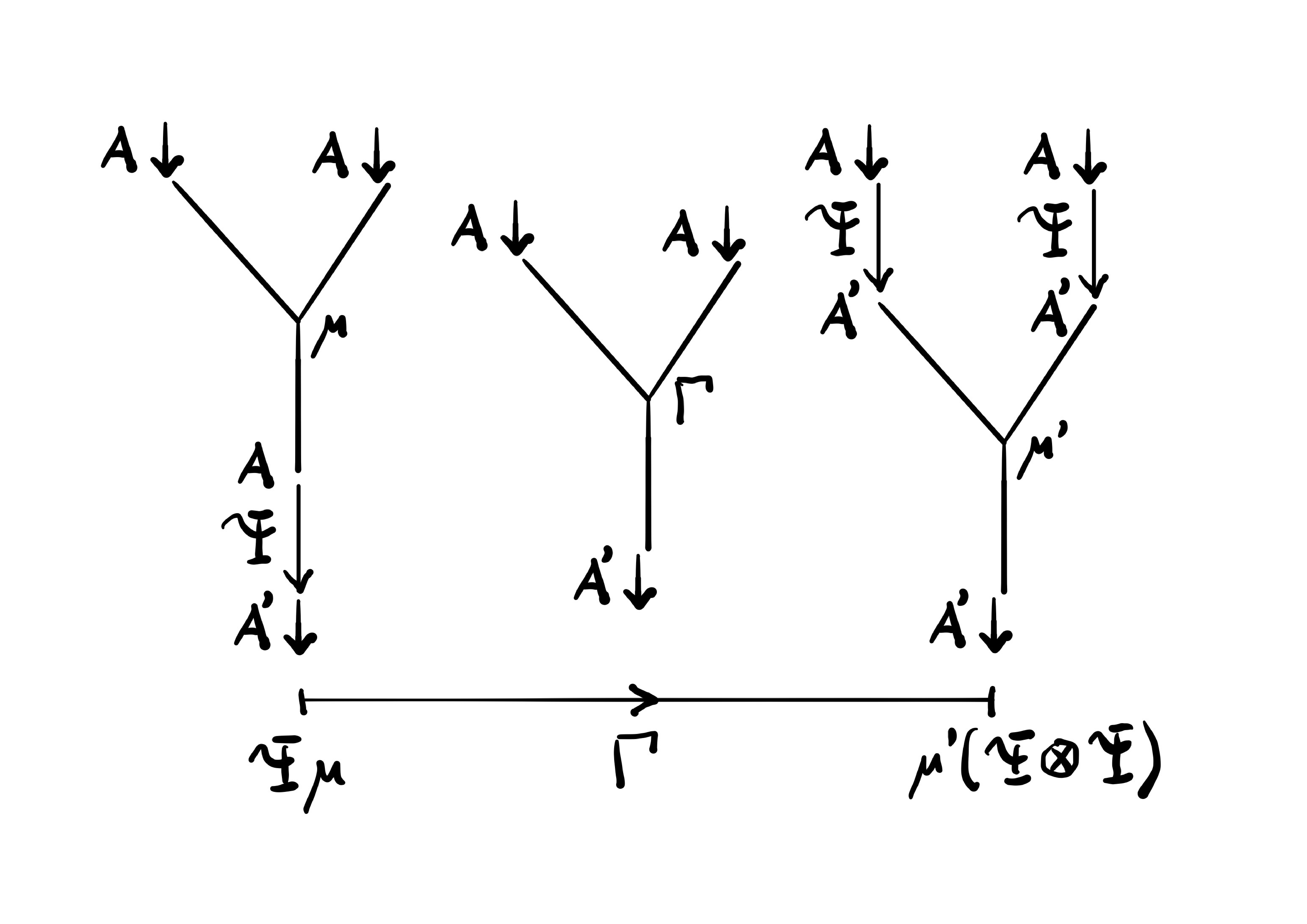}
\caption{The map $\Gamma$}
\label{fig:Gamma}
\end{center}
\end{figure}

\begin{figure}
\begin{center}
\includegraphics[width=\textwidth]{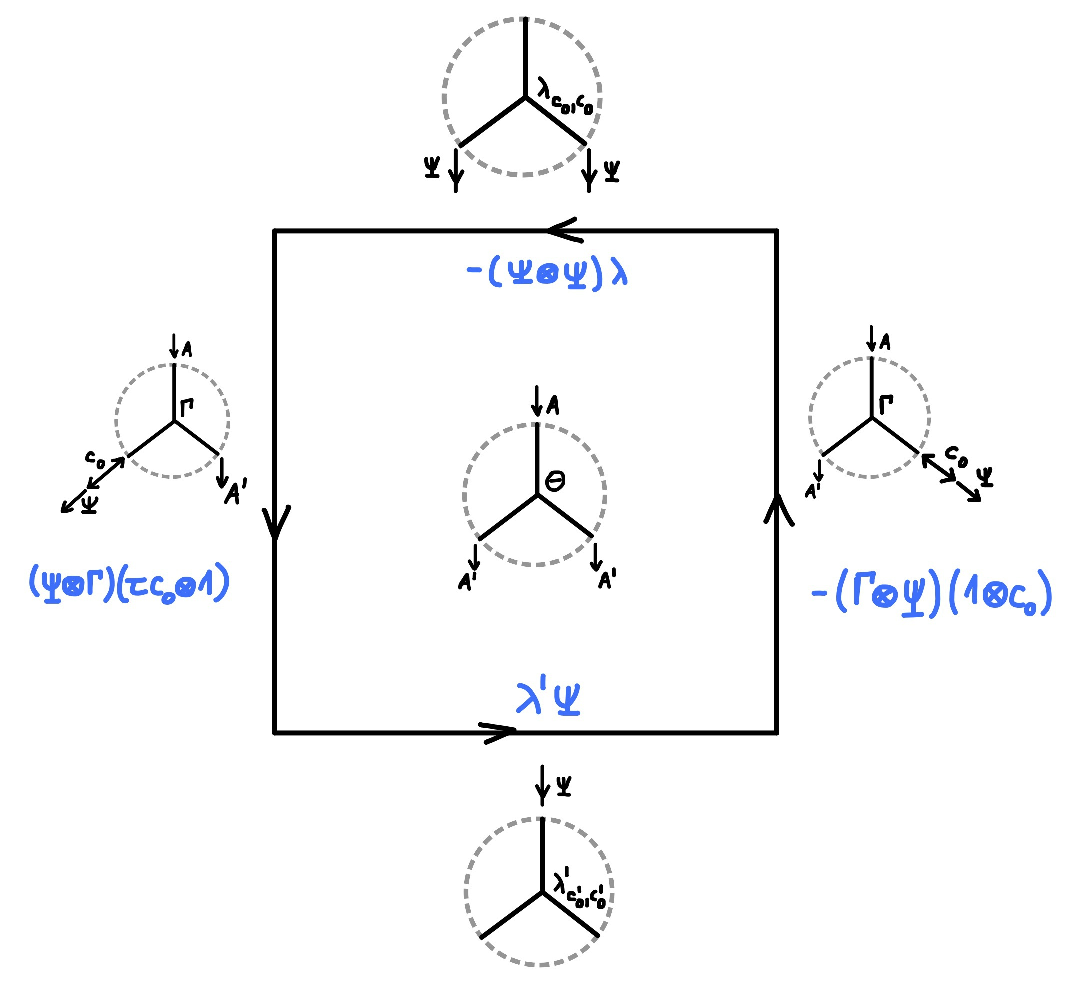}
\caption{The map $\Theta$}
\label{fig:Theta}
\end{center}
\end{figure}


Note that the first part of property (i) of a special morphism implies
\begin{equation}\label{eq:c-Psi}
   \Psi c\Psi^\vee = c'. 
\end{equation}

\begin{remark}
For a general morphism of $A_2^+$-algebras, the two relations forming condition (i) only need to hold up to chain homotopy and the other conditions have to be adjusted accordingly. Also, the condition $\Theta c=0$ would not be needed at all if the setup were upgraded to include arity $3$ operations. We restrict our discussion to special $A_2$-algebras and special morphisms because these suffice for our applications in Rabinowitz Floer homology. 
\end{remark}

Recall that by Proposition~\ref{prop:TQFT+} the $A_2^+$-algebra
$\cA$ canonically gives rise to an $A_2$-triple $(\cA^\vee,c,\cA)$,
and similarly for $\cA'$. Note that a chain map $\Psi:\cA\to\cA'$
induces a chain map $\Psi^\vee:\cA'^\vee\to\cA^\vee$. 

\begin{proposition}\label{prop:A2+toA2-mor}
A special morphism of special $A_2^+$-algebras $(\Psi,\Gamma,\Theta):\cA\to\cA'$ 
canonically induces an $A_2$-structure on the triple $(\cA'^\vee,c\Psi^\vee,\cA)$, 
as well as special morphisms of $A_2$-triples
\begin{equation}\label{eq:A2+toA2-mor}
\xymatrix{
  & (\cA'^\vee,c\Psi^\vee,\cA) \ar[dl]_{(\Psi^\vee,1)} \ar[dr]^{(1,\Psi)} \\
  (\cA^\vee,c,\cA) & & (\cA'^\vee,c',\cA').  
}
\end{equation}
\end{proposition}

Combining this proposition with Proposition~\ref{prop:A2mor-to-cone} yields the following immediate corollary.

\begin{corollary}\label{cor:A2+toA2-mor}
Let $(\Psi,\Gamma,\Theta):\cA\to\cA'$ be a special morphism of special $A_2^+$-algebras. Then there is a correspondence of chain maps
$$
\xymatrix{
  & Cone(c\Psi^\vee) \ar[dl]_{Cone(\Psi^\vee,1)} \ar[dr]^{Cone(1,\Psi)} \\
  Cone(c) & & Cone(c')\,.  
}
$$
Their induced maps on homology are ring maps and fit into the commuting diagram with exact sequences
$$
\xymatrix{
   \cdots H^{-*}(\cA) \ar[r]^-{c_*} & H_*(\cA) \ar[r] & H_*(Cone(c)) \ar[r] & H^{1-*}(\cA) \cdots \\
   \cdots H^{-*}(\cA') \ar[r]^-{c_*\Psi^*} \ar[u]_{\Psi^*} \ar@{=}[d] & H_*(\cA) \ar[r] \ar@{=}[u] \ar[d]^{\Psi_*} & H_*(Cone(c\Psi^\vee)) \ar[r] \ar[u] \ar[d] & H^{1-*}(\cA') \cdots \ar[u]_{\Psi^*} \ar@{=}[d] \\
   \cdots H^{-*}(\cA') \ar[r]^-{c'_*} & H_*(\cA') \ar[r] & H_*(Cone(c')) \ar[r] & H^{1-*}(\cA') \cdots 
}
$$
If $\Psi$ induces an isomorphism on homology, then so do the maps between the cones and the commuting diagram simplifies to
$$
\xymatrix{
   \cdots H^{-*}(\cA) \ar[r]^-{c_*} & H_*(\cA) \ar[r] \ar[d]^{\Psi_*}_\cong & H_*(Cone(c)) \ar[r] \ar[d]_\cong & H^{1-*}(\cA) \cdots \\
   \cdots H^{-*}(\cA') \ar[r]^-{c'_*} \ar[u]_{\Psi^*}^\cong & H_*(\cA') \ar[r] & H_*(Cone(c')) \ar[r] & H^{1-*}(\cA') \cdots \ar[u]_{\Psi^*}^\cong
}
$$
where the second and third vertical maps are ring isomorphisms. \qed
\end{corollary}

\begin{proof}[Proof of Proposition~\ref{prop:A2+toA2-mor}]
The $A_2$-structure $(\wt\mu,\wt m_L,\wt
m_R,\wt\tau_L,\wt\tau_R,\wt\sigma,\wt\beta)$ on the 
triple $(\cA'^\vee,c\Psi^\vee,\cA)$ is defined as follows. 
We set
$\wt\mu:=\mu$. 
The maps $\wt m_L$ and $\wt m_R$ are defined in
Figure~\ref{fig:mLmR-tilde}. The maps $\wt\tau_L$, $\wt\tau_R$ and $\wt\sigma$ are defined in Figures~\ref{fig:sigma-tilde-new-Q},~\ref{fig:tauL-tilde-new-Q}
and~\ref{fig:tauR-tilde-new-Q}, in which we denote 
$$
\ol{\lambda}_{\boldsymbol{a},\boldsymbol{b}}=-\tau\lambda_{\boldsymbol{b},\boldsymbol{a}},\qquad \boldsymbol{a},\boldsymbol{b}=c_0 \,\, \mbox{or}\,\, \tau c_0,
$$
$$
\ol{\lambda}'_{\boldsymbol{a},\boldsymbol{b}}=-\tau\lambda'_{\boldsymbol{b},\boldsymbol{a}},\qquad \boldsymbol{a},\boldsymbol{b}=c'_0 \,\, \mbox{or}\,\, \tau c'_0.
$$
Note in particular that $\wt m_L=m'_L(\Psi\otimes 1)$, $\wt m_R=m'_R(1\otimes \Psi)$ and $\tilde \sigma=\sigma'$. 

The map $\wt \beta$ is defined in Figure~\ref{fig:beta-tilde-new-Q}. It uses the auxiliary map 
$$
\ol{\Theta}_{\tau c_0,\tau c_0} = -\tau \Theta_{\tau c_0,\tau c_0},
$$
with $\Theta_{\tau c_0,\tau c_0}$ defined by Figure~\ref{fig:Thetabar-new-Q}.


\begin{figure}
\begin{center}
\includegraphics[width=.7\textwidth]{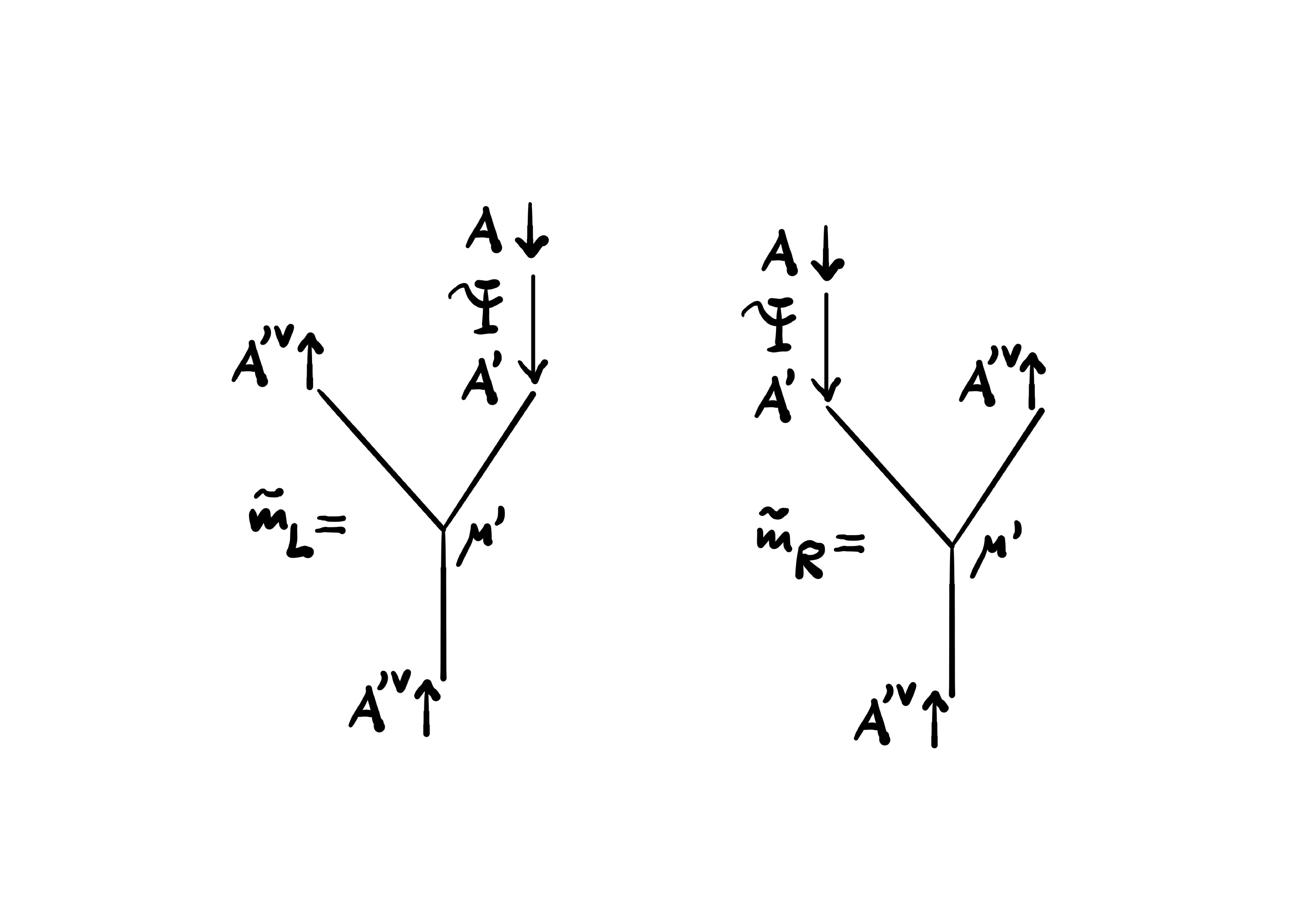}
\caption{Definition of $\wt m_L$ and $\wt m_R$}
\label{fig:mLmR-tilde}
\end{center}
\end{figure}

\begin{figure}
\begin{center}
\includegraphics[width=\textwidth]{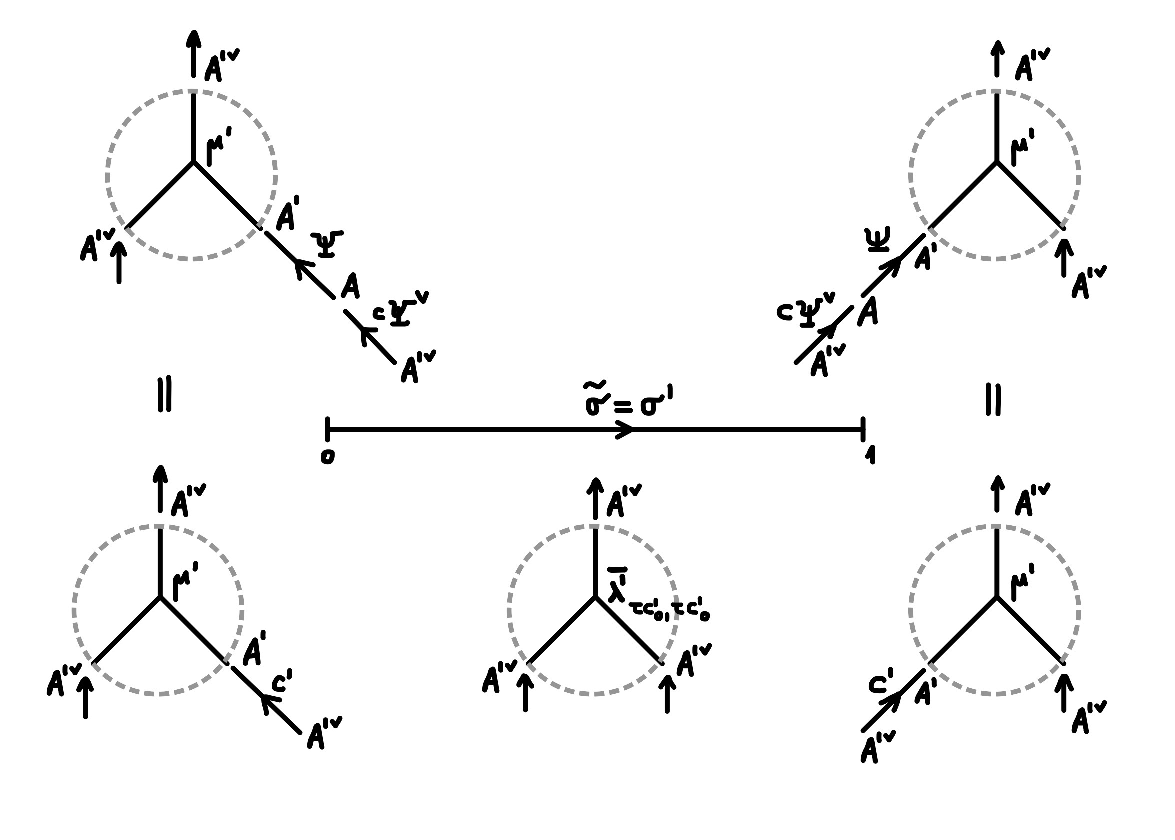}
\caption{Definition of $\wt\sigma$}
\label{fig:sigma-tilde-new-Q}
\end{center}
\end{figure}

\begin{figure}
\begin{center}
\includegraphics[width=\textwidth]{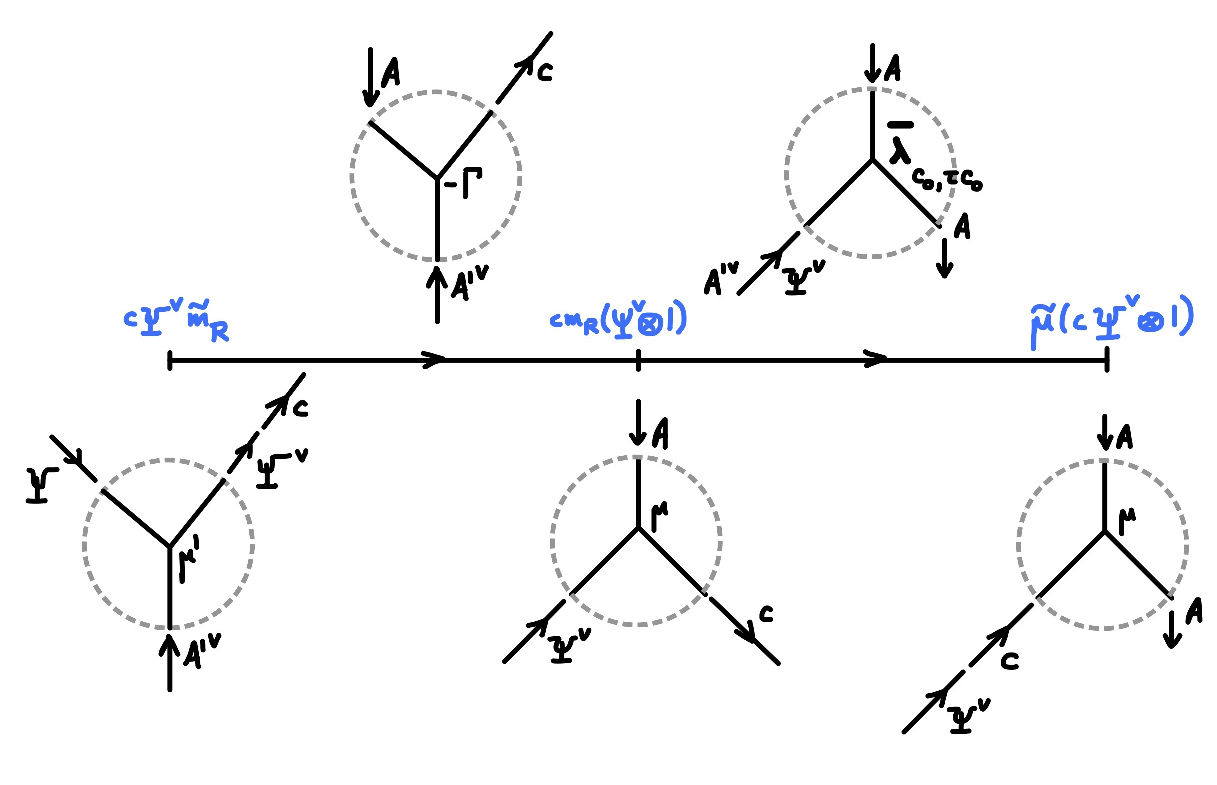}
\caption{Definition of $\wt\tau_L$}
\label{fig:tauL-tilde-new-Q}
\end{center}
\end{figure}

\begin{figure}
\begin{center}
\includegraphics[width=\textwidth]{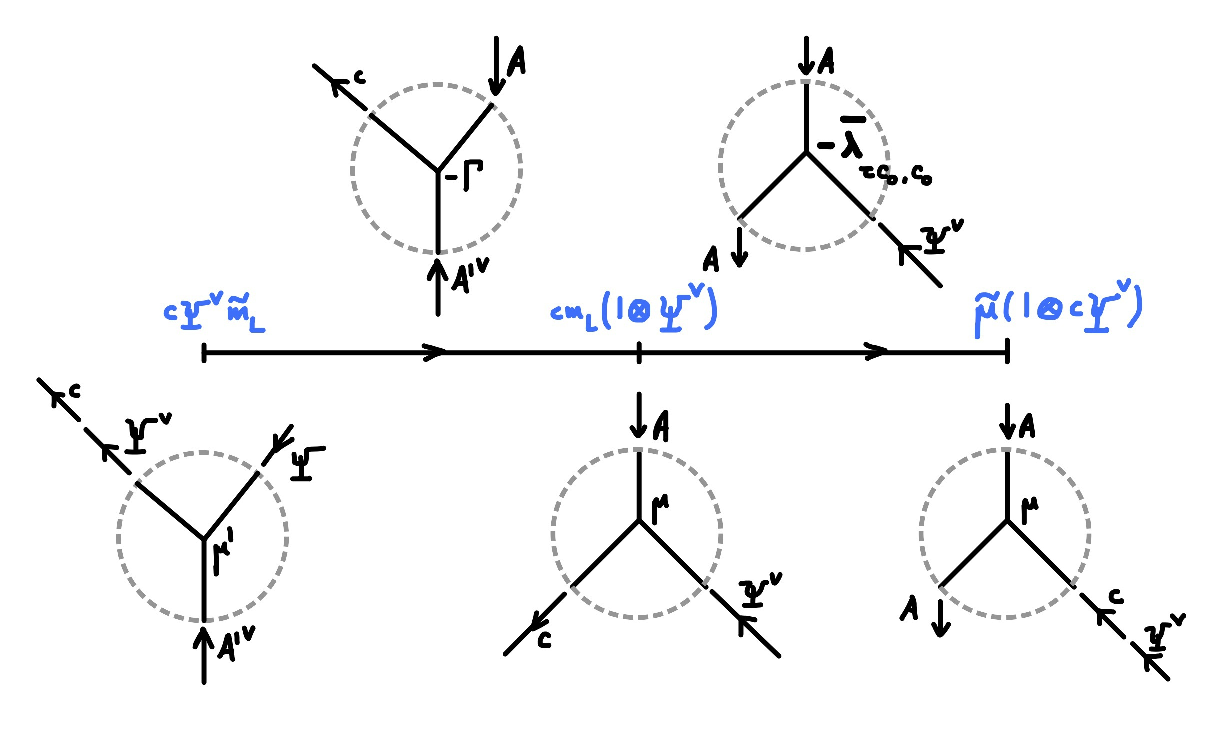}
\caption{Definition of $\wt\tau_R$}
\label{fig:tauR-tilde-new-Q}
\end{center}
\end{figure}

\begin{figure}
\begin{center}
\includegraphics[width=\textwidth]{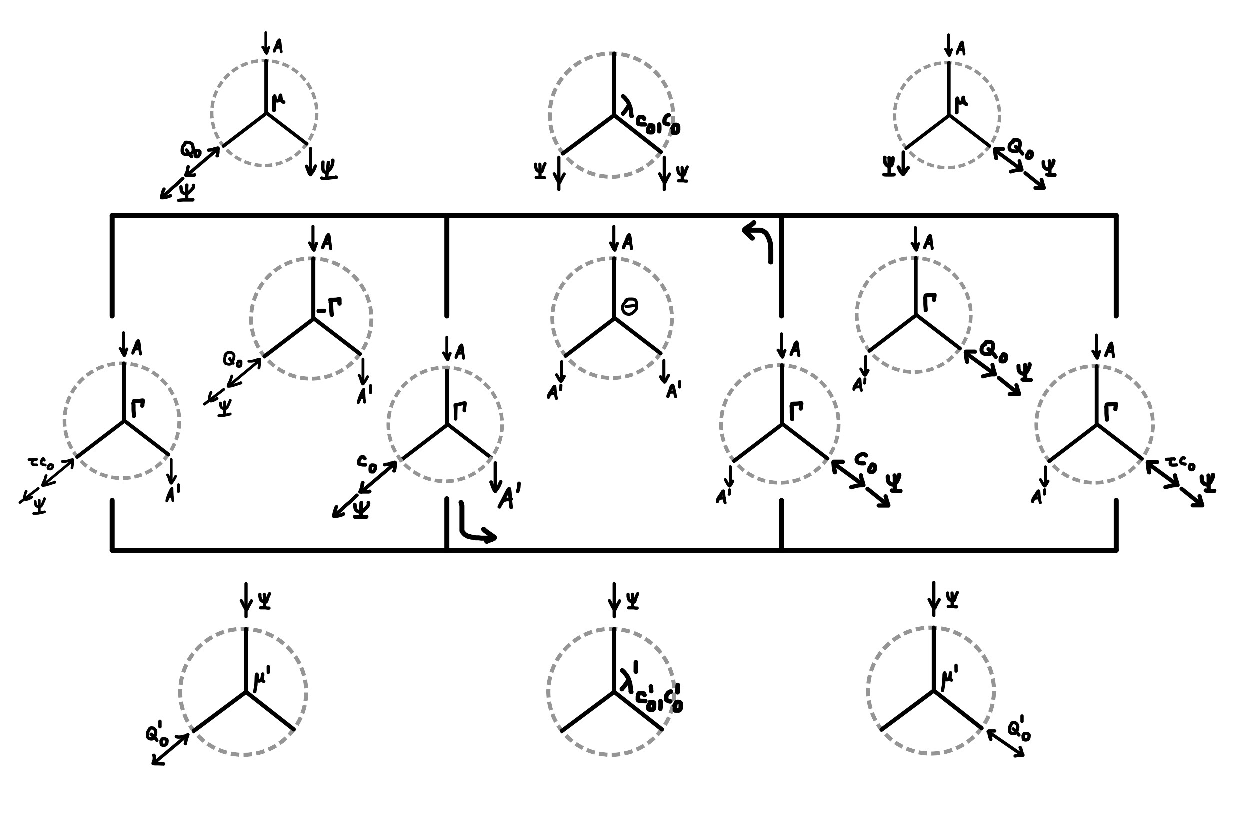}
\caption{Definition of $\Theta_{\tau c_0,\tau c_0}$}
\label{fig:Thetabar-new-Q}
\end{center}
\end{figure}

\begin{figure}
\begin{center}
\includegraphics[width=\textwidth]{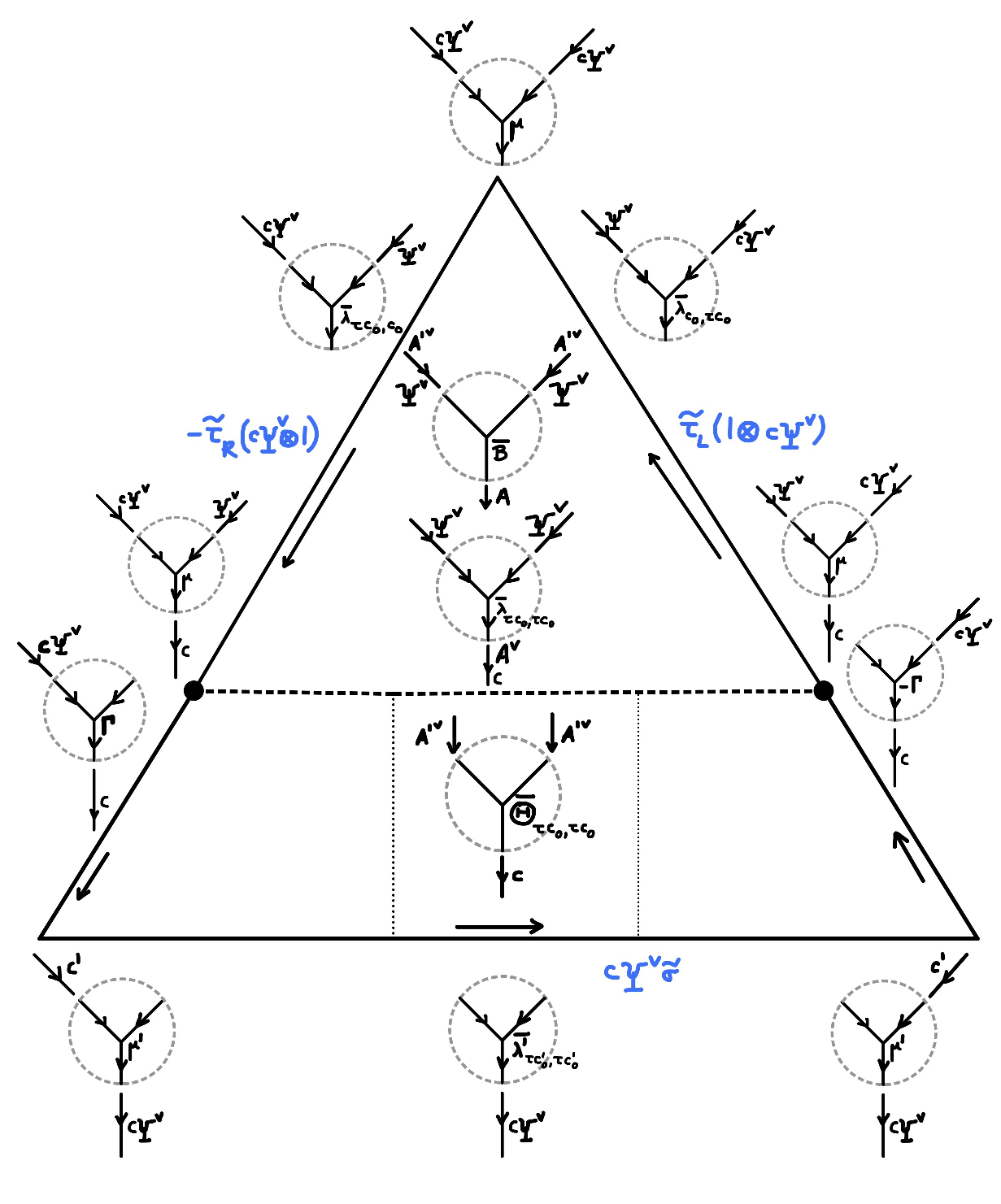}
\caption{Definition of $-\wt\beta$}
\label{fig:beta-tilde-new-Q}
\end{center}
\end{figure}

All the relations of an $A_2$-triple are clear from the figures. 
Note that, although we have $B=0$ and $\Theta c=0$, the map $\tilde \beta$ may be nonzero.
This finishes the construction of the $A_2$-structure on $(\cA'^\vee,c\Psi^\vee,\cA)$. 

We remark that Figure~\ref{fig:beta-tilde-new-Q} shows that the conditions $B=0$ and $\Theta c=0$ are not necessary in order to construct the $A_2$-triple structure on $(\cA'^\vee,c\Psi^\vee,\cA)$. However, we will use them below in defining the special morphisms of $A_2$-triples $(1,\Psi)$ and $(\Psi^\vee,1)$. 

%

Consider now the right hand map in~\eqref{eq:A2+toA2-mor} corresponding to
the commuting diagram
$$
\xymatrix{
  \cA'^\vee \ar[r]^{c\Psi^\vee} \ar[d]^{1} & \cA \ar[d]^\Psi \\
  \cA'^\vee \ar[r]^{c'} & \cA'.  
}
$$
The maps defining the $A_2$-triple $(\cA'^\vee,c\Psi^\vee,\cA)$ are
decorated with~$\widetilde{}$, while the maps defining the
$A_2$-triple $(\cA'^\vee,c',\cA')$ are decorated with~$'$.
We need to define maps $(\wh\mu,\wh m_L,\wh m_R,\wh\tau_L,\wh\tau_R,\wh\sigma,\wh\beta)$
making $(1,\Psi)$ a special morphism of $A_2$-triples. So we need to
verify conditions (1)--(5) in Definition~\ref{defi:pre-subalgebra-mor}
with $f=\Psi$, $g=1$, and $(\cA'^\vee,c\Psi^\vee,\cA)$ in place of $(\cM,c,\cA)$.

Condition (1) holds in view of $\Psi c\Psi^\vee=c'$ from~\eqref{eq:c-Psi}.
Since $[\p,\Gamma] = \mu'(\Psi\otimes\Psi) - \Psi\mu$, the first
equation in condition (2) holds with $\wh\mu:=\Gamma$. By definition of
$\wt m_L,\wt m_R,\wt\sigma$ we have 
\begin{gather*}
  m_L'(\Psi\otimes 1) = 1\circ\wt m_L,\qquad
  m_R'(1\otimes\Psi) = 1\circ\wt m_R,\cr
  \sigma'(1\otimes 1) = 1\circ\wt\sigma. 
\end{gather*}
Thus the remaining two equations in condition (2) and the first equation in condition (3) 
are satisfied with $\wh m_L=\wh m_R=\wh\sigma:=0$. 

To define the other maps we introduce $\Theta_{c_0,\tau c_0}$ and $\Theta_{\tau c_0,c_0}$ defined in Figures~\ref{fig:Thetabar-new-Q-c0tauc0} and~\ref{fig:Thetabar-new-Q-tauc0c0}, and we denote 
$$
\ol{\Theta}_{\boldsymbol{a},\boldsymbol{b}}=-\tau\Theta_{\boldsymbol{b},\boldsymbol{a}},\qquad \boldsymbol{a},\boldsymbol{b}=c_0 \,\, \mbox{or}\,\, \tau c_0.
$$

\begin{figure}
\begin{center}
\includegraphics[width=\textwidth]{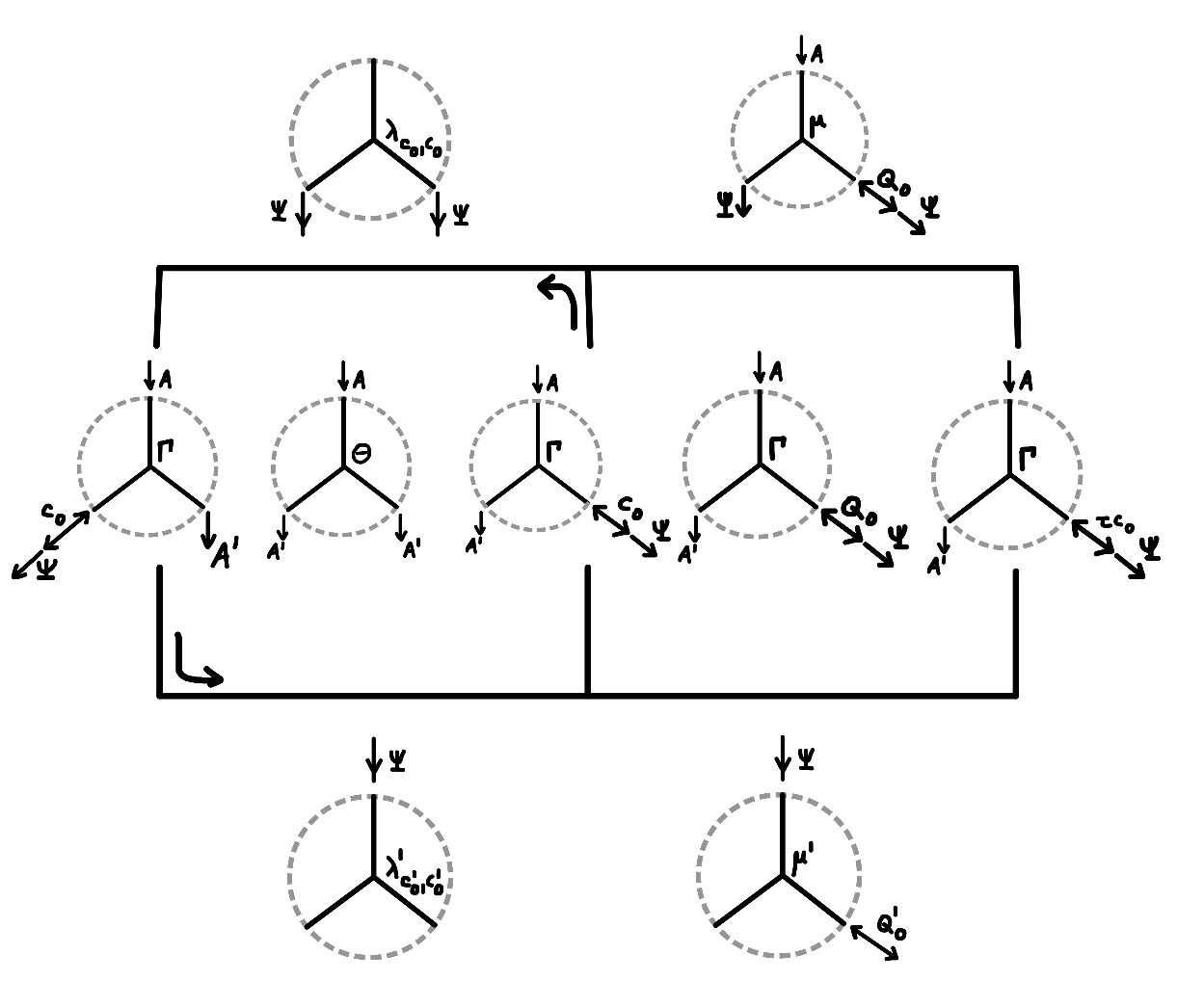}
\caption{Definition of $\Theta_{c_0,\tau c_0}$}
\label{fig:Thetabar-new-Q-c0tauc0}
\end{center}
\end{figure}

\begin{figure}
\begin{center}
\includegraphics[width=\textwidth]{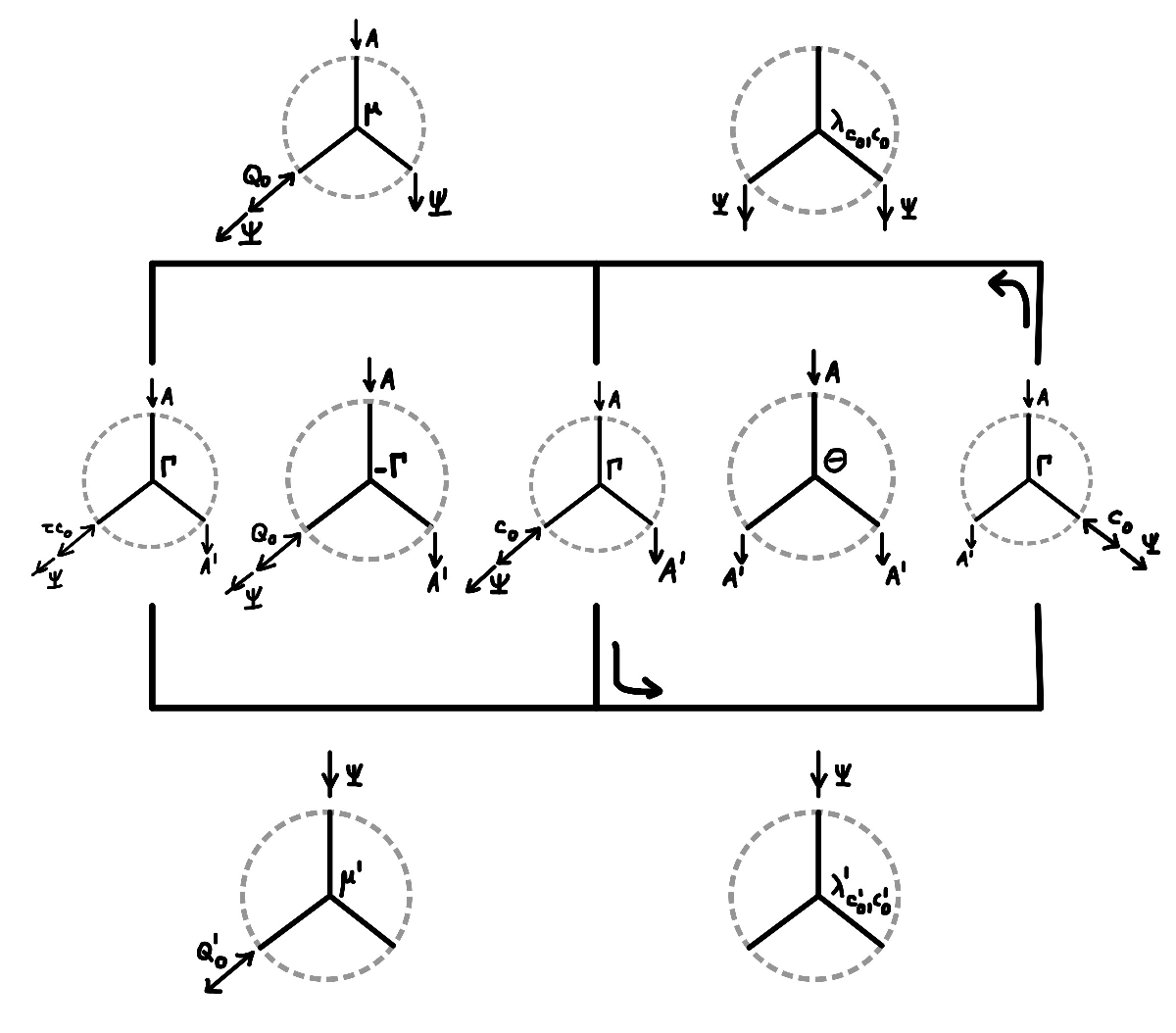}
\caption{Definition of $\Theta_{\tau c_0,c_0}$}
\label{fig:Thetabar-new-Q-tauc0c0}
\end{center}
\end{figure}

The definition of $\wh\tau_L$ is shown in
Figure~\ref{fig:tauL-hat-new-Q}. We read off that it satisfies the second
equation in condition (3), which with $f=\Psi$, $g=1$, $\wh\mu=\Gamma$
and $\wh m_L=0$ takes the form
$$
   [\p,\wh\tau_L] = \tau_L'(1\otimes\Psi) - \Psi\wt\tau_L -
   \Gamma(c\Psi^\vee\otimes 1).
$$
The definition of $\wh\tau_R$ such that it satisfies the last
equation in condition (3) is analogous. It is shown in Figure~\ref{fig:tauR-hat-new-Q}. 

\begin{figure}
\begin{center}
\includegraphics[width=\textwidth]{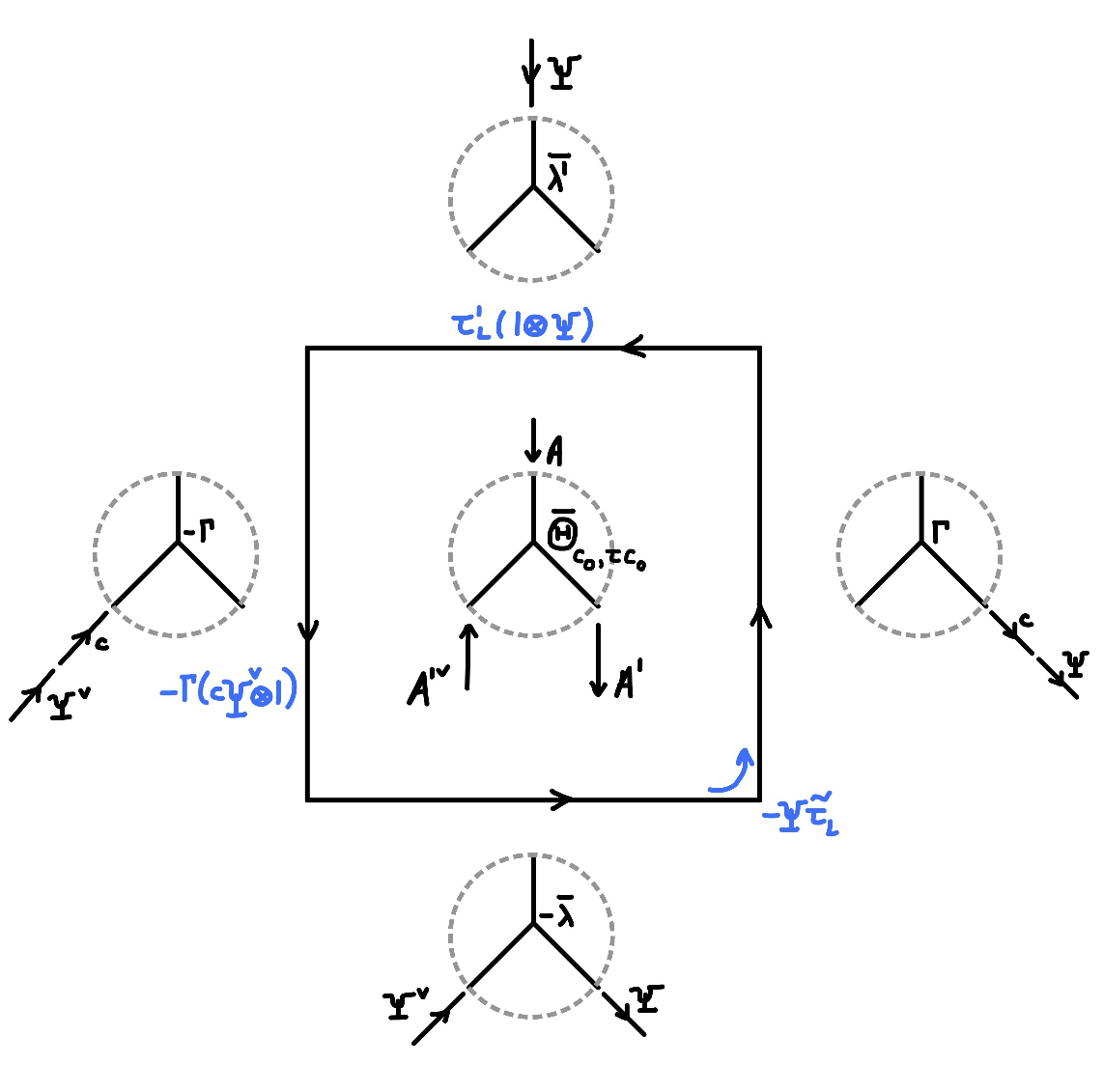}
\caption{Definition of $\wh\tau_L$}
\label{fig:tauL-hat-new-Q}
\end{center}
\end{figure}

\begin{figure}
\begin{center}
\includegraphics[width=\textwidth]{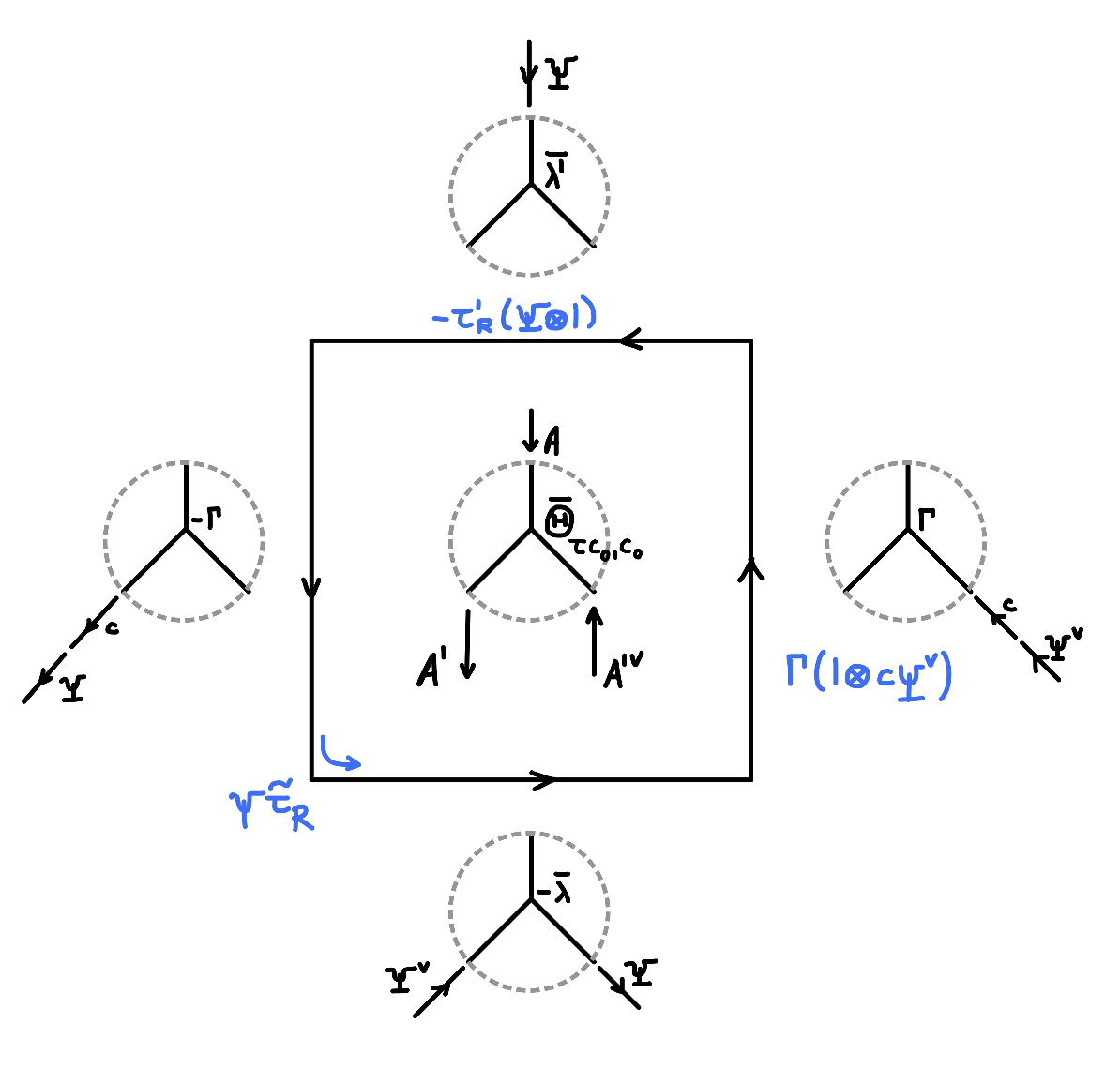}
\caption{Definition of $-\wh\tau_R$}
\label{fig:tauR-hat-new-Q}
\end{center}
\end{figure}

With $\wh\beta:=0$ and
$f=\Psi$, $g=1$ and $\beta'=\wh\sigma=0$, condition (4) becomes
$$
  -\Psi\wt\beta - \wh\tau_R(c\Psi^\vee\otimes 1) + \wh\tau_L(1\otimes c\Psi^\vee)=0.
$$
The condition $\Theta c=0$ in
Definition~\ref{def:A2+mor} implies that this expression reduces to a sum involving the side terms in the definition of $\Theta_{\tau c_0,\tau c_0}$ from Figure~\ref{fig:Thetabar-new-Q}, each appearing twice and with opposite signs. The above expression therefore vanishes. 
This finishes the construction of the right hand special $A_2$-morphism
in~\eqref{eq:A2+toA2-mor}. 


Consider now the left hand map in~\eqref{eq:A2+toA2-mor} corresponding to
the commuting diagram
$$
\xymatrix{
  \cA'^\vee \ar[r]^{c\Psi^\vee} \ar[d]^{\Psi^\vee} & \cA \ar[d]^{1} \\
  \cA^\vee \ar[r]^{c} & \cA.  
}
$$
The maps defining the $A_2$-triple $(\cA'^\vee,c\Psi^\vee,\cA)$ are
decorated with $\widetilde{}$, while the maps defining the
$A_2$-triple $(\cA^\vee,c,\cA)$ are not decorated with anything.
We need to define maps $(\wh\mu,\wh m_L,\wh m_R,\wh\tau_L,\wh\tau_R,\wh\sigma,\wh\beta)$
making $(\Psi^\vee,1)$ a special morphism of $A_2$-triples. So we need to
verify conditions (1)--(4) in Definition~\ref{defi:pre-subalgebra-mor}
with $f=1$, $g=\Psi^\vee$, $(\cA'^\vee,c\Psi^\vee,\cA)$ in place of
$(\cM,c,\cA)$, and $(\cA^\vee,c,\cA)$ in place of $(\cM',c',\cA')$.

Condition (1) holds trivially.
Since $\mu(1\otimes 1) = \mu = \wt\mu$, the first equation in
condition (2) holds with $\wh\mu:=0$.
The definition of $-\wh m_L$ is shown in Figure~\ref{fig:mL-hat}.
We read off that it satisfies the second equation in condition (2).
The definition of $\wh m_R$ such that it satisfies the third
equation in condition (2) is analogous.

\begin{figure}
\begin{center}
\includegraphics[width=.7\textwidth]{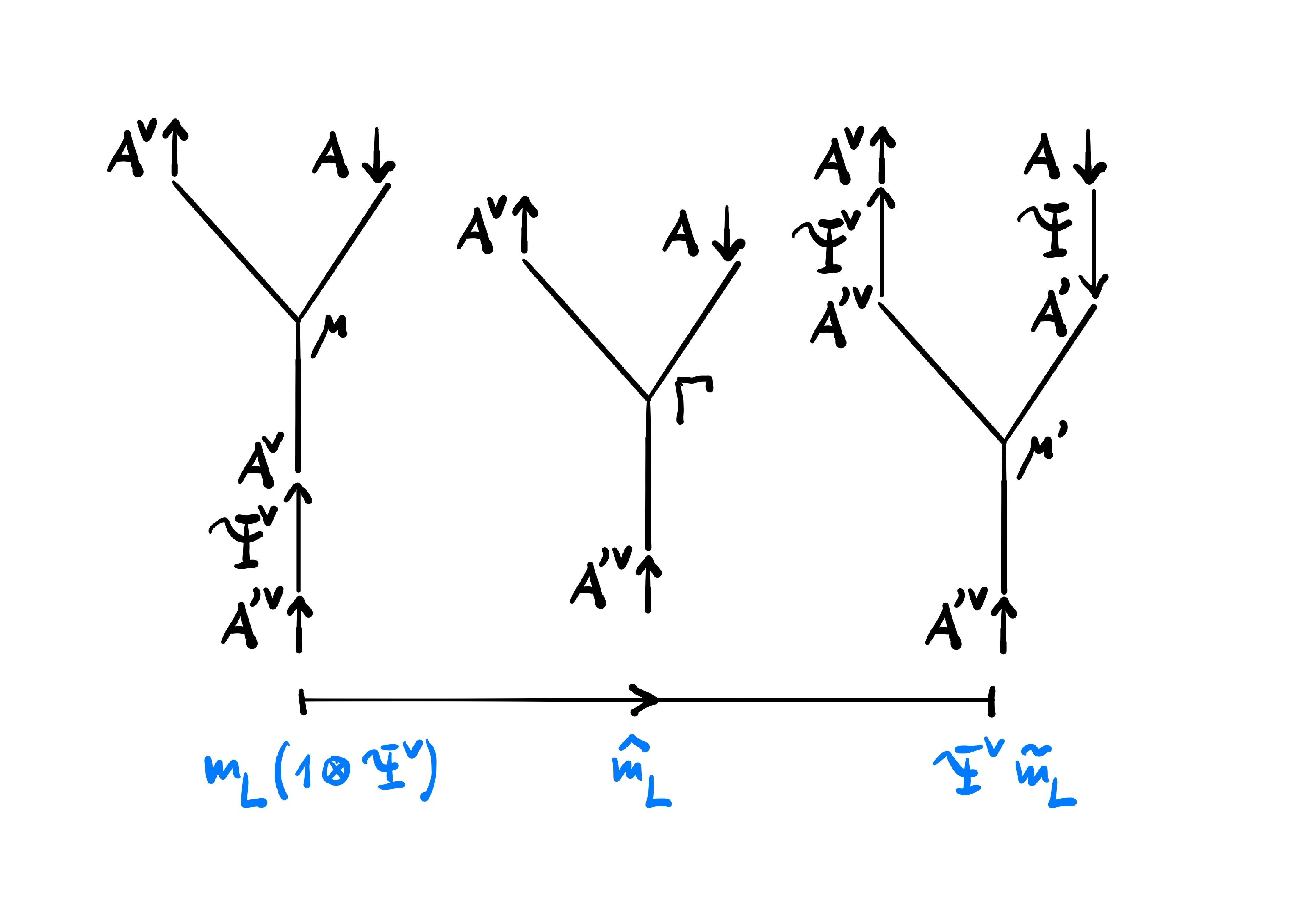}
\caption{The map $-\wh m_L$}
\label{fig:mL-hat}
\end{center}
\end{figure}

The definition of $-\wh\sigma$ is shown in Figure~\ref{fig:sigma-hat-new-Q} and uses the map $\ol{\Theta}_{\tau c_0,\tau c_0}$ from Figure~\ref{fig:Thetabar-new-Q}.
We read off that it satisfies condition (3) which takes the form
$$
  [\p,\wh\sigma] = \sigma(\Psi^\vee\otimes \Psi^\vee)-\Psi^\vee\wt\sigma-\wh m_R(1\otimes
c\Psi^\vee)+\wh m_L(c\Psi^\vee\otimes 1).
$$

\begin{figure}
\begin{center}
\includegraphics[width=\textwidth]{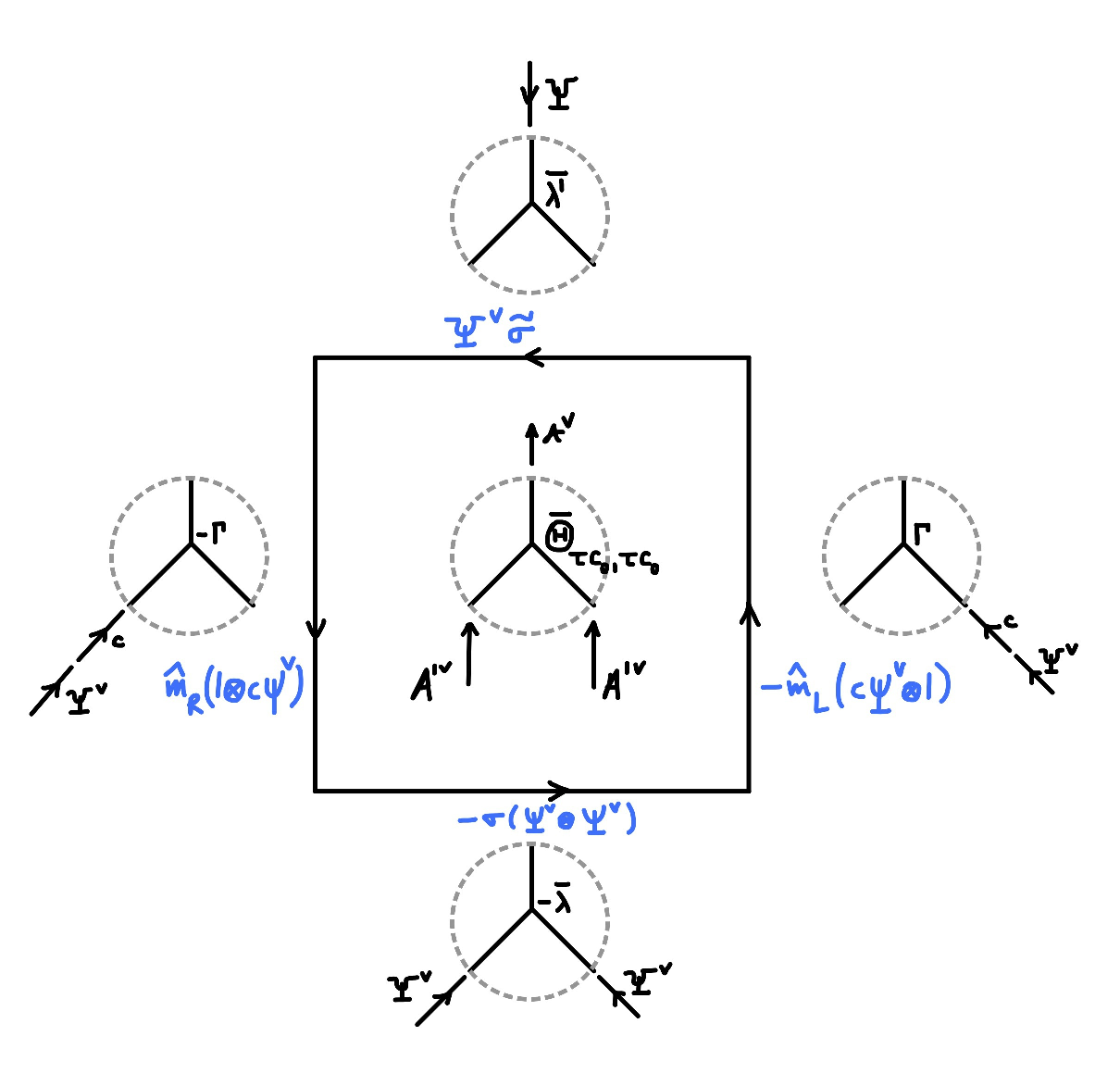}
\caption{The map $-\wh\sigma$}
\label{fig:sigma-hat-new-Q}
\end{center}
\end{figure}

With $\wh\tau_L:=0$ and $f=1$, $g=\Psi^\vee$ and $\wh\mu=0$, the second
equation in condition (3) becomes
$$
   \wt\tau_L = \tau_L(\Psi^\vee\otimes 1)+c\wh m_R.
$$
This equality is seen by inspection of Figure~\ref{fig:tauL-tilde-new-Q}
for $\wt\tau_L$, where the left half of the homotopy corresponds to $c\wh m_R$ and the right half corresponds to
$\tau_L(\Psi^\vee\otimes 1)$.
Similarly, the last equation in condition (3) holds with $\wh\tau_R:=0$. 

With $\wh\beta:=0$ and $f=1$, $g=\Psi^\vee$ and
$\beta=\wh\tau_L=\wh\tau_R=0$, 
condition (5) becomes 
$$
-\wt\beta + c\wh\sigma=0,
$$
which holds due to the condition $\Theta c=0$ in Definition~\ref{def:A2+mor}. Indeed, this expression reduces to a sum involving as before the side terms in the definition of $\Theta_{\tau c_0,\tau c_0}$ from Figure~\ref{fig:Thetabar-new-Q}, each appearing twice and with opposite signs. The expression therefore vanishes. 

This finishes the construction of the left hand special $A_2$-morphism
in~\eqref{eq:A2+toA2-mor}, and thus the proof of Proposition~\ref{prop:A2+toA2-mor}. 
\end{proof}

\begin{remark}
The proof of Proposition~\ref{prop:A2+toA2-mor} does not use the condition $\lambda c=0$ in the definition of a special $A_2^+$-algebra. We have chosen to include it in the definition in order to put it on an equal footing with the condition $\Theta c=0$ in the definition of a special morphism, and because it is satisfied in our main examples in~\cite{CHO-MorseFloerGH}.
\end{remark}

\subsection{Duality for $A_2^+$-algebras}\label{sec:PD-A2+}

Let $(\cA,\p,c_0,Q_0,\mu,\lambda,B)$ be an $A_2^+$-algebra and assume that $\cA$ is free and finite dimensional over $R$ in each degree. Then we have a canonical isomorphism $\cA\cong\cA^{\vee\vee}$ under which the maps $c,Q:\cA^\vee\to\cA$ induced by $c_0,Q_0$ satisfy $c^\vee-c=[\p,Q]$ and canonical chain isomorphisms
$$
  Cone(c)\cong Cone(c^\vee)\cong Cone(c)^\vee[-1].
$$
Here the first chain isomorphism is 
$$
  \begin{pmatrix}1 & -Q \\ 0 & 1\end{pmatrix}\colon
  \left(\cA\oplus\cA^\vee[-1],\begin{pmatrix}\p & c \\ 0 & -\p^\vee\end{pmatrix}\right) \to
  \left(\cA\oplus\cA^\vee[-1],\begin{pmatrix}\p & c^\vee \\ 0 & -\p^\vee\end{pmatrix}\right)
$$
and the second one simply interchanges to two direct summands,
$$
\left(\cA\oplus\cA^\vee[-1],\begin{pmatrix}\p & c^\vee \\ 0 & -\p^\vee\end{pmatrix}\right) \cong
  \left(\cA^\vee[-1]\oplus\cA,\begin{pmatrix}-\p^\vee & 0 \\ c^\vee & \p\end{pmatrix}\right).
$$
Let us denote by $\boldsymbol{\mu}$ the chain level product on $Cone(c)$ arising from Proposition~\ref{prop:TQFT+} and Proposition~\ref{prop:product_on_cone}. It is algebraically dual to a chain level coproduct $\tau \boldsymbol{\mu}^\vee$ 
on $Cone(c)^\vee[-1]$, which under the chain isomorphisms above corresponds to a coproduct $\boldsymbol{\lambda}$ on $Cone(c)$. This coproduct is in turn algebraically dual to a product $\boldsymbol{\lambda}^\vee\tau$ on $Cone(c)^\vee[-1]$, which under the chain isomorphisms above corresponds to the original product $\boldsymbol{\mu}$ on $Cone(c)$. 
On the level of homology we thus obtain the following algebraic counterpart of our main Poincar\'e duality theorem:

\begin{theorem}[Duality for $A_2^+$-algebras]\label{thm:PD-A2+}
Let $(\cA,\p,c_0,Q_0,\mu,\lambda,B)$ be an $A_2^+$-algebra which is free and finite dimensional over $R$ in each degree. Then $Cone(c)$ carries a canonical product $\boldsymbol{\mu}$ and coproduct $\boldsymbol{\lambda}$ and we have a canonical isomorphism 
$$
  \Bigl(H_*\bigl(Cone(c)\bigr),\boldsymbol{\mu},\boldsymbol{\lambda}\Bigr)\cong
  \Bigl(H_{*-1}\bigl(Cone(c)^\vee\bigr),\boldsymbol{\lambda}^\vee \tau,\tau \boldsymbol{\mu}^\vee\Bigr)
$$
intertwining the products and coproducts.
\qed
\end{theorem}


By construction, the components of the product $\boldsymbol{\mu}$ as well as the coproduct $\boldsymbol{\lambda}$ on the cone are obtained by dualizing the operations $\mu$, $\lambda$ of the $A_2^+$-structure at some of their inputs and outputs, together with contributions from the secondary copairing $Q_0$.

\appendix

\section{Multilinear shifts}\label{sec:shifts}

We describe in this section our convention for shifts of multilinear maps. It is inspired by Lef\`evre-Hasegawa's discussion of shifts in the context of $A_\infty$-algebras~\cite[\S1.2.2]{Lefevre-Hasegawa}. In this section the notation $|\cdot|$ stands for degree (of an element, resp. of a linear map). 

Given a chain complex $(V,\p)$, recall that $V[k]$ is the chain complex in which we shift the degree down by $k\in\Z$ and multiply the differential by $(-1)^k$, i.e.  
$$
V[k]_i=V_{i+k}, \qquad \p[k]=(-1)^k \p. 
$$
We denote 
$$
s_k:V\to V[k]
$$ 
the map induced by $\mathrm{Id}_V$. This is a chain isomorphism of degree $-k$, and we denote its inverse, which is a chain isomorphism of degree $k$, by  
$$
\omega_k:V[k]\to V.
$$ 

\begin{definition}
Let $V_1,\dots,V_\ell$ and $V$ be chain complexes and let 
$$
\alpha:V_1\otimes \dots \otimes V_\ell\to V
$$
be a multilinear map. Given $k_1,\dots,k_\ell,k\in\Z$ we denote 
$$
\overline k=(k_1,\dots,k_\ell;k)
$$ 
and define the $\overline k$-shift of $\alpha$ to be 
$$
\alpha[\overline k]:V_1[k_1]\otimes \dots \otimes V_\ell[k_\ell]\to V[k],
$$  
$$
\alpha[\overline k]=s_k\alpha \omega_{k_1}\otimes \dots\otimes \omega_{k_\ell}.
$$
\end{definition}
In an equivalent formulation, the shift $\alpha[\overline k]$ is defined from the commutative diagram 
$$
\xymatrix{
\bigotimes_i V_i[k_i]\ar[r]^-{\alpha[\overline k]}\ar[d]_{\otimes_i \omega_{k_i}}& V[k] \\
\bigotimes_i V_i \ar[r]_-\alpha & V \ar[u]^{s_k}
}
$$
Yet equivalently, given elements $x_i\in V_i$ and $x\in V$, denote $\overline x_i=s_{k_i}x_i\in V[k_i]$ and $\overline x=s_kx\in V[k]$. We then have  
$$
\alpha[\overline k](\bar x_1,\dots,\bar x_\ell)=(-1)^{|\overline x_1|(k_2+\dots+k_\ell) + \dots + |\overline x_{\ell-1}|k_\ell} \overline{\alpha(x_1,\dots,x_\ell)}. 
$$

The degrees of $\alpha$ and $\alpha[\overline k]$ are related by 
$$
|\alpha[\overline k]|=|\alpha|+k_1+\dots+k_\ell - k, 
$$
and we have 
$$
\big[\p,\alpha[\overline k]\big] = (-1)^k [\p,\alpha][\overline k]. 
$$
In particular, if $\alpha$ is a chain map then so is $\alpha[\overline k]$.

\begin{example} Let $\{\mu^d_\cA\}$, $d\ge 1$ be an $A_\infty$-algebra structure on $\cA$, with $\mu^d_\cA:\cA[-1]^{\otimes d}\to \cA[-1]$ of degree $-1$ and $[\mu,\mu]=0$. Denoting 
$$
\p=\mu^1_\cA[1]=-\mu^1_\cA
$$
the shifted differential on $\cA=\cA[-1][1]$, the shifted product 
$$
\mu=\mu^2_\cA[1,1;1]:\cA\otimes \cA\to \cA
$$
is a chain map with respect to $\p$ and is defined on elements as 
$$
\mu(a,b)=(-1)^{|a|}\mu^2_\cA(a,b). 
$$
This is exactly our convention~\eqref{eq:dga-from-Ainfty}. Moreover, formula~\eqref{eq:associator-from-Ainfty} shows that the associator for $\mu$ is precisely given by $f=\mu^3_\cA[1,1,1;1]$, i.e. 
$$
\mu(\mu(a_1,a_2),a_3)-\mu(a_1,\mu(a_2,a_3))= [\p,f](a_1,a_2,a_3). 
$$
\end{example}

\begin{example}
If $f:V\to V$ is a graded map, then $f[k]_i=f_{i+k}$. In particular $f[k][-k]=f$ (the shift is ``involutive" on linear maps).  
\end{example}

\begin{remark}[Non-involutivity] Given $\overline k=(k_1,\dots,k_\ell;k)$, denote $-\overline k=(-k_1,\dots,-k_\ell;-k)$. Then 
$$
\alpha[\overline k][-\overline k]=(-1)^{k_1(k_2+\dots+k_\ell) + \dots + k_{\ell-1}k_\ell} \alpha. 
$$
Therefore the shift on multilinear maps is only involutive up to sign. 
\end{remark} 

\begin{remark}[Non-commutativity]
Given $\overline k=(k_1,\dots,k_\ell;k)$ and $\overline t=(t_1,\dots,t_\ell;t)$, define 
$\overline k + \overline t = (k_1+t_1,\dots,k_\ell+t_\ell;k+\ell)$. Then 
\begin{align*}
(-1)^{t_1(k_2+\dots+k_\ell) + \dots + t_{\ell-1}k_\ell} & \alpha[\overline k][\overline t] \\
& =\alpha[\overline k+\overline t] = (-1)^{k_1(t_2+\dots+t_\ell) + \dots + k_{\ell-1}t_\ell}\alpha[\overline t][\overline k],
\end{align*}
and thus $\alpha[\overline k][\overline t]$ differs in general from $\alpha[\overline t][\overline k]$, and from $\alpha[\overline k + \overline t]$, by a sign. (This is also the reason for non-involutivity.) 
\end{remark}

\begin{remark}[Associativity]
Let us view the shift $[\overline k]$ as acting from the right on the space of multilinear maps $L(V_1\otimes\dots\otimes V_\ell,V)$. Then 
$$
([\overline k][\overline t])[\overline s]= [\overline k]([\overline t][\overline s]). 
$$
Indeed, a straightforward computation shows that both terms are equal to 
\begin{align*}
(-1)^{s_1(t_2+\dots+t_\ell)+ \dots s_{\ell-1}t_\ell} & (-1)^{s_1(k_2+\dots+k_\ell)+\dots s_{\ell-1}k_\ell} \\
& \times \, (-1)^{t_1(k_2+\dots+k_\ell)+\dots+t_{\ell-1}k_\ell} [\overline k + \overline t +\overline s].
\end{align*}
\end{remark}

\section{DG conventions} \label{sec:dg_linear_algebra}

In this section we discuss the Koszul sign rule, we deduce the differential graded (dg) sign conventions from the single rule for the differential on a tensor product, and discuss a useful pictorial representation.

Let $R$ be a commutative ring. The notions of left $R$-module, right $R$-module and symmetric $R$-bimodule coincide, so that we use the simplified terminology ``$R$-module". We denote generically the differential of a dg $R$-module by $\p$, and will further specify the notation only if necessary. We work in homological grading convention: the differential $\p$ has degree $-1$. The degree of elements or maps is denoted $|\cdot|$. 

\subsection{Fundamental conventions} \label{sec:conventions}
Let $\cA$, $\cB$, $\cC$, $\cD$ be dg $R$-modules.
The fundamental conventions of dg linear algebra are the following.

{\sc 1. Differential on the tensor product.} Let $\cA\otimes \cB$ be the $R$-module whose degree $k$ part is $\oplus_{i+j=k} \cA_i\otimes \cB_j$. One defines a differential by 
the formula\footnote{This formula has geometric roots. From a purely algebraic perspective, it is the essentially unique possible choice in order to achieve the relation $\p^2=0$. The other possibility would be $\p(a\otimes b)=(-1)^{|b|}\p a\otimes b + a\otimes \p b$, which is equivalent to~\eqref{eq:tensor_prod} under the twist isomorphism $A\otimes B\stackrel\simeq\longrightarrow B\otimes A$, $a\otimes b\mapsto (-1)^{|a|\cdot |b|}b\otimes a$.} 
\begin{equation}\label{eq:tensor_prod}
\p(a\otimes b) = \p a \otimes b + (-1)^{|a|}a\otimes \p b.
\end{equation} 
In view of the Koszul sign rule below, this can also be rewritten 
$$
\p_{\cA\otimes \cB} = \p_\cA\otimes \Id_{\cB}+ \Id_{\cA}\otimes \p_\cB.
$$

{\sc 2. Differential on the $\Hom$ module.} We denote $\Hom(\cA,\cB)$ the $R$-module whose degree $r$ part consists of $R$-module maps $f:\cA_*\to\cB_{*+r}$. The differential is defined to be  
$$
\p f = \p\circ f - (-1)^{|f|}f\circ \p. 
$$
We also write $\p f=[\p,f]$. 

{\sc 2bis. Differential on the dual module.} A particular case of the previous construction is that of the \emph{dual $R$-module}  
$$
\cA^\vee = \Hom(\cA,R).  
$$
Here the base ring $R$ is understood to be supported in degree $0$. As a consequence $\cA^\vee$ is graded as $\cA^\vee_*=\Hom(\cA_{-*},R)$ (note the minus sign!), and is endowed with the differential 
$$
\p f = -(-1)^{|f|}f\circ \p.
$$

{\sc 3. The evaluation map is a chain map.} The \emph{evaluation map} is
$$
\ev:\Hom(\cA,\cB)\otimes \cA\to \cB,\qquad f\otimes a\mapsto f(a). 
$$
We also write $f(a)=\langle f, a\rangle$. This has degree $0$ and is a chain map under conventions 1 and 2.

\begin{lemma}
Assuming any of the conventions 1, 2 or 3, the two other conventions are equivalent. (For the implication $2+3\Rightarrow 1$ we assume that $\p_{\cA\otimes\cB}$ has the form $a\otimes b\mapsto \pm\p a\otimes b\pm a \otimes \p b$.)
\qed
\end{lemma}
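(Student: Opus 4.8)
The plan is to reduce all three equivalences to a single computation, namely the meaning of the condition that $\ev:\Hom(\cA,\cB)\otimes\cA\to\cB$ commutes with differentials, and to observe that only three primitive implications are needed: $(1\wedge 2)\Rightarrow 3$, $(1\wedge 3)\Rightarrow 2$, and $(2\wedge 3)\Rightarrow 1$. Indeed, "assuming convention 1" the assertion $2\Leftrightarrow 3$ is $(1\wedge 2)\Rightarrow 3$ together with $(1\wedge 3)\Rightarrow 2$; "assuming convention 2" it is $(1\wedge 2)\Rightarrow 3$ together with $(2\wedge 3)\Rightarrow 1$; and "assuming convention 3" it is $(1\wedge 3)\Rightarrow 2$ together with $(2\wedge 3)\Rightarrow 1$.

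First I would write the chain-map condition for $\ev$ on a homogeneous element $f\otimes a$, which is $\p_\cB(f(a))=\ev\big(\p_{\Hom(\cA,\cB)\otimes\cA}(f\otimes a)\big)$. If convention 1 is assumed and applied to $f\otimes a$ with $f$ homogeneous of degree $|f|$, the right-hand side equals $(\p f)(a)+(-1)^{|f|}f(\p a)$, where $\p f$ denotes the differential on $\Hom(\cA,\cB)$. If in addition convention 2 is assumed, then $(\p f)(a)=(\p\circ f)(a)-(-1)^{|f|}(f\circ\p)(a)$, and substituting turns the condition into the tautology $\p(f(a))=\p(f(a))$; this gives $(1\wedge 2)\Rightarrow 3$. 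Conversely, assuming conventions 1 and 3, the displayed condition reads $(\p f)(a)=\p(f(a))-(-1)^{|f|}f(\p a)$ for all $a$, which says precisely $\p f=\p\circ f-(-1)^{|f|}f\circ\p$; this gives $(1\wedge 3)\Rightarrow 2$. Neither of these two implications involves anything beyond keeping track of signs.

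The remaining implication $(2\wedge 3)\Rightarrow 1$ is where the stated hypothesis on the shape of $\p_{\cA\otimes\cB}$ enters. Writing $\p_{\cA\otimes\cB}(a\otimes b)=\epsilon_1\,\p a\otimes b+\epsilon_2\,a\otimes\p b$ with $\epsilon_1,\epsilon_2\in\{+1,-1\}$ (a priori functions of $|a|$ and $|b|$), I would specialise to $\Hom(\cA,\cB)\otimes\cA$, apply $\ev$, use convention 2 to rewrite $(\p f)(a)$, and then impose convention 3; comparing the coefficients of $\p(f(a))$ and of $f(\p a)$, which are independent for suitable choices of $f$ and $a$, forces $\epsilon_1=1$ and $\epsilon_2=(-1)^{|f|}$. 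To pass from this to the sign rule on \emph{all} tensor products, I would use that the sign attached to $\p_{\cU\otimes\cV}(u\otimes v)$ depends only on the pair $(|u|,|v|)$, and that every pair of integers is realised by some $\Hom(\cA,\cB)\otimes\cA$ (choose $\cA$ and $\cB$ concentrated in suitable degrees so that $\Hom(\cA,\cB)$ is concentrated in a prescribed degree and $\cA$ in another). This universality step — upgrading a pointwise identity, valid only modulo the kernel of the non-injective map $\ev$, to the actual formula for the differential on every tensor product — is the one genuinely delicate point; everything else is a routine sign calculation.
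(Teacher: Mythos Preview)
Your argument is correct. The paper gives no proof of this lemma (it is stated with a bare \qed), so there is nothing to compare; your reduction to the three primitive implications $(1\wedge 2)\Rightarrow 3$, $(1\wedge 3)\Rightarrow 2$, $(2\wedge 3)\Rightarrow 1$ is exactly the intended exercise, and your treatment of the one nontrivial point---the universality step in $(2\wedge 3)\Rightarrow 1$, where one must manufacture $f$ and $a$ with $\p(f(a))$ and $f(\p a)$ linearly independent in order to separate the two coefficients---is the right idea and can be made precise as you indicate.
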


{\sc 3bis. The evaluation pairing is a chain map.} The \emph{evaluation pairing} is 
$$
\ev:\cA^\vee\otimes \cA\to R,\qquad f\otimes a\mapsto f(a) =: \langle f,a\rangle. 
$$
This has degree $0$ and is a chain map under conventions 1 and 2bis. 

\begin{lemma}Assuming convention 1, conventions 2bis and 3bis are equivalent.  \qed
\end{lemma}

{\sc 4. The twist isomorphism is a chain map.} The \emph{twist isomorphism} is the dg $R$-module isomorphism 
\begin{equation} \label{eq:twist_iso}
\tau:\cA\otimes \cB\stackrel\simeq\longrightarrow \cB\otimes \cA,\qquad a\otimes b \mapsto (-1)^{|a|\cdot |b|}b\otimes a. 
\end{equation}
Under convention 1, a chain isomorphism $\cA\otimes \cB\stackrel\simeq\longrightarrow\cB\otimes\cA$ of the form $a\otimes b\mapsto \varepsilon(|a|,|b|)b\otimes a$ with $\varepsilon(|a|,|b|)\in\{\pm 1\}$ is necessarily of the form~\eqref{eq:twist_iso} up to global sign change. As an example, while $a\otimes b\mapsto b\otimes a$ is an $R$-module isomorphism, it is not in general a chain map.

{\sc 5. The Koszul sign rule.} There is a chain map 
$$
T:\Hom(\cA,\cB)\otimes \Hom(\cC,\cD)\to \Hom(\cA\otimes \cC,\cB\otimes \cD)
$$
defined by the commutative diagram 
\begin{equation}\label{eq:HomABCD}
{\scriptsize 
\xymatrix
@C=40pt
{
\Hom(\cA,\cB)\otimes \Hom(\cC,\cD) \otimes \cA\otimes \cC\ar[r]^-{1\otimes \tau_{23}\otimes 1}\ar[d]_{T\otimes 1\otimes 1}& \Hom(\cA,\cB)\otimes \cA \otimes \Hom(\cC,\cD) \otimes \cC \ar[d]_{\ev\otimes\ev}\\
\Hom(\cA\otimes \cC,\cB\otimes\cD)\otimes \cA\otimes\cC\ar[r]_-\ev& \cB\otimes \cD.
}
}
\end{equation}
Here $\tau_{23}$ is the twist isomorphism applied on the 2nd and 3rd factors of the tensor product. With a slight abuse of notation we denote $T(f\otimes g)$ by $f\otimes g$. Then the previous commutative diagram amounts to the familiar Koszul sign rule 
$$
\langle f\otimes g,a\otimes c\rangle = (-1)^{|g|\cdot |a|}f(a)\otimes g(c).
$$
We thus see that the Koszul sign rule is a consequence of convention 4 for the twist map, which in turn is a consequence of convention 1 for the differential on the tensor product.

The bottom line of the discussion is the following: 
\begin{itemize}
\item conventions 4 and 5 are consequences of convention 1. 
\item assuming any one of the conventions 1, 2 or 3, the two other ones are equivalent. 
\end{itemize}

At this point it is instructive to quote Loday-Vallette~\cite[1.5.3]{Loday-Vallette}: {\it ``[The Koszul convention] permits us to avoid complicated signs in the formulas provided one works with the maps (or functions) \emph{without} evaluating them on the elements. When all the involved operations are of degree $0$, the formulas in the nongraded case apply mutatis mutandis to the graded case."} The point of the previous discussion was to deduce the convention for the twist isomorphism and for the Koszul sign rule from convention 1, which is ultimately of geometric nature. 

\subsection{Consequences of the fundamental conventions} \label{sec:consequences}

From this point on we will be able to avoid all signs by working exclusively with functional equalities, or commutative diagrams. 

1. We have a canonical chain map 
$$
T:\cB\otimes \cC^\vee\longrightarrow \Hom(\cC,\cB)
$$
obtained by specializing~\eqref{eq:HomABCD} with $\cA=\cD=R$. This is equivalently defined by the commutative diagram 
$$
\xymatrix
@C=60pt
{
\cB\otimes \cC^\vee \otimes \cC \ar[r]^-{1\otimes \ev}\ar[d]_{T\otimes 1}  & \cB \ar@{=}[d] \\
\Hom(\cC,\cB)\otimes \cC \ar[r]_-{\ev}& \cB.
}
$$
If $\cC$ is free and finite dimensional over $R$ in each degree the map $T$ is a chain isomorphism. We have $T(b\otimes f)(c)=b\cdot f(c)$.

There is of course also a canonical chain map 
$$
\widetilde{T}:\cC^\vee\otimes \cB \longrightarrow \Hom(\cC,\cB) 
$$
obtained by suitably specializing~\eqref{eq:HomABCD}. This is alternatively defined by the commutative diagram 
$$
\xymatrix
@C=60pt
{
\cC^\vee\otimes\cB\otimes \cC \ar[r]^-{\tau_{12}\otimes 1}\ar[d]_{\widetilde{T}\otimes 1}  & \cB\otimes\cC^\vee\otimes \cC \ar[r]^-{1\otimes \ev} & \cB \ar@{=}[d] \\
\Hom(\cC,\cB)\otimes \cC \ar[rr]_-{\ev}& & \cB.
}
$$
Thus $\widetilde{T}=T\circ\tau$ and $\widetilde{T}(f\otimes b)(c) = (-1)^{|b|\cdot |f|}b\cdot f(c)$. As expected, there is no sign involved when expressing the operation in terms of a commutative diagram, i.e. in functional terms. The sign appears only when explicitly evaluating on elements. 

2. We have canonical \emph{adjunction isomorphisms} 
$$
F:\Hom(\cA,\Hom(\cB,\cC))\stackrel\simeq\longleftrightarrow \Hom(\cA\otimes \cB,\cC):G.
$$
The map $F$ acts by $F(f)(a\otimes b)=f(a)(b)$. Its inverse $G=F^{-1}$ acts by $G(g)(a)(b)=g(a\otimes b)$. The fact that the adjunction isomorphisms are chain maps ties together conventions 1 and 2 in the same manner as does convention 3. 

3. There is a canonical map
$$
\cA^\vee\otimes \cB^\vee\to (\cA\otimes \cB)^\vee
$$
realized by the composition $F\widetilde{T}:\cA^\vee\otimes \cB^\vee\to \Hom(\cA,\cB^\vee)\stackrel\simeq\to (\cA\otimes \cB)^\vee$. Also, there is a canonical map
$$
\cB^\vee\otimes \cA^\vee\to (\cA\otimes \cB)^\vee
$$
obtained from the previous one by pre-composing with $\tau$, and thus realized by $FT:\cB^\vee\otimes \cA^\vee\to \Hom(\cA,\cB^\vee)\stackrel\simeq\to(\cA\otimes \cB)^\vee$. These maps are isomorphisms if either $\cA$ or $\cB$ is free and finite dimensional over $R$. 

4. For sequel use, it is useful to consider the \emph{coevaluation map} 
$$
\ev^\vee:R\to(\cA^\vee\otimes \cA)^\vee, 
$$
dual to the evaluation map, and also the canonical map 
$$
\iota : \cA\to \cA^{\vee\vee}
$$
defined by the commutative diagram 
$$
\xymatrix
@C=60pt
{\cA\otimes \cA^\vee\ar[r]^\tau \ar[d]_{\iota\otimes 1}& \cA^\vee\otimes\cA \ar[r]^\ev & R \ar@{=}[d]\\
\cA^{\vee\vee}\otimes \cA^\vee \ar[rr]_{\ev}& & R.
} 
$$
Explicitly, the map $\iota$ acts as $a\mapsto \left( f\mapsto (-1)^{|a|\cdot |f|} \langle f,a\rangle\right)$. (Formally we have $\langle a,f\rangle = (-1)^{|a|\cdot |f|} \langle f,a\rangle$.)

If $\cA$ is free and finite dimensional over $R$ then the map $\iota: \cA\to \cA^{\vee\vee}$ is an isomorphism, the modules $\cA^\vee$ and $\cA^{\vee\vee}$ are also free and finite dimensional over $R$, and $\ev^\vee$ can be interpreted as a chain map\footnote{In this situation, given a basis $e_1,\dots,e_\ell$ of $\cA$ and denoting $e_1^*,\dots,e_\ell^*$ the dual basis of $\cA^\vee$, we have $\ev^\vee(1)=\sum_i e_i\otimes e_i^*$. This is the so-called \emph{Casimir element}.}  
$$
\ev^\vee:R\to \cA\otimes\cA^\vee.
$$ 
We then have $(\ev\otimes 1)(1\otimes \ev^\vee)=\text{Id}_{\cA^\vee}$ and $(1\otimes \ev)(\ev^\vee\otimes 1)=\text{Id}_\cA$.

\subsection{The language of trees} \label{sec:trees}

We depict operations involving multiple inputs and outputs in $\cA$ and $\cA^\vee$ by trees whose half-edges carry two different labels: 1. \emph{input} or \emph{output}, signified by an ingoing or outgoing arrow, and 2. $\cA$ or $\cA^\vee$. In our graphical representation, the input edges labeled $\cA$ or $\cA^\vee$ are directed ``upwards", resp. ``downwards", and the output edges labeled $\cA$ or $\cA^\vee$ are directed ``downwards", resp. ``upwards". The composition of operations is represented by stacking trees one on top of the other. 

For example, the graphical representation of the maps $\ev$ and $\ev^\vee$ in the case where $\iota$ is an isomorphism is 
\begin{center}
\begin{tikzpicture}
\draw (0,0) -- (0,2); 
 \node [below] at (0,0) {$\cA^\vee$}; 
 \draw [->] (0.25,0) -- (0.25,0.25);
 \draw [->] (0.25,2) -- (0.25,1.75);
 \node [above] at (0,2) {$\cA$};
 \node at (0.35,1) {$\ev$};
\draw (4,0) -- (4,2);
 \node [below] at (4,0) {$\cA$}; 
 \draw [<-] (4.25,0) -- (4.25,0.25);
 \draw [<-] (4.25,2) -- (4.25,1.75);
 \node [above] at (4,2) {$\cA^\vee$};
 \node at (4.45,1) {$\ev^\vee$};
\end{tikzpicture}
\end{center}
The relations $(\ev\otimes 1)(1\otimes \ev^\vee)=\text{Id}_{\cA^\vee}$ and $(1\otimes \ev)(\ev^\vee\otimes 1)=\text{Id}_\cA$ become graphically 
\begin{center}
\begin{tikzpicture}
\draw (0,0) -- (0,2); 
 \node [below] at (0,0) {$\cA^\vee$}; 
 \draw [->] (0.25,0) -- (0.25,0.25);
 \draw [->] (0.25,2) -- (0.25,1.75);
 \node [left] at (0,2) {$\cA$};
 \node at (0.35,1) {$\ev$};
\draw (0,2.25) -- (0,4.25);
 \draw [<-] (0.25,2.25) -- (0.25,2.5);
 \draw [<-] (0.25,4.25) -- (0.25,4);
 \node [above] at (0,4.25) {$\cA^\vee$};
 \node at (0.45,3.25) {$\ev^\vee$};
 
 \node at (1,2) {$=$};
 
 \draw (2,1) -- (2,3);
 \node [below] at (2,1) {$\cA^\vee$}; 
 \draw [->] (2.25,1) -- (2.25,1.25);
 \draw [<-] (2.25,3) -- (2.25,2.75);
 \node [above] at (2,3) {$\cA^\vee$};
 \node at (2.55,2) {$\text{Id}_{\cA^\vee}$};
  
\draw (6,0) -- (6,2);
 \node [below] at (6,0) {$\cA$}; 
 \draw [<-] (6.25,0) -- (6.25,0.25);
 \draw [<-] (6.25,2) -- (6.25,1.75);
 \node [left] at (6,2) {$\cA^\vee$};
 \node at (6.45,1) {$\ev^\vee$};
\draw (6,2.25) -- (6,4.25);
 \draw [->] (6.25,2.25) -- (6.25,2.5);
 \draw [->] (6.25,4.25) -- (6.25,4);
 \node [above] at (6,4.25) {$\cA$};
 \node at (6.35,3.25) {$\ev$}; 
 
\node at (7,2) {$=$};
 
 \draw (8,1) -- (8,3);
 \node [below] at (8,1) {$\cA$}; 
 \draw [<-] (8.25,1) -- (8.25,1.25);
 \draw [->] (8.25,3) -- (8.25,2.75);
 \node [above] at (8,3) {$\cA$};
 \node at (8.45,2) {$\text{Id}_{\cA}$}; 
\end{tikzpicture}
\end{center}
The graphical representation is more suggestive than the formulas.

Using the evaluation map, an input (output) in $\cA$ can be converted to an output (input) in $\cA^\vee$: add a tensor factor $\cA$ for each new output in $\cA^\vee$, and apply the evaluation map as dictated by the tree. For example, suppose we are given a bilinear product $\mu$ on $\cA$ (which is not assumed to be commutative or associative) depicted by the trivalent tree with one vertex:
\begin{center}
\begin{tikzpicture}
\draw (0,0) -- (-2,2); 
	\node [right] at (-1.5,1.5) {\tiny $1$};
\draw (0,0) -- (2,2); 
	\node [left] at (1.5,1.5) {\tiny $2$};
\draw (0,0) -- (0,-2);
\node [right] at (0,0) {$\mu$};
\draw [->] (-2,2.5) -- (-2,2.25); \node [right] at (-2,2.5) {$\cA$}; 
\draw [->] (2,2.5) -- (2,2.25); \node [right] at (2,2.5) {$\cA$}; 
\draw [->] (0,-2.25) -- (0,-2.5); \node [right] at (0,-2.25) {$\cA$};
\end{tikzpicture}
\end{center} 
Here the labels $1$ and $2$ on the incoming edges express the order of the arguments for the product $\mu:\cA\otimes \cA\to \cA$. The same tree then defines left and right multiplications
$$m_L:\cA\otimes \cA^\vee\to \cA^\vee\quad\text{and}\quad m_R:\cA^\vee\otimes \cA\to \cA^\vee$$
by converting inputs and outputs as in Figure~\ref{fig:mLmR}:
\begin{figure}
\begin{center}
\includegraphics[width=.7\textwidth]{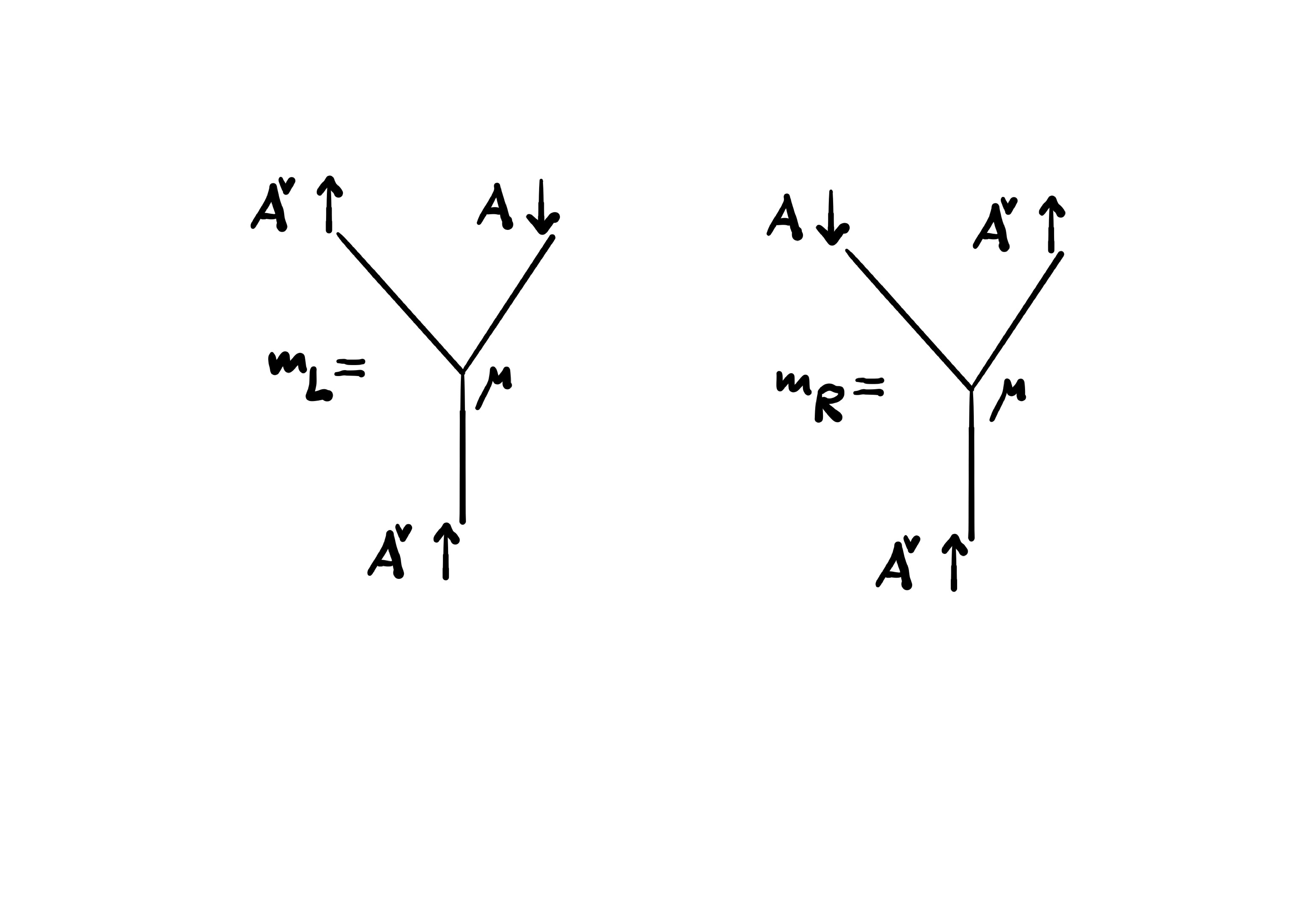}
\caption{Definition of $m_L$ and $m_R$}
\label{fig:mLmR}
\end{center}
\end{figure}
According to our rules, the operation $m_L$ is defined by the commuting diagram
$$
\xymatrix{
\cA\otimes \cA^\vee\otimes \cA \ar[d]^{\tau_{23}\tau_{12}}\ar[r]^{\ \ \ m_L\otimes 1} & \cA^\vee\otimes\cA \ar[dr]^{\ev} \\ 
\cA^\vee \otimes \cA \otimes \cA \ar[r]^{\ \ \ 1\otimes\mu} & \cA^\vee\otimes\cA \ar[r]^{\ \ \ev} & R, \\ 
}
$$
where $\tau_{23}\tau_{12}$ is the permutation moving the first tensor factor to the third position. This writes out as 
$$
\ev (m_L\otimes 1)=\ev(1\otimes\mu)\tau_{23}\tau_{12},
$$
which means $\langle m_L(a,f),b\rangle=(-1)^{|a|\cdot (|f|+|b|)}\langle f,\mu(b,a)\rangle$ in terms of elements. In accordance with the discussion in~\S\ref{sec:conventions}, the defining formula does not involve signs when written in functional terms: the signs appear only when the formula is evaluated on elements. Using the notation $\langle a,f\rangle = (-1)^{|a|\cdot |f|} \langle f,a\rangle$ from above, the last formula can also be written in equivalent form without signs as
\begin{equation}\label{eq:mL-mu}
\langle b,m_L(a,f)\rangle =\langle \mu(b,a),f\rangle.
\end{equation}
Similarly, the right multiplication reads
$$
\ev(m_R\otimes 1)=\ev(1\otimes\mu),
$$
which in terms of elements means 
\begin{equation}\label{eq:mR-mu}
\langle m_R(f,a),b\rangle =\langle f,\mu(a,b)\rangle.
\end{equation}

\begin{remark}\label{rmk:finite-dim}
Let us emphasize that in the preceding discussion as well as in the sequel we use only the evaluation map and not its dual, so everything works in the infinite dimensional case. If $\cA$ is free and finite dimensional, then we can view the conversion of inputs/outputs in $\cA$ to outputs/inputs in $\cA^\vee$ as adding the pictures for $\ev$ resp.~$\ev^\vee$ at the corresponding edge. 
\end{remark}

\begin{remark}\label{rmk:TQFT} One remarkable aspect about the language of trees is that the definitions are forced upon us by TQFT-type relations: two composite expressions which give rise to the same tree by formal gluing are equal. In the situation at hand, this requirement leaves as an available choice the order on the input half-edges of $\mu$, but this is fixed by the requirement that $m_L$ defines a \emph{left} module structure, and $m_R$ defines a \emph{right} module structure, in the case that the product $\mu$ is associative (so in this case $\cA^\vee$ becomes a bimodule over $\cA$). 
\end{remark}

\bibliographystyle{abbrv}
\bibliography{000_SHpair}

\end{document}